
\NeedsTeXFormat{LaTeX2e}

\documentclass{lms}
\usepackage{amsmath}
\usepackage{amssymb}
\usepackage{txfonts}
\usepackage{color}
\usepackage{tikz}

\newtheorem{theorem}{Theorem}[section] 
\newtheorem{lemma}[theorem]{Lemma}     
\newtheorem{corollary}[theorem]{Corollary}
\newtheorem{proposition}[theorem]{Proposition}

\newnumbered{assertion}[theorem]{Assertion}    
\newnumbered{conjecture}[theorem]{Conjecture}  
\newnumbered{convention}[theorem]{Convention}
\newnumbered{definition}[theorem]{Definition}
\newnumbered{hypothesis}[theorem]{Hypothesis}
\newnumbered{remark}[theorem]{Remark}
\newnumbered{note}[theorem]{Note}
\newnumbered{observation}[theorem]{Observation}
\newnumbered{problem}[theorem]{Problem}
\newnumbered{question}[theorem]{Question}
\newnumbered{algorithm}[theorem]{Algorithm}
\newnumbered{example}[theorem]{Example}
\newunnumbered{notation}[theorem]{Notation} 

\newcommand{\ignore}[1]{}

\providecommand{\aut}{\mathop{\rm aut}\nolimits}

\providecommand{\cent}{\mathop{\rm cent}\nolimits}

\providecommand{\colim}{\mathop{\rm colim}\nolimits}

\providecommand{\Cent}{\mathop{\rm Cent}\nolimits}

\providecommand{\enid}{\mathop{\rm End}\nolimits}

\providecommand{\GL}{\mathop{\rm GL}\nolimits}

\providecommand{\id}{\mathop{\rm id}\nolimits}

\providecommand{\Ker}{\mathop{\rm ker}\nolimits}

\providecommand{\lub}{\mathop{\rm lub}\nolimits}

\providecommand{\Map}{\mathop{\rm map}\nolimits}

\providecommand{\ord}{\mathop{\rm ord}\nolimits}

\providecommand{\pt}{\mathop{\rm pt}\nolimits}
\providecommand{\pr}{\mathop{\rm pr}\nolimits}
\providecommand{\inclu}{\mathop{\rm inc}\nolimits}
\providecommand{\quot}{\mathop{\rm quot}\nolimits}
\providecommand{\res}{\mathop{\rm res}\nolimits}
\providecommand{\rk}{\mathop{\rm rk}\nolimits}

\providecommand{\saut}{\mathop{\rm saut}\nolimits}

\providecommand{\sign}{\mathop{\rm sign}\nolimits}
\providecommand{\Stab}{\mathop{\rm stab}\nolimits}

\providecommand{\slope}{\mathop{\rm slope}\nolimits}

\providecommand{\SL}{\mathop{\rm SL}\nolimits}
\providecommand{\Sym}{\mathop{\rm sym}\nolimits}
\providecommand{\tors}{\mathop{\rm tors}\nolimits}
\providecommand{\tr}{\mathop{\rm tr}\nolimits}

\providecommand{\vol}{\mathop{\rm vol}\nolimits}
\providecommand{\Verti}{\mathop{\rm vert}\nolimits}

\newcommand{\FJC}{{\bf{FJC}}}
\newcommand{\Fin}{{\mathcal{F}\text{in}}}

\newcommand{\VCyc}{{\mathcal{V}\mathcal{C}\text{yc}}}
\newcommand{\VSol}{{\mathcal{V}\mathcal{S}\text{ol}}}

\newcommand{\IN}{\mathbb{N}}

\newcommand{\IQ}{\mathbb{Q}}
\newcommand{\IR}{\mathbb{R}}

\newcommand{\IZ}{\mathbb{Z}}
\newcommand{\cala}{\mathcal{A}}

\newcommand{\calf}{\mathcal{F}}

\newcommand{\calg}{\mathcal{G}}
\newcommand{\calh}{\mathcal{H}}

\newcommand{\calu}{\mathcal{U}}
\newcommand{\calv}{\mathcal{V}}

\newcommand{\calw}{\mathcal{W}}

\newcommand{\frl}{\mathfrak{L}}

\newcommand{\x}{{\times}}

\newcommand{\e}{{\varepsilon}}
\newcommand{\CAT}{{\operatorname{CAT}}}
\newcommand{\FS}{\mathit{FS}}

\newcommand{\ev}{\mathit{ev}}


\title[The Farrell-Jones conjecture for S-arithmetic groups]
 {The Farrell-Jones conjecture for $S$-arithmetic groups} 

\author{H. R\"uping}


\classno{18F25 (primary), 19A31, 19B28, 19G24(secondary).
}

\extraline{This work was supported by the Leibniz Prize of Prof. Dr. Wolfgang L\"uck granted by the Deutsche Forschungsgemeinschaft.
}

\begin{document}
\maketitle

\begin{abstract}
This paper contains the results of my PhD-thesis. I will show the $K$- and $L$-theoretic Farrell-Jones conjecture (\FJC) for the groups $\GL_n(F(t))$ where $F$ and $\GL_n(\mathbb{Q})$. This especially implies the conjecture for all $S$-arithmetic groups. 
\end{abstract}
\section{Introduction}

The Farrell-Jones conjecture makes predictions about the structure of the algebraic $K$-theory of group rings. There is an an $L$-theoretic version. It implies a lot of well-known conjectures such as the Bass-, Borel- and Novikov-conjecture.
In this paper I will prove the \FJC~for all groups that are linear over $F(t)$ for a finite field $F$ (Theorem~\ref{ICGlnfullyreduced}) or over $\IQ$. This means groups that are subgroups of $\GL_n(F[t][S^{-1}])$ or of $\GL_n(\IQ)$. 

The action of $\GL_n(\IZ)$ on its symmetric space has been used to show the Farrell-Jones in \cite{Bartels-Lueck-Reich-Rueping(2012KandL)}.  I will extend these methods to show the Farrell-Jones conjecture for groups which are linear over $\IQ$  or $F(t)$ for a finite field $F$. This includes in particular all $S$-arithmetic groups.

I will show the strongest version of this conjecture for those groups; the version with coefficients in any additive category with a group action and with finite wreath products. This version has strong inheritance properties, for example any group commensurable to a subgroup of one of the groups mentioned above will satisfy the \FJC. 
This paper is based on my thesis \cite{ruping2013farrell}.

\tableofcontents

\section{Axiomatic setting}\label{sec:axiomaticsetting}

Let me first recall some definitions. Most of them can be found in \cite{Bartels-Lueck(2012CAT(0)flow)}.
Let $X$ be a proper, finite dimensional $\CAT(0)$ space with a proper, isometric group action of a group $G$.
The flow space of $X$  is the set of all maps $\IR\rightarrow X$ whose restriction to some interval $[a,b]$ with $-\infty\le a\le b\le \infty$ is a geodesic and which are locally constant on the complement of this interval 
\[d(f,g):=\int_\IR \frac{d_X(f(t),g(t))}{2e^{|t|}}dt\]
defines a metric on this flow space. $\Phi_t(f):=f(\_+t)$ defines an $\IR$-action (\emph{flow}) on $FS(X)$ that commutes with the induced $G$-action

Let $\calf$ be a family of subgroups of $G$. Let us say that $FS(X)$ \emph{admits long and thin $\calf$-covers}, if there is a $N>0$ such that we can find for every $R>0$ an $G$-invariant open cover $\calu$ and an $\varepsilon>0$ such that for every point $x$ there is a open set $U\in \calu$ containing $B_\varepsilon(\varphi_{[-R,R]}(x))$.

The goal of this chapter is to formulate and proof Proposition~\ref{prop:1:coversAtInfinity}. It will apply to those general linear groups in consideration. Bartels and L\"uck have defined in \cite[Definition~0.4]{Bartels-Lueck(2012CAT(0)flow)} when a group $G$ is transfer reducible over a family of subgroups. Furthermore they showed in Proposition 5.11 how a flow space can be used to show transfer reducibility. Wegner defined a notion of strong transfer reducibility and showed that the same setup also gives strong transfer reducibility in \cite[Definition~3.1 and Theorem~3.4]{wegner2012k}.
The argument uses the following properties of the flow space, which already have been verified for those flow spaces mentioned above:\\
\begin{table}[h]\vspace*{-3ex}
\label{tab:implications}
\begin{tabular}{lcr}
Name & Definition & Verified in \\ \hline
niceness of $X$&\cite[Convention~5.1]{Bartels-Lueck(2012CAT(0)flow)} &\cite[Section~6.2]{Bartels-Lueck(2012CAT(0)flow)}\\\hline
contracting transfers &\cite[Definition~5.9]{Bartels-Lueck(2012CAT(0)flow)} &\cite[Section~6.4]{Bartels-Lueck(2012CAT(0)flow)}\\\hline
long $\calf$-covers at infinity  &\cite[Definition~5.5]{Bartels-Lueck(2012CAT(0)flow)}& \emph{only} periodic part in ~\cite[Theorem~4.2]{Bartels-Lueck(2012CAT(0)flow)}
\\
and periodic flow lines&&
\end{tabular}
\end{table}

One part of the niceness is that there is a bound on the order of finite subgroups of $G$. This is not satisfied in this setting. There is a workaround and the same statements hold even without this assumption see (\cite[Theorem~4.3]{mole2013equivariant}). 

Finally there is also a ``almost'' version of transfer reducibility which inherits to wreath products with finite groups in \cite[Definition~5.3]{Bartels-Lueck-Reich-Rueping(2012KandL)}. 
We have the following implications.
\begin{table}[h]\vspace*{-3ex}
\label{it:transfredimpFJ}
\begin{tabular}{lcr}
 Assumption & conclusion& reference \\ \hline
$G$ (almost) (strongly)  & $G\wr F$ is almost (strongly)  &\cite[Theorem~5.1]{Bartels-Lueck-Reich-Rueping(2012KandL)}\\
transfer reducible over $\calf$&transfer reducible over $\calf^\wr$ &\\\hline
$G$ transfer reducible over $\calf$ & \FJC-K up to dim 1 relative $\calf$ & \cite[Theorem~1.1]{bartels2009borel}\\
&and \FJC-L relative $\calf_2$ hold for $G$&\\\hline
$G$ almost transfer reducible & \FJC-K up to dim 1 relative $\calf'$  & \cite[Proposition~5.4]{Bartels-Lueck-Reich-Rueping(2012KandL)}\\
&and \FJC-L relative $\calf'$ hold for $G$&\\\hline
the strong versions & eliminates the dimension restriction &\cite[Theorem~1.1]{wegner2012k}\\ &&\cite[Proposition~5.4]{Bartels-Lueck-Reich-Rueping(2012KandL)}
\end{tabular}
\end{table}\\
Here $\calf^\wr$ denotes the family of those subgroups which are virtually subgroups of some product $\prod_{f\in F} H_f\subset G^F\subset G\wr F$ with $H_f\in \calf$ and $\calf_2$ denotes all subgroups of $G$ which contain a subgroup from $\calf$ of index at most two.

\begin{definition}[Long $\calf$-covers at infinity and periodic flow lines]
    Let $\FS_{\le \gamma}(X)$ be the subspace of $\FS(X)$ of those generalized geodesics $c$  for which 
there exists for every $\epsilon > 0$ an element $\tau \in (0, \gamma + \epsilon]$ and 
 $g \in G$ such that $g\cdot c = \Phi_{\tau}(c)$ holds. We will say that $\FS$ 
    \emph{admits long $\calf$-covers at infinity and periodic flow lines}
    if the following holds:
    
    There is $N > 0$ such that for every $\gamma > 0$ there is a 
    collection $\calv$  of open $\calf$-subsets of $\FS$ and $\e > 0$  
    satisfying:
    \begin{enumerate}
         \item \label{def:at-infty_plus_periods:G-inv}
             $\calv$ is $G$-invariant: $g \in G$, 
             $V \in \calv \implies gV \in \calv$;
         \item \label{def:at-infty_plus_periods:dim}
             $\dim \calv \leq N$;
         \item \label{def:at-infty_plus_periods:covers}
           there is a compact subset $K \subseteq \FS$ such that
           \begin{enumerate}
             \item $\FS_{\leq \gamma} \cap G \cdot K = \emptyset$;
             \item for $z \in \FS \setminus G \cdot K$ there is 
                $V \in \calv$ such that 
                $B_\e(\Phi_{[-\gamma,\gamma]}(z)) \subset V$.
           \end{enumerate}
    \end{enumerate}
\end{definition}

The ``at infinity'' part is automatically satisfied if the group acts cocompactly. However this is not the case here. 

The following proposition sums up all conditions that are used in \cite{Bartels-Lueck-Reich-Rueping(2012KandL)} to prove that the group action of $\GL_n(\IZ)$ on the space of inner products admits long coverings at infinity and periodic flow lines. 

I will show that the general linear group over $ R[S^{-1}]$ where $R$ is either $\IZ$ or $F[t]$ for a finite field $F$ and $S$ is a finite set of primes in $R$ satisfies these conditions. It might be interesting to find other groups which also satisfy these assumptions.

\begin{proposition}\label{prop:1:coversAtInfinity} Let $G$ be a group, $X$ be a $G$-space, $N$ a natural number and $\calw$  a collection of open subsets of $X$ such that
\begin{enumerate}
\item $X$ is a proper $\CAT(0)$ space, 
\item the covering dimension of $X$ is less or equal to $N$,
\item the group action of $G$ on $X$ is proper and isometric,
\item $G\calw \coloneqq\{gW\mid g\in G, W\in \calw\}=\calw$,
\item the sets $gW$ and $W$ are either disjoint or equal for all $g\in G,W\in \calw$,
\item the dimension of $\calw$ is less or equal to $N$. 
\item the $G$ operation on 
\[X\setminus ( \bigcup \calw^{-\beta} )\coloneqq\{x\in X\mid \nexists W\in \calw:\overline{B}_\beta(x)\subset W\}\]
is cocompact for every $\beta \ge 0$.
\end{enumerate}
Then $FS(X)$ admits long $\mathfrak{F}$-covers at infinity and periodic flow lines for the family $\mathfrak{F}\coloneqq\VCyc \cup \{H\le G \mid \exists \;W\in \calw\;\forall \;h\in H:\; hW=W\}$.
\\
As explained above this means that $G$ is strongly transfer reducible over the family $\mathfrak{F}$ and thus $G$ satisfies the $K$-theoretic \FJC~relative to $\mathfrak{F}$ and the $L$-theoretic \FJC~relative to the family $\mathfrak{F}_2$. Further $G\wr F$ satisfies the $K$- and $L$-theoretic \FJC~relative $\mathfrak{F}^\wr$ for any finite group $F$.
\end{proposition}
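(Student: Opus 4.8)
The plan is to reduce Proposition~\ref{prop:1:coversAtInfinity} to the corresponding statement for cocompact actions, which has already been carried out for $\GL_n(\IZ)$ acting on the space of inner products, by "covering up" the locus where the $\calw$-sets fail to provide the needed covering. The hypotheses (1)--(3) say $X$ is a nice $\CAT(0)$ $G$-space, so the flow space $\FS(X)$ inherits all the structural properties (properness, finite dimensionality, the flow $\Phi$) recorded in the first table. The key new input is (4)--(7): the collection $\calw$ is $G$-invariant, its translates are disjoint-or-equal, it has dimension $\le N$, and — crucially — $G$ acts cocompactly on the complement $X \setminus \bigcup \calw^{-\beta}$ of the "$\beta$-interior" of $\bigcup\calw$ for every $\beta \ge 0$. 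So the action is cocompact "away from the $\calw$-sets", and the $\calw$-sets themselves are stabilized (up to the disjoint-or-equal condition) by subgroups lying in $\mathfrak{F}$.

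First I would fix $N$ as given and, for a prescribed $\gamma > 0$, choose the scale $\beta = \beta(\gamma)$ large enough that a generalized geodesic $c$ whose image $\Phi_{[-\gamma,\gamma]}(c)$ (together with an $\e$-neighborhood) is not contained in any single $W \in \calw$ must have the relevant portion of its track lying in $X \setminus \bigcup\calw^{-\beta'}$ for a suitable $\beta'$; this is where the $\CAT(0)$ geometry and the explicit form of the flow-space metric $d(f,g) = \int_\IR d_X(f(t),g(t))/(2e^{|t|})\,dt$ enter, translating "close in $\FS$" into "uniformly close in $X$ on a long central interval". Second, on the cocompact part $X \setminus \bigcup\calw^{-\beta'}$ I would invoke the already-established construction (the cocompact long-$\calf$-cover machinery of \cite{Bartels-Lueck-Reich-Rueping(2012KandL)} / \cite{Bartels-Lueck(2012CAT(0)flow)}, in the form valid without a bound on finite subgroups, \cite[Theorem~4.3]{mole2013equivariant}) to obtain a $G$-invariant open collection $\calv_0$ of $\VCyc$-subsets of $\FS$ with $\dim \calv_0 \le N_0$ covering $\e_0$-neighborhoods of $\Phi_{[-\gamma,\gamma]}$-tracks of all points whose tracks meet the cocompact part, and with the "compact $K$ avoiding $\FS_{\le\gamma}$" clause coming from the periodic-flow-line analysis of \cite[Theorem~4.2]{Bartels-Lueck(2012CAT(0)flow)}. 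Third, for points $z$ whose entire track $\Phi_{[-\gamma,\gamma]}(z)$ (suitably thickened) does stay inside some $W$, the set of generalized geodesics with image in $W$, flowed and thickened, provides an open set $V_W \subset \FS$; by the disjoint-or-equal hypothesis the $G$-translates of these $V_W$ are again disjoint-or-equal, hence form a collection $\calv_1$ of dimension $\le 1$ (in fact a disjoint union over $G$-orbits), and each $V_W$ is stabilized by $\{h : hW = W\} \in \mathfrak{F}$. Finally I would set $\calv = \calv_0 \cup \calv_1$, take $\e = \min(\e_0,\e_1)$ and $N$ the max of the two dimension bounds (adjusting the global $N$ in the statement accordingly, or absorbing constants), check $G$-invariance and the covering property (2)(c): a point not in $G\cdot K$ either has its track in the cocompact part — covered by $\calv_0$ — or inside a $W$ — covered by $\calv_1$; the only subtlety is the "boundary" points, handled by the overlap built into the choice of $\beta'$ versus $\beta$.

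The main obstacle I expect is exactly this gluing at the interface: one must choose the thickening scales $\beta' > \beta$ and the $\e$'s so that every $z \notin G\cdot K$ is genuinely captured by \emph{one} family, while simultaneously keeping the total covering dimension bounded by a single $N$ independent of $\gamma$ — the bound on $\dim(\calv_0 \cup \calv_1)$ is not simply $\dim\calv_0 + \dim\calv_1 + 1$ unless one controls how many $\calv_0$-sets can meet a given $V_W$, which forces a careful choice making the $\calv_1$-sets "swallow" the $\calf$-part of $\calv_0$ near the $W$'s, as in the cocompact argument. A secondary technical point is the "at infinity" clause: producing the compact set $K$ with $\FS_{\le\gamma}\cap G\cdot K = \emptyset$ requires that periodic (and near-periodic) flow lines are confined to the cocompact part or to the $W$'s in a controlled way, which should follow from the structure of $\VCyc$ subgroups and the hypothesis that stabilizers of the $W$'s are the other constituents of $\mathfrak{F}$, but needs the periodic-flow-line estimates cited above applied on the cocompact piece.
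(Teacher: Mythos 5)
Your decomposition is the paper's decomposition, and the broad plan is sound: handle the near-periodic geodesics over the cocompact locus by the existing machinery (\cite[Theorem~4.2]{Bartels-Lueck(2012CAT(0)flow)}, upgraded to the unbounded-stabilizer setting by \cite[Theorem~4.3]{mole2013equivariant}), handle geodesics near a $W\in\calw$ by pulling $\calw$ back to the flow space, and let whatever is left be the compact $K$. But two of your concrete claims are wrong. First, the correct pull-back is not ``generalized geodesics with image in $W$, flowed and thickened'' --- that set is not open and is harder to work with --- it is simply $\ev_0^{-1}(W)=\{c:c(0)\in W\}$, which is automatically open since $\ev_0$ is continuous, with the thickening absorbed instead into the definition of the set $S$ of $c$'s for which some $Z\in\calu\cup\calv$ contains $\overline{B}_\varepsilon(\Phi_{[-\gamma,\gamma]}(c))$; the containment $\ev_0^{-1}(\calw^{-(5+\gamma)})\subset S$ then follows from the elementary estimate $d(c'(0),c(0))\le 4+\gamma+\varepsilon$ for $c'\in\overline{B}_\varepsilon(\Phi_{[-\gamma,\gamma]}(c))$. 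Second, your claim that $\calv_1$ has dimension $\le 1$ (``a disjoint union over $G$-orbits'') is false: hypothesis~(5) only forces $W$ and $gW$ to be disjoint-or-equal, not $W$ and $W'$ from different $G$-orbits, and indeed in the applications as many as $n-1$ of the $W$'s (one per rank in the canonical filtration) overlap. The correct bound is $\dim\calv_1\le N$ from hypothesis~(6).

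Your ``main obstacle'' is also spurious. The union of two covers of dimensions $M$ and $N$ has dimension at most $M+N+1$ unconditionally; there is no need to control how many $\calv_0$-sets meet a $V_W$, and no ``swallowing'' occurs. Both $M$ (from the cocompact theorem, depending only on the compact $L\subset X\setminus\bigcup\calw^{-\beta}$) and $N$ (from the hypothesis) are independent of $\gamma$, so the bound is uniform as required. The step you do not address, and which is the genuinely nontrivial verification, is that the set $S$ of well-covered geodesics is \emph{open}: one needs this to conclude that $\FS(X)\setminus S$ is a closed $G$-invariant subset of the cocompact set $\FS(X)\setminus\ev_0^{-1}(\calw^{-(5+\gamma)})$, hence itself cocompact, which is how $K$ is produced. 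The paper establishes openness of $S$ by a contradiction argument using properness of $\FS(X)$ and the Lipschitz estimate $d_{\FS}(\Phi_s c,\Phi_s c_k)\le e^{|s|}d_{\FS}(c,c_k)$ for $s\in[-\gamma,\gamma]$, so that balls around $\Phi_{[-\gamma,\gamma]}(c_k)$ eventually fall inside a ball around $\Phi_{[-\gamma,\gamma]}(c)$ that was already contained in some $Z$.
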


\begin{proof}
$FS(X)$ is  a proper metric space by~\cite[Proposition~1.9]{Bartels-Lueck(2012CAT(0)flow)}. Hence it is locally compact. Fix $\gamma \ge 1$. Let $\beta\coloneqq 4+\gamma +1$. Pick a compact subset $L\subset X$ such that $G\cdot L=X\setminus \bigcup \calw^{-\beta}$. 
For this compact subset $L$ we obtain a natural number $M$, a real number $\varepsilon > 0$ and a set $\calu$ of subsets of $\FS(X)$ from \cite[Theorem~4.2]{Bartels-Lueck(2012CAT(0)flow)}. We can assume $\varepsilon \le 1$. 
Let $\calv\coloneqq\ev_0^{-1}(\calw)\coloneqq\{\ev_0^{-1}(W)\mid W\in \calw\}$. We have

\begin{enumerate}
\item $\calv$ is a $G$-set with $gV\cap V\in \{\emptyset,V\}$ for any $g\in G$ and any $V\in \calv$,
\item every element $V\in \calv$ is an open subset of $FS(X)$ since the evaluation map is continuous by \cite[Lemma 1.4]{Bartels-Lueck(2012CAT(0)flow)}),
\item the dimension of $\calv$ is bounded by $N$,
\item the group action on $\ev_0^{-1}(X\setminus \calw^{-R})=FS(X) \setminus \ev_0^{-1}(\calw^{-R})$ is cocompact (as the evaluation map is proper \cite[Lemma 1.10]{Bartels-Lueck(2012CAT(0)flow)}).
\end{enumerate}
Consider the union $\calu \cup \calv$. Each element is an open $\VCyc \cup \{H\le G \mid \exists \;W\in \calw\;\forall \;h\in H:\; hW=W\}$-subset. Define
\[S \coloneqq \{c \in \FS(X) \mid \exists \;Z \in \calu \cup \calv \; \text{with}\;\overline{B}_{\epsilon}\bigl(\Phi_{[-\gamma,\gamma]}(c)\bigr) \subseteq Z\}.\]
This set $S$ contains $\FS(X)_{\le \gamma} \cup |\ev_0^{-1}(\calw^{-(5+\gamma)})|$ by the following
argument.  If $c \in |\ev_0^{-1}(\calw^{-R})|$  we get for any $c'\in \overline{B}_{\epsilon}\bigl(\Phi_{[-\gamma,\gamma]}(c)\bigr)$ by \cite[Lemma~3.4]{Bartels-Lueck-Reich-Rueping(2012KandL)} $d(c'(0),c(0))\le 4+\gamma+\varepsilon \le 5+\gamma $ and hence $c'(0)\in W$. So $c'\in \ev_0^{-1}(W)$. So we verified that $|\calw^{-R}|$ is contained in $S$.
If $c \in\FS(X)_{\le \gamma}$ and $c \notin |\ev_0^{-1}(\calw^{-(5+\gamma)})|$, then $c \in \FS(X)_{\le \gamma}$ and $c(0) \in G\cdot L$ and hence $c \in S$ by Theorem~\cite[Theorem~4.2(v)]{Bartels-Lueck(2012CAT(0)flow)}.

Next we prove that $S$ is open. 
Assume that this is not the case. Then there 
exists $c \in S$ and a sequence $(c_k)_{k \ge 1}$ of elements in $\FS(X) - S$
such that $d_{\FS(X)}\bigl(c,c_k\bigr) <1/k$ holds for $k \ge 1$. Choose $Z
\in \calu \cup \calv$ with
$\overline{B}_{\epsilon}\bigl(\Phi_{[-\gamma,\gamma]}(c)\bigr) \subseteq Z$.
Since $\FS(X)$ is proper as metric space
by~\cite[Proposition~1.9]{Bartels-Lueck(2012CAT(0)flow)} and
$\overline{B}_{\epsilon}\bigl(\Phi_{[-\gamma,\gamma]}(c)\bigr)$ has bounded
diameter, $\overline{B}_{\epsilon}\bigl(\Phi_{[-\gamma,\gamma]}(c)\bigr)$ is
compact.  Hence we can find $\mu > 0$ with 
$B_{\epsilon+  \mu}\bigl(\Phi_{[-\gamma,\gamma]}(c)\bigr) \subseteq Z$.  We conclude
from~\cite[Lemma~2.3]{Bartels-Lueck(2012CAT(0)flow)} for all $s \in
[-\gamma,\gamma]$
\[
d_{\FS(X)}\bigl(\Phi_s(c),\Phi_s(c_k)\bigr) \le e^{s} \cdot
d_{\FS(X)}\bigl(c,c_k\bigr) < e^{\tau} \cdot 1/k.
\]
Hence we get for $k \ge 1$
\[
B_{\epsilon}\bigl(\Phi_{[-\gamma,\gamma]}(c_k)\bigr) \subseteq B_{\epsilon+
  e^{\tau} \cdot 1/k}\bigl(\Phi_{[-\gamma,\gamma]}(c)\bigr).
\]
Since $c_k$ does not belong to $S$, we conclude that
$B_{\epsilon+  e^{\tau} \cdot 1/k}\bigl(\Phi_{[-\gamma,\gamma]}(c)\bigr)$ is not contained in $Z$.
This implies $e^{\tau} \cdot 1/k \ge \mu$ for all $k \ge 1$, a contradiction.
Hence $FS(X)-S$ is a closed $G$-subset of the cocompact set $FS(X)-|\calw^{-R}|$. So it is also cocompact and there is a compact $K\subset FS(X)$ with $G\cdot K=FS(X)-S$.
All in all the $G$-system of open sets $\calu\cup\calv$ of dimension $\le M+N+1$ has the following properties
\begin{enumerate}
\item $FS_{\le \gamma}(X)\cap G\cdot K =FS_{\le \gamma}(X)\cap (FS(X)\setminus S)=\emptyset$ as  $FS_{\le\gamma}(X)\subset S$;
\item for $z\in FS(X)\setminus G\cdot K=S$ there is a  $V\in \calv$ such that $B_\varepsilon(\Phi_{[-\gamma,\gamma]}(z))$.
\end{enumerate}
Hence $FS(X)$ admits long $\mathfrak{F}$-covers at infinity and periodic flow lines. This implies strong transfer reducibility over $\calf$ by \cite[Theorem~4.3]{mole2013equivariant} and thus the mentioned references in~\ref{it:transfredimpFJ} yield the conclusions.
\end{proof}

\section{The canonical filtration}\label{sec:canFilt}

This section shows how the systems of open sets used in Proposition~\ref{prop:1:coversAtInfinity} are constructed. The ideas of this section can all be found in \cite{Grayson(1984)}. Let $V$ be a free $\IZ$-module and $s$ an inner product on $\IR\otimes_\IZ V$. The size of submodules can be measured in two different ways -- by its rank and its volume. The desired open sets in the space of homothety classes of inner products are constructed by comparing these two quantities. This section is formulated in a very general way, since the same constructions also apply for the rings $\IZ[S^{-1}],F[t][S^{-1}]$.

An \emph{order-theoretic lattice} $\frl$ is a poset such that any finite subset has a least upper bound and a greatest lower bound. For any two elements $W,W'\in \frl$ let $W+W'$ denote their least upper bound and let $W\cap W'$ denote their greatest lower bound. Let $0$ denote the minimal element. It is the least upper bound of the empty set. Let $1$ denote the maximal element which is the greatest lower bound of the empty set.
\ignore{An order-theoretic lattice $\frl$ can be viewed as a category whose objects are the elements of $\frl$. The morphism set from $W$ to $W'$ consists of an unique element if $W\le W'$ and is empty otherwise. Least upper bounds and greatest lower bounds are product and coproducts.}
\begin{convention}\label{conv:latticeForFiltr}
Let $\frl$ be an order-theoretic lattice.
Suppose furthermore there are functions $\rk: \frl\rightarrow \IN$ and $\log\vol:\frl\rightarrow \IR$ such that
\begin{enumerate}
\item \label{conv:latticeForFiltr_StrMon} $\rk$ is strictly monotone. This means that for all $W,W'\in \frl:$
\[W< W'\Rightarrow \rk(W)<\rk(W').\]
\item \label{conv:latticeForFiltr_rkAdd} $\rk$ is additive. This means that for all $W,W'\in \frl$:
\[\rk(W\cap W')+\rk(W+W')=\rk(W)+\rk(W').\]
\item \label{conv:latticeForFiltr_logovolsubAdd} $\log\vol(-):\frl\rightarrow \IR$ is subadditive. This means that for all $W,W'\in \frl:$
\[\;\log\vol(W\cap W')+\log\vol(W+W')\le \log\vol(W)+\log\vol(W').\]
\item \label{conv:latticeForFiltr_FinShort} For each $C\in \IR$ there are only finitely many $L\in \frl$ with $\log\vol(W)\le C$.
\item \label{conv:latticeForFiltr_normalization}$ \rk(0)=0,\log\vol(0)=0$. 
\end{enumerate}
\end{convention}

\begin{remark}\label{rem:lattices}\begin{enumerate}
\item The strict monotonicity holds for the lattice of direct summands of $\IZ^n$ whereas it fails for the lattice of all submodules of $\IZ^n$.
\item \label{rem:lattices:zwei}In the lattice of direct summands of $\IZ^n$ the least upper bound $V+W$ is \emph{not} the sum of the modules but the direct summand spanned by the sum of the modules, i.e. the preimage of the torsion group of $\IZ^n/(V+W)$.
\item It follows that $0$ and $1$ are the only elements of rank zero resp. $\rk(1)$.
\item Later the volume will also depend on the choice of an inner product. Thus we will view $\log\vol$ as a real valued function on the space of inner products. 
\end{enumerate}
\end{remark}

\begin{definition} We can plot every element $W\in \frl$ on the $(x,y)$-plane with $x$-coordinate equal to its rank and $y$-coordinate equal to $\log\vol(W)$. For any fixed rank  between zero and $\rk(1)$ there is a lowest point among all points with that rank. 

We can omit those elements which lie above or on a line connecting two other points of this set and call the remaining points the \emph{canonical path}. 
\end{definition}

Of course, it might happen that there are several elements from $\frl$ with the same rank and volume. We will see that this will not be the case for the points in the canonical path. 

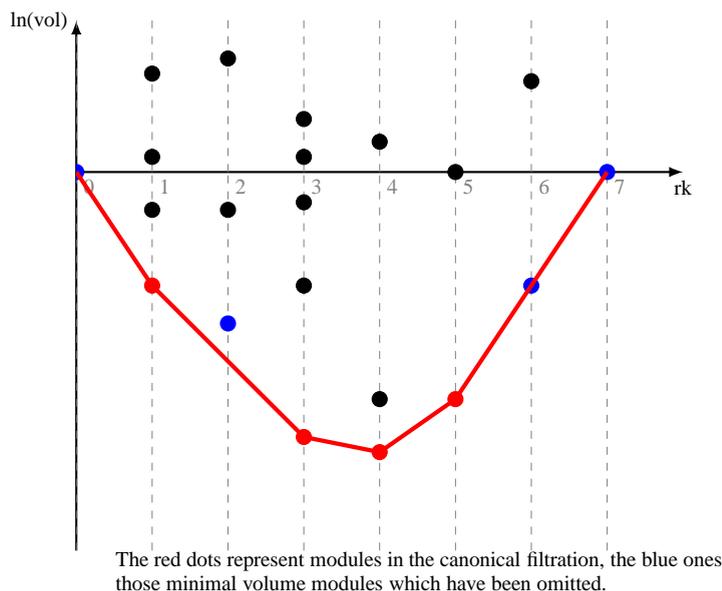
\begin{figure}[ht]
  \centering
  \begin{tikzpicture}
    \coordinate (Origin)   at (0,0);
    \coordinate (XAxisMin) at (0,0);
    \coordinate (XAxisMax) at (8,0);
    \coordinate (YAxisMin) at (0,-5);
    \coordinate (YAxisMax) at (0,2);
    \draw [thick, black,-latex] (XAxisMin) -- (XAxisMax)node[below]{rk};
    \draw [thick, black,-latex] (YAxisMin) -- (YAxisMax)node[left]{ln(vol)};

    \clip (0,-5) rectangle (10cm,2cm); 
   \foreach \x in {0,...,7}{
	   \draw[style=help lines,dashed](\x,-5) -- +(0,5)node [below right] {$\x$};
	   \draw[style=help lines,dashed](\x,0) -- +(0,5);
   }
   \node[draw,circle,inner sep=2pt,fill,blue] at (0,0) {};
   \node[draw,circle,inner sep=2pt,fill,red] at (1,-1.5) {};
   \node[draw,circle,inner sep=2pt,fill,blue] at (2,-2) {};
   \node[draw,circle,inner sep=2pt,fill,red] at (3,-3.5) {};
   \node[draw,circle,inner sep=2pt,fill,red] at (4,-3.7) {};
   \node[draw,circle,inner sep=2pt,fill,red] at (5,-3) {};
   \node[draw,circle,inner sep=2pt,fill,blue] at (6,-1.5) {};
   \node[draw,circle,inner sep=2pt,fill,blue] at (7,0) {};   
   \draw [ultra thick,red] (0,0)--(1,-1.5)--(3,-3.5)--(4,-3.7)--(5,-3)--(7,0);
   \node[draw,circle,inner sep=2pt,fill] at (1,-0.5) {};
   \node[draw,circle,inner sep=2pt,fill] at (1,0.2) {};
   \node[draw,circle,inner sep=2pt,fill] at (1,1.3) {};
   \node[draw,circle,inner sep=2pt,fill] at (2,-0.5) {};
   \node[draw,circle,inner sep=2pt,fill] at (2,1.5) {};
   \node[draw,circle,inner sep=2pt,fill] at (3,-1.5) {};
   \node[draw,circle,inner sep=2pt,fill] at (3,-0.4) {};
   \node[draw,circle,inner sep=2pt,fill] at (3,0.2) {};
   \node[draw,circle,inner sep=2pt,fill] at (3,0.7) {};
   \node[draw,circle,inner sep=2pt,fill] at (4,-3) {};
   \node[draw,circle,inner sep=2pt,fill] at (4,0.4) {};
   \node[draw,circle,inner sep=2pt,fill] at (5,0) {};
   \node[draw,circle,inner sep=2pt,fill] at (6,1.2) {};
  \end{tikzpicture}
     \vspace*{8pt}
   \begin{minipage}{8cm}
   The red dots represent modules in the canonical filtration, the blue ones those minimal volume modules which have been omitted.
   \end{minipage}

  \caption{The canonical plot.}
  \label{figure:canPlot}
\end{figure}

\begin{definition}\label{def:cW}
We can define for $W\in \frl\setminus \{0,1\}$ a number 
\[c_W\coloneqq \inf_{\genfrac{(}{)}{0pt}{}{W_0\lneq W}{W\lneq W_2}} \slope(W_2,W)-\slope(W,W_0),\]
where $\slope(W,W')$ is defined as $\frac{\log\vol(W)-\log\vol(W')}{\rk(W)-\rk(W')}$.
\end{definition}

Note that the denominators of $\slope(W,W')$ are nonzero for $W\lneq W'$ by the strict monotonicity of the rank.
If $W$ represents a vertex in the canonical path we get $c_W>0$. Otherwise $W$ would lie above the edge from $W_0$ to $W_2$. The following lemma leads to the converse.

\begin{lemma}\label{lem:cWneg} Given two incomparable elements $V,W\in \mathfrak{L}$. Then $c_W\le 0$ or $c_V\le 0$.
\end{lemma}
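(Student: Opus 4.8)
The plan is to feed one canonical pair of elements into the infima defining $c_W$ and $c_V$ — namely $W_0=V\cap W$ and $W_2=V+W$ — and then, assuming both $c_W$ and $c_V$ were positive, derive a contradiction by playing additivity of $\rk$ against subadditivity of $\log\vol$.

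First I would record that incomparability of $V$ and $W$ forces $V\cap W\lneq W\lneq V+W$ and $V\cap W\lneq V\lneq V+W$: e.g.\ $V\cap W=W$ would give $W\le V$, and $W=V+W$ would give $V\le W$. In particular neither $V$ nor $W$ equals $0$ or $1$, so $c_V$ and $c_W$ are defined, and the pair $(V\cap W,\,V+W)$ is admissible in the infimum defining $c_W$ and likewise in the one defining $c_V$. This yields
\[
c_W\le\slope(V+W,W)-\slope(W,V\cap W),\qquad c_V\le\slope(V+W,V)-\slope(V,V\cap W).
\]

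Next I would assume for contradiction that both right-hand sides are strictly positive. Set $p:=\rk(W)-\rk(V\cap W)>0$ and $q:=\rk(V)-\rk(V\cap W)>0$; additivity of $\rk$ gives $\rk(V+W)-\rk(V)=p$ and $\rk(V+W)-\rk(W)=q$, so every slope above has denominator $p$ or $q$. Clearing these positive denominators turns the two positivity assumptions into
\[
p\log\vol(V+W)+q\log\vol(V\cap W)>(p+q)\log\vol(W),
\]
\[
q\log\vol(V+W)+p\log\vol(V\cap W)>(p+q)\log\vol(V).
\]
Adding these and dividing by $p+q$ gives $\log\vol(V\cap W)+\log\vol(V+W)>\log\vol(V)+\log\vol(W)$, contradicting the subadditivity of $\log\vol$ from Convention~\ref{conv:latticeForFiltr}. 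Hence at least one of the two bracketed differences above is $\le 0$, i.e.\ $c_W\le 0$ or $c_V\le 0$.

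There is no real obstacle beyond keeping indices straight; what makes it work is that additivity of $\rk$ equates the ``horizontal width'' from $V\cap W$ to $W$ with that from $V$ to $V+W$ (and symmetrically), so the weighted average of $\log\vol(V\cap W)$ and $\log\vol(V+W)$ dominating $\log\vol(W)$ is exactly complementary to the one dominating $\log\vol(V)$, and summing collapses everything onto subadditivity. I would also note that the infimum for $c_W$ (resp.\ $c_V$) imposes no constraint on $W_0,W_2$ beyond $W_0\lneq W\lneq W_2$, so taking $W_0=V\cap W$ even when it equals $0$, or $W_2=V+W$ even when it equals $1$, is legitimate.
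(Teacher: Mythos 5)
Your proof is correct and uses the same essential ingredients as the paper's: it plugs the same test pair $(V\cap W,\,V+W)$ into both infima, then plays additivity of $\rk$ against subadditivity of $\log\vol$. The only presentational difference is that you argue by contradiction (adding the two cleared inequalities to violate subadditivity), whereas the paper runs a direct chain of inequalities showing $c_V\le -c_W$; the underlying computation is essentially identical.
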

\begin{proof*}
Incomparable means that $V\nleq W$ and $W\nleq V$. Especially $V\cap W$ is smaller than $V$, so it can't be $W$ since $W$ is not smaller than $V$. So we have by the same argument
\[V\cap W\lneq V \lneq\lub(V,W) \mbox{ and }V\cap W\lneq W \lneq\lub(V,W).\]
Let us assume that $c_W>0$. So we have to show that $c_V\le 0$. We have 
\multbox\begin{eqnarray*}
0&<& c_W\\
&=&\inf_{\genfrac{}{}{0pt}{}{W_0\lneq W}{W\lneq W_2}} \slope(W_2,W)-\slope(W,W_0)\\
&\le & \slope(\lub(V,W),W)-\slope(W,V\cap W)\\
&=&\frac{\log\vol(\lub(V,W))-\log\vol(W)}{\rk(\lub(V,W))-\rk(W)}-\frac{\log\vol(W)-\log\vol(V\cap W)}{\rk(W)-\rk(V\cap W)}\\
&\stackrel{\mbox{\ref{conv:latticeForFiltr}\ref{conv:latticeForFiltr_rkAdd},\ref{conv:latticeForFiltr_logovolsubAdd} }}\le &\frac{\log\vol(V)-\log\vol(V\cap W)}{\rk(V)-\rk(V\cap W)}-\frac{\log\vol(\lub(V,W))-\log\vol(V)}{\rk(\lub(V,W))-\rk(V)}\qquad \\
&=&-(\slope(\lub(V,W),V)-\slope(V,V\cap W))\\
&\le &-\inf_{\genfrac{}{}{0pt}{}{V_0\lneq V}{V\lneq V_2}} \slope(V_2,V)-\slope(V,V_0)\\
&=&-c_V.\end{eqnarray*}\emultbox
\end{proof*}

\begin{figure}[ht]
  \centering
  \begin{tikzpicture}
    \coordinate (Origin)   at (0,0);
    \coordinate (XAxisMin) at (-1,0);
    \coordinate (XAxisMax) at (5,0);
    \coordinate (YAxisMin) at (0,0);
    \coordinate (YAxisMax) at (0,2);

    \clip (-1,-0.3) rectangle (7cm,2cm); 
   \foreach \x in {0,...,5}{
	   \draw[style=help lines,dashed](\x,-5) -- +(0,10);
   }
   \node[draw,circle,inner sep=1pt,fill] at (2,0.2){};
   \node[draw,circle,inner sep=1pt,fill] at (0,0.7){};
   \node[draw,circle,inner sep=1pt,fill] at (3,1.5){};
   \node[draw,circle,inner sep=1pt,fill] at (5,0.5){};
   \draw [thin, black] (2,0.2) -- (2,0.2)node[below]{$W$};
   \draw [thin, black] (0,0.7) -- (0,0.7)node[above]{$V\cap W$};
   \draw [thin, black] (3,1.5) -- (3,1.5)node[above]{$V$};
   \draw [thin, black] (5,0.5) -- (5,0.5)node[right]{$\lub(V,W)$};
   \draw [thin, black](0,0.7) --(2,0.2);
   \draw [thin, black](0,0.7) --(3,1.5);
   \draw [thin, black](5,0.5) --(3,1.5);
   \draw [thin, black](5,0.5) --(2,0.2);
   \draw [thin, black,dotted](5,0.9) --(2,0.2);
   \draw [thin, black,dotted](5,0.9) --(3,1.5);

  \end{tikzpicture}
       \vspace*{8pt}
   \begin{minipage}{8cm}
Subadditivity gives an upper bound for the logarithmic volume of $\lub(V,W)$ indicated by the dotted line. It completes a the parallelogram so subadditivity can also be called the ``parallelogram rule''.
   \end{minipage}
  \label{figure:filtrLemProof}
\end{figure}

\begin{corollary}\label{cor:cWAndCanFiltr} We have that
\begin{enumerate}
\item every vertex in the canonical path is represented by a unique element $V\in \frl$ and
\item those elements form a chain.
\item Furthermore an element $V\in \frl\setminus \{0,1\}$ represents a vertex in the canonical path if and only if $c_V>0$.
\end{enumerate}
\end{corollary}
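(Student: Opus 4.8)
The plan is to build all three parts on Lemma~\ref{lem:cWneg} together with the geometry of the canonical path as drawn in Figure~\ref{figure:canPlot}. First I would establish part (iii), since (i) and (ii) follow from it almost immediately. One direction was already observed right before the lemma: if $V$ represents a vertex in the canonical path, then by the very definition of the path $V$ does not lie on or above any segment joining two other minimal-volume points, so for every $W_0\lneq V\lneq W_2$ the point $(\rk V,\log\vol V)$ lies strictly below the chord from $(\rk W_0,\log\vol W_0)$ to $(\rk W_2,\log\vol W_2)$, which rearranges to $\slope(W_2,V)-\slope(V,W_0)>0$; taking the infimum gives $c_V>0$ (one should check the infimum is actually attained, or at least bounded below, using Convention~\ref{conv:latticeForFiltr}\ref{conv:latticeForFiltr_FinShort}, which guarantees only finitely many elements below any volume bound, hence only finitely many candidates for $W_0$ with $\log\vol(W_0)$ small; for the $W_2$ side one uses that $\rk$ takes finitely many values between $\rk V$ and $\rk(1)$). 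Conversely, suppose $c_V>0$. I claim $V$ is the lowest point of its rank and lies strictly below the relevant chords. The key tool is Lemma~\ref{lem:cWneg}: if $V'$ is any element with $\rk(V')=\rk(V)$ and $\log\vol(V')\le\log\vol(V)$ but $V'\neq V$, then $V$ and $V'$ are incomparable (equal rank plus strict monotonicity forbids $\lneq$ either way), so $c_V\le 0$ or $c_{V'}\le 0$ — and a short computation shows $c_{V'}\le 0$ would force, via the parallelogram inequality, that $V$ cannot have $c_V>0$ either; more carefully, I would run the chain of inequalities in the proof of Lemma~\ref{lem:cWneg} with the roles chosen so that $c_V>0$ directly yields a contradiction with the existence of such a $V'$. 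This simultaneously shows $V$ is the unique lowest point of its rank (giving part (i) of the corollary), and that $c_V>0$ means $V$ is a genuine vertex, not an omitted collinear point.

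For part (ii), that the vertices form a chain: suppose $V$ and $W$ both have $c_V>0$ and $c_W>0$. If they were incomparable, Lemma~\ref{lem:cWneg} would give $c_V\le 0$ or $c_W\le 0$, a contradiction. Hence any two vertices are comparable, i.e. they form a chain in $\frl$. This is the cleanest of the three and is essentially an immediate corollary of the lemma.

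The main obstacle I expect is the bookkeeping in the converse of (iii): turning the inequality $c_V>0$ into the two separate geometric statements "$V$ is the strictly lowest point of its rank" and "$V$ lies strictly below every chord through minimal points of strictly smaller and strictly larger rank." The first needs Lemma~\ref{lem:cWneg} applied to same-rank competitors, plus a nondegeneracy argument that the infimum in the definition of $c_V$ is a minimum (or at least that $c_V>0$ is equivalent to strict inequality for all admissible pairs) — this is where Convention~\ref{conv:latticeForFiltr}\ref{conv:latticeForFiltr_FinShort} does real work, ensuring the set of pairs $(W_0,W_2)$ relevant to the infimum is effectively finite after restricting to a bounded region. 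The second is a direct unwinding of the $\slope$ definition. I would present (iii) as the core argument and deduce (i) (uniqueness) and (ii) (chain condition) as corollaries, matching the logical dependencies already signposted in the text.
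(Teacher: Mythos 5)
Your plan takes essentially the same route as the paper: both proofs lean entirely on Lemma~\ref{lem:cWneg} together with the slope/convexity picture, and the pieces appear in a slightly reordered but logically equivalent arrangement. Two points of comparison are worth making. First, you do a bit more work than the paper at the outset: the paper simply asserts that a vertex of the canonical path has $c_V>0$ (treating the strict positivity of the infimum as obvious), whereas you correctly flag that one must reduce the infimum to a minimum over effectively finitely many competitors, using Convention~\ref{conv:latticeForFiltr}\ref{conv:latticeForFiltr_FinShort} for the $W_0$-side and the finiteness of the rank range plus \ref{conv:latticeForFiltr_FinShort} for the $W_2$-side; this is a genuine detail the paper glosses over. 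Second, your converse of (iii) is organized differently: you split it into ``$V$ is the strict minimizer at its rank'' plus ``$V$ lies strictly below the adjacent chords,'' and for the first of these you correctly observe that you must rerun the inequality chain \emph{inside} the proof of Lemma~\ref{lem:cWneg} (the statement of the lemma alone only gives $c_{V'}\le 0$, which is not a contradiction), arriving at $c_V\le(\log\vol(V')-\log\vol(V))/d\le 0$. That is sound. The paper instead argues directly: assume $V$ lies on or above a segment $[W,W']$ of the path, use Lemma~\ref{lem:cWneg} (plus $c_W,c_{W'}>0$ from the forward direction) to force $W\lneq V\lneq W'$, and then read off $c_V\le\slope(W',V)-\slope(V,W)\le 0$ from the admissible pair $(W,W')$. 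Your step (b), which you describe as ``a direct unwinding of the $\slope$ definition,'' hides exactly the same comparability issue: before you may feed the pair $(P,Q)$ of neighboring vertices into the infimum defining $c_V$, you need $P\lneq V\lneq Q$, and this again comes from Lemma~\ref{lem:cWneg} together with $c_P,c_Q>0$. The paper spells this out; you should too. With that addition your argument is complete, and parts (i) and (ii) then fall out exactly as you say.
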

\begin{proof*}
\begin{enumerate}
\item By definition any element $V\in \frl$ has $c_V>0$. The slope of the outgoing line must
be larger than the slope of the incoming line. 
Given two elements $V,V'\in \frl \setminus\{0,1\}$ that represent the 
same vertex of the canonical path. They cannot be incomparable by the last lemma. So either $V\le V'$ or $V'\le V$.
But they have the same rank. So the strict monotonicity of the rank gives $V=V'$.
\item Let $V_0,\ldots,V_m$ be the list of elements ordered by rank that represent the vertices of the canonical path whose rank is at least one and  at most $\rk1-1$. By the last item the ranks of those elements are all distinct. 
So $\rk(V_i)<\rk(V_j)$ if $i<j$. As in the last item we know that either $V_i\le V_j$ or $V_j\le V_i$.
The monotonicity of the rank gives that $V_i\le V_j$ for $i<j$. So
\[0 \lneq V_0 \lneq \ldots \lneq V_m \lneq 1\]
is a chain.
\item Given $V\in \frl$ with $c_V>0$. Assume $V$ does not represent a vertex in the canonical filtration.
So $V$ lies above a line segment of the canonical path. Say that segment starts at $W$ and ends at $W'$.
By the last lemma we again know that $V$ and $W$ (resp. $V$ and $W'$) are not incomparable. As in the last item we get
$W\le V\le W'$. Because $V$ lies above the edge from $W$ to $W'$ we have
\multbox\begin{eqnarray*}
0&<& \slope(W',V)-\slope(V,W)\\
&\le& \inf_{\genfrac{(}{)}{0pt}{}{W_0\lneq W}{W\lneq W_2}} \slope(W_2,W)-\slope(W,W_0)=c_V.\end{eqnarray*}\emultbox
\end{enumerate}
\end{proof*}

\begin{definition} The chain of elements $0=V_0\le V_1\le \ldots V_m =1$ that represent the vertices in the canonical path is called the \emph{canonical filtration} of $(\frl, \rk, \log\vol)$.
\end{definition}

\section{Volume: The integral case}\label{sec:volint}
\ignore{
Let $V$ be a finitely generated, free $\IZ$-module $V$ and let $m$ be its rank. Let $s$ be an inner product on $\IR\otimes_\IZ V$. 
\begin{definition}\label{def:nf:volume}
The volume of a submodule $W\subset V$ with respect to $s$ can be defined as 
\[\vol_W(s)\coloneqq\det((s(b_i,b_j))_{1\le i,j\le m})^\frac{1}{2},\]
where $b_1,\ldots,b_{\rk(W)}$ is a basis for $W$. 
\end{definition}

Alternatively we could equip $\Lambda^m\IR\otimes_\IZ V$ with the inner product $\Lambda^m s$ given on elementary exterior products by
\[\Lambda^m s(v_1\wedge \ldots \wedge v_{\rk(W)},w_1\wedge \ldots \wedge w_{\rk(W)})\coloneqq\det((s(v_i,w_j))_{1\le i,j\le \rk(V)}).\]
Let me omit the computation showing that $\Lambda^m s$ is indeed an inner product.
Using this inner product the volume is just the length of the vector $b_1\wedge \ldots\wedge b_m\in \Lambda^{m}\IR\otimes_\IZ V$. Choosing a different basis might change this vector by a multiplication with $-1$. Its length is independent of the choice of the basis. This shows that the volume is well defined.

Furthermore the volume is always positive. The volume of the trivial Abelian group is one as the determinant of the $0\times 0$ -matrix is defined to be one. 

\begin{lemma}\label{lem:nf:trivia} We have the following collection of trivia:
\begin{enumerate}
\item \label{rem:nf:equivarianceOfVol} We have for a submodule $W\subset V$ and an automorphism $\varphi \in \aut_\IZ(V)$ and any inner product $s$ on $\IR\otimes_\IZ V$
\[\vol_{\varphi W}(s\circ (\varphi^{-1}\otimes \varphi^{-1}))=\vol_W(s)\]
\item \label{lem:exteriorpowerprojections} Let $\pr:\IR\otimes_\IZ V\rightarrow \IR\otimes_\IZ V$ be an orthogonal projection on a linear subspace. Then the map
\[\Lambda^m \pr: \Lambda^m\IR\otimes_\IZ V\rightarrow \Lambda^m\IR\otimes_\IZ V\]
is the orthogonal projection on $\Lambda^m \pr(\IR\otimes_\IZ V)$.
\item \label{lem:nfquotients} Let $W\subset V$ be a projective submodule. We can identify $\IR\otimes_\IZ (V/W)$ with the orthogonal complement of $\IR\otimes_\IZ W\subset \IR\otimes_\IZ V$. So we can restrict any inner product $s$ on $\IR\otimes_\IZ V$ to an inner product $s'$ on $\IR\otimes_\IZ (V/W)$. Furthermore we have
\[\vol_V(s)=\vol_W(s)\cdot \vol_{V/W}(s').\]
\item \label{lem:nfVolOffiniteIndex}
Let $V$ be  a free $\IZ$-module of rank $n$ and let $W$ be a submodule of the same rank. Then 
$\vol_W(s)=\vol_V(s)\cdot |V/W|$.
\end{enumerate}
\end{lemma}

Consider the lattice $\frl$ of all direct summands of $V$. For a direct summand $W\subseteq V$ let $\rk(W)$ denote its rank as an $\IZ$-module and let $\ln\vol_V(s)$ denote the natural logarithm of the volume defined above. Let us fix an inner product $s$ and verify that the functions
\[\rk_\IZ: \frl \rightarrow \IZ,\qquad \ln\vol_?(s):\frl \rightarrow \IR\]
satisfy all properties needed in Convention~\ref{conv:latticeForFiltr}.

\begin{proposition}\label{prop:nflatticeForFiltrSatisfied} Let $V$ be a finitely generated free $\IZ$-module and $s$ an inner product on $\IR\otimes_\IZ V$. Consider the lattice of direct summands of $V$. Let $\lub(W,W')$ denote the least upper bound of two elements $W,W'\in \frl$.
The logarithmic volume function $W\mapsto \ln\vol_W(B)$ and the rank $W\mapsto \rk_{\IZ}(W)$ have the following properties. 
\begin{enumerate}
\item  $\rk$ is strictly monotone, i.e. $\rk(W)<\rk(W')$ for all $W,W'\in \frl$ with $W<W'$.
\item $\rk$ is additive, i.e. $\rk(W\cap W')+\rk(\lub(W,W'))=\rk(W)+\rk(W')$ for all $W,W'\in \frl$.
\item  The function $\ln\vol(-):\frl\rightarrow \IR$ is subadditive. This means
\[\ln\vol_{W\cap W'}(s)+\ln\vol_{\lub(W,W')}(s)\le \ln\vol_W(s)+\ln\vol_{W'}(s)\mbox{ for all }W,W'\in \frl.\]
\item  For each $C\in \IR$ there are only finitely many $L\in \frl$ with $\ln\vol_W(s)\le C.$
\item $ \rk(0)=0,\ln\vol_0(s)=0.$ 
\end{enumerate}
\end{proposition}
\begin{proof}
\begin{enumerate}
\item This is clear.\ignore{
We clearly have for $W\subset W'$ that $\rk(W)\le \rk(W')$. Their quotient $W'/W$ is a submodule of the module $V/W'$. 
The module $V/W'$ is again free by the structure theorem for finitely generated modules over a principal ideal domain since it is a direct summand of the free module $V$. Hence $W'/W$ is torsionfree. The structure theorem tells us that its rank is bigger than zero if $W'/W$ is not trivial. Hence the additivity of the rank implies:
\[\rk_\IZ(W)\le \rk_\IZ(W)+\rk_\IZ(W'/W)=\rk_\IZ(W')\]
and the inequality is strict if $W\neq W'$.}
\item Note that this is not exactly the additivity of the rank applied to the exact sequence
\[0\rightarrow W\cap W'\rightarrow W\oplus W'\rightarrow W+W'\rightarrow 0\]
since $\lub(W,W')$ is not the sum $W+W'$ but the direct summand spanned by the sum. But $V+W$ has finite index in $\lub(V,W)$ and so their rank agrees.
\item Using Lemma~\ref{lem:nfquotients} we can restrict to the case where $W\cap W'=0$. If this was not the case we would have to pass to quotients by $W\cap W'$.

Since $W+W'$ is a finite index submodule of $\lub(W+W')$ its volume is larger (by Lemma~\ref{lem:nf:trivia}\ref{lem:nfVolOffiniteIndex}). Let us show the stronger statement 
\[\ln\vol_{W+W'}(s)\le \ln\vol_{W}(s)+\ln\vol_{W'}(s)\]
instead. So we can pick a basis $w_1,\ldots,w_n$ of $W+W'$ such that $w_1,\ldots,w_m$ is a basis of $W$ and $w_{m+1},\ldots,w_n$ is a basis of $W'$. 
Let $\pr:\IR\otimes_\IZ V\rightarrow \IR\otimes_\IZ W^\perp$ be the orthogonal projection. Then 
\begin{eqnarray*}
\vol_{W+W'}(s)
&=&||w_1\wedge\ldots\wedge w_n||_{\Lambda^ns}\\
&=&||w_1\wedge\ldots\wedge w_m\wedge \pr(w_{m+1}) \wedge \ldots \wedge \pr(w_n)||_{\Lambda^ns}\\
&=&||w_1\wedge\ldots\wedge w_m||_{\Lambda^ms}\cdot ||\pr(w_{m+1}) \wedge \ldots \wedge \pr(w_n)||_{\Lambda^{n-m}s}\\
&=&||w_1\wedge\ldots\wedge w_m||_{\Lambda^ms}\cdot ||(\Lambda^{n-m}\pr)(w_{m+1} \wedge \ldots \wedge w_n)||_{\Lambda^{n-m}s}.\end{eqnarray*}
The first factor is just $\vol_W(s)$. By Lemma~\ref{lem:exteriorpowerprojections} $\Lambda^{n-m}\pr$ is also an orthogonal projection and hence it is length decreasing. Thus the second factor is bounded by  
\[||w_{m+1} \wedge \ldots \wedge w_n||_{\Lambda^{n-m}s}=\vol_{W'}(s).\]
\item $\Lambda^m \IR\otimes_\IZ V$  equipped with the inner product $\Lambda^m s$ is a proper metric space and $\Lambda^m V$ is a discrete subset. So the intersection of this subset with any ball is compact and discrete and hence finite. So the set
\[S\coloneqq\{v \in \Lambda^m V\mid ||v||_{\Lambda^m s}\le  e^C \}/ w\sim -w\]
is finite. We can assign to a direct summand $W\subset V$ of rank $m$ the element $[w_1\wedge \ldots \wedge w_m]$ in this set, where $w_1,\ldots,w_m$ is a basis of $W$. Conversely we can assign to an element $[w]$ of $S$ the direct summand $\Ker(-\wedge w:V\rightarrow \Lambda^{m+1} V)$. A quick computation shows that this is a retraction. 
\item see Definition~\ref{def:nf:volume}.
\end{enumerate}
\end{proof}

\ignore{Hence we have shown that for a fixed inner product $s$ the lattice $\frl$ of direct summands of $V$ and the functions $\rk_\IZ$ and $W\mapsto \ln\vol_W(s)$ satisfy all conditions needed in Convention ~\ref{conv:latticeForFiltr}. So we can use the numbers $c_W$ from Definition~\ref{def:cW}. So we get a function that assigns to an inner product $s$ the number $c_W(s)$. This function will be used to analyze the space of homothety classes of inner products in section~\ref{sec:nfspace}.}


}
\begin{definition}\label{def:nf:volume}
Given an inner product on $\IR^n$ and a submodule $M\subset \IZ^n$. Then we can define its volume as
\[\vol_M(s):=\det((s(m_i,m_j))_{1\le i,j\le \rk(M)})^\frac{1}{2}.\]
\end{definition}
This definition (together with the usual rank) satisfies all properties needed in Convention~\ref{conv:latticeForFiltr}.

\begin{proposition}\label{prop:nflatticeForFiltrSatisfied} Let $V$ be a finitely generated free $\IZ$-module and $s$ an inner product on $\IR\otimes_\IZ V$. Consider the lattice of direct summands of $V$. Let $\lub(W,W')$ denote the least upper bound of two elements $W,W'\in \frl$.
The logarithmic volume function $W\mapsto \ln\vol_W(B)$ and the rank $W\mapsto \rk_{\IZ}(W)$ have the following properties. 
\begin{enumerate}
\item  $\rk$ is strictly monotone, i.e. $\rk(W)<\rk(W')$ for all $W,W'\in \frl$ with $W<W'$.
\item $\rk$ is additive, i.e. $\rk(W\cap W')+\rk(\lub(W,W'))=\rk(W)+\rk(W')$ for all $W,W'\in \frl$.
\item  The function $\ln\vol(-):\frl\rightarrow \IR$ is subadditive. This means
\[\ln\vol_{W\cap W'}(s)+\ln\vol_{\lub(W,W')}(s)\le \ln\vol_W(s)+\ln\vol_{W'}(s)\mbox{ for all }W,W'\in \frl.\]
\item  For each $C\in \IR$ there are only finitely many $L\in \frl$ with $\ln\vol_W(s)\le C.$
\item $ \rk(0)=0,\ln\vol_0(s)=0.$ 
\end{enumerate}
\end{proposition}
\begin{proof}
The elementary proof can be found in \cite[Proposition~4.6]{ruping2013farrell}. The only tricky part is the subadditivity for which there is also a proof in \cite[Theorem~1.12]{Grayson(1984)} with the minor difference that the symbol $W+W'$ denotes there the sum of submodules and not the direct summand spanned by that sum. This is a little bit stronger since the volume of the sum is larger.
\end{proof}

\section{Volume: The function field case}\label{sec:FFvol}

Let $F$ be a finite field and consider the ring $Z\coloneqq F[t]$ and its quotient field $Q$. Let us examine the valuation 
\[\nu: Q\rightarrow\IZ\cup \{\infty\},\qquad \frac{p}{q}\mapsto \deg(q)-\deg(p).\]
We use the convention that the degree of the zero polynomial is $-\infty$.  Its valuation ring is 
\[R \coloneqq \{\frac{p}{q}\in Q\mid \deg(p)\le \deg(q)\}=\{x\in Q\mid\nu(x)\ge 0\}.\]
The following definition is the analogue of a ``lattice'' from \cite[section~1]{Grayson(1984)} in the integral case; but I would like to avoid this term because it will also appear with different meanings.

\begin{definition}\label{def:ffVolSpace} A $Z$-volume space $(V,S)$ is a finitely generated free $Z$-module $V$ with the choice of an $R$-lattice $S$ in $Q\otimes_Z V$. This means that $S$ is a finitely generated $R$-submodule with $\rk_Z(V)=\rk_R(S)$. 
\end{definition}
It is not hard to see that such an $S$ is torsionfree and hence isomorphic to $R^{\rk(V)}$ by the structure theorem.

\begin{definition}\label{def:ffquotVolSpace} We say that $(W,S')$ is a \emph{sub-volume space} of $(V,S)$ (written $(W,S')\subset (V,S)$) if $W\subset V$ is a $Z$-submodule and $S'=S\cap Q\otimes_Z W\eqqcolon \res_W(S)$ for the inclusion $i:W\hookrightarrow V$.

If $V/W$ is projective the \emph{quotient volume space} of $(W,S')\subset (V,S)$ is defined as \[(V/W,S/(S\cap (Q\otimes_Z W))).\] Let us denote it by $(V,S)/(W,S')$. Let $\quot_W(S)\coloneqq S/(S\cap (Q\otimes_Z W))$.
\end{definition} 

\begin{remark}\label{rem:ffEasyvolProps} We have the following easy properties: \begin{enumerate}
\item \label{rem:ffEasyvolPropsEins} The $R$-lattice occuring in the definition of sub-volume space can be omitted. More precisely, any submodule $W$ of $V$ can be turned into a sub-volume space with the choice $\res_W(S)$. 
\item \label{rem:ffEasyvolPropsDrei} The $R$-module $S\cap (Q\otimes_Z W)$ is a direct summand in $S$. 
\item Consequently there is an $R$-basis $b_1,\ldots,b_n$ of $S\subset Q\otimes_Z V$ with \[\langle b_1,\ldots,b_{\rk(W)}\rangle_Q=Q\otimes_Z W.\]
\item \label{rem:ffEasyvolPropsZwei} The quotient volume space is a volume space.
\item{}[Analog of {\cite[Lemma~1.1]{Grayson(1984)}} ]\label{lem:ffelemPropVol} Let $L=(V,S)$ be a volume space.
For $V_1\subset V_2 \subset V$ we have $\res_{V_1}\circ \res_{V_2}(S) =\res_{V_1}(S)$.
\end{enumerate}
\end{remark}

\begin{remark} We have  to be a bit careful here when we speak about subquotients. It makes a difference whether you first pass to quotients and then to a subobject or the other way round. For example if $V$ is the free $F[t]$-module on generators $e_1,e_2$ and let $V_1$ be the submodule spanned by $1e_1+t^ne_2$ and let $V_2$ be the submodule spanned by $e_2$. 
Let $S$ be the $R$-module spanned by $1\otimes e_1,1\otimes e_2$. 
In this example we have 
\[\quot_{V_1\cap V_2}(\res_{V_1}(S))\neq \res_{V_1/(V_1\cap V_2)}\quot_{V_2}(S).\]
\end{remark}
The situation is better if we assume that $V_2$ is a submodule of $V_1$:
\begin{lemma}\label{lem:ffsubquotients}
Let $L=(V,S)$ be a volume space and let $V_2, V_1 \subset V$ be submodules such that $V/V_2$ is projective. We have:
\begin{enumerate}
\item \label{lem:ffsubquotients:eins}\[\quot_{V_1\cap V_2}(\res_{V_1}(S)) \subset \res_{V_1+V_2/V_2}(\quot_{V_2}(S)).\]
\item \label{lem:ffsubquotients:zwei}If $V_2\subset V_1$, then both sides are equal:
\[\quot_{V_2}(\res_{V_1}(S)) = \res_{V_1/V_2}(\quot_{V_2}(S)).\]
\item If additionally $V/V_1$ is also projective, then it is the quotient of $V/V_2$ by $V_1/V_2$, i.e.
\[\quot_{V_1/V_2}(\quot_{V_2}(S))=\quot_{V_1}(S).\]
\end{enumerate}
\end{lemma}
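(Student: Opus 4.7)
My plan is to unwind the definitions of $\res$ and $\quot$ and chase elements inside the ambient $Q$-vector space $Q\otimes_Z V$. The basic tools are flatness of $Q$ over $Z$, which yields
\begin{equation*}
Q\otimes_Z(V_1\cap V_2)=(Q\otimes_Z V_1)\cap(Q\otimes_Z V_2)
\quad\text{and}\quad
Q\otimes_Z(V_1+V_2)=(Q\otimes_Z V_1)+(Q\otimes_Z V_2),
\end{equation*}
together with the canonical isomorphism $V_1/(V_1\cap V_2)\cong(V_1+V_2)/V_2$ from the second isomorphism theorem. Everything else is elementary manipulation of sums, intersections and quotients of $R$-modules inside $Q\otimes_Z V$.

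For part (1), I take an arbitrary $s\in S\cap Q\otimes_Z V_1$ and use that
$\res_{V_1}(S)\cap Q\otimes_Z(V_1\cap V_2)=S\cap Q\otimes_Z(V_1\cap V_2)$
to rewrite the LHS as $(S\cap Q\otimes_Z V_1)/(S\cap Q\otimes_Z(V_1\cap V_2))$. The class of $s$, pushed forward along the canonical inclusion $Q\otimes_Z(V_1/(V_1\cap V_2))\hookrightarrow Q\otimes_Z((V_1+V_2)/V_2)$, is the coset $s+Q\otimes_Z V_2$. Since $s\in S$, this is the image of $s$ in $\quot_{V_2}(S)$, and it lies in $Q\otimes_Z((V_1+V_2)/V_2)$ because $s\in Q\otimes_Z V_1$. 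The reverse inclusion can fail: a representative of an element of the RHS need only be some $s'\in S$ with image in $(V_1+V_2)/V_2$, which forces $s'\in(Q\otimes_Z V_1)+(Q\otimes_Z V_2)$ but gives no reason for $s'$ itself to lie in $Q\otimes_Z V_1$.

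For part (2), the hypothesis $V_2\subseteq V_1$ collapses the LHS to $(S\cap Q\otimes_Z V_1)/(S\cap Q\otimes_Z V_2)$, because $Q\otimes_Z V_2\subseteq Q\otimes_Z V_1$. For the RHS I exploit that $Q\otimes_Z(V_1/V_2)\hookrightarrow Q\otimes_Z(V/V_2)$ is injective, so a class $s+(S\cap Q\otimes_Z V_2)$ with $s\in S$ lies in $Q\otimes_Z(V_1/V_2)$ precisely when $s\in(Q\otimes_Z V_1)+(Q\otimes_Z V_2)=Q\otimes_Z V_1$, i.e.\ when $s\in S\cap Q\otimes_Z V_1$; this matches the LHS. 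Part (3) then follows formally: applying part (2) to $V_1,V_2$ identifies $\quot_{V_2}(S)\cap Q\otimes_Z(V_1/V_2)$ with $(S\cap Q\otimes_Z V_1)/(S\cap Q\otimes_Z V_2)$, and the third isomorphism theorem collapses
\begin{equation*}
\quot_{V_1/V_2}(\quot_{V_2}(S))=\frac{S/(S\cap Q\otimes_Z V_2)}{(S\cap Q\otimes_Z V_1)/(S\cap Q\otimes_Z V_2)}
\end{equation*}
to $S/(S\cap Q\otimes_Z V_1)=\quot_{V_1}(S)$.

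The main obstacle is not any single computation but bookkeeping: keeping track of which ambient $Q$-vector space each side a priori lives in, and explicitly writing down the canonical identifications used to compare them. The projectivity hypotheses are needed only to guarantee that the quotient volume spaces in the statement are defined in the sense of Definition~\ref{def:ffquotVolSpace}; since $Z=F[t]$ is a principal ideal domain, submodules of projectives are projective and the relevant subquotients are automatically projective. The identities themselves are purely formal consequences of flatness and the standard isomorphism theorems.
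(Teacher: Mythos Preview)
Your proof is correct and follows essentially the same route as the paper's: both arguments unwind the definitions of $\res$ and $\quot$, rewrite the left-hand side of (1) as $(S\cap Q\otimes_Z V_1)/(S\cap Q\otimes_Z(V_1\cap V_2))$, and then observe the elementary inclusion $\pr(A\cap B)\subseteq\pr(A)\cap\pr(B)$ (the paper phrases this via an explicit projection map, you via element chasing), with equality in (2) coming from the saturation condition forced by $V_2\subseteq V_1$, and (3) from the third isomorphism theorem. Your additional remarks on flatness, on why the reverse inclusion in (1) can fail, and on the role of the projectivity hypotheses are accurate and add clarity, but the core argument is the same.
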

\begin{proof*}
Clearly $V_1/(V_1+V_2)$ is a submodule of $V/V_2$. So let us now compare the lattices. Let 
\[\pr:Q\otimes_Z V\rightarrow Q\otimes_Z V/(S\cap Q\otimes_Z V_2)\] denote the projection. 
\begin{multline*}
\quot_{V_2\cap V_1}(\res_{V_1}(S))
\coloneqq(S\cap Q\otimes_Z V_1)/(S\cap Q\otimes_Z (V_1\cap V_2))
= \pr(S\cap Q\otimes_Z V_1)\\
\subseteq  \pr(S)\cap \pr(Q\otimes_Z V_1)
=S/(S\cap Q\otimes_Z V_2)\cap (Q\otimes_Z (V_1+V_2)/V_2)
\eqqcolon \res_{(V_1+V_2)/V_2}(\quot_{V_2}(S)).\end{multline*}
This proves the first claim. Note that $\pr(A\cap B)=\pr(A)\cap \pr(B)$ holds if $A$ and $B$ are $\pr$-saturated, i.e. $\pr(\pr^{-1}(A))=A$. In the case where $A$ is a submodule this means that $S\cap Q\otimes_Z V_2\subset A$. This is where the condition $V_2\subset V_1$ enters. This proves the second claim. The third claim follows from
\multbox\begin{eqnarray*}
\quot_{V_1/V_2}(\quot_{V_2}(S))
&=&(S/(S\cap Q\otimes_Z V_2))/((S\cap Q\otimes_Z V_1)/(S\cap Q\otimes_Z V_2))\\
&=& S/(S\cap Q\otimes_Z V_1)= \quot_{V_1}(S).\end{eqnarray*}\emultbox
\end{proof*}

\begin{remark} Let $(V,S)$ be a volume space. Then $(\Lambda^m V,\Lambda^m S)$ is a volume space. 
\end{remark}
If $(V,S)$ is a volume space we get two ways of choosing a basis for $\langle V\rangle_Q=\langle S\rangle_R$, first by picking a $Z$-Basis of $V$ and second by picking a $R$-basis of $S$. The difference is interesting:

\begin{definition}[logarithmic volume]\label{def:ffvol} Let $(V,S)$ be a volume space. Pick an $Z$-basis $v_1,\ldots,v_n$ for $V$ and an $R$-basis $b_1,\ldots,b_n$ of $S$. The $Q$-vector space $Q\otimes_Z \Lambda^n V\cong \Lambda^n (Q\otimes_Z V)$ is one dimensional. Consider the element $q\in Q$ with $v_1\wedge\ldots\wedge v_n =q(b_1\wedge \ldots \wedge b_n)$ (It exists since $b_1\wedge\ldots\wedge b_n\neq 0$). Define 
\[\log\vol_V(S)\coloneqq-\nu(q).\]
\end{definition}
Clearly the volume is independent of the involved choices. Choosing different bases will change $q$ by a multiplication with an element in $Z^*=F^*$ resp. $R^*=\{q\in Q\mid\nu(q)=0\}$. This change does not affect the valuation.

\ignore{\begin{remark}\label{lem:ffequivarianceOfVol} We clearly have the following equivariance property. For any $\varphi \in \aut_Z(V)$ we have 
\[\log\vol_{\varphi(W)}(\varphi(S))=\log\vol_{W}(S).\]
\end{remark}}

Let us now find a way to compute the volume of a sub-volume space $W\subset (V,S)$ without constructing a basis for $S\cap Q\otimes_Z W$.

\begin{proposition}[Formula for the logarithmic volume]\label{prop:ffVolFormula} Let $(V,S)$ be an $Z$-volume space and let $(b_1,\ldots,b_n)$ be an $R$-basis of $S$.
Let $W\subset Q\otimes V$ be a finitely generated $Z$-submodule. Choose a $Z$-basis $w_1,\ldots,w_m$ of $W$. The set 
\[\{b_{i_1}\wedge\ldots\wedge b_{i_m}\mid 1\le i_1<\ldots <i_m\le n \}\]
is a $Q$-basis for $\Lambda^m Q\otimes_Z V$. Write $w_1\wedge \ldots\wedge w_m$ as a linear combination of this basis
\[w_1\wedge \ldots\wedge w_m= \sum_{1\le i_1<\ldots <i_m\le n} \lambda_{i_1,\ldots,i_m}\cdot b_{i_1}\wedge\ldots\wedge b_{i_m}.\]
Then the logarithmic volume of $W$ with respect to $S$ is
\[\log\vol_W(\res_W(S))= \sup_{1\le i_1<\ldots <i_m\le n}-\nu(\lambda_{i_1,\ldots ,i_m}).\]
\end{proposition}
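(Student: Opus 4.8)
The plan is to reduce the claimed supremum formula to the definition of $\log\vol$ (Definition~\ref{def:ffvol}) applied to the sub-volume space $(W,\res_W(S))$. Recall that $\log\vol_W(\res_W(S)) = -\nu(q)$, where $q \in Q$ is the unique scalar with $w_1 \wedge \ldots \wedge w_m = q\cdot(c_1 \wedge \ldots \wedge c_m)$ for an $R$-basis $c_1,\ldots,c_m$ of $\res_W(S) = S \cap (Q\otimes_Z W)$. So I need to compare this $q$ with the coefficients $\lambda_{i_1,\ldots,i_m}$ appearing when $w_1\wedge\ldots\wedge w_m$ is expanded in the basis $\{b_{i_1}\wedge\ldots\wedge b_{i_m}\}$ of $\Lambda^m(Q\otimes_Z V)$.

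First I would set up the comparison. By Remark~\ref{rem:ffEasyvolProps}\ref{rem:ffEasyvolPropsDrei}, $\res_W(S) = S \cap (Q\otimes_Z W)$ is a direct summand of $S$; hence (Remark~\ref{rem:ffEasyvolProps}, third item) after a $\GL_n(R)$-change of the $R$-basis of $S$ we may assume $b_1,\ldots,b_m$ is an $R$-basis of $\res_W(S)$. Such a change multiplies each $\lambda_{i_1,\ldots,i_m}$ by the matrix entries of an element of $\GL_n(R)$ and its inverse — all of valuation $\ge 0$ with the transformation invertible over $R$ — so the supremum $\sup -\nu(\lambda_{i_1,\ldots,i_m})$ is unchanged; this is the key invariance step and I would state it as a short lemma (it is the same mechanism already used after Definition~\ref{def:ffvol} to see $\log\vol$ is well-defined). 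With $b_1,\ldots,b_m$ an $R$-basis of $\res_W(S)$, the definition gives $w_1\wedge\ldots\wedge w_m = q\,(b_1\wedge\ldots\wedge b_m)$, i.e. the only nonzero coefficient in the expansion is $\lambda_{1,\ldots,m} = q$ and $\lambda_{i_1,\ldots,i_m}=0$ for all other multi-indices. Since $\nu(0)=-\infty$, the supremum collapses to $-\nu(q) = \log\vol_W(\res_W(S))$, which is exactly the claim.

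The only subtlety — and the step I expect to be the main obstacle — is justifying that the supremum is genuinely invariant under the change of $R$-basis of $S$. Writing $b_i' = \sum_j a_{ij} b_j$ with $(a_{ij})\in\GL_n(R)$, the new Plücker coordinates are obtained from the old ones by the induced action on $\Lambda^m$, i.e. by the $m\times m$ minors of $(a_{ij})$; these minors lie in $R$, so $-\nu$ of each new coordinate is $\le$ the max of $-\nu$ of the old ones, giving $\sup(\text{new}) \le \sup(\text{old})$, and applying the same to the inverse matrix (also in $\GL_n(R)$) gives equality. One should also check the statement is independent of the $Z$-basis $w_1,\ldots,w_m$ of $W$: a different choice multiplies $w_1\wedge\ldots\wedge w_m$ by a unit of $Z^* = F^*$, which has valuation $0$, so every $\lambda_{i_1,\ldots,i_m}$ changes by that unit and the supremum is unaffected. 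Everything else is bookkeeping: the set $\{b_{i_1}\wedge\ldots\wedge b_{i_m}\}$ being a $Q$-basis of $\Lambda^m(Q\otimes_Z V)$ is standard multilinear algebra, and $\res_W(S)$ having the same $R$-rank as $W$ has $Z$-rank follows since $W$ spans $Q\otimes_Z W$ over $Q$ and $S$ spans $Q\otimes_Z V$.
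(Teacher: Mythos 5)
Your proposal is correct and follows essentially the same route as the paper's proof: both establish invariance of the supremum under a $\GL_n(R)$-change of the $R$-basis of $S$ (your minor-based argument is a slightly more explicit phrasing of the paper's observation that the new coefficients are $R$-linear combinations of the old ones, applied in both directions), and both then pass to a basis of $S$ adapted to the direct summand $\res_W(S)$ so that only the Plücker coordinate $\lambda_{1,\ldots,m}$ survives, at which point the formula collapses to Definition~\ref{def:ffvol}. The minor additional check you record — that a change of $Z$-basis of $W$ multiplies the top wedge by a unit of $F^*$ and hence does not affect the valuation — is the part the paper dismisses as "clear," and your explicitness there does no harm.
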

\begin{proof}
First we show that the right hand side does not depend on the choice of bases. This is clear for the basis $w_1,\ldots,w_m$. If we would choose another basis $b'_*$ the new coefficients $\lambda'_*$ would be a linear combination of the old ones whose factors are products of the entries of the base change matrix. Those entries lie in $R$ and thus have valuation $\ge 0$. 

Because $F[t]$ is a principal ideal domain $W$ is again free. Furthermore $R$ is  also a principal ideal domain; so the same holds for the $R$-module $S$. So the minimal valuation of those coefficients cannot decrease. Since we can swap the roles of $b$ and $b'$ it also cannot increase and so it has to be the same. If we extend bases of $W$ (resp. $S/S\cap(Q\otimes_Z W)$) to $V$ (resp. $S$) we can achieve that only of the coefficients $\lambda_*$ is nonzero and we end up with the definition of $\log\vol_W(S\cap Q\otimes_ZW)$.
\end{proof} 

\begin{remark} We will sometimes use the abbreviation $\log\vol_W(S)$ for $\log\vol_W(\res_W(S))$. 
\end{remark}

\begin{remark}\label{rem:ffvolumeOfZero} The logarithmic volume of the zero volume space is defined as zero.
\end{remark}

\begin{remark}\label{rem:ffieldFormulaVolume} There is an explicit formula for the logarithmic volume. Let $S$, $W$, $(b_i)_i$, $(w_i)_i$, $m$, $n$ be as above and let $w_i=\sum_{j=1}^n \lambda_{i,j} b_j$ with $\lambda_{i,j}\in Q$. 
Inserting this in the definition of the logarithmic volume yields
\[\log\vol_W(\res_W(S))= \sup_{1\le i_1<\ldots <i_m\le n} -\nu(\sum_{\sigma \in \Sigma_n} \sign(\sigma)\lambda_{\sigma(1),{i_1}}\cdot \ldots\cdot \lambda_{\sigma(m),i_m})\]
This means the following. We consider the non-square matrix $(\lambda_{i,j})_{i,j}$. Consider all $m\times m$ minors, i.e. square matrices obtained from this matrix by deleting rows/columns. The logarithmic volume of $\langle w_1,\ldots,w_m\rangle$ is the negative of the minimum of the valuation of their determinants.
\end{remark}

\begin{lemma}\label{lem:ffSubmoduleOfFiniteIndex}Let $(V,S)$ be a volume space and let $W'\subset W\subset V$ be a chain of $Z$-modules of the same rank $m$. Let $A$ denote
a matrix that represents the inclusion after choice of bases for $W$ and $W'$. Then 
\[\log\vol_{W'}(\res_{W'}(S))= \log\vol_W(\res_W(S))+(-\nu(\det(A))).\]
Furthermore $-\nu(\det(A))=\dim_F(W/W')$.
\end{lemma}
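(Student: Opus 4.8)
The idea is to reduce the first formula to the explicit description of the logarithmic volume given in Proposition~\ref{prop:ffVolFormula}. Fix an $R$-basis $b_1,\dots,b_n$ of $S$, a $Z$-basis $w_1,\dots,w_m$ of $W$, and a $Z$-basis $w_1',\dots,w_m'$ of $W'$, and let $A\in M_m(Z)$ be the matrix expressing the $w_i'$ in the $w_j$. Since $W'\subseteq W$ has full rank $m$, the entries of $A$ lie in $Z=F[t]$ and $\det(A)\neq 0$. Standard multilinear algebra in the one-dimensional $Q$-space $\Lambda^m(Q\otimes_Z W)$ (mapped into $\Lambda^m(Q\otimes_Z V)$) gives
\[w_1'\wedge\dots\wedge w_m' = \det(A)\cdot(w_1\wedge\dots\wedge w_m).\]
Writing $w_1\wedge\dots\wedge w_m=\sum_{i_1<\dots<i_m}\lambda_{i_1,\dots,i_m}\,b_{i_1}\wedge\dots\wedge b_{i_m}$ as in Proposition~\ref{prop:ffVolFormula}, the coefficients of $w_1'\wedge\dots\wedge w_m'$ in this basis are exactly $\det(A)\lambda_{i_1,\dots,i_m}$.

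Now I would invoke that $\nu$ is a valuation, so $\nu(\det(A)\lambda_{i_1,\dots,i_m})=\nu(\det(A))+\nu(\lambda_{i_1,\dots,i_m})$, and pull the constant out of the supremum in the volume formula:
\[\log\vol_{W'}(\res_{W'}(S))=\sup_{i_1<\dots<i_m}\bigl(-\nu(\det(A))-\nu(\lambda_{i_1,\dots,i_m})\bigr)=-\nu(\det(A))+\log\vol_W(\res_W(S)),\]
which is the first claim. (The supremum is over the same finite nonempty index set in both cases, so this manipulation is legitimate.)

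For the second claim I would argue that $W/W'$ is a finitely generated torsion $F[t]$-module, so by the structure theorem for the PID $F[t]$ — equivalently, by putting $A$ into Smith normal form over $Z$ — there are $Z$-bases of $W$ and $W'$ in which $A$ is diagonal with entries $d_1,\dots,d_m\in F[t]\setminus\{0\}$ and $W/W'\cong\bigoplus_i F[t]/(d_i)$. Hence $\dim_F(W/W')=\sum_i\deg(d_i)=\deg\bigl(\prod_i d_i\bigr)=\deg(\det(A))$. Since $\nu(p)=\deg(1)-\deg(p)=-\deg(p)$ for a nonzero polynomial $p$, this gives $-\nu(\det(A))=\deg(\det(A))=\dim_F(W/W')$. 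I would also remark that changing the chosen bases of $W$ and $W'$ multiplies $A$ on either side by elements of $\GL_m(F[t])$, whose determinants lie in $F[t]^\times=F^\times$ and thus have valuation $0$, so both $-\nu(\det(A))$ and the identification with $\dim_F(W/W')$ are independent of all choices.

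\textbf{Main obstacle.} There is no real difficulty of substance: the proof is a direct computation with the volume formula. The only points needing care are the bookkeeping with the valuation (that it turns the product $\det(A)\lambda_{i_1,\dots,i_m}$ into a sum and commutes with the supremum up to a shift) and the observation that $A$ genuinely has \emph{polynomial} entries — which is exactly where the hypothesis $W'\subseteq W$ (rather than some general subquotient relation, cf.\ the cautionary remark before Lemma~\ref{lem:ffsubquotients}) is used — so that $\nu(\det(A))=-\deg(\det(A))$ and the elementary-divisor count $\deg(\det(A))=\dim_F(W/W')$ applies.
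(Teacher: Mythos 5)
Your proof is correct and takes essentially the same route as the paper's: the wedge-product identity $w'_1\wedge\cdots\wedge w'_m = \det(A)\,w_1\wedge\cdots\wedge w_m$ drives the first formula, and the invariant factor (Smith normal form) theorem over $F[t]$ gives $-\nu(\det(A))=\dim_F(W/W')$. Your detour through the minor formula of Proposition~\ref{prop:ffVolFormula} (rather than reading the answer off Definition~\ref{def:ffvol} directly, as the paper's terse ``hence'' does) is harmless and performs the same computation.
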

\begin{proof} Choose bases $w_1,\ldots,w_m$ of $W$ and $w'_1,\ldots,w'_m$ of $W'$ (invariant factor theorem).  We obtain by definition of the determinant:
\[ w'_1\wedge \ldots \wedge w'_m=\det(i)(w_1\wedge\ldots \wedge w_m)\]
and hence $\log\vol_{W'}(S)=-\nu(\det(i))+\log\vol_W(S)$.
The equality $-\nu(\det(A))=\dim_F(W/W')$ follows directly from the invariant factor theorem.
\end{proof}

\begin{lemma}[Volume of a quotient]\label{lem:ffVolQuot} Let $(V,S)$ be a volume space and let $(W,S\cap (Q\otimes_Z W))$ be a sub-volume space such that $V/W$ is projective. Then
\[\log\vol_V(S)=\log\vol_W(\res_W(W))+\log\vol_{V/W}(\quot_W(S)).\]
 \end{lemma}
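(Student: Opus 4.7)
The plan is to reduce the identity to a direct computation of the scalar $q \in Q$ appearing in Definition~\ref{def:ffvol}, by choosing $Z$-bases and $R$-bases that are simultaneously compatible with the submodule $W$.

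First I would choose the bases. Because $V/W$ is projective and $Z=F[t]$ is a PID, $V/W$ is free, so the short exact sequence $0\to W\to V\to V/W\to 0$ splits and there is a $Z$-basis $v_1,\ldots,v_n$ of $V$ such that $v_1,\ldots,v_m$ is a $Z$-basis of $W$ and the images $\bar v_{m+1},\ldots,\bar v_n$ form a $Z$-basis of $V/W$. By Remark~\ref{rem:ffEasyvolPropsDrei}, $S\cap (Q\otimes_Z W)$ is a direct $R$-summand of $S$, so we can pick an $R$-basis $b_1,\ldots,b_n$ of $S$ such that $b_1,\ldots,b_m$ is an $R$-basis of $\res_W(S)$ and $\bar b_{m+1},\ldots,\bar b_n$ is an $R$-basis of $\quot_W(S)$. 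Let $q_W,q_{V/W}\in Q$ be the scalars determined by
\[
v_1\wedge\cdots\wedge v_m = q_W\,(b_1\wedge\cdots\wedge b_m),\qquad \bar v_{m+1}\wedge\cdots\wedge \bar v_n = q_{V/W}\,(\bar b_{m+1}\wedge\cdots\wedge \bar b_n),
\]
so that $\log\vol_W(\res_W(S))=-\nu(q_W)$ and $\log\vol_{V/W}(\quot_W(S))=-\nu(q_{V/W})$. It remains to prove that the scalar $q_V$ with $v_1\wedge\cdots\wedge v_n = q_V\,(b_1\wedge\cdots\wedge b_n)$ satisfies $q_V=q_W\,q_{V/W}$, since additivity of $-\nu$ then yields the claim.

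Next I would compute $b_1\wedge\cdots\wedge b_n$ in $\Lambda^n(Q\otimes_Z V)$ using the direct sum decomposition $Q\otimes_Z V = W_Q\oplus W'_Q$, where $W_Q=Q\otimes_Z W$ and $W'_Q=\langle v_{m+1},\ldots,v_n\rangle_Q$. Writing each $b_{m+j}=u_j+w_j$ with $u_j\in W_Q$ and $w_j\in W'_Q$, the key observation is that $b_1\wedge\cdots\wedge b_m$ spans the one-dimensional space $\Lambda^m W_Q$, so any $u_j$ wedged against it contributes to $\Lambda^{m+1}W_Q=0$. Expanding the wedge therefore collapses to
\[
b_1\wedge\cdots\wedge b_n \;=\; b_1\wedge\cdots\wedge b_m\wedge w_1\wedge\cdots\wedge w_{n-m}.
\]
The projection $\pi\colon Q\otimes_Z V\to Q\otimes_Z (V/W)$ restricts to a $Q$-linear isomorphism $W'_Q\xrightarrow{\sim} Q\otimes_Z(V/W)$ sending $v_{m+i}\mapsto \bar v_{m+i}$ and $w_j\mapsto \bar b_{m+j}$ (since $u_j\in W_Q=\ker\pi$).

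Finally I would combine everything. Applying $\Lambda^{n-m}\pi$ to the relation defining $q_{V/W}$ and pulling back through the isomorphism above gives $v_{m+1}\wedge\cdots\wedge v_n = q_{V/W}\,(w_1\wedge\cdots\wedge w_{n-m})$ inside $\Lambda^{n-m}W'_Q$. Therefore
\[
v_1\wedge\cdots\wedge v_n = (v_1\wedge\cdots\wedge v_m)\wedge(v_{m+1}\wedge\cdots\wedge v_n) = q_W\,q_{V/W}\,(b_1\wedge\cdots\wedge b_m\wedge w_1\wedge\cdots\wedge w_{n-m}),
\]
which by the collapse identity above equals $q_W q_{V/W}(b_1\wedge\cdots\wedge b_n)$, so $q_V=q_W q_{V/W}$ and the result follows by the additivity of $\nu$.

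The main obstacle is simply organizing the bookkeeping: ensuring that the compatible choices of $Z$- and $R$-bases really exist (the projectivity hypothesis on $V/W$ and Remark~\ref{rem:ffEasyvolPropsDrei} make this automatic) and correctly identifying the wedge-product computations in $W'_Q$ with those in $Q\otimes_Z(V/W)$. Once the $\Lambda^{m+1}W_Q=0$ cancellation is in hand, the rest is essentially a one-line determinant computation.
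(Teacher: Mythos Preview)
Your proof is correct and essentially identical to the paper's: you extend compatible $Z$- and $R$-bases through $W\subset V$ and $\res_W(S)\subset S$, then observe that the change-of-basis matrix is block upper-triangular, so its determinant factors. The paper phrases the last step as a block-matrix determinant via Remark~\ref{rem:ffieldFormulaVolume}, while you unwind the same computation directly in the exterior algebra using the vanishing $\Lambda^{m+1}W_Q=0$; these are the same argument in different notation.
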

\begin{proof}
Again we extend bases of $W$ (resp. $S\cap (Q\otimes_Z W)$) to the whole of $V$ (resp. $S$). Then the matrix in Remark~\ref{rem:ffieldFormulaVolume} has block form and hence we have to compute the valuation of its determinant. The upper left contributes $\log\vol_W(\res_W(W))$ and the lower right block contributes $\log\vol_{V/W}(\quot_W(S))$.
\end{proof}

\begin{lemma}[Parallelogram constraint/subadditivity]\label{prop:ffSubaddidivity} Let $(V,S)$ be a volume space and let $W_1,W_2$ be finitely generated $Z$-submodules of $V$. Then
\[\log\vol_{W_1\cap W_2}(S)+\log\vol_{W_1+W_2}(S) \le \log\vol_{W_1}(S)+\log\vol_{W_2}(S).\]
\end{lemma}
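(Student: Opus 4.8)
The plan is to mimic the lattice-theoretic subadditivity proof, but working with the explicit minor formula for the logarithmic volume (Remark~\ref{rem:ffieldFormulaVolume}) rather than with orthogonal projections, since in the function-field case there is no inner product to project along. Concretely, I would first reduce to the case $W_1\cap W_2 = 0$: using Lemma~\ref{lem:ffVolQuot} (volume of a quotient) and Lemma~\ref{lem:ffsubquotients}, passing to the quotient volume space $(V,S)/(W_1\cap W_2, \res_{W_1\cap W_2}(S))$ subtracts $\log\vol_{W_1\cap W_2}(S)$ from all three of the remaining terms $\log\vol_{W_1}, \log\vol_{W_2}, \log\vol_{W_1+W_2}$ simultaneously (the images of $W_1, W_2$ have trivial intersection there, and their sum has image $(W_1+W_2)/(W_1\cap W_2)$). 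So it suffices to prove $\log\vol_{W_1+W_2}(S) \le \log\vol_{W_1}(S) + \log\vol_{W_2}(S)$ when $W_1\cap W_2 = 0$.

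In that situation, pick a $Z$-basis $u_1,\dots,u_m$ of $W_1$ and $u_{m+1},\dots,u_{m+\ell}$ of $W_2$; since $W_1\cap W_2 = 0$ these together form a $Z$-basis $u_1,\dots,u_{m+\ell}$ of $W_1+W_2$. Fix an $R$-basis $b_1,\dots,b_n$ of $S$. Write each $u_i = \sum_j \lambda_{i,j} b_j$ with $\lambda_{i,j}\in Q$, and let $A$ be the $m\times n$ matrix $(\lambda_{i,j})_{1\le i\le m}$, $B$ the $\ell\times n$ matrix $(\lambda_{i,j})_{m+1\le i\le m+\ell}$, and $C = \binom{A}{B}$ the $(m+\ell)\times n$ matrix for $W_1+W_2$. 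By Remark~\ref{rem:ffieldFormulaVolume}, $\log\vol_{W_1+W_2}(S) = -\min_{I} \nu(\det C_I)$, where $I$ ranges over $(m+\ell)$-element subsets of columns and $C_I$ is the corresponding maximal minor; similarly for $W_1$ with the minors of $A$ and for $W_2$ with the minors of $B$. The key algebraic fact is the generalized Cauchy–Binet (Laplace) expansion: for any column set $I$ of size $m+\ell$,
\[
\det C_I = \sum_{\substack{J\sqcup K = I\\ |J| = m,\ |K| = \ell}} \pm\, \det A_J\cdot \det B_K .
\]
Since $\nu$ is a valuation, $\nu(\det C_I) \ge \min_{J\sqcup K = I} \bigl(\nu(\det A_J) + \nu(\det B_K)\bigr) \ge \bigl(\min_{|J|=m}\nu(\det A_J)\bigr) + \bigl(\min_{|K|=\ell}\nu(\det B_K)\bigr)$. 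Taking the minimum over $I$ on the left and negating gives exactly $\log\vol_{W_1+W_2}(S) \le \log\vol_{W_1}(S) + \log\vol_{W_2}(S)$.

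**The main obstacle** I anticipate is the reduction step: I must check carefully that passing to the quotient by $W_1\cap W_2$ behaves correctly for all three remaining modules at once, and in particular that $(W_1+W_2)/(W_1\cap W_2)$ really is the sub-volume space of the quotient generated by the images of $W_1$ and $W_2$, and that $V/(W_1\cap W_2)$ is projective so that Lemma~\ref{lem:ffVolQuot} applies — here one uses that $W_1\cap W_2$ is a $Z$-submodule of the free module $V$ over the PID $Z = F[t]$, hence a direct summand after saturation, and one should pass to the saturation (which does not change volumes by Lemma~\ref{lem:ffSubmoduleOfFiniteIndex}, or rather changes all terms compatibly). The Cauchy–Binet step itself is a standard determinant identity over the field $Q$ and requires no subtlety beyond noting that $\nu$ of a sum is $\ge$ the minimum of the $\nu$'s of the summands. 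Everything else — that maximal minors compute the volume, that the $u_i$ assemble into a basis of $W_1+W_2$ — is immediate from the definitions and Remark~\ref{rem:ffieldFormulaVolume}. An alternative, if the reduction proves annoying, is to prove the sharper statement $\log\vol_{W_1+W_2}(S) + \log\vol_{W_1\cap W_2}(S) \le \log\vol_{W_1}(S) + \log\vol_{W_2}(S)$ directly by a block-matrix computation, choosing a $Z$-basis of $V$ adapted to the flag $W_1\cap W_2 \subset W_1, W_2 \subset W_1+W_2$, but the Cauchy–Binet route is cleaner.
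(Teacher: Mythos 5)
Your reduction to the case $W_1\cap W_2=0$ is the same one the paper uses (first pass to the direct summands generated by $W_1,W_2$ via Lemma~\ref{lem:ffSubmoduleOfFiniteIndex}, then quotient by the saturated $W_1\cap W_2$ using Lemma~\ref{lem:ffVolQuot} and Lemma~\ref{lem:ffsubquotients}). The difference is in the core step. The paper proves $\log\vol_{W_1+W_2}(S)\le\log\vol_{W_1}(S)+\log\vol_{W_2}(S)$ by a chain of inequalities that shuttles between $\res$ and $\quot$: starting from $\log\vol_{W_1}(\res_{W_1}(S))+\log\vol_{W_2}(\quot_{W_1\cap W_2}\res_{W_2}(S))$, it applies the containment of Lemma~\ref{lem:ffsubquotients}\ref{lem:ffsubquotients:eins} together with monotonicity of $\log\vol$ in the lattice, then swaps $\res$ and $\quot$ via Lemma~\ref{lem:ffsubquotients}\ref{lem:ffsubquotients:zwei}, and finally collapses the two terms with the quotient formula Lemma~\ref{lem:ffVolQuot}. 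You instead fix an $R$-basis of $S$, stack the coefficient matrices of $W_1$ and $W_2$ into the coefficient matrix of $W_1\oplus W_2=W_1+W_2$, and use the Laplace (generalized Cauchy--Binet) expansion of each maximal minor along the first $m$ rows together with the ultrametric inequality $\nu(\sum a_i)\ge\min\nu(a_i)$. This is correct, and it is a genuinely different argument: yours is an explicit computation from Remark~\ref{rem:ffieldFormulaVolume} that makes the mechanism completely visible (the ultrametric inequality \emph{is} the subadditivity), whereas the paper's argument reuses the abstract $\res/\quot$ lemmas that were set up earlier in the section and stays closer to the projection argument used in the integral case. A practical advantage of your route is that it bypasses the monotonicity observation ``if $S\subset S'$ then $\log\vol(S)\le\log\vol(S')$'' that the paper invokes, whose direction is easy to get wrong; the one place you should be more explicit is the reduction itself — you correctly flag that one must saturate $W_1,W_2$ so that $W_1\cap W_2$ becomes a direct summand before quotienting, and that the volume corrections from Lemma~\ref{lem:ffSubmoduleOfFiniteIndex} on the four terms cancel (they do, by the short exact sequence $0\to(\overline{W_1}\cap\overline{W_2})/(W_1\cap W_2)\to \overline{W_1}/W_1\oplus\overline{W_2}/W_2\to(\overline{W_1}+\overline{W_2})/(W_1+W_2)\to 0$), but neither your sketch nor the paper spells this out.
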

\begin{proof} Using Lemma~\ref{lem:ffSubmoduleOfFiniteIndex} we can first replace $W_1,W_2$  by the direct summands generated by them. 
Then $W_1\cap W_2$ is also a direct summand. After passing to quotients by $W_1\cap W_2$ we can further assume that $W_1\cap W_2=0$. Now we have to be a bit more careful with the notation:
\begin{eqnarray*}
&&\log\vol_{W_1}(\res_{W_1}(S))+\log\vol_{W_2}(\quot_{W_1\cap W_2}\res_{W_2}(S))\\
&\ge &\log\vol_{W_1}(\res_{W_1}(S))+\log\vol_{(W_1+W_2)/W_1}(\res_{(W_1+W_2)/W_1}\quot_{W_1}(S))\\
&=&\log\vol_{W_1}(\res_{W_1}(S))+\log\vol_{(W_1+W_2)/W_1}(\quot_{W_1}\res_{W_1+W_2}(S))\\
&=&\log\vol_{W_1+W_2}(S)\\
\end{eqnarray*}
The first steps uses Lemma~\ref{lem:ffsubquotients}\ref{lem:ffsubquotients:eins} and the observation that if $S\subset S'$, then $\log\vol_?(S)\le \log\vol_?(S')$. The second step uses ~\ref{lem:ffsubquotients}~\ref{lem:ffsubquotients:zwei} to swap back. The third step is Lemma~\ref{lem:ffVolQuot}.
\end{proof}

\begin{lemma}\label{lem:ffShortFin} Given a volume space $(V,S)$ and a real number $C$. Then there are only finitely many elements $v\in V\setminus \{0\}$ with $\log\vol_{\langle v\rangle_Z}(S)\le C$.
\end{lemma}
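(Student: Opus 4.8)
The plan is to reduce the statement to a discreteness/properness argument in the one‑dimensional $Q$‑vector spaces attached to rank‑one submodules, using the explicit formula for the logarithmic volume from Proposition~\ref{prop:ffVolFormula}. First I would fix an $R$‑basis $b_1,\ldots,b_n$ of $S$ inside $Q\otimes_Z V$, so that by Proposition~\ref{prop:ffVolFormula} (with $m=1$), for a nonzero $v=\sum_{j=1}^n\lambda_j b_j$ with $\lambda_j\in Q$ we have $\log\vol_{\langle v\rangle_Z}(\res_{\langle v\rangle_Z}(S))=\sup_{1\le j\le n}\bigl(-\nu(\lambda_j)\bigr)$. Hence the condition $\log\vol_{\langle v\rangle_Z}(S)\le C$ says exactly that $\nu(\lambda_j)\ge -C$ for every $j$, i.e. that the coordinates of $v$ with respect to the $R$‑basis $b_*$ all lie in the fractional ideal $\mathfrak{m}^{-\lceil C\rceil}=\{x\in Q\mid\nu(x)\ge -\lceil C\rceil\}$, which is a finitely generated $R$‑module (free of rank one).

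Next I would observe that this confines $v$ to a single finitely generated $R$‑submodule $S'\coloneqq\mathfrak{m}^{-\lceil C\rceil}\cdot\langle b_1,\ldots,b_n\rangle_R\subset Q\otimes_Z V$, which is again an $R$‑lattice (free of rank $n$). So it suffices to show that $V\cap S'$ is finite, where $V\subset Q\otimes_Z V$ is the given $Z$‑submodule. But $V$ is a free $Z=F[t]$‑module of rank $n$ and $S'$ is a free $R$‑module of rank $n$ spanning the same $Q$‑vector space; I would argue that $V\cap S'$ is finitely generated as a $Z$‑module (being a submodule of the free module $V$ over the PID $Z$, of rank at most $n$) and also finitely generated as an $R$‑module (being contained in the finitely generated $R$‑module $S'$ over the Noetherian ring $R$). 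A $Z$‑module that is simultaneously a finitely generated $R$‑module is finite: choosing finitely many $R$‑generators and noting $R$ is a finitely generated... no — rather, $V\cap S'$ is a finitely generated $R$‑module on which $t$ acts, and $t\in\mathfrak{m}$ is topologically nilpotent, so $\bigcap_k t^k(V\cap S')=0$ while the module has finite length over $R/\mathfrak{m}^k$ for large $k$; cleanest is: $V\cap S'$ is a finitely generated torsion $Z$‑module, hence finite since $Z/(f)$ is finite for nonzero $f\in F[t]$.

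To see that $V\cap S'$ is $Z$‑torsion, I would note that any $v\in V\cap S'$ satisfies $\nu$‑bounds on all $b_*$‑coordinates, but so does $v/t^k$ fail them for large $k$ unless the coordinates are $0$; more precisely $V\cap S'$ is a $Z$‑submodule of the free $Z$‑module $V$ and $Q\otimes_Z(V\cap S')$ injects into $Q\otimes_Z S'=Q\otimes_Z V$, and $V\cap S'$ cannot contain a $Z$‑free summand of positive rank because such a summand $\cong Z$ already violates the bound $\nu(\lambda_j)\ge -C$ once we scale by high powers of $t$ (the $b_*$‑coordinates of $t^m w$ have valuation tending to $-\infty$). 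Hence $V\cap S'$ has $Z$‑rank zero, so it is a finitely generated torsion $Z$‑module, hence finite, and a fortiori the set of nonzero $v$ in it is finite.

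The main obstacle is getting the finiteness of $V\cap S'$ right: one has a $Z$‑module and an $R$‑lattice living in the same $Q$‑vector space, and unlike the number‑field case the archimedean "bounded implies finite" heuristic is replaced by the non‑archimedean statement that a finitely generated module over $F[t]$ whose elements have bounded $t^{-1}$‑adic denominators must be $t$‑torsion hence $F$‑finite; I would isolate this as the one genuinely non‑formal point, everything else being bookkeeping with the volume formula. As a sanity check, Remark~\ref{rem:ffvolumeOfZero} handles $v=0$ and explains why we exclude it in the statement.
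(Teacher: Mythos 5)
Your opening step is right and matches the paper: use Proposition~\ref{prop:ffVolFormula} with $m=1$ to translate $\log\vol_{\langle v\rangle_Z}(S)\le C$ into a bound $-\nu(\lambda_j)\le C$ on the coordinates of $v$ relative to an $R$-basis $b_1,\ldots,b_n$ of $S$, and then reduce everything to showing that the set of such $v\in V$ is finite. However, the module-theoretic argument you propose for the finiteness step is wrong in several places.

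First, $V\cap S'$ is neither a $Z$-submodule of $V$ nor an $R$-submodule of $S'$. Here $Z=F[t]$ and $R=\{x\in Q\mid\nu(x)\ge 0\}$, and $Z\cap R=F$; multiplying by $t$ leaves $S'$ (since $\nu(t)=-1<0$, so $t\notin R$) and multiplying by $1/t$ leaves $V$. So $V\cap S'$ is just an $F$-subspace of $Q\otimes_Z V$, and your claims that it is ``finitely generated as a $Z$-module'' or ``finitely generated as an $R$-module'' are not meaningful as stated. Second, and for the same reason, $t$ is \emph{not} in $\mathfrak{m}$; the uniformizer here is $1/t$. Third, the ``$Z$-rank zero, hence torsion, hence finite'' step would in fact force $V\cap S'=0$, since a $Z$-submodule of the free $Z$-module $V$ is itself free, and a free module of rank zero is trivial. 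What your scaling observation really shows is precisely that $V\cap S'$ is \emph{not} $Z$-stable, not that it is a torsion $Z$-module.

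The paper's actual proof sidesteps all of this by working in coordinates: take $V=Z^n$, let $b_{i,j}$ be the $j$-th entry of the basis vector $b_i\in Q^n$, and note that if $v=\sum\lambda_i b_i$ has all $-\nu(\lambda_i)$ bounded, then by the ultrametric inequality the $j$-th entry of $v$ has $-\nu$ bounded by $C$ plus $\max_{i,j}(-\nu(b_{i,j}))$. Since the entries of $v$ lie in $F[t]$, that bound is a bound on their degrees, and there are only finitely many polynomials over the finite field $F$ of bounded degree. If you want to keep your framing of ``show $V\cap S'$ is finite,'' the honest way to do it is exactly this: show that elements of $V\cap S'$ have polynomial coordinates of uniformly bounded degree, hence $V\cap S'$ is a finite-dimensional $F$-subspace of $V$, hence finite.
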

\begin{proof}
To simplify notation let $V=Z^n$. Let $b_1,\ldots,b_n$ be a $R$-basis of $S\subset Q^n$. By Remark~\ref{rem:ffieldFormulaVolume} set of such vectors whose logarithmic volume is less than some number $D$ is given by 
\[X=Z^n\cap \{\sum \lambda_ib_i|\lambda_i\in R,-\nu(\lambda_i)<D \}.\]
Let $D':=\min_{i,j=1}^n, -\nu(b_{i,j})$ where $b_{i,j}$ denotes the $j$-th entry of $b_i\in Q^n$. Since $\nu$ is a valuation we get $-\nu(x)<D+D'$. But this means that
$X$ is a subset of $\{p\in F[t] |\deg(p)<D+D'\}^n$ and hence finite.
\end{proof}

\begin{corollary}\label{cor:fffinShort} Given a volume space $(V,S)$ and a real number $C$. Then there are only finitely many submodules $W\subset V$ with $\log\vol_W(\res_W(S))\le C$.
\end{corollary}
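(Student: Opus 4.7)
The plan is to reduce the corollary to the rank-one Lemma~\ref{lem:ffShortFin} via exterior powers, with a separate argument to pass from direct summands back to arbitrary submodules.

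First, stratify by rank $m$ and, for each submodule $W \subseteq V$ of rank $m$, introduce its saturation $\tilde W$, i.e.\ the smallest direct summand of $V$ containing $W$ (it has the same rank $m$). Lemma~\ref{lem:ffSubmoduleOfFiniteIndex} gives
\[
\log\vol_W(\res_W S) \;=\; \log\vol_{\tilde W}(\res_{\tilde W} S) \;+\; \dim_F(\tilde W / W),
\]
with both summands non-negative. Hence $\log\vol_W(\res_W S) \le C$ forces both $\log\vol_{\tilde W}(\res_{\tilde W} S) \le C$ and $\dim_F(\tilde W / W) \le C$. So it suffices to prove (a) there are only finitely many direct summands $\tilde W \subseteq V$ with $\log\vol_{\tilde W}(\res_{\tilde W}S) \le C$, and (b) for each such $\tilde W$, there are only finitely many full-rank submodules $W \subseteq \tilde W$ with $\dim_F(\tilde W / W) \le C$.

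For (a), pass to the exterior-power volume space $(\Lambda^m V, \Lambda^m S)$. A $Z$-basis $b_1, \ldots, b_m$ of $\tilde W$ yields a nonzero vector $v \coloneqq b_1 \wedge \cdots \wedge b_m \in \Lambda^m V$, well-defined up to the finite unit group $Z^\times = F^\times$. Picking an $R$-basis of $S$ adapted to the direct summand $\res_{\tilde W}(S) \subseteq S$ (possible by Remark~\ref{rem:ffEasyvolProps}\ref{rem:ffEasyvolPropsDrei}), one computes directly from Definition~\ref{def:ffvol} that the log-volume of the rank-one submodule $\langle v \rangle_Z \subseteq \Lambda^m V$ inside $(\Lambda^m V, \Lambda^m S)$ equals $\log\vol_{\tilde W}(\res_{\tilde W} S)$. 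Applying Lemma~\ref{lem:ffShortFin} to $(\Lambda^m V, \Lambda^m S)$ then gives, for each $m \in \{0,1,\ldots,\rk V\}$, finitely many such wedge vectors $v$; and $\tilde W$ can be recovered from $v$ as the preimage in $V$ of the $Q$-subspace of $Q \otimes_Z V$ annihilated by wedging with $v$. So only finitely many $\tilde W$ can arise.

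For (b), the crucial ingredient is that $F$ being finite forces only finitely many polynomials in $Z = F[t]$ to have degree at most $C$. If $\dim_F(\tilde W / W) \le C$, then the invariant-factor decomposition of $\tilde W / W$ over $F[t]$ shows it is annihilated by some polynomial $p$ of degree $\le C$, so $p \tilde W \subseteq W \subseteq \tilde W$. For each such $p$ the admissible $W$ correspond bijectively to $F$-subspaces of the finite $F$-vector space $\tilde W / p \tilde W$, of which there are only finitely many. Combining the finite counts from (a) and (b) yields the conclusion. The main subtlety is the compatibility in (a) of $\log\vol$ with the passage to exterior powers; once that identity is established, the rest is a clean reduction to Lemma~\ref{lem:ffShortFin} plus finite combinatorics over $F$.
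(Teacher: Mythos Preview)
Your approach is essentially the same as the paper's: reduce to Lemma~\ref{lem:ffShortFin} via the exterior-power volume space $(\Lambda^m V,\Lambda^m S)$, then argue that only finitely many rank-$m$ submodules can share a given wedge vector. The paper packages this slightly differently (it works directly with the wedge $v=w_1\wedge\cdots\wedge w_m$ and observes that all $W$ yielding the same $v$ sit inside $W'=\Ker(\wedge v)$ with the \emph{same} index), while you interpose the saturation $\tilde W$ and bound the index separately; the content is the same.

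One slip: your claim that ``both summands are non-negative'' is false---$\log\vol_{\tilde W}(\res_{\tilde W}S)$ can be negative (see e.g.\ Proposition~\ref{prop:ffieldIsomandCanFilt}, where the $r_i$ may be negative). Thus $\log\vol_W\le C$ does \emph{not} directly give $\dim_F(\tilde W/W)\le C$. The fix is immediate: from $\dim_F(\tilde W/W)\ge 0$ you still get $\log\vol_{\tilde W}\le C$, so part~(a) yields finitely many saturations $\tilde W$; then for each fixed $\tilde W$ you get the $\tilde W$-dependent bound $\dim_F(\tilde W/W)\le C-\log\vol_{\tilde W}$, and part~(b) goes through unchanged. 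With this correction the proof is fine.
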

\begin{proof}
This follows directly from the previous result and the following claim:

For every $m\le n\coloneqq \rk(V)$ and every $v\in\Lambda^m V$ there are only finitely many submodules $W\subset V$ such that $v=w_1\wedge \ldots \wedge w_m$ for a $Z$-basis $w_1,\ldots,w_m$ of $W$.
Consider $W':=\Ker(\wedge v:V\rightarrow \Lambda^{m+1}V)$. Every such $W$ is a submodule of $W'$ and they all have the same index.  But there are only finitely many submodules of $W'\cong F[t]^m$ of a given index.
\end{proof}

\begin{proposition}[diagonal bases] Let $(V,S)$ be a volume space. Then there is an $R$-basis $b_1,\ldots,b_n$ of $S$ and an $Z$-basis $w_1,\ldots,w_n$ of $V$ such that
$w_i=t^{r_i}b_i$ for some $r_i\in \IZ$ with $r_1 \le r_2\le \ldots \le r_n$.
\end{proposition}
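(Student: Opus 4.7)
The plan is to induct on $n = \rk_Z V$, the case $n = 0$ being vacuous; in the inductive step I construct $w_1, b_1, r_1$ by a minimum-volume argument, apply the hypothesis to the quotient volume space, lift the resulting bases back to $V$ and $S$, and adjust the lifts to achieve the exact identity $w_i = t^{r_i} b_i$. The main obstacle will be this final adjustment step.

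For the selection of $w_1$: by Lemma~\ref{lem:ffShortFin} the function $v \mapsto \log\vol_{\langle v\rangle_Z}(S)$ on $V \setminus \{0\}$ attains a minimum $r_1$. Since Lemma~\ref{lem:ffSubmoduleOfFiniteIndex} yields $\log\vol_{\langle cw\rangle_Z}(S) = \log\vol_{\langle w\rangle_Z}(S) + \deg(c)$ for $c \in Z \setminus \{0\}$, this minimum is attained by a primitive $w_1$, meaning $\langle w_1\rangle$ is a direct summand of $V$. Let $b_1$ be an $R$-generator of the rank-one $R$-module $\res_{\langle w_1\rangle}(S) = S \cap Qw_1$; writing $w_1 = q b_1$ with $q \in Q^*$ and factoring $q = t^{r_1} u$ with $u \in R^*$ (possible since $\nu \colon Q^* \twoheadrightarrow \IZ$), replacing $b_1$ by $ub_1$ yields $w_1 = t^{r_1}b_1$.

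Applying the induction hypothesis to the rank-$(n-1)$ quotient volume space $(V/\langle w_1\rangle, \quot_{\langle w_1\rangle}(S))$ produces bases $\bar w_2, \ldots, \bar w_n$ and $\bar b_2, \ldots, \bar b_n$ with $\bar w_i = t^{r_i}\bar b_i$ and $r_2 \le \cdots \le r_n$. Pick arbitrary lifts $w_i \in V$ and $b_i \in S$; since $w_i - t^{r_i}b_i$ vanishes in the quotient there is $q_i \in Q$ with $w_i - t^{r_i}b_i = q_i w_1 = q_i t^{r_1} b_1$.

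The hard part is adjusting these lifts. The unimodular replacements $w_i \mapsto w_i + \beta_i w_1$ with $\beta_i \in Z$ and $b_i \mapsto b_i + \alpha_i b_1$ with $\alpha_i \in R$ preserve the respective bases of $V$ and $S$ and the quotient images. A short computation reduces the identity $w_i = t^{r_i}b_i$ to
\[\beta_i - t^{r_i - r_1}\alpha_i = -q_i,\]
which is solvable with $\beta_i \in Z$ and $\alpha_i \in R$ because $Q = Z + R$ (by polynomial division modulo the denominator) together with $R \subseteq t^k R$ for $k \ge 0$ gives $Q = Z + t^{r_i - r_1} R$, using $r_i \ge r_1$. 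The ordering $r_1 \le r_2$ then holds because $w_2$ is primitive in $V$ as part of the new basis and expanding in the $R$-basis $b_1, \ldots, b_n$ shows $\res_{\langle w_2\rangle}(S) = Rb_2$, so $r_2 = \log\vol_{\langle w_2\rangle}(S) \ge r_1$ by the minimality defining $r_1$; combined with $r_2 \le \cdots \le r_n$ from the induction, this gives the full chain.
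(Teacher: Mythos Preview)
Your overall strategy matches the paper's: induct on rank, pick $w_1$ of minimal volume, pass to the quotient, lift, and adjust. The gap is a circularity in the adjustment step. You solve $\beta_i - t^{r_i - r_1}\alpha_i = -q_i$ by appealing to $Q = Z + t^{r_i-r_1}R$, which you justify via $R \subseteq t^kR$ for $k \ge 0$, explicitly ``using $r_i \ge r_1$.'' But your only argument for $r_i \ge r_1$ comes afterward, and it computes $r_2 = \log\vol_{\langle w_2\rangle}(S)$ from the \emph{post-adjustment} identity $w_2 = t^{r_2}b_2$. So you assume $r_i \ge r_1$ to perform the adjustment that is then used to prove $r_i \ge r_1$. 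This is not a cosmetic issue: for $k \le -2$ one has $Q \ne Z + t^kR$ (for instance $1/t \notin Z + t^{-2}R$), so without the inequality the equation can genuinely fail.

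The paper breaks the circle by splitting the adjustment into two stages. First, modify only $w_i$ by a $Z$-multiple of $w_1$ so that the $b_1$-coefficient $s_i$ of $w_i$ satisfies $-\nu(s_i) \le r_1 - 1$; this step needs no hypothesis on $r_i$. With this partial adjustment in hand, Proposition~\ref{prop:ffVolFormula} gives $\log\vol_{\langle w_i\rangle}(S) = \max(-\nu(s_i), r_i)$; since the left side is $\ge r_1$ by the minimality of $w_1$ and $-\nu(s_i) \le r_1-1$, the maximum is forced to be $r_i$, whence $r_i \ge r_1$. Only then is $b_i$ replaced by $b_i + (s_i/t^{r_i})b_1$, which now lies in $S$ because $s_i/t^{r_i} \in R$. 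Reordering your argument along these lines closes the gap.
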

\begin{proof}
The proof is done by induction on $\rk(V)$ and there is nothing to show in the case of $\rk(V)=0$.

Let $v\in V$ be a shortest nontrivial vector. Hence $\langle v\rangle$ is a direct summand of $V$. Let $b_1$ be a basis vector of the $R$-module $(Q\otimes_Z \langle v\rangle_Z)\cap S$. Hence $w_1$ is of the form $\lambda b_1$ for some $0\neq  \lambda \in Q$. 
Without loss of generality we can assume that $\lambda$ is of the form $t^{r_1}$  --- otherwise replace $b_1$ by $\lambda\cdot t^{\nu(\lambda)}b_1$. We get the following two split exact sequences:
\[0\rightarrow \langle v\rangle_Z \rightarrow V\rightarrow V/\langle v\rangle_Z\rightarrow  0,\]
\[0\rightarrow S\cap (Q\otimes_Z\langle v\rangle_Z) \rightarrow S\rightarrow  S/S\cap (Q\otimes_Z\langle v\rangle_Z) \rightarrow  0.\]
By induction we already get such bases for the quotient volume space. Let $b_2,\ldots,b_n$ be preimages of the basis of $S/(Q\otimes_Z \langle v\rangle)\cap S$ and let $w_2,\ldots,w_n\in W$ be preimages of the basis of $W/\langle v\rangle_Z$ under the projection map. We get the following linear combinations
\[w_1=t^{r_1}b_1,\qquad w_i = s_i b_1 + t^{r_i}b_i.\]
for some $s_i \in Q$. Let us consider a fixed $i\in \{2,\ldots,n\}$.
Now we can write $s_i$ as a mixed fraction $s_i = \sum_{j=r_1}^m  a_j t^j + t^{r_1-1}\cdot r$ with $r\in R$. If we now improve the choice of the preimages by replacing $w_i$ by $w_i-\sum_{j=r_1}^m a_j t^{j-r_1}w_1$ we may assume $s_i = t^{r_1-1}\cdot r$. Now we have to use the fact that $v$ was chosen to be a shortest vector and the formula~\ref{prop:ffVolFormula} for the computation of the volume to get
\[r_1 = \log\vol_{\langle w_1\rangle_Z}(S)\le \log\vol_{\langle w_i\rangle_Z}(S)= \max(-\nu(s_i),r_i).\] 
We have already achieved that $-\nu(s_i) = (r_1-1)-\nu(r)\le (r_1-1)$. Hence $\frac{s_i}{t^{r_i}}\in R$ and $r_1\le r_i$. If we finally replace $b_i$ by $b_i+\frac{s_i}{t^{r_i}}b_1$, we can  assume that $s_i=0$. Hence we have found a basis of the desired form.
\end{proof}

Let us understand the cosets of the $\aut_Z(V)$-action on the set of all $R$-lattices in $Q\otimes_Z V$. 

\begin{proposition}\label{prop:ffieldIsomandCanFilt} The numbers $r_1,\ldots,r_n$ from the last proposition uniquely determine the $\aut_Z(V)$-orbit. Furthermore the canonical filtration of $(V,S)$ consists exactly of the modules $\{\langle \{w_i\mid r_i\le C\} \rangle_Z \mid C\in \IZ\}$. The integral volume of such a module $\langle \{w_i\mid r_i\le C\} \rangle_Z$ is just $\sum_{i\in \{j\mid r_j\le C\}}r_i$. 

Furthermore $c_{\langle w_1,\ldots,w_m\rangle}(S) = r_{m+1}-r_m$.

Unlike in the integral case there is in every dimension a module on the canonical path; this can be seen as an implication of the ultrametric inequality.
\end{proposition}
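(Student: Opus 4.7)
My plan is to leverage the diagonal basis $w_i = t^{r_i} b_i$ from the previous proposition so that every question about $(V, S)$ becomes a bookkeeping exercise using Proposition~\ref{prop:ffVolFormula}. The core lemma I would first establish is a minimum-volume estimate: for any rank-$m$ submodule $W \subseteq V$, $\log\vol_W(S) \ge r_1 + \ldots + r_m$, with equality iff $W = \langle w_1, \ldots, w_m\rangle_Z$. The argument is to take a $Z$-basis $u_1,\ldots,u_m$ of $W$, expand $u_j = \sum_i a_{ji} w_i = \sum_i (a_{ji} t^{r_i}) b_i$ with $a_{ji} \in Z$, and apply Proposition~\ref{prop:ffVolFormula}. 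Each nonvanishing coefficient of $u_1 \wedge \ldots \wedge u_m$ in the $b$-basis is of the form $t^{r_{i_1}+\ldots+r_{i_m}}$ times an $m\times m$ minor of $(a_{ji})$, an element of $Z$ and hence of non-positive valuation. The supremum therefore inherits the lower bound $r_1 + \ldots + r_m$. Here the non-Archimedean (ultrametric) definition of $\log\vol$ is indispensable: there is no chance of distinct minors cancelling against each other as they would in the Archimedean integral case.

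Given this estimate, I would first deduce the orbit claim. One direction is trivial: matching diagonal bases define an explicit $Z$-linear isomorphism. The converse is that $r_1 + \ldots + r_m$ is an orbit invariant for each $m$ (being the minimum rank-$m$ volume), so the individual $r_i$ are determined by successive differences. The canonical filtration and the last assertion both drop out from plotting the points $(m, r_1 + \ldots + r_m)$: they are the lowest points at each rank and their consecutive slopes $r_1 \le r_2 \le \ldots \le r_n$ already form a non-decreasing sequence, so the polygon is already convex. Hence every $\langle w_1, \ldots, w_m\rangle_Z$ lies on the canonical path, and after deleting collinear vertices the surviving modules are exactly $\langle\{w_i \mid r_i \le C\}\rangle_Z$ for $C \in \IZ$, with logarithmic volume $\sum_{r_i \le C} r_i$.

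Finally, to compute $c_W$ at $W = \langle w_1, \ldots, w_m\rangle_Z$, I would apply the minimum-volume estimate twice: once to the sub-volume space $(W, \res_W(S))$, which has the same diagonal structure with parameters $r_1, \ldots, r_m$, to show that $\sup_{W_0 \lneq W}\slope(W, W_0)$ is attained at $W_0 = \langle w_1, \ldots, w_{m-1}\rangle_Z$ with value $r_m$; and once to the quotient volume space $(V/W, \quot_W(S))$, with parameters $r_{m+1}, \ldots, r_n$ by Lemma~\ref{lem:ffVolQuot}, to show that $\inf_{W \lneq W_2}\slope(W_2, W)$ is attained at $W_2 = \langle w_1, \ldots, w_{m+1}\rangle_Z$ with value $r_{m+1}$. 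Since the infimum in Definition~\ref{def:cW} separates into these two independent extrema, $c_W = r_{m+1} - r_m$. The main obstacle throughout is the minimum-volume estimate and in particular the equality case; once it is nailed down, the rest is bookkeeping with the diagonal basis and the volume formulas already in place.
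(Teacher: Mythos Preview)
Your approach is essentially the paper's: both hinge on the minimum-volume estimate $\log\vol_W(S) \ge r_1 + \cdots + r_m$ for rank-$m$ submodules, from which the orbit invariance, the identification of the canonical path, and the value of $c_W$ all follow by reading off the convex polygon with slopes $r_1\le\cdots\le r_n$.

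One caution: the equality clause you flag as the ``main obstacle'' (equality iff $W = \langle w_1,\ldots,w_m\rangle_Z$) is false when $r_m = r_{m+1}$ --- for instance $\langle w_1,\ldots,w_{m-1},w_{m+1}\rangle_Z$ then also realizes the minimum --- and your argument nowhere actually uses it. Uniqueness of the module representing a genuine \emph{corner} of the canonical path (where $r_m < r_{m+1}$) is already supplied by Corollary~\ref{cor:cWAndCanFiltr}, so you can and should drop the equality claim. Your computation of $c_W$ via the sub- and quotient-volume spaces is correct but heavier than needed: since the infimum in Definition~\ref{def:cW} separates as $\inf_{W_2}\slope(W_2,W) - \sup_{W_0}\slope(W,W_0)$, the global minimum-volume estimate applied directly to $W_0\subset V$ and $W_2\subset V$ already bounds the two pieces by $r_m$ and $r_{m+1}$, which is the paper's (terse) route.
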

\begin{proof}
An element $f\in \aut_Z(V)$ maps such bases again to such bases. So we only have to show that the numbers $r_1,\ldots,r_n$ do not depend on the choices. The idea is to express $r_m$ intrinsically as the difference of minimal logarithmic volume of a rank $m+1$ and a rank $m$ direct summand.
A easy computation shows that $\sum_{i=0}^m r_i=\log\vol_{\langle w_1,\ldots,w_m}(S)$ and that every other rank $m$ submodule cannot have smaller logarithmic volume.
Let us have a look at the canonical path.
The slope does not decrease and it increases at rank $m$ if and only if $r_{m+1}>r_m$. Thus the modules $\{\langle \{w_i\mid r_i\le C\} \rangle_Z \mid C\in \IZ\}$ are really the canonical filtration of $(V,S)$. 
The value of $c_{\langle w_1,\ldots w_m\rangle}$ is $r_{m+1}-r_m$ because $r_{m+1}$ is the slope of the canonical path from $m$ to $m+1$ and $r_m$ is the slope from $m-1$ to $m$. 
\end{proof}

\begin{figure}[ht]
  \centering
  \begin{tikzpicture}
    \coordinate (Origin)   at (0,0);
    \coordinate (XAxisMin) at (0,0);
    \coordinate (XAxisMax) at (8,0);
    \coordinate (YAxisMin) at (0,-3);
    \coordinate (YAxisMax) at (0,2);
    \draw [thick, black,-latex] (XAxisMin) -- (XAxisMax)node[below]{rk};
    \draw [thick, black,-latex] (YAxisMin) -- (YAxisMax)node[left]{ln(vol)};

    \clip (0,-3) rectangle (10cm,2cm); 
   \foreach \x in {0,...,7}{
	   \draw[style=help lines,dashed](\x,-5) -- +(0,5)node [below right] {$\x$};
	   \draw[style=help lines,dashed](\x,0) -- +(0,5);
   }
   \node[draw,circle,inner sep=2pt,fill,blue] at (0,0) {};
   \node[draw,circle,inner sep=2pt,fill,blue] at (1,-1) {};
   \node[draw,circle,inner sep=2pt,fill,red] at (2,-2) {};
   \node[draw,circle,inner sep=2pt,fill,red] at (3,-2.5) {};
   \node[draw,circle,inner sep=2pt,fill,blue] at (4,-2) {};
   \node[draw,circle,inner sep=2pt,fill,blue] at (5,-1.5) {};
   \node[draw,circle,inner sep=2pt,fill,red] at (6,-1) {};
   \node[draw,circle,inner sep=2pt,fill,blue] at (7,0) {};   
   \draw [ultra thick,red] (0,0)--(2,-2)--(3,-2.5)--(4,-2)--(5,-1.5)--(6,-1)--(7,0);
  \end{tikzpicture}
  \vspace*{8pt}
   \begin{minipage}{8cm}
	 Sketch of the canonical plot of a volume space with $r_*=(-2,-2,-1,1,1,1,2)$. The slope of the canonical path from $i-1$ to $i$ is $r_i$.
   \end{minipage}
  \label{figure:canPlotAndTheNumbersR}
\end{figure}

\begin{lemma}[Some trivia] Let $(V,S), (V,S')$ be two volume spaces. 
\begin{itemize}
\item If $S\subset S'$, then $\log\vol_W(\res_W{S})\le \log\vol_W(\res_W(S')).$
\item \label{cor:FFieldvolumeOfNeighbors} Let $W\subset V$ be any submodule. Let us assume $S\subset S'\subset tS$ ($t$ is the variable in $F[t]=Z$). Then
\[\log\vol_W(\res_W(S)) \le \log\vol_W(\res_W(S')) \le \rk_Z(W)+\log\vol_W(\res_W(S)).\]
\end{itemize}
\end{lemma}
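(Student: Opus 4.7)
The plan is to reduce both bullets to the explicit formula of Proposition~\ref{prop:ffVolFormula}: fixing a $Z$-basis $w_1, \ldots, w_m$ of $W$ and $R$-bases of $S$ and of $S'$, the logarithmic volume becomes the supremum of $-\nu$ taken over the $m \times m$ minors of the matrix expressing the $w_k$ in that $R$-basis. With this in hand, both assertions become Cauchy--Binet expansions controlled by the ultrametric property of $\nu$.

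For the first bullet, fix $R$-bases $b_1, \ldots, b_n$ of $S$ and $b'_1, \ldots, b'_n$ of $S'$. The inclusion $S \subseteq S'$ supplies a change-of-basis matrix $C = (c_{ij}) \in M_n(R)$ with $b_i = \sum_j c_{ij} b'_j$, so $\nu(c_{ij}) \ge 0$. Let $A = (\alpha_{k,i})$ be the $m \times n$ matrix with $w_k = \sum_i \alpha_{k,i} b_i$; then $w_k = \sum_j (AC)_{k,j}\, b'_j$ in the $b'$-basis. By Proposition~\ref{prop:ffVolFormula}, $\log\vol_W(\res_W(S)) = \sup_{|I|=m} (-\nu(\det A_{\bullet, I}))$ and $\log\vol_W(\res_W(S')) = \sup_{|J|=m} (-\nu(\det (AC)_{\bullet, J}))$. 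Cauchy--Binet expands $\det((AC)_{\bullet, J}) = \sum_{|I|=m} \det(A_{\bullet, I}) \cdot \det(C_{I, J})$ with $\det(C_{I, J}) \in R$; applying the ultrametric inequality $\nu(\sum_k a_k) \ge \min_k \nu(a_k)$ together with $\nu(\det C_{I,J}) \ge 0$ term by term, and then passing to the supremum, yields the claim.

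For the second bullet, the first inequality is just the first bullet applied to $S \subseteq S'$. For the second inequality, I would first observe that $\res_W(tS) = t \cdot \res_W(S)$, since multiplication by $t \in Q^\times$ commutes with intersecting the $Q$-subspace $Q \otimes_Z W$. The first bullet applied to $S' \subseteq tS$ then compares $\log\vol_W(\res_W(S'))$ with $\log\vol_W(\res_W(tS)) = \log\vol_W(t \cdot \res_W(S))$, reducing the claim to computing $\log\vol_W(t \cdot \res_W(S)) - \log\vol_W(\res_W(S))$. Replacing an $R$-basis $b_1, \ldots, b_m$ of $\res_W(S)$ by $tb_1, \ldots, tb_m$ of $t \cdot \res_W(S)$ multiplies every $m \times m$ determinant appearing in the formula of Proposition~\ref{prop:ffVolFormula} by $t^{-m}$, and $|\nu(t^{-m})| = m = \rk_Z(W)$ accounts exactly for the missing term in the bound.

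The only real subtlety is tracking directions of inequalities and the sign of the valuation shift $\nu(t^{-m})$, dictated by the convention $\nu(p/q) = \deg q - \deg p$. No deep step is required: once Proposition~\ref{prop:ffVolFormula} is in hand, the lemma is a straightforward piece of linear algebra over the DVR $R$ together with the ultrametric property of $\nu$.
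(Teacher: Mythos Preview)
The paper gives no proof of this lemma (it is labeled ``trivia''), so your approach via Proposition~\ref{prop:ffVolFormula} and Cauchy--Binet is the natural one and is structurally sound. The gap is exactly where you yourself flag it: the direction of the inequality. Carrying your first-bullet argument through, from $\det((AC)_{\bullet,J})=\sum_I \det(A_{\bullet,I})\det(C_{I,J})$ with $\det(C_{I,J})\in R$ you obtain $\nu\bigl(\det(AC)_{\bullet,J}\bigr)\ge\min_I\nu\bigl(\det A_{\bullet,I}\bigr)$, hence $-\nu\bigl(\det(AC)_{\bullet,J}\bigr)\le\sup_I\bigl(-\nu(\det A_{\bullet,I})\bigr)$; taking the supremum over $J$ gives $\log\vol_W(\res_W(S'))\le\log\vol_W(\res_W(S))$, the \emph{reverse} of what is printed. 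A one-line sanity check: with $V=W=Z$, $S=R$, $S'=tR$ one has $S\subset S'$ (since $t^{-1}\in R$), yet $\log\vol_V(S)=0$ while $\log\vol_V(S')=-\nu(t^{-1})=-1$.

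So the inequality in the lemma is stated backwards; the same slip recurs verbatim in the text of the proof of Lemma~\ref{prop:ffSubaddidivity}, though the displayed chain of inequalities there is correct and in fact requires the direction your computation actually produces. With the corrected first bullet your second-bullet argument then goes through cleanly: $S'\subset tS$ gives $\log\vol_W(\res_W(S'))\ge\log\vol_W(\res_W(tS))=\log\vol_W(\res_W(S))-\rk_Z(W)$ (note $\nu(t^{-m})=m$, so the shift is $-\rk_Z(W)$, not $+\rk_Z(W)$), and together with $S\subset S'$ one obtains $\log\vol_W(\res_W(S))-\rk_Z(W)\le\log\vol_W(\res_W(S'))\le\log\vol_W(\res_W(S))$. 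The downstream use in Lemma~\ref{lem:ffcWLipschitz} is unaffected, since only the symmetric bound $|\log\vol_W(\res_W(S))-\log\vol_W(\res_W(S'))|\le\rk_Z(W)$ is needed there.
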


So we have shown that the logarithmic volume function satisfies all conditions from Convention~\ref{conv:latticeForFiltr}:

\begin{proposition} \label{prop:fflatticeForFiltr} Let $(V,S)$ be a volume space. Consider the lattice $\frl$ of direct summands of $V$. 
The logarithmic volume function $W\mapsto \log\vol_W(\res_W(S))$ and the rank $W\mapsto \rk_{F[t]}(W)$ have the following properties. 
\begin{enumerate}
\item \label{prop:fflatticeForFiltr_StrMon} $\rk$ is strictly monotone, i.e.  $\rk(W)<\rk(W')$ for all $W,W'\in \frl$ with $W<W'$.
\item \label{prop:fflatticeForFiltr_rkAdd} $\rk$ is additive, i.e. $\rk(W\cap W')+\rk(\lub(W,W'))=\rk(W)+\rk(W')$ for all $W,W'\in \frl$.
\item \label{prop:fflatticeForFiltr_logovolsubAdd} The function $\log\vol(-):\frl\rightarrow \IR$ is subadditive. This means that for all $W,W'\in \frl$
\begin{multline*} \log\vol_{W\cap W'}(\res_{W\cap W'}(S))+\log\vol_{\lub(W,W')}(\res_{\lub(W,W')}(S))\\ \le \log\vol_{W}(\res_W(S))+\log\vol_{W'}(\res_{W'}(S)).\end{multline*}
\item \label{prop:fflatticeForFiltr_FinShort} For each $C\in \IR$ there are only finitely many $L\in \frl$ with \\
$\log\vol_W(\res_W(S))\le C$.
\item \label{prop:fflatticeForFiltr_normalization}$ \rk(0)=0,\log\vol(0)=0$. 
\end{enumerate}
\end{proposition}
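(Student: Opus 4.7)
The plan is to assemble items (1)--(5) from results already proved in Section~\ref{sec:FFvol}, since most of the real work has been done upstream.

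First I would dispatch the rank axioms (1), (2), and (5). For strict monotonicity, if $W \subsetneq W'$ are direct summands of $V$, then $V/W$ is free and hence torsionfree, so $W'/W \hookrightarrow V/W$ is nonzero and torsionfree over the PID $F[t]$; therefore it has positive rank, whence $\rk(W) < \rk(W')$. For additivity of $\rk$, I would first verify that the intersection of two direct summands is again a direct summand: $V/(W \cap W')$ embeds into $V/W \oplus V/W'$, which is free, so $V/(W \cap W')$ is torsionfree and thus free over $F[t]$. Then the standard short exact sequence $0 \to W \cap W' \to W \oplus W' \to W + W' \to 0$ yields $\rk(W \cap W') + \rk(W + W') = \rk(W) + \rk(W')$, and since $\lub(W, W')$ is the direct summand spanned by $W + W'$ it contains $W+W'$ with torsion quotient and hence has the same rank. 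The normalization (5) is immediate from Remark~\ref{rem:ffvolumeOfZero}.

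For the finiteness statement (4), I would appeal directly to Corollary~\ref{cor:fffinShort}, which already produces the claim for arbitrary submodules and a fortiori for direct summands.

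The subadditivity (3) is where a small extra step is required beyond Lemma~\ref{prop:ffSubaddidivity}, since the latter gives the inequality using the submodule sum $W + W'$ rather than the direct-summand join $\lub(W, W')$. The key observation is that $W + W' \subseteq \lub(W, W')$ is an inclusion of $F[t]$-modules of the same rank; Lemma~\ref{lem:ffSubmoduleOfFiniteIndex} therefore gives $\log\vol_{\lub(W,W')}(\res_{\lub(W,W')}(S)) \le \log\vol_{W+W'}(\res_{W+W'}(S))$. Combining this with Lemma~\ref{prop:ffSubaddidivity} delivers the parallelogram inequality for the lattice of direct summands. The \emph{main} technical obstacle has already been overcome upstream: subadditivity ultimately reduces, via the quotient formula in Lemma~\ref{lem:ffVolQuot} and the compatibility statements in Lemma~\ref{lem:ffsubquotients}, to the case $W \cap W' = 0$, where the estimate is obtained from a submodule inclusion between the relevant $R$-lattices together with monotonicity of $\log\vol_?(-)$ in the lattice argument. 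Given those tools, the present proposition is essentially a repackaging of statements already established.
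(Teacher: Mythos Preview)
Your proof is correct and follows essentially the same route as the paper: items (1), (2), (5) are dispatched as formalities about direct summands over a PID, item (4) is Corollary~\ref{cor:fffinShort}, and item (3) is Lemma~\ref{prop:ffSubaddidivity} together with Lemma~\ref{lem:ffSubmoduleOfFiniteIndex} to pass from $W+W'$ to $\lub(W,W')$. Your additional verification that $W\cap W'$ is a direct summand and your closing remarks on the upstream proof of subadditivity are accurate but not strictly needed here.
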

\begin{proof}
\begin{enumerate}
\item This is clear, note that strictness holds since we only consider direct summands.
\item This is almost the classical additivity of the rank applied to the short exact sequence \[0\rightarrow W\cap W'\rightarrow W\oplus W'\rightarrow W+W'\rightarrow 0,\]
except that we have to replace $W+W'$ by $\lub(W,W')$. Passing to a finite index submodule does not change the rank. 
\item Proposition~\ref{prop:ffSubaddidivity} shows a stronger statement (with $\lub(W,W')$ replaced by $W+W'$). It is stronger by Lemma~\ref{lem:ffSubmoduleOfFiniteIndex}.
\item This has been done in Corollary~\ref{cor:fffinShort}.
\item This is clear from the definitions (and Remark~\ref{rem:ffvolumeOfZero}). 
\end{enumerate}\end{proof}
\begin{remark}\label{rem:ff:homothetyInvOfcW} It follows directly from Definition~\ref{def:ffvol} that
\[\log\vol_W(qS)=\rk(W)\cdot \nu(q)+\log\vol_W(S)\]
for $q\in Q$.
The function $c_W$ from Definition~\ref{def:cW} is defined as an infimum over functions of the form
\[S\mapsto \frac{\log\vol_{W_2}(S)-\log\vol_W(S)}{\rk(W_2)-\rk(W)}-\frac{\log\vol_{W}(S)-\log\vol_{W_0}(S)}{\rk(W)-\rk(W_0)}.\]
Using the upper formula we see that replacing $S$ by $qS$ does not affect $c_W$. Hence $c_W(S)=c_W(qS)$.
\end{remark}

\section{Volume: The localized case}\label{sec:volloc}
Now let start to study groups of the form $\GL_n(\IZ[\frac{1}{2}])$. Again we want to assign to an inner product its volume in a $\GL_n(\IZ[\frac{1}{2}])$ invariant way. But the volume of the parallelepiped spanned by a $\IZ[\frac{1}{2}]$-basis of $\IZ[\frac{1}{2}]^n$ depends on the choice of this basis. The solution is to add additional structure that tells us which bases are allowed. The set of possible choices for this additional information also carries a $\GL_n(\IZ[\frac{1}{2}])$-action, so it makes sense to pick the volume in a $\GL_n(\IZ[\frac{1}{2}])$-invariant way.

\begin{convention}Let
\begin{itemize}
\item $Z$ denote either the integers $\IZ$ or the polynomial ring $F[t]$ for a finite field $F$,
\item an element $z\in Z$ be called \emph{normalized} if it is positive in the case of $Z=\IZ$ resp. if its leading coefficient is one in the case of $F[t]$.
\item $\mathfrak{P}$ denote the set of all normalized primes in $Z$,
\item $T\subset  \mathfrak{P}$ denote a finite subset,
\item $Q$ denote the quotient field of $Z$,
\item $Z[T^{-1}]$ be the ring $\{\frac{a}{b}\in Q\mid \nu_p(\frac{a}{b})\ge 0\mbox{ for all }p\in \mathfrak{P}\setminus T\}$,
\item $Z_T$ be the ring $Z[(\mathfrak{P}\setminus T)^{-1}]$,
\item $n$ be a fixed integer,
\item $(z,T)$ denote the product of all normalized prime factors of $z\in Z$ that lie in $T$,
\item $\ord(m)$ denote a generator of the ideal  $Ker(Z\rightarrow M \quad r\mapsto rm)$ for an element $m$ of a $Z$-module.
\end{itemize}
\end{convention}  

\begin{remark} \label{rem:propZSinv}
\begin{enumerate}
\item Every nonzero ring element $z\in Z$ is associated to a unique normalized element. 
\item The rings $Z,Z_T,Z[T^{-1}]$ are all Euclidean rings and hence principal ideal domains. For $\IZ$ a degree function is given by the absolute value and for $F[t]$ it is given by the degree of a polynomial. A degree function on $Z[S^{-1}]$ is for example given by 
\[\frac{a}{b}\mapsto \deg((a,\mathfrak{P}\setminus S)) - \deg((b,\mathfrak{P}\setminus S))\]
where $\deg$ denotes a degree function on $Z$.
\end{enumerate}
\end{remark}

\begin{definition}\label{def:IntegralStructure} A \emph{integral structure} with respect to $T$ on a finitely generated free $Z[T^{-1}]$-module $V$ of rank $n$ is a finitely generated $Z_T$-submodule $B$ of $Q\otimes_{Z[T^{-1}]} V$ of rank $n$.
\end{definition}

\begin{remark}\label{rem:GLnQactsOnIntegralStructures} Let $V\coloneqq Z[T^{-1}]^n$. We have the following trivia:
\begin{enumerate}
\item The underlying $Z_T$-module of an integral structure is always free of rank $n$ by the structure theorem for finitely generated modules over a PID.
\item $\aut_Q(Q \otimes V)\cong \GL_n(Q)$ acts transitively on the set of all integral structures on $V$; for any two integral structures $B,B'$ we can pick $Z_T$-bases and a matrix $A\in \GL_n(Q)$ that maps one basis to the other. The stabilizer of the standard integral structure $Z[(\mathfrak{P}\setminus T)^{-1}]^n\subset Q^n$ is $\GL_n(Z[(\mathfrak{P}\setminus T)^{-1}])$. Hence every other stabilizer is conjugate to $\GL_n(Z[(\mathfrak{P}\setminus T)^{-1}])$ in $\GL_n(Q)$.
\item \label{rem:QnmodBisTorsion} For any integral structure $B$ we get that $Q^n/B\cong Q^n/Z[(\mathfrak{P}\setminus T)^{-1}]^n$ is $T$-torsion.
\end{enumerate}
\end{remark}

The poset of direct summands of $Z^n$ is isomorphic to the poset of subvector spaces of $Q^n$; the isomorphisms are given by $\_\cap Z^n$ and $\langle \_\rangle_Q$. A more fancy version of this is the following proposition:

\begin{proposition}\label{prop:isomPoset} Let $B$ be an integral structure on $V$. Then $V\cap B$ is a finitely generated, free $Z$-module and $\_\cap B$ defines a (rank-preserving) isomorphism from the poset of direct summands of $V$ to the poset of direct summands of $V\cap B$.
Thus it also preserves greatest lower bounds and least upper bounds and hence is an isomorphism of lattices.
\end{proposition}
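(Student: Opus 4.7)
The plan is to prove the proposition in three steps: first establish that $L := V \cap B$ is a finitely generated free $Z$-module of rank $n$, then verify that $\_ \cap B$ sends direct summands of $V$ to direct summands of $L$, and finally exhibit the inverse map $M \mapsto Z[T^{-1}] \cdot M$.

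The module structure on $L$ is automatic from $Z = Z[T^{-1}] \cap Z_T$ inside $Q$, since $L$ is stable under scalars from both rings. To establish finite generation with the correct rank, I would argue locally, using that $Z$ is a PID and a $Z$-submodule of the $Q$-vector space $Q \otimes_{Z[T^{-1}]} V \cong Q^n$ is a full-rank free lattice iff its localisations at all primes $p \in \mathfrak P$ are $Z_{(p)}$-lattices agreeing with a fixed reference lattice for almost all $p$. For $p \in T$ one has $V_{(p)} = Q^n$ while $B_{(p)}$ is a $Z_{(p)}$-lattice, so $L_{(p)} = B_{(p)}$; for $p \notin T$ the situation is symmetric and $L_{(p)} = V_{(p)}$. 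The local data agree with a fixed reference $Z$-lattice at cofinitely many primes because $V$ and $B$ are finitely generated in their respective categories, so $L$ is a $Z$-lattice and hence free of rank $n$. A more hands-on alternative would be to pick a $Z[T^{-1}]$-basis of $V$ and a $Z_T$-basis of $B$, and use the unique factorisation of nonzero elements of $Q$ into a $T$-part and a complementary part to put the change-of-basis matrix into a simultaneous diagonal form, from which a $Z$-basis of $L$ can be read off directly.

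Next, recall the saturation characterisation: a submodule $W\subseteq V$ is a direct summand iff $V/W$ is torsion-free iff $W = V \cap (Q \otimes_{Z[T^{-1}]} W)$ inside $Q^n$, and analogously for $L$. Given a direct summand $W$ of $V$, I would compute
\[
W \cap B = V \cap (Q \otimes W) \cap B = L \cap (Q \otimes W),
\]
which is a direct summand of $L$ of the correct rank $\dim_Q(Q\otimes W) = \rk_{Z[T^{-1}]}(W)$. Conversely, for a direct summand $M$ of $L$, the module $\Phi(M) := Z[T^{-1}] \cdot M \subseteq V$ is a direct summand of $V$ because $L/M$ is a torsion-free $Z$-module and localisation preserves torsion-freeness. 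The composites both equal the identity: for any direct summand $M$ of $L$ one has $Q \otimes \Phi(M) = Q \otimes M$, and using the saturation characterisation $\Phi(M) \cap B = L \cap (Q \otimes M) = M$; symmetrically $\Phi(W \cap B) = W$.

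The main obstacle is really the first step — getting the correct rank and finite generation for $L$ in a setting where $V$ and $B$ are finitely generated only over distinct localisations of $Z$. After that, both posets identify canonically with the lattice of $Q$-subspaces of $Q \otimes V = Q \otimes L$ via the saturation map, so $\_ \cap B$ is an order-isomorphism of posets (both it and its inverse are inclusion-preserving), and any order-isomorphism between posets that happen to be order-theoretic lattices automatically preserves greatest lower bounds and least upper bounds, yielding the lattice isomorphism.
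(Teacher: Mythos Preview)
Your argument is correct and the overall architecture matches the paper's: the inverse map is $M \mapsto \langle M\rangle_{Z[T^{-1}]}$, and the saturation description of direct summands is exactly what makes both composites the identity. The paper in fact dismisses this part as ``totally elementary'' and deduces rank-preservation intrinsically (rank $=$ length of a maximal chain in the poset), whereas you read it off from the common $Q$-span; both are fine.

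The one genuine methodological difference is in the finite-generation step. You invoke a local-to-global characterisation of $Z$-lattices via their localisations at all primes (which does work here, though one should note that checking $(V\cap B)_{(p)} = B_{(p)}$ for $p\in T$ uses that any $b\in B$ can be pushed into $V$ by an element of $Z$ with no prime factors in $T$). The paper instead uses a one-line denominator bound: since $V$ is finitely generated over $Z[T^{-1}]$ there is $z\in Z$ with $V\subset \tfrac{1}{z}Z[T^{-1}]^n$, and since $B$ is finitely generated over $Z_T$ there is $z'\in Z$ with $B\subset \tfrac{1}{z'}Z_T^n$; hence $V\cap B\subset \tfrac{1}{zz'}Z^n$ is a $Z$-submodule of a finitely generated free $Z$-module, so finitely generated and free. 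Your local argument is more conceptual and immediately gives the rank; the paper's is shorter and avoids the localisation-commutes-with-intersection subtlety.
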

\begin{proof}
Finite generation of $V\cap B$ follows from the following elementary criterion: A $Z[S^{-1}]$-submodule $M$ of $Q^n$ is finitely generated, iff it is contained in a submodule of the form $\frac{1}{z}Z[S^{-1}]^n$ for some $z\in Z$. 

 By the criterion above we find some $z\in Z$ such that $(d,\mathfrak{P}\setminus T)$ divides $z$ for all denominators of entries of elements of $V\subset Q^n$ and a $z'$ such that $(d,T)$ divides $z'$ for all denominators of entries of elements of $B\subset Q^n$. Thus all $V\cap B$ is a submodule of $\frac{1}{zz'}Z^n$ and hence finitely generated.
 
It is totally elementary to verify that $\_ \cap B$ and $\langle\_\rangle_{Z[T^{-1}]}$ are inverse to each other. The rank of a direct summand is an intrinsic property of the poset of direct summands -- it is the length of the longest ascending chain ending with this direct summand. Thus it is preserved under a poset isomorphism.
\end{proof}

\begin{lemma} Given two disjoint sets of primes. An integral structure on $Z[(T_1\cup T_2)^{-1}]^n$ is uniquely determined by an integral structure on $Z[T_1^{-1}]^n$ and an integral structure on $Z[T_2^{-1}]^n$.
\end{lemma}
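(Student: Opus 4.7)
The strategy is to set up explicit mutually inverse maps
\[\phi\colon B\longmapsto \bigl(Z_{T_1}\cdot B,\; Z_{T_2}\cdot B\bigr),\qquad \psi\colon (B_1,B_2)\longmapsto B_1\cap B_2,\]
with all objects regarded as subsets of $Q^n$. The key ring-theoretic input, which uses the disjointness hypothesis, is the identity
\[Z_{T_1}\cap Z_{T_2}\;=\;Z_{T_1\cup T_2}\qquad\text{inside }Q,\]
since $(\mathfrak{P}\setminus T_1)\cap(\mathfrak{P}\setminus T_2)=\mathfrak{P}\setminus(T_1\cup T_2)$. Taken componentwise this yields $Z_{T_1}^n\cap Z_{T_2}^n=Z_{T_1\cup T_2}^n$.

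For $\psi\circ\phi=\mathrm{id}$, I first note that $Z_{T_i}\cdot B$ is finitely generated (inherited from $B$) and spans $Q^n$, hence is an integral structure on $Z[T_i^{-1}]^n$. Since $Z_{T_1\cup T_2}$ is a PID by Remark~\ref{rem:propZSinv}, Remark~\ref{rem:GLnQactsOnIntegralStructures} gives $B=A\cdot Z_{T_1\cup T_2}^n$ for some $A\in\GL_n(Q)$. Then $Z_{T_i}B=A\cdot Z_{T_i}^n$, so the intersection commutes with the invertible $A$ and reduces to the displayed identity, giving $Z_{T_1}B\cap Z_{T_2}B=B$.

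For $\phi\circ\psi=\mathrm{id}$, I first verify that $B_1\cap B_2$ is itself an integral structure on $Z[(T_1\cup T_2)^{-1}]^n$. It is automatically a $Z_{T_1\cup T_2}$-module by the displayed identity. For the rank property, take a $Z_{T_1}$-basis $b_1,\dots,b_n$ of $B_1$; since $B_2$ spans $Q^n$, there exist $d_i\in Z\setminus\{0\}$ with $d_ib_i\in B_2$, and these $Q$-linearly independent vectors lie in $B_1\cap B_2$. For finite generation, pick $c\in Z\setminus\{0\}$ with $cB_1\subseteq Z_{T_1}^n$ and $cB_2\subseteq Z_{T_2}^n$; then $c^2(B_1\cap B_2)\subseteq Z_{T_1}^n\cap Z_{T_2}^n=Z_{T_1\cup T_2}^n$, embedding $B_1\cap B_2$ into a finitely generated $Z_{T_1\cup T_2}$-module, which suffices by Noetherianness of $Z_{T_1\cup T_2}$.

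The principal obstacle is the compatibility $Z_{T_1}(B_1\cap B_2)=B_1$ (the $T_2$ case being symmetric). The inclusion $\subseteq$ is trivial; for $\supseteq$, fix $x\in B_1$. Full rank of $B_2$ yields a nonzero $d\in Z$ with $dx\in B_2$. Factor $d=d_1d_2$ so that $d_2$ collects exactly those prime power factors of $d$ whose underlying prime lies in $T_2$. Then all prime factors of $d_1$ lie outside $T_2$, making $d_1$ a unit in $Z_{T_2}$, so $d_2x=d_1^{-1}(dx)\in B_2$; trivially $d_2x\in B_1$, giving $d_2x\in B_1\cap B_2$. The disjointness $T_1\cap T_2=\emptyset$ now forces $T_2\subseteq\mathfrak{P}\setminus T_1$, hence $d_2$ is a unit in $Z_{T_1}$, and $x=d_2^{-1}(d_2x)\in Z_{T_1}(B_1\cap B_2)$. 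This denominator-splitting step is the genuine use of the disjointness hypothesis; without it one cannot split a nonzero $d\in Z$ into a factor invertible in $Z_{T_2}$ and a factor invertible in $Z_{T_1}$.
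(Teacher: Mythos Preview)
Your proof is correct and follows precisely the same approach as the paper: the paper defines the same pair of maps $B\mapsto(\langle B\rangle_{Z_{T_1}},\langle B\rangle_{Z_{T_2}})$ and $(B_1,B_2)\mapsto B_1\cap B_2$, then simply asserts that ``an elementary computation shows that both compositions are the identity.'' You have supplied that elementary computation in full, including the denominator-splitting argument that makes the disjointness hypothesis visible.
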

\begin{proof}
The proof works exactly the same way: From an integral structure $B\subset Q^n$ with respect to $T_1\cup T_2$ we can obtain $(\langle B\rangle_{Z_{T_1}},\langle B\rangle_{Z_{T_2}})$. Conversely, given two such integral structures we can take their intersection. 
An elementary computation shows that both compositions are the identity.
\end{proof}
Equivalence classes of integral structures relative to a single prime form the vertices of an affine building. With the last lemma we can identify integral structures (up to rescaling) with the vertices of a product of buildings.
\ignore{
\subsection{Some posets}

We have to figure out what happens if one changes the set of primes in consideration. This is done in this section. 

Fix an integer $n\in \IN$ for this section. For a $Z$-module $M$ and a set of primes $T$ let 
\begin{eqnarray*}
T-\tors(M)&\coloneqq &\Ker(M\rightarrow Z[T^{-1}]\otimes_Z M \quad m\mapsto 1\otimes m)\\&=&\{m\in M\mid \mbox{All prime factors  of }\ord(m )\mbox{ lie in }T\}.\end{eqnarray*}

Let us fix sets of primes $T_1$ and $T_2\subset T_2'$ and a finitely generated $Z[T_1^{-1}]$-submodule $M$ of $Q^n$.

\begin{definition}  Let $\frl^M_{Z[T_1^{-1}]}[T_2^{-1}]$ denote the poset of all $Z[T_1^{-1}]$ submodules $V$ of $M$ such that $T_2-\tors(M/V)=0$ .
\end{definition}

\begin{remark}\label{rem:retractionOfPosets}
\begin{enumerate} \item $V\in \frl^M_{Z[T_1^{-1}]}[T_2^{-1}]$ is automatically finitely generated free. This follows from the structure theorem applied to the $Z[T_1^{-1}]$-module $M/V$.
\item By the structure theorem for finitely generated modules over a PID we know that any submodule of a finitely generated free module $M$ is a direct summand if and only if the quotient is torsionfree. Hence in this case $\frl^M_{Z[T_1^{-1}]}[\mathfrak{P}^{-1}]$  is the subposet of direct summands of $M$.
\item Note that for a $Z[T_1^{-1}]$-module $M'$ its $Z[T_1^{-1}]$-torsion submodule 
\[\tors_{Z[T^{-1}]}(M')\coloneqq \{x\in M'\mid cx=0 \mbox{ for a } c \in Z[T^{-1}]\}\]
is the same as its $Z$-torsion part $\tors_Z(\res_Z(M'))$. 
\item There is a retract $r:\frl^M_{Z[T_1^{-1}]}[T_2^{-1}]\rightarrow \frl^M_{Z[T_1^{-1}]}[T_2'^{-1}]$ of posets left inverse to the inclusion $i:\frl^M_{Z[T_1^{-1}]}[T_2'^{-1}]\rightarrow \frl^M_{Z[T_1^{-1}]}[T_2^{-1}]$. It is given by $V\mapsto \pi^{-1}((T_2'\setminus T_2)-\tors(M/V))$, where $\pi$ is the canonical projection $M\rightarrow M/V$.
\item The retract is rank preserving. Consider the short exact sequences 
\[0\rightarrow V\rightarrow V\rightarrow 0 \rightarrow 0,\]
\[0\rightarrow V\rightarrow \pi^{-1}((T_2\setminus T_2')-\tors(M/V))\rightarrow  (T_2\setminus T_2')-\tors(M/V)\rightarrow 0.\]
Now the additivity of the rank and the fact that any torsion group has rank zero implies $\rk(V)=\rk(r(V))$.
\item We get for $T_2\subset T_2'$ and $V\in\frl^M_{Z[T_1^{-1}]}[T_2^{-1}], W\in \frl^M_{Z[T_1^{-1}]}[T_2'^{-1}]$
\[V\subseteq i(W)\Rightarrow r(V)\subseteq r\circ i(W)=W.\]
and we always have $i\circ r(V)\supset V$.
\item The abelian group $V$ has finite index in $r(V)$ for any $V\in \frl_Z$ 
\begin{eqnarray*}
[r(V):V]&=&[\pi^{-1}((T_2'\setminus T_2)-\tors(M/V):\pi^{-1}(0)]\\
&=&|(T_2'\setminus T_2)-\tors(M/V)|.\end{eqnarray*}
$M/V$ is a finitely generated $Z[T_1^{-1}]$ module. By the structure theorem its $(T_2'\setminus T_2)$-torsion part is isomorphic to a direct sum of finitely many copies of modules of the form $Z[T_1^{-1}]/p^kZ[T_1^{-1}]$ with $k\in \IN, p\in T_2'\setminus T_2$. Note that all those summands are finite abelian groups. Hence $|(T_2'\setminus T_2)-\tors(M/V)|<\infty$.
\end{enumerate}
\end{remark}

\begin{lemma}\label{lem:isAlattice} $\frl^M_{Z[T_1^{-1}]}[T_2^{-1}]$ is a lattice in the order theoretic sense. This means that any finite subset has a greatest lower bound and a least upper bound.
\end{lemma}
\begin{proof}
Let $S$ denote the chosen subset. The greatest lower bound is given by the intersection $\bigcap S=\bigcap_{A\in S}A$.

Note that $M/\bigcap S$ embeds into $\prod_{A\in S} M/A$ and hence it is also $T_2$-torsionfree.

In the case of $T_2=\emptyset$ the least upper bound is given by the sum of all submodules in $S$. In general this sum need not lie in $\frl^M_{Z[T_1^{-1}]}[T_2^{-1}]$.
Let $i:\frl^M_{Z[T_1^{-1}]}[T_2^{-1}]\hookrightarrow \frl^M_{Z[T_1^{-1}]}[\emptyset^{-1}]$ denote the inclusion and let $r$ denote the retract from Remark~\ref{rem:retractionOfPosets}. Let us now find the least upper bound of $S$:

Let $M'$ be an upper bound of all elements in $S$. Since $i$ is order preserving we get $i(M)\le i(M')$ for all $M\in S$. So since $\frl^M_{Z[T_1^{-1}]}[\emptyset^{-1}]$ is a lattice we get $\sum_{M\in S} i(M)\le i(M')$. Since the retract is also order preserving we get
\[r(\sum_{M\in S} i(M))\le r(i(M'))=M.\]
Clearly $r(\sum_{M\in S} i(M))$ is an upper bound. So we have shown that $r(\sum_{M\in S} i(M))$ is really the least upper bound. 
\end{proof}

\begin{remark} Hence the inclusion $\frl_Z\hookrightarrow \frl_Z[S^{-1 }]$ is just a morphism of posets and not a morphism of lattices as it doesn't preserve the least upper bounds. 
\end{remark}

The following lemma gives a criterion to decide whether a given $Z[T^{-1}]$-submodule of $Q^n$ is finitely generated by looking at the denominators.

\begin{lemma} For $q\in Q$ let $f_T(q)$ denote the $T$-primary part of the denominator of $q$; i.e. $\coloneqq c$ where $c$ is normalized, $q=\frac{a}{bc}$ and 
\[(a,bc)=1=(c,T)=(b,\mathfrak{P}\setminus T).\]
Then a $Z[T^{-1}]$-submodule $M$ of $Q^n$ is finitely generated if and only if the set 
\[\{f_T(x_i)\mid (x_1,\ldots,x_n)\in M, i=1,\ldots,n \}\subset Z\] 
is finite. Equivalently we may ask for an element $N\in Z\setminus\{0\}$ such that all elements of this set divide $N$.
\end{lemma}
\begin{proof} Let $M$ be finitely generated and let $((x_{i,j})_{i=1,\ldots,n})_{j=1,\ldots,m}$ be a finite generating set. If $y=(y_1,\ldots,y_n)$ lies in the $Z[T^{-1}]$-span of the generating set we get
\[y_i = \sum_{j=1}^m\lambda_j x_{i,j}\]
So the denominator of $y_i$ divides the product of the denominators of $x_{i,j}$. Thus the same holds for the $T$-primary parts. Hence
the element $f_T(y_i)$ divides $\prod_{i=1,\ldots,n}\prod_{j=1,\ldots,m} f_{T}(x_{i,j})$ for every $(y_1,\ldots, y_n)\in M$. Hence the set is finite.

Conversely let this set be finite and let $S$ denote the product of its elements. Hence $M$ is a submodule of $\frac{1}{S}\cdot Z[T^{-1}]^n\subset Q^n$ and over a principal ideal domain submodules of finitely generated modules are finitely generated.
\end{proof}

\begin{proposition}\label{prop:isomLattices} Let $T$ be a set of primes. Let $B$ be an integral structure with respect to $T$. Let $M$ be a finitely generated $Z[T^{-1}]$-submodule of $Q^n$. The map 
\[\frl^M_{Z[T^{-1}]}\rightarrow \frl^{M\cap B}_{Z}[T^{-1}]\qquad W\mapsto W\cap B\]
is an isomorphism of posets (and hence of lattices) that

\begin{enumerate}
\item is rank preserving,
\item is index preserving (considering the modules just as abelian groups),
\item restricts to an isomorphism $\frl^M_{Z[T^{-1}]}[T_2^{-1}]\rightarrow \frl^{M\cap B}_Z[(T\cup T_2)^{-1}]$ for another set of primes $T_2$.
Especially for $T_2=\mathfrak{P}$ this gives an isomorphism of the lattices of direct summands.
\end{enumerate}
\end{proposition}
\begin{proof}
The first implicit claim is that $M\cap B$ is a finitely generated $Z$-module. By the last lemma there are $N,N'\in Z$ such that if $x=(\frac{a_1}{b _1c_1},\ldots,\frac{a_n}{b _nc_n})$ with $(a_i,b_ic_i)=1=(b_i,T)=(c_i,\mathfrak{P}\setminus T)$, then $b_i|N$ and $c_i|N'$. Hence $b_ic_i|NN'$ and the last lemma implies that $M$ is a finitely generated $Z$-module.

The inverse is given by
 \[\frl^{M\cap B}_{Z}[T^{-1}]\rightarrow \frl^M_{Z[T^{-1}]}\qquad V\mapsto \langle V\rangle_{Z[T^{-1}] }.\]
 Let us check both compositions:  Pick $W\in \frl^M_{Z[T^{-1}]}$ and pick an element $w\in W$. 
Because $W/(W\cap B)\subset Q^n/B$ we know, that $W/W\cap B$ is $T$-torsion by Remark~\ref{rem:GLnQactsOnIntegralStructures}~\ref{rem:QnmodBisTorsion}. Hence there is an element $n\in Z\setminus\{0\}$ whose prime factors are in $T$ such that $nw\in W\cap B$. But $n$ is a unit in $Z[T^{-1}]$ and so $w=n^{-1}nw\in \langle W\cap B\rangle_{Z[T^{-1}]}$. Hence we get the chain of inclusions:
\[W\subset \langle W\cap B\rangle_{Z[T^{-1}]}\subset \langle W\rangle_{Z[T^{-1}]} =W.\]
So the first composition is the identity. Let us now pick a $V\in\frl^{M\cap B}_Z[T^{-1}]$ and note that
\[\langle V\rangle_{Z[T^{-1}]}=\{x\in Q^n\mid \exists \;n\in Z\setminus\{0\}:\;\mbox {all prime factors of }n\mbox{ are in }T,nx\in V\}, \]
\[\langle V\rangle_{Z[T^{-1}]}\cap B=\{x\in B\mid \exists \;n\in Z\setminus \{0\}:\;\mbox {all prime factors of }n\mbox{ are in }T,nx\in V\}.\]
This is just $V$ as $M/V$ does not contain $T$-torsion  since $V\in\frl_Z[T^{-1}]$.

Hence those maps are inverse bijections. They are obviously order preserving. As they are isomorphisms of posets they have to map a greatest lower bound to a greatest lower bound and a least upper bound to a least upper bound. Hence they are also isomorphisms of lattices.
\begin{enumerate}
\item Note first that $\rk_{Z[T^{-1}]}=\rk_Z$. Pick $W\in\frl^M_{Z[T^{-1}]}$. We have the following short exact sequences of $Z$-modules:
\begin{align*}
\xymatrix{0\ar[r]& W\cap B\ar[r]\ar[d]& B\ar[r]\ar[d]&B/(W\cap B)\ar[r]\ar[d]&0\\
0\ar[r]&W\ar[r]& Q^n\ar[r]&Q^n/W\ar[r]&0\\
}
\end{align*}
The kernel of the map $B\rightarrow Q^n/W$ is $W\cap B$. So the last vertical map is injective. The quotient $(Q^n/W)/(B/W\cap B)$ is a quotient of the $T$-torsion module $Q^n/B$ (see Remark~\ref{rem:QnmodBisTorsion}) and hence its rank is zero. Applying the additivity of the rank to the last column shows that $B/(W\cap B)$ and $Q^n/W$ have the same rank. The rank of both $B$ and $Q^n$ is $n$ (see Definition~\ref{def:IntegralStructure}). The additivity of the rank implies $\rk(W)=\rk(W\cap B)$.

\item Let $W,W'\in \frl^M_{Z[T^{-1}]}$ with $W\subset W'$. Note that $W'\cap B/W\cap B$ is a $Z$-submodule of $W'/W$. The inclusion is induced by $W'\cap B\hookrightarrow W'$. 
We want to show that the canonical map $\tors(W'\cap B/W\cap B)\rightarrow \tors(W'/W)$ is an isomorphism.

We only have to consider the surjectivity since the map is the restriction of the injective map $W'\cap B/W\cap B\rightarrow W'/W$. 
So let $[v]\in W'/W$ be any torsion element. No element of $T$ divides the order of $[v]$ as it is an element of the $Z[T^{-1}]$-module $W'/W$. We already know that $W=\langle W\cap B\rangle_{Z[T^{-1}]}$ and hence $v'=tv$ for some $v\in W\cap B$ and an element $t\in Z\setminus \{0\}$ whose prime factors lie in $T$.
$W'/W$  is $T$-torsionfree and so is its subgroup $W'\cap B/W\cap B$.

Pick an element $s$ with $st=\lambda \ord([v])+1$ for some $\lambda \in Z$. This exists as $t$ and $\ord([v])$ are coprime. Hence
\[ [v]=(1+\ord([v]))[v]=st[v]=s[v']\in W'\cap B/W\cap B.\]
So the canonical map $W'\cap B/W\cap B\rightarrow W'/W$ restricts to an isomorphism $\tors(W'\cap B/W\cap B)\rightarrow \tors(W'/W)$. 
If $W'/W$ is a torsion group, so is its subgroup $W'\cap B/W\cap B$ and hence we get
\[W'\cap B/W\cap B\cong \tors(W'\cap B/W\cap B)\cong \tors(W'/W)\cong W'/W.\]
Especially $[W'\cap B:W\cap B]=[W':W]$. If it is not a torsion group its rank is at least one. By the previous item the rank of $W'\cap B/W\cap B$ is also at least one and hence $[W'\cap B:W\cap B]=\infty=[W':W]$.
\item The isomorphism $\tors(M/V)\cong \tors(M\cap B/V\cap B)$ from the last item shows that $M/V$ is $T_2$-torsionfree if and only if $M\cap B/V\cap B$ is. This means exactly $V\in\frl^M_{Z[T^{-1}]}[T_2^{-1}]$ if and only if $V\cap B\in \frl^M_{Z}[(T\cup T_2)^{-1}]$.
\end{enumerate}
\end{proof}

\begin{remark}\label{rem:locBases} Let $V$ be a free $Z[S^{-1}]$-module of rank $n$ and let $B$ be an integral structure on $V$. Then $V\cap B$ is a free $Z$-module of rank $n$ by the last lemma. 
Let us show that any $Z$-basis of $V\cap B$ is automatically a $Z[S^{-1}]$-basis of $V$. Clearly it is linear independent. Since $V/V\cap B$ is a $Z$-module of rank zero we can find for any $v\in V$ a number $\lambda \in Z$ such that $\lambda v\in V\cap B$. Write $\lambda =\lambda'\cdot \lambda''$ where $\lambda'$ is a product of primes from $S$ and $\lambda''$ is coprime to each element of $S$. 
Since $B$ is a $Z_S$-module and $\lambda''$ is a unit in $Z_S$ we can assume that $\lambda=\lambda'$. Hence we can write $\lambda'v$ as a $Z$-linear combination of the given basis of $V\cap B$. Since $\lambda'$ is a unit in $Z[S^{-1}]$ we can multiply all coefficients $\lambda'^{-1}$. This shows that it is also a generating system.

Analogously it is also an $Z_S$-basis of $B$.
\end{remark}
}

\subsection{The localized case}

\begin{convention}Let 
\begin{itemize}
\item $n\in \IN$ be a fixed non-negative integer.
\item $Z$ be either $\IZ$ (integral case) or $F[t]$ (function field case) for a finite field $F$,
\item $V$ be a finitely generated free $Z[T^{-1}]$-module of rank $n$,
\item $\tilde{X}(V)$ denote the set of all inner products on $\IR\otimes_\IZ V$ in the integral case or the set of all $\{\frac{a}{b}\in Q\mid\deg(b)\ge\deg(a)\}$-lattices in $Q \otimes_Z V$ for a finitely generated, free $Z[T^{-1}]$-module in the function field case.
\item $\frl$ denote the order-theoretic lattice of direct summands of the $Z[T^{-1}]$-module $V$.
\end{itemize}

\end{convention}

Now we are ready to define the volume function.
\begin{definition}\label{def:locVol} Let $\tilde{Y}_T(V)$ denote the set of all integral structures on $V$ relative to $T$.
 Define the logarithmic volume function of $V$ as
\[\log\vol: \frl \times \tilde{X}(V)\times \tilde{Y}_T(V) \rightarrow \IR; \qquad (W,s,B)\mapsto \log\vol_{W\cap B}(s)\eqqcolon \log\vol_W(B,s).\]
\end{definition}

\begin{remark}\label{rem:loc:equivarianceOfVol} We have the following trivia:\begin{enumerate}
\item\label{rem:loc:equivarianceOfVol:eins} An element in $\tilde{Y}_T(Z[T^{-1}]^n)$ is just a choice of an equivalence class of a system of $n$ linear independent vectors in $Q^n$, where two such systems are equivalent if and only if their $Z[(\mathfrak{P}\setminus T)^{-1}]$-span agrees. Hence 
\[\tilde{Y}_T(Z[T^{-1}]^n)\cong \GL_n(Q)/\GL_n(Z[(\mathfrak{P}\setminus T)^{-1}])\]
as left-$\GL_n(Q)$-sets.
\item\label{rem:loc:equivarianceOfVol:zwei} Note that $W\cap B$ is just a finitely generated free $Z$-module. In the integral case $s$ is an inner product on $\IR\otimes_\IZ V$ and hence it can be restricted to 
$W\cap B\subset V\subset \IR\otimes_\IZ V$.

In the function field case $s$ is a lattice in $Q \otimes_Z V$. The inclusion $V\cap B\rightarrow V$  induces an isomorphism $Q\otimes_Z (V\cap B)\rightarrow Q\otimes_Z V = Q\otimes_{Z[T^{-1}]}V$ since $Q\otimes_Z\_$ is exact. So $s$ can also be considered as a lattice in $V\cap B$. Hence $(V\cap B,s)$ is a volume space. So the definition of volume (Definition~\ref{def:ffvol}) for the function field case can be used here.
\item\label{rem:loc:equivarianceOfVol:drei} For any $\varphi \in \aut_{Z[S^{-1}]}(V)$, any submodule $W\subset V$ and any integral structure $B$ we get:
\[\log\vol_{\varphi(W)}((\varphi\cdot s),\varphi\cdot B)=\log\vol_{\varphi(W\cap B)}(\varphi\cdot s)\\
=\log\vol_{W}(s,B).\]
\end{enumerate}
\end{remark}

\subsection{Properties of the volume function for $Z[T^{-1}]$}

Fix an integral structure $B$ on $V$ relative to a set of primes $T$ and an element $s\in \tilde{X}(V)$. We want to show that the function
\[\log\vol_?(s,B):\frl \rightarrow \IR\]
satisfies all conditions from Convention~\ref{conv:latticeForFiltr} so that we can consider the canonical filtration.  

\begin{proposition}\label{prop:loc:convtrue}
Let $\frl$ denote the order-theoretic lattice of direct summands of $V$ and for $W\in \frl$ let $\rk(W)$ denote the $Z[T^{-1}]$-rank of $W$. Let $\log\vol_W(s,B)$ denote the logarithmic volume as above.
We have:
\begin{enumerate}
\item $\rk$ is strictly monotone, i.e. $\rk(W)<\rk(W')$ for all $W,W'\in \frl$ with $W\subsetneq W'$.
\item $\rk$ is additive, i.e. $\rk(W\cap W')+\rk(\lub(W,W'))=\rk(W)+\rk(W')$ for all $W,W'\in \frl$.
\item The function $\log\vol_{-}(s,B):\frl\rightarrow \IR$ is subadditive. This means that for all $W,W'\in \frl$
\[\log\vol_{W\cap W'}(s,B)+\log\vol_{\lub(W,W')}(s,B)\le \log\vol_W(s,B)+\log\vol_{W'}(s,B).\]
\item For each $C\in \IR$ there are only finitely many $L\in \frl$ with $\log\vol_W(s,B)\le C$.
\item $rk(0)=0,\log\vol(0)=0$. 
\end{enumerate}
\end{proposition}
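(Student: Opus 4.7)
The plan is to reduce the proposition to the already established results in the two ``classical'' cases by transporting everything along the lattice isomorphism
\[
\Phi : \frl \xrightarrow{\;\cong\;} \frl',\qquad W \mapsto W \cap B,
\]
supplied by Proposition~\ref{prop:isomPoset}, where $\frl'$ denotes the lattice of direct summands of the finitely generated free $Z$-module $V \cap B$. By that proposition, $\Phi$ is a bijective, rank-preserving morphism of lattices, so in particular it preserves meets and joins. Moreover, by Definition~\ref{def:locVol}, $\log\vol_W(s,B) = \log\vol_{W\cap B}(s) = \log\vol_{\Phi(W)}(s)$, so the logarithmic volume on $\frl$ is just the pullback along $\Phi$ of the logarithmic volume on $\frl'$.

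Under this reduction, properties (1) and (2) are immediate: strict monotonicity and additivity of $\rk$ on $\frl$ follow from the corresponding properties of $\rk$ on $\frl'$ (items (1)--(2) of Proposition~\ref{prop:nflatticeForFiltrSatisfied} in the integral case, and of Proposition~\ref{prop:fflatticeForFiltr} in the function field case), since $\Phi$ preserves ranks and the strict inclusion order. For (3), the identities
\[
\Phi(W \cap W') = \Phi(W) \cap \Phi(W'),\qquad \Phi(\lub(W,W')) = \lub(\Phi(W),\Phi(W'))
\]
coming from the fact that $\Phi$ is a lattice isomorphism reduce subadditivity on $\frl$ to subadditivity on $\frl'$, which is item (3) of Proposition~\ref{prop:nflatticeForFiltrSatisfied} respectively Proposition~\ref{prop:fflatticeForFiltr}.

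Property (4) follows similarly: the bijectivity of $\Phi$ implies that for any $C \in \IR$, the set $\{W \in \frl \mid \log\vol_W(s,B) \le C\}$ is in bijection with $\{W' \in \frl' \mid \log\vol_{W'}(s) \le C\}$, which is finite by item (4) of Proposition~\ref{prop:nflatticeForFiltrSatisfied} or Proposition~\ref{prop:fflatticeForFiltr}. Property (5) is immediate, since $0 \cap B = 0$ gives $\rk(0) = 0$ and $\log\vol_0(s,B) = \log\vol_0(s) = 0$ by the normalization in the integral or function field case.

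The only step that requires any care is ensuring that $\Phi$ really is a morphism of lattices (not merely of posets), i.e.\ that it preserves joins as well as intersections; intersection compatibility is automatic from $W \cap B \cap W' \cap B = (W \cap W') \cap B$, while compatibility with joins is a consequence of the fact that the inverse map $V' \mapsto \langle V'\rangle_{Z[T^{-1}]}$ is also order-preserving, as recorded in Proposition~\ref{prop:isomPoset}. Once this is in hand, the proof is a routine transport of the known statements from Sections~\ref{sec:volint} and~\ref{sec:FFvol}.
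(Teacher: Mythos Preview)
Your proof is correct and follows essentially the same strategy as the paper: reduce everything via the rank-preserving lattice isomorphism $W\mapsto W\cap B$ of Proposition~\ref{prop:isomPoset} to the already-established Propositions~\ref{prop:nflatticeForFiltrSatisfied} and~\ref{prop:fflatticeForFiltr}. The only cosmetic difference is that the paper proves items (1) and (2) directly for $Z[T^{-1}]$ (via the structure theorem and a short-exact-sequence rank computation) rather than pulling them back through $\Phi$, but your uniform use of the isomorphism is equally valid.
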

\begin{proof}
\begin{enumerate}
\item This follows from the structure theorem of finitely generated modules over a principal ideal domain.
\item  Since $Q$ is a flat $Z$-module, we can apply $Q\otimes_z\_$ to the following short exact sequences 
\[0\rightarrow W+W'\rightarrow \lub(W,W')\rightarrow \lub(W,W')/(W+W')\rightarrow 0,\]
\[0\rightarrow W\cap W'\rightarrow W\oplus W'\rightarrow W+W'\rightarrow 0.\]
Recall that the rank is defined as $\dim_Q(Q\otimes_Z\_)$. Since $\lub(W,W')/(W+W')$ is torsion (Remark~\ref{rem:lattices}\ref{rem:lattices:zwei}) and $\dim_Q$ is additive we get the result.
\item Using Definition~\ref{def:locVol} of the volume function, we really have to show that:
\[\log\vol_{W_1\cap B}(s)+\log\vol_{W_2\cap B}(s)\ge \log\vol_{W_1\cap W_2\cap B}(s)+\log\vol_{\lub(W_1,W_2)\cap B}(s).\]
This equation just involves the definition of the volume of a $Z$-module. By Proposition~\ref{prop:isomPoset} we get $\lub(W_1,W_2)\cap B=\lub(W_1\cap B,W_2\cap B)$. 
We have already shown
\[\log\vol_{W_1\cap B}(s)+\log\vol_{W_2\cap B}(s)\ge \log\vol_{W_1\cap W_2\cap B}(s)+\log\vol_{\lub(W_1\cap B,W_2\cap B)}(s)\]
in Proposition~\ref{prop:nflatticeForFiltrSatisfied} for the integral case and in Proposition~\ref{prop:fflatticeForFiltr}\ref{prop:fflatticeForFiltr_logovolsubAdd} for the function field case.
\item The definition of the volume function (Definition~\ref{def:locVol}) says that it is just the composition of the old volume function for $Z$ and this isomorphism of lattices. Hence the statement follows directly from the statement for $Z$ (see Proposition~\ref{prop:nflatticeForFiltrSatisfied} for the integral case and Corollary~\ref{lem:ffShortFin} for the function field case) and Proposition~\ref{prop:isomPoset}.
\item The zero module is the minimal element in the lattice and its rank is zero and its logarithmic volume is defined to be zero.
\end{enumerate}
\end{proof}

\begin{remark}\label{rem:loccWIdent} So we can use section~\ref{sec:canFilt} to get for each $W\in \frl$ a number $c_W(s,B)$. 
We have $c_W(s,B)=c_{W\cap B}(S)$. This follows easily from the definitions of both sides and Proposition~\ref{prop:isomPoset}.
\ignore{\begin{eqnarray*}
&&c_W(s,B)\\
&\coloneqq &\inf_{\genfrac{(}{)}{0pt}{}{W_0\subsetneq W}{W\subsetneq W_2}} \frac{\log\vol_{W_2}(s,B)-\log\vol_W(s,B)}{\rk(W_2)-\rk(W)}-\frac{\log\vol_W(s,B)-\log\vol_{W_0}(s,B)}{\rk(W)-\rk(W_0)}\\
&\coloneqq &\inf_{\genfrac{(}{)}{0pt}{}{W_0\subsetneq W}{W\subsetneq W_2}} \frac{\log\vol_{W_2}(s,B)-\log\vol_W(s,B)}{\rk(W_2)-\rk(W)}-\frac{\log\vol_W(s,B)-\log\vol_{W_0}(s,B)}{\rk(W)-\rk(W_0)}\\
&\coloneqq &\inf_{\genfrac{(}{)}{0pt}{}{W_0\subsetneq W}{W\subsetneq W_2}} \frac{\log\vol_{W_2\cap B}(s)-\log\vol_{W\cap B}(s)}{\rk(W_2\cap B)-\rk(W\cap B)}-\frac{\log\vol_{W\cap B}(s)-\log\vol_{W_0\cap B}(s)}{\rk(W\cap B)-\rk(W_0\cap B)}\\
&\coloneqq &\inf_{\genfrac{(}{)}{0pt}{}{W_0\subsetneq W}{W\subsetneq W_2}} \frac{\log\vol_{W_2\cap B}(s)-\log\vol_{W\cap B}(s)}{\rk(W_2)-\rk(W)}-\frac{\log\vol_{W\cap B}(s)-\log\vol_{W_0\cap B}(s)}{\rk(W)-\rk(W_0)}\\
&\eqqcolon & c_{W\cap B}(s).\end{eqnarray*}
This used that the map $-\cap B$ from the lattice of direct summands of the $Z[S^{-1}]$-module $V$ to the lattice of direct summands of $V\cap B$ is a rank-preserving isomorphism by Proposition~\ref{prop:isomPosets}.}
\end{remark}

Furthermore we have the following properties:

\begin{lemma} In the number field case  ($Z=\IZ$) we have
\begin{enumerate}
\item $\vol_W(\lambda s,B)=\lambda^{\rk W} \vol_W(s,B)$ for $\lambda \in \IR,\lambda>0$,
\item $\vol_W(s,p B)=p^{\rk W}\vol_W(s,B)$ for any $p\in T$.
\end{enumerate}
In the function field case  ($Z=F[t]$) we have
\begin{enumerate}
\item for $\lambda \in Z[T^{-1}]\setminus \{0\}$ that
\[\log\vol_W(\lambda S,B) = -\rk(W)\cdot \nu(\lambda) + \log\vol_W(S,B),\]
\item $\log\vol_W(S,pB)=-\rk(W)\nu(p)+\log\vol(S,B)$ for any $p\in T$.
\end{enumerate}
\end{lemma}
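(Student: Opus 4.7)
My plan is to verify each of the four identities by unfolding the definitions, with the integral and function field cases treated in parallel. The only structural input beyond the definitions is the identification $W \cap pB = p\,(W \cap B)$ used in part (2) of both cases, which I will prove first. This is immediate: since $p \in T$ is a unit in $Z[T^{-1}]$, multiplication by $p$ is a $Z[T^{-1}]$-automorphism of $V$ carrying $B$ to $pB$, and $W$ is closed under this automorphism because it is a $Z[T^{-1}]$-submodule. Proposition~\ref{prop:isomPoset} applied to both $B$ and $pB$ then guarantees that $W \cap B$ and $W \cap pB$ are finitely generated free $Z$-modules of rank $\rk_{Z[T^{-1}]}(W)$.

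For the integral case, I pick a $\IZ$-basis $m_1, \dots, m_r$ of $W \cap B$; by Definition~\ref{def:nf:volume} we have $\vol_W(s,B) = \det\bigl(s(m_i,m_j)\bigr)^{1/2}$. For part (1), replacing $s$ by $\lambda s$ scales the Gram matrix by $\lambda$, its determinant by $\lambda^r$, and hence the square root by the corresponding power of $\lambda$. For part (2), $(pm_i)$ is a $\IZ$-basis of $W \cap pB$ and $s(pm_i,pm_j) = p^2 s(m_i,m_j)$, so the Gram determinant picks up $p^{2r}$ and the volume scales by $p^r = p^{\rk W}$.

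For the function field case I fix an $R$-basis $(b_i)$ of $S$ and a $Z$-basis $(w_j)$ of $W \cap B$, and expand $w_1 \wedge \cdots \wedge w_m = \sum_I \mu_I\, b_{i_1} \wedge \cdots \wedge b_{i_m}$ as in Proposition~\ref{prop:ffVolFormula}. For part (1), $(\lambda b_i)$ is an $R$-basis of $\lambda S$, and the expansion becomes $\sum_I (\lambda^{-m}\mu_I)(\lambda b_{i_1}) \wedge \cdots \wedge (\lambda b_{i_m})$; applying $\sup(-\nu)$ together with additivity of $\nu$ gives the stated identity, exactly in the manner of Remark~\ref{rem:ff:homothetyInvOfcW}. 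For part (2), $(pw_j)$ is a $Z$-basis of $W \cap pB$ and $pw_1 \wedge \cdots \wedge pw_m = p^m(w_1 \wedge \cdots \wedge w_m)$, so each coefficient $\mu_I$ is scaled by $p^m$ and the same $\sup(-\nu)$ computation yields the claimed shift by $\rk(W)\,\nu(p)$.

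All substantive content is therefore contained in Proposition~\ref{prop:isomPoset} (which lets me identify intersections with $B$) and in the defining formulas for $\vol$ and $\log\vol$. The only potential obstacle is purely bookkeeping: tracking the sign and $\tfrac{1}{2}$-power conventions carefully enough to match the form of the statement as written.
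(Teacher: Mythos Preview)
Your approach is essentially the same as the paper's: both establish $W\cap pB = p(W\cap B)$ via $pW=W$ and then unfold the definitions, with the paper citing the index/finite-quotient lemmas (Lemma~\ref{lem:ffSubmoduleOfFiniteIndex} and the analogous integral statement) where you work directly with bases and Gram matrices. One bookkeeping point you already anticipate: with the convention $\vol_M(s)=\det(s(m_i,m_j))^{1/2}$, scaling $s\mapsto\lambda s$ multiplies the Gram determinant by $\lambda^{\rk W}$ and hence the volume by $\lambda^{\rk W/2}$, not $\lambda^{\rk W}$; the paper glosses over this in the same way, and since only the scale invariance of $c_W$ is used downstream the discrepancy is harmless, but your hedge about ``$\tfrac{1}{2}$-power conventions'' is well placed.
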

\begin{proof*}
We get in the number field case:
\begin{enumerate}
\item \begin{centering}$\vol_W(\lambda s,B)\coloneqq \vol_{W\cap B}(\lambda s)=\lambda^{\rk W} \vol_{W\cap B}(s) \eqqcolon  \lambda^{\rk W} \vol_{W}(s,B).$\end{centering}
The equality in the middle follows directly from the definition of the volume (see Definition~\ref{def:nf:volume}).
\item As $W$ is a $\IZ[T^{-1}]$ module we get $pW=W$ and hence 
$W\cap pB=pW\cap pB = p(W\cap B)$ and consequently
\begin{multline*}
\vol_W(s,pB)=\vol_{p(W\cap B)}(s)=[W\cap B:p(W\cap B)]\vol_{W\cap B}(s)\\
=p^{\rk(W\cap B)}\vol_{W\cap B}(s)=p^{\rk W}\vol_W(s,B).\end{multline*}
\end{enumerate}
Let us now consider the function field case:
\begin{enumerate}
\item We can use the same chain of equalities as in the number field case
\begin{multline*}
\log\vol_W(\lambda S,B)\coloneqq  \log\vol_{W\cap B}(\lambda S)=-\rk(W)\cdot \nu(\lambda) + \log\vol_{W\cap B}(S)\\ 
\eqqcolon  -\rk(W)\cdot \nu(\lambda) + \log\vol_W(S,B)\end{multline*}
and the middle equality is given by Lemma~\ref{lem:ffSubmoduleOfFiniteIndex}.
\item As $W$ is a $Z[T^{-1}]$ module we get $tW=W$ and hence 
$W\cap pB=pW\cap pB = p(W\cap B)$ and consequently
\multbox\begin{multline*}
 \log\vol_W(S,pB)=\log\vol_{p(W\cap B)}(S)\stackrel{\ref{lem:ffSubmoduleOfFiniteIndex}}{=}\dim_F((W\cap B)/p(W\cap B))+\log\vol_{W\cap B}(S)\\
=\rk(W)\deg(p)+\log\vol_{W\cap B}(S)=-\rk(W)\nu(p)+\log\vol_{W\cap B}(S).\end{multline*}\emultbox
\end{enumerate}
\end{proof*}

\begin{corollary}\label{cor:locVolOfAdjacentVert} Given two integral structures $B,B'$ such that $zB \subset B'\subset B$ for some $z\in Z$. Since $B$ is a $Z[\mathfrak{P}\setminus T]$-module we get $pB=B$ for any $p\in \mathfrak{P}\setminus T$. Thus we can leave out all prime factors of $z$ from $\mathfrak{P}\setminus T$. So let us assume that no element of $\mathfrak{P}\setminus T$ divides $z$. We have
\begin{itemize}
\item in the number field case
\[\rk(W)\cdot \ln(z)+\vol_W(s,B)=\ln\vol_W(s,zB)\ge\ln\vol_W(s,B')\ge \ln\vol_W(s,B),\]
\item in the function field case 
\[-\rk(W)\cdot \nu(z) + \log\vol_W(s,B)=\log\vol_W(s,zB)\ge\log\vol_W(s,B')\ge \log\vol_W(s,B).\]
\end{itemize}
\end{corollary}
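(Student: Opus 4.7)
The plan is to combine the scaling identity from the previous lemma, applied inductively to the prime factors of $z$, with the monotonicity of $\log\vol_W(s,-)$ as a function of the integral structure (with the obvious sign changes in the number field case).

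First I would establish the equality at the left end. Since by assumption no element of $\mathfrak{P}\setminus T$ divides $z$, its normalized prime factorization $z=u\prod_{i=1}^{k}p_i^{a_i}$ satisfies $p_i\in T$ and $u\in Z^{\times}$. Applying the previous lemma to one prime at a time yields, in the function field case,
\[
\log\vol_W(s,zB)=\log\vol_W(s,B)-\rk(W)\sum_{i=1}^{k}a_i\nu(p_i)=\log\vol_W(s,B)-\rk(W)\nu(z),
\]
and analogously $\ln\vol_W(s,zB)=\ln\vol_W(s,B)+\rk(W)\ln(z)$ in the integral case. The unit $u$ has no effect because $uB=B$.

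Next I would prove the monotonicity: if $B_1\subseteq B_2$ are integral structures, then $\log\vol_W(s,B_1)\ge\log\vol_W(s,B_2)$. By Proposition~\ref{prop:isomPoset}, both $W\cap B_1$ and $W\cap B_2$ are free $Z$-modules of rank $\rk(W)$, and clearly $W\cap B_1\subseteq W\cap B_2$. In the function field case, Lemma~\ref{lem:ffSubmoduleOfFiniteIndex} gives
\[
\log\vol_{W\cap B_1}(s)=\log\vol_{W\cap B_2}(s)+\dim_F\bigl((W\cap B_2)/(W\cap B_1)\bigr)\ge\log\vol_{W\cap B_2}(s).
\]
In the integral case, the analogous standard identity $\vol_{W\cap B_1}(s)=[W\cap B_2:W\cap B_1]\cdot\vol_{W\cap B_2}(s)$ (used already in Proposition~\ref{prop:nflatticeForFiltrSatisfied}) yields the same conclusion after taking logarithms.

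Applying this monotonicity to the chain $zB\subseteq B'\subseteq B$ and combining with the scaling identity from the first step gives
\[
\log\vol_W(s,zB)\ge\log\vol_W(s,B')\ge\log\vol_W(s,B),
\]
and the corresponding inequalities in the number field case, which is exactly the statement. There is no real obstacle here: both ingredients (the scaling identity for one prime, and the volume formula for a finite–index inclusion of $Z$-modules) are already on record, and the only bookkeeping is the reduction from general $z$ to a product of primes in $T$, which is handled by the hypothesis that $\gcd(z,\mathfrak{P}\setminus T)=1$ after clearing spurious factors.
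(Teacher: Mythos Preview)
Your proof is correct and is exactly the argument the paper has in mind: the corollary is stated without proof because it follows immediately from the scaling identity of the preceding lemma (iterated over the prime factors of $z$, all of which lie in $T$) together with the monotonicity $B_1\subseteq B_2\Rightarrow\log\vol_W(s,B_1)\ge\log\vol_W(s,B_2)$, which in turn is the finite-index volume formula applied to $W\cap B_1\subseteq W\cap B_2$.
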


\begin{corollary}[Scaling invariance of $c_W$]\label{cor:loccWscalingInv} We get in the number field case:
\begin{enumerate}
 \item\label{cor:loccWscalingInv:eins} $c_W(\lambda s,B)=c_W(s,B)$ for any $\lambda \in \IR,\lambda >0$
\item\label{cor:loccWscalingInv:zwei} $c_W(s, pB)=c_W(s,B)$ for any $p\in T$.
\end{enumerate}
and in the function field case
\begin{enumerate}
\item\label{cor:loccWscalingInv:drei} $c_W(\lambda s,B) =c_W(s,B)$ for any $\lambda \in F[t]\setminus \{0\}$
\item\label{cor:loccWscalingInv:vier} $c_W(\lambda s,pB) =c_W(s,B)$ for any $p \in T$
\end{enumerate}
\end{corollary}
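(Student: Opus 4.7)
The plan is to observe that each of the four scalings listed changes $\log\vol_W$ by a term that is linear in $\rk(W)$ with a constant (i.e.\ $W$-independent) coefficient, and that $c_W$ is built only from \emph{differences of slopes}, which are unchanged by adding a rank-linear function to $\log\vol$.

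Concretely, by the preceding lemma each of the four operations acts on $\log\vol_W$ in the form $\log\vol_W(\cdot,\cdot) \mapsto \log\vol_W(\cdot,\cdot) + \rk(W)\cdot K$, where $K$ depends only on $\lambda$ or $p$ (namely $K=\ln\lambda$, $K=\ln p$, $K=-\nu(\lambda)$, or $K=-\nu(p)$ respectively). For any triple $W_0\lneq W\lneq W_2$ in $\frl$ the slope $\slope(W_2,W)$ then changes to
\[
\frac{\bigl(\log\vol(W_2)+\rk(W_2)K\bigr)-\bigl(\log\vol(W)+\rk(W)K\bigr)}{\rk(W_2)-\rk(W)} = \slope(W_2,W)+K,
\]
and analogously $\slope(W,W_0)$ becomes $\slope(W,W_0)+K$. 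The difference $\slope(W_2,W)-\slope(W,W_0)$ is therefore unchanged.

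Taking the infimum over all admissible $(W_0,W_2)$ in the definition of $c_W$ (Definition~\ref{def:cW}) yields the same value before and after the scaling, which is exactly the four equalities to be proven. This is essentially the same observation already recorded in Remark~\ref{rem:ff:homothetyInvOfcW} for the pure function-field volume, extended here to the combined $(s,B)$ data using the formulas of the previous lemma.

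There is no real obstacle; the only thing to be slightly careful about is that the operations considered do act \emph{uniformly} on $W$, i.e.\ that $K$ in the formula above does not depend on the summand $W$. For the $\lambda$-scalings this is immediate from the formula $\log\vol_W(\lambda\cdot,\cdot)=\log\vol_W(\cdot,\cdot)+\rk(W)\cdot K$. For the $pB$-scaling one uses that $W$ is a $Z[T^{-1}]$-module, so $pW=W$, and hence $W\cap pB = p(W\cap B)$; this is the step the previous lemma already carried out to obtain the clean rank-linear change in $\log\vol_W$.
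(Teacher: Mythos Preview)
Your proof is correct and is essentially the paper's own argument spelled out in full: the paper simply says the result ``follows immediately from the definition of $c_W$ (Definition~\ref{def:cW}) and the previous lemma,'' and what you have written is precisely the unpacking of that sentence---the previous lemma gives the rank-linear shift of $\log\vol_W$, and then slope differences in Definition~\ref{def:cW} are invariant under such shifts.
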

\begin{proof} This follows immediately from the definition of $c_W$  (Definition~\ref{def:cW}) and the previous lemma. 
\end{proof}

\begin{definition} Let $X(\IR^n)$ denote the quotient of $\tilde{X}(V)$ under the group action
\[(\IR^+,*)\times \tilde{X}(\IR^n)\rightarrow \tilde{X}(\IR^n)\qquad (\lambda, s)\mapsto \lambda s.\]
Let $T$ be a set of primes. Let $Y_T(n)$ denote the quotient of $\tilde{Y}_T(n)$ under the group action
 of the group of units in $Z[T^{-1}]^*=\cent(\aut_{Z[S^{-1}]}(V))$. By centrality we still have a $\aut_{Z[S^{-1}]}(V)$-action on $Y_T(V)$.
\end{definition}

\begin{remark}
The scaling invariance from Corollary~\ref{cor:loccWscalingInv} shows that the function $c_W$ descends to a function
\[c_W:X( \IR^n)\times Y_T(n)\rightarrow \IR.\]
\end{remark}
The following lemma will be needed to study the action of $\GL_n(Q)$ on a specific $\CAT(0)$-space.

\begin{lemma} \label{lem:matrixFactorizations}
Let $T$ be a set of primes.
\begin{enumerate}

\item\label{lem:matrixFactorizations:eins} Every matrix $A \in \GL_n(Q)$ can be written as a product of a matrix in $\GL_n(Z[T^{-1}])$ and a matrix in $\GL_n(Z[(\mathfrak{P}\setminus T)^{-1}])$. 
\item\label{lem:matrixFactorizations:zwei} Every matrix $A \in \SL_n(Q)$ can be written as a product of a matrix in $\SL_n(Z[T^{-1}])$ and a matrix in $\SL_n(Z[(\mathfrak{P}\setminus T)^{-1}])$. 
\item\label{lem:matrixFactorizations:drei} Furthermore if a subgroup $G$ is conjugate to $\SL_n(Z[(\mathfrak{P}\setminus T)^{-1}])$ in $\GL_n(Q)$ we can also decompose any matrix $A\in \SL_n(\IQ)$ as a product of a matrix in $\SL_n(Z[T^{-1}])$ and a matrix in $G$.
\end{enumerate}
\end{lemma}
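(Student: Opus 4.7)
The plan is to combine Smith normal form over the PID $Z$ with the multiplicative decomposition $Q^{*} = Z[T^{-1}]^{*} \cdot Z[(\mathfrak{P}\setminus T)^{-1}]^{*}$ coming from unique factorization; the intersection $Z[T^{-1}]^{*} \cap Z[(\mathfrak{P}\setminus T)^{-1}]^{*} = Z^{*}$ is the key identity that makes the $\SL_n$ version go through.

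For \ref{lem:matrixFactorizations:eins}, given $A \in \GL_n(Q)$, first pick $d \in Z \setminus \{0\}$ clearing denominators so that $dA \in M_n(Z)$. Since $Z$ is a PID (Remark~\ref{rem:propZSinv}), Smith normal form furnishes $P, Q \in \GL_n(Z)$ and diagonal $D = \diag(d_1, \dots, d_n)$ with $d_i \in Z$ such that $P(dA)Q = D$, hence $PAQ = \diag(d_1/d, \dots, d_n/d)$. For each $q_i = d_i/d \in Q^{*}$, unique factorization in $Z$ lets us write $q_i = a_i b_i$, where $a_i \in Z[T^{-1}]^{*}$ collects the $T$-primary factors and $b_i \in Z[(\mathfrak{P}\setminus T)^{-1}]^{*}$ collects the rest. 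Splitting the diagonal gives
\[
A = \bigl(P^{-1} \diag(a_1, \dots, a_n)\bigr) \cdot \bigl(\diag(b_1, \dots, b_n) Q^{-1}\bigr),
\]
with the first factor in $\GL_n(Z[T^{-1}])$ and the second in $\GL_n(Z[(\mathfrak{P}\setminus T)^{-1}])$, since $P^{-1}, Q^{-1} \in \GL_n(Z)$ lies in both subrings.

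For \ref{lem:matrixFactorizations:zwei}, start from the decomposition $A = B_1 C_1$ just produced. Since $\det(A) = 1$, we get $\det(B_1) \det(C_1) = 1$, so $\det(B_1)$ lies in both $Z[T^{-1}]^{*}$ and $(Z[(\mathfrak{P}\setminus T)^{-1}]^{*})^{-1} = Z[(\mathfrak{P}\setminus T)^{-1}]^{*}$, and the intersection is $Z^{*}$ by unique factorization. Setting $u = \det(B_1) \in Z^{*}$, we replace $B_1$ by $B_1 \diag(u^{-1}, 1, \dots, 1)$ and $C_1$ by $\diag(u, 1, \dots, 1) C_1$; the product is unchanged, and because $u$ is a unit in both subrings the two new factors land in $\SL_n(Z[T^{-1}])$ and $\SL_n(Z[(\mathfrak{P}\setminus T)^{-1}])$ respectively.

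For \ref{lem:matrixFactorizations:drei}, write $G = h H h^{-1}$ with $H = \SL_n(Z[(\mathfrak{P}\setminus T)^{-1}])$ and $h \in \GL_n(Q)$. The strategy is to reduce to \ref{lem:matrixFactorizations:zwei} by viewing the problem through the integral structure $h \cdot Z[(\mathfrak{P}\setminus T)^{-1}]^{n}$: an element lies in $G$ iff it has determinant one and stabilises this integral structure. So I would apply \ref{lem:matrixFactorizations:eins} to $Ah$ to obtain $Ah = B' C'$ with $B' \in \GL_n(Z[T^{-1}])$ and $C' \in \GL_n(Z[(\mathfrak{P}\setminus T)^{-1}])$, giving $A = B' \cdot (C' h^{-1})$; one then has to correct the determinants using units in $Z^{*}$ exactly as in \ref{lem:matrixFactorizations:zwei} so that $B'$ becomes an element of $\SL_n(Z[T^{-1}])$ and $C' h^{-1}$ stabilises $h Z[(\mathfrak{P}\setminus T)^{-1}]^{n}$ with determinant $1$, i.e.\ lies in $G$. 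The main obstacle is precisely this last determinant/stabiliser adjustment: one must verify that the unit correction from $Z^{*}$ can be absorbed on the $G$-side, which uses that $Z^{*} \subseteq Z[(\mathfrak{P}\setminus T)^{-1}]^{*}$ acts trivially (up to central scaling) on the integral structure, so that the stabiliser condition is preserved.
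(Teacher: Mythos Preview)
Your arguments for \ref{lem:matrixFactorizations:eins} and \ref{lem:matrixFactorizations:zwei} are correct and essentially the paper's proof; the paper uses the invariant factor theorem with $B,C$ of determinant one so that in part \ref{lem:matrixFactorizations:zwei} the two determinants are automatically forced into $Z^*$, whereas you make the unit correction explicit. Either way works.

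Your approach to \ref{lem:matrixFactorizations:drei}, however, has a genuine gap that a unit correction cannot repair. Writing $G=hHh^{-1}$ with $H=\SL_n(Z[(\mathfrak{P}\setminus T)^{-1}])$ and factoring $Ah=B'C'$, you propose $A=B'\cdot(C'h^{-1})$ and then want $C'h^{-1}\in G$. But $C'h^{-1}$ sends the integral structure $h\,Z[(\mathfrak{P}\setminus T)^{-1}]^n$ to $C'\,Z[(\mathfrak{P}\setminus T)^{-1}]^n=Z[(\mathfrak{P}\setminus T)^{-1}]^n$, which is the \emph{standard} integral structure, not $h\,Z[(\mathfrak{P}\setminus T)^{-1}]^n$. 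Unless $h$ already lies in $\GL_n(Z[(\mathfrak{P}\setminus T)^{-1}])$ these two lattices differ, and multiplying by a scalar in $Z^*\subset Z[(\mathfrak{P}\setminus T)^{-1}]^*$ fixes every $Z[(\mathfrak{P}\setminus T)^{-1}]$-lattice, so it cannot move one to the other. The obstruction is not a determinant issue at all; it is that $C'h^{-1}$ lands in the wrong coset of stabilizers.

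The paper's fix is to preprocess the conjugator rather than the matrix $A$: apply part \ref{lem:matrixFactorizations:eins} to $h$ itself to write $h=h'h''$ with $h'\in\GL_n(Z[T^{-1}])$ and $h''\in\GL_n(Z[(\mathfrak{P}\setminus T)^{-1}])$. Since $h''$ normalises $H$, one has $G=h'Hh'^{-1}$, so one may assume from the start that $h\in\GL_n(Z[T^{-1}])$. Now decompose $h^{-1}Ah\in\SL_n(Q)$ via part \ref{lem:matrixFactorizations:zwei} as $B_0C_0$ with $B_0\in\SL_n(Z[T^{-1}])$, $C_0\in H$, and conjugate back: $A=(hB_0h^{-1})\cdot(hC_0h^{-1})$ with $hB_0h^{-1}\in\SL_n(Z[T^{-1}])$ (because $h\in\GL_n(Z[T^{-1}])$) and $hC_0h^{-1}\in G$.
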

\begin{proof}
\begin{enumerate}
\item This is obvious for diagonal matrices. If $A$ is not diagonal let $m$ be the least common multiple of the denominators of all entries of $M$. By the invariant factor theorem applied to the matrix $mA\in M_n(Z)$ we can find integral matrices $B,C,D\in M_n(Z)$ such that $B,C$ are invertible matrices of determinant one and $D$ is a diagonal matrix and $mA=BDC$. Hence $A=B\cdot (\frac{1}{m}D)\cdot C$. Then we apply this lemma to the diagonal matrix $\frac{1}{m}D$ to obtain the result.
\item The product of the two determinants of the two matrices obtained like in the last item is one. One of them lies in $Z[(\mathfrak{P}\setminus T)^{-1}]$ and the other one lies in $Z[T^{-1}]$. Hence they both have to be one.
\item Assume $G=B\cdot \SL_n(Z[(\mathfrak{P}\setminus T)^{-1}])\cdot B^{-1}$. We can first decompose $B=B'B''$ like in the first item. Especially we get $B\cdot \SL_n(\IZ[(\mathfrak{P}\setminus T)^{-1}])\cdot B^{-1}=B'\cdot \SL_n(\IZ[(\mathfrak{P}\setminus T)^{-1}])\cdot B'^{-1}$. Hence without loss of generality we may assume $B\in \GL_n(\IZ[T^{-1}])$. We decompose $B^{-1}AB$ as in the second item and conjugate each factor with $B$. This gives the desired decomposition.
\end{enumerate}
\end{proof}

\begin{proposition}\label{prop:locSLcofinite} For any finite set of primes $S$ the group action of $\saut_{Z[S^{-1}]}(V)$ on $Y_S(V)$ is cofinite.
\end{proposition}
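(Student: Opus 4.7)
The plan is to reduce cofiniteness to a small double-coset computation. By Remark~\ref{rem:GLnQactsOnIntegralStructures}, after fixing a $Z[S^{-1}]$-basis of $V$ we may identify $\saut_{Z[S^{-1}]}(V)$ with $\SL_n(Z[S^{-1}])$ and $\tilde{Y}_S(V)$ with $\GL_n(Q)/\GL_n(Z[(\mathfrak{P}\setminus S)^{-1}])$. Then $Y_S(V)$ is the further quotient by the central scaling action of $Z[S^{-1}]^*$, embedded diagonally as scalar matrices. Cofiniteness is therefore equivalent to finiteness of the double coset space
\[\SL_n(Z[S^{-1}]) \backslash \GL_n(Q) / \bigl(Z[S^{-1}]^* \cdot \GL_n(Z[(\mathfrak{P}\setminus S)^{-1}])\bigr).\]

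First I would apply Lemma~\ref{lem:matrixFactorizations}\ref{lem:matrixFactorizations:eins} to factor any $A \in \GL_n(Q)$ as $A = BC$ with $B \in \GL_n(Z[S^{-1}])$ and $C \in \GL_n(Z[(\mathfrak{P}\setminus S)^{-1}])$. So every double coset has a representative in $\GL_n(Z[S^{-1}])$, and two representatives differ by an element of $\GL_n(Z[S^{-1}]) \cap \bigl(Z[S^{-1}]^* \cdot \GL_n(Z[(\mathfrak{P}\setminus S)^{-1}])\bigr) = Z[S^{-1}]^* \cdot \GL_n(Z)$, the last equality using that the intersection of the two localizations of $Z$ inside $Q$ is $Z$. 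So the double coset space reduces to
\[\SL_n(Z[S^{-1}]) \backslash \GL_n(Z[S^{-1}]) / \bigl(Z[S^{-1}]^* \cdot \GL_n(Z)\bigr).\]

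Next I would apply the determinant homomorphism. It sends $\SL_n(Z[S^{-1}]) \mapsto 1$, each scalar $\lambda I \mapsto \lambda^n$ for $\lambda \in Z[S^{-1}]^*$, and $\GL_n(Z) \mapsto Z^*$, hence descends to a map from the above double coset space to the abelian group $Z[S^{-1}]^* / \bigl((Z[S^{-1}]^*)^n \cdot Z^*\bigr)$. Surjectivity is clear, and injectivity is a direct unwinding: given $B \in \GL_n(Z[S^{-1}])$ with $\det B = \lambda^n u$ for $\lambda \in Z[S^{-1}]^*$ and $u \in Z^*$, pick any $D \in \GL_n(Z)$ of determinant $u$ (for instance $D = \mathrm{diag}(u,1,\ldots,1)$); then $\lambda^{-1}BD^{-1} \in \SL_n(Z[S^{-1}])$, so that $B \in \SL_n(Z[S^{-1}]) \cdot Z[S^{-1}]^* \cdot \GL_n(Z)$.

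Finally, $Z[S^{-1}]^*$ is a finitely generated abelian group---namely $\{\pm 1\} \times \IZ^{|S|}$ in the integral case and $F^* \times \IZ^{|S|}$ in the function field case---so its quotient by the subgroup of $n$-th powers is already finite, and the further quotient by the finite group $Z^*$ remains finite. This proves cofiniteness. The main subtlety is purely the bookkeeping with the three commuting actions; once Lemma~\ref{lem:matrixFactorizations} is invoked, the determinant alone classifies orbits.
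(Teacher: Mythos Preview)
Your proof is correct and follows essentially the same route as the paper: identify $\tilde{Y}_S(V)$ with $\GL_n(Q)/\GL_n(Z_S)$, use Lemma~\ref{lem:matrixFactorizations}\ref{lem:matrixFactorizations:eins} to see that already $\GL_n(Z[S^{-1}])$ acts transitively, and then use the determinant to reduce cofiniteness of the $\SL_n(Z[S^{-1}])$-action modulo scalars to the finiteness of $Z[S^{-1}]^*/(Z[S^{-1}]^*)^n$. Your version spells out the double-coset bookkeeping and the injectivity of the determinant map more explicitly than the paper does, but the underlying argument is the same.
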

\begin{proof*}Since $\GL_n(Q)\cong \aut_Q(Q\otimes_{Z[S^{-1}]}V)$ acts transitively on $\tilde{Y}_S(V)$ with stabilizer $\GL_n(Z_S)$, we get $\tilde{Y}_S(V)\cong \GL_n(Q)/\GL_n(Z_S)$. By Lemma~\ref{lem:matrixFactorizations}\ref{lem:matrixFactorizations:eins} we also have a transitive group action of $\GL_n(Z[S^{-1}])$. The isomorphism
\begin{multline*}\GL_n(Z[S^{-1}])/SL_n(Z[S^{-1}])\cdot \{\lambda I_n| \lambda \in Z[S^{-1}]\}\stackrel{\det}{\rightarrow} Z[S^{-1}]^*/\{\lambda^m|\lambda\in Z[S^{-1}]^*\}\\{\rightarrow} Z^*\times (\IZ/m)^{|T|}.\end{multline*}
Thus $SL_n(Z[S^{-1}])\cdot \{\lambda I_n| \lambda \in Z[S^{-1}]\}$ has finite index and hence there are only finitely many elements in  
\[SL_n(Z[S^{-1}])\cdot \{\lambda I_n| \lambda \in Z[S^{-1}]\}\backslash \GL_n(Q)/\GL_n(Z_S)\]
By centrality this is the same as 
\[\singlebox SL_n(Z[S^{-1}])\backslash \GL_n(Q)/\GL_n(Z_S)\cdot \{\lambda I_n| \lambda \in Z[S^{-1}]\}=SL_n(Z[S^{-1}])\backslash Y_S(V)\esinglebox\]
\end{proof*}

\section{Spaces with actions of general linear groups}\label{sec:spacesWith}

\subsection{$\GL_n(\IZ)$ acts on the space of homothety classes of inner products}\label{sec:nfspace}
This section will analyze the metric on the space of homothety classes of inner products  (defined for example in \cite[p.~314~ff.]{bridson1999metric}). Furthermore certain properties of the volume functions will be established. Apart from the growth condition, which was analyzed in \cite[Section~1]{Bartels-Lueck-Reich-Rueping(2012KandL)}, these have basically been shown in \cite{Grayson(1984)}. It still makes sense to restate them in precisely this form. Then the localized version for $\IZ$ and for $F[t]$ can be treated simultaneously in Section~\ref{sec:volloc}.

Let $V$ be finitely generated, free $\IZ$-module of rank $n$ and consider the space $\tilde{X}(V)$ of all inner products on $\IR\otimes_\IZ V$. We will think of an inner product on $\IR\otimes_\IZ V$ either as a symmetric map $\IR\otimes_\IZ V\rightarrow (\IR\otimes_\IZ V)^*$ or as a bilinear form. 

After a choice of a $\IZ$-basis for $V\subset \IR\otimes_\IZ V$ we can write such an inner product as a matrix. This gives $\tilde{X}(V)$ the structure of a manifold. Rescaling gives a group action of $(\IR^{>0},\cdot)$ on $\tilde{X}(V)$ via
\[(\lambda, s)\mapsto \lambda s.\]
Let $X(V)$ be the quotient of $\tilde{X}(V)$ under this group action. An element of $X(V)$ is called a homothety class of inner products. The projection map has a section that sends a homothety class to the inner product whose representing matrix with respect to some basis of $V$ has determinant one. The group $\aut_\IZ(V)\cong \GL_n(\IZ)$ acts on the space of homothety classes of inner products.

$\tilde{X}(V)$ is a subset of the vector space $\Sym(\IR\otimes_\IZ V)$ of symmetric linear maps $(\IR\otimes_\IZ V)\rightarrow (\IR\otimes_\IZ V)^*$. Symmetric means that for any $s\in \Sym(V)$ the map 
\[(\IR\otimes_\IZ V)\stackrel{\cong}{\rightarrow} (\IR\otimes_\IZ V)^{**}\stackrel{s^*}{\rightarrow}(\IR\otimes_\IZ V)^*\]
is again $s$. The isomorphism on the left is the inverse of the canonical evaluation isomorphism. 
 
Indeed $\tilde{X}(V)$ is an open subset of $\Sym(\IR\otimes_\IZ V)$. So we get a canonical trivialization of the tangent bundle 
\[\tilde{X}(V)\times \Sym(\IR\otimes_\IZ V)\stackrel{\cong}{\rightarrow} T_*\tilde{X}(V) \qquad (s,v)\mapsto [t\mapsto s+tv].\]
Let us now define a Riemannian metric on $\tilde{X}(V)$. So we have to define for each $s\in X(V)$ an inner product $g_s$ on $\tilde{X}(V)$:
\[g_s(u,v)\coloneqq \tr(s^{-1}\circ u\circ s^{-1} \circ v).\]
It is obviously bilinear and symmetric. Furthermore the endomorphism $s^{-1}\circ u$  is self adjoint with respect to the inner product $s$ on $(\IR\otimes_\IZ V)$ since 
\[s^{-1}\circ (s^{-1}\circ u)^*\circ s = s^{-1} u.\]
Hence there is an orthonormal basis of eigenvectors with eigenvalues $\lambda_1,\ldots,\lambda_n$. Then the eigenvalues of $(s^{-1}\circ u)^2$ are $\lambda_1^2,\ldots, \lambda_n^2$ and its trace is just the sum. Hence its trace is nonnegative and it vanishes only if $s^{-1}u$ is zero. 
In this case $u$ is zero since $s$ is invertible. So $g_s$ is indeed an inner product.

\begin{lemma}\label{lem:nfVolLipschitz} Let $V$ be a free abelian group of rank $n $. The function $\ln\circ \vol_W:\tilde{X}(V)\rightarrow \IR$ is $n$-Lipschitz for any direct summand $W$ of $V$.
\end{lemma}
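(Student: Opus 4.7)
The plan is to reduce the Lipschitz estimate to a pointwise bound on the differential $d(\ln\vol_W)_s$, since $\tilde{X}(V)$ is Riemannian with metric $g_s$ and the associated distance $d_{\tilde{X}(V)}$ is the infimum of lengths of smooth paths. If I can show that $|d(\ln\vol_W)_s(u)| \le n \cdot \|u\|_{g_s}$ for every $s$ and every $u \in T_s \tilde{X}(V) \cong \Sym(\IR\otimes_\IZ V)$, then for any smooth path $\gamma:[0,1]\to \tilde{X}(V)$ joining $s_0$ to $s_1$,
\[|\ln\vol_W(s_1) - \ln\vol_W(s_0)| \le \int_0^1 |d(\ln\vol_W)_{\gamma(t)}(\gamma'(t))|\,dt \le n \cdot L(\gamma),\]
and taking the infimum over $\gamma$ finishes the proof.

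To compute the differential I would fix a $\IZ$-basis $b_1,\ldots,b_m$ of $W$ (with $m = \rk W$) and write $B(s) := (s(b_i,b_j))_{i,j=1}^m$, so that $\ln\vol_W(s) = \tfrac{1}{2}\ln\det B(s)$. The standard identity $d\ln\det(B) = \tr(B^{-1}\,dB)$ then gives
\[d(\ln\vol_W)_s(u) = \tfrac{1}{2}\tr\bigl(B(s)^{-1}\,U\bigr), \qquad U_{ij} = u(b_i,b_j).\]
This trace is intrinsically the trace of the endomorphism $(s|_W)^{-1}\circ u|_W$ of $\IR\otimes_\IZ W$, so it is independent of the chosen basis of $W$.

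To bound this against $g_s$, I would pick an $s$-orthonormal basis $e_1,\ldots,e_n$ of $\IR\otimes_\IZ V$ whose first $m$ vectors span $\IR\otimes_\IZ W$ (apply Gram–Schmidt to a basis of $\IR\otimes_\IZ W$ and then extend). In such a basis $s$ is represented by $I_n$, so $g_s(u,u) = \tr(U^2) = \sum_{i,j=1}^n U_{ij}^2$, while $(s|_W)^{-1}\circ u|_W$ is represented by the top-left $m\times m$ block of $U$, whose trace is $\sum_{i=1}^m U_{ii}$. Cauchy--Schwarz and discarding off-diagonal and off-$W$ entries yield
\[\Bigl|\sum_{i=1}^m U_{ii}\Bigr| \le \sqrt{m}\cdot\Bigl(\sum_{i=1}^m U_{ii}^2\Bigr)^{1/2} \le \sqrt{m}\cdot g_s(u,u)^{1/2},\]
hence $|d(\ln\vol_W)_s(u)| \le \tfrac{\sqrt{m}}{2}\|u\|_{g_s} \le n\,\|u\|_{g_s}$, since $m\le n$ and $\sqrt{m}/2 \le n$ for $n\ge 1$ (the case $n=0$ being trivial).

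There is no real obstacle here: the only point to watch is keeping the change-of-basis straight between the $\IZ$-basis $b_i$ used to define $\vol_W$ and the auxiliary $s$-orthonormal basis $e_i$ used to trivialize $s$, and checking that the derivative formula $d\ln\det$ applies uniformly in $s$. Both are routine. Note the bound is actually $\sqrt{n}/2$, so the stated constant $n$ is comfortable; the stronger form $\tfrac{\sqrt{n}}{2}$-Lipschitz could also be recorded if needed later.
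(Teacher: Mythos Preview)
Your proposal is correct and follows essentially the same approach as the paper: compute the differential of $\ln\vol_W$ and show that its $g_s$-norm (equivalently, the length of the gradient) is $\tfrac{1}{2}\sqrt{\rk W}\le n$, then integrate along paths to get the Lipschitz bound. The paper simply cites this gradient computation from \cite[Corollary~1.8]{Bartels-Lueck-Reich-Rueping(2012KandL)}, while you spell it out via the identity $d\ln\det(B)=\tr(B^{-1}\,dB)$ and an $s$-orthonormal basis adapted to $W$; the arguments are the same in substance.
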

\begin{proof} The strategy is just to compute the gradient and observe, that its length is $\frac{1}{2}\sqrt{\rk(W)}\le n$. See for example \cite[Corollary~1.8]{Bartels-Lueck-Reich-Rueping(2012KandL)}.
\end{proof}

\begin{corollary}\label{cor:nfcWLipschitz} Thus $c_W$ is $4n$-Lipschitz in the above setting.
\end{corollary}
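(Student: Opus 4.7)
The plan is to unpack the definition of $c_W$ from Definition~\ref{def:cW} and show that each function appearing inside the infimum is $4n$-Lipschitz, then invoke the standard fact that an infimum of $L$-Lipschitz functions is $L$-Lipschitz (provided it is finite).

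More concretely, fix any pair $W_0 \lneq W \lneq W_2$ in $\frl$ and consider the function
\[
f_{W_0,W_2}(s) := \slope(W_2,W)(s) - \slope(W,W_0)(s) = \frac{\log\vol_{W_2}(s) - \log\vol_W(s)}{\rk(W_2) - \rk(W)} - \frac{\log\vol_W(s) - \log\vol_{W_0}(s)}{\rk(W) - \rk(W_0)}.
\]
By Lemma~\ref{lem:nfVolLipschitz} each $\log\vol_{(-)}$ is $n$-Lipschitz, so each numerator is $2n$-Lipschitz as a difference of $n$-Lipschitz functions. The denominators are positive integers (by the strict monotonicity of the rank, Proposition~\ref{prop:nflatticeForFiltrSatisfied}\ref{conv:latticeForFiltr_StrMon}), so dividing by them preserves or decreases the Lipschitz constant. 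Hence each slope is $2n$-Lipschitz, and $f_{W_0,W_2}$ is $4n$-Lipschitz.

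Since $c_W(s) = \inf_{W_0 \lneq W \lneq W_2} f_{W_0,W_2}(s)$ is the infimum over a family of $4n$-Lipschitz functions, the inequality
\[
|c_W(s) - c_W(s')| \le \sup_{W_0,W_2} |f_{W_0,W_2}(s) - f_{W_0,W_2}(s')| \le 4n \cdot d(s,s')
\]
yields the desired conclusion, as long as $c_W(s)$ is finite; but it is bounded above by any individual $f_{W_0,W_2}(s)$ and bounded below since there are only finitely many elements $W'$ with $\log\vol_{W'}(s)$ below any given constant (Proposition~\ref{prop:nflatticeForFiltrSatisfied}\ref{conv:latticeForFiltr_FinShort}), so the infimum is attained. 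I do not expect any real obstacle here; the only care needed is to verify that the infimum is actually finite so that the bookkeeping goes through.
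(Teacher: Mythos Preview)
Your proof is correct and is exactly the argument the paper has in mind: the corollary carries no proof in the paper because it is the immediate consequence of Lemma~\ref{lem:nfVolLipschitz} obtained by bounding each term in the definition of $c_W$ and passing to the infimum, just as you do. The only cosmetic point is that the displayed inequality $|c_W(s)-c_W(s')|\le \sup |f_{W_0,W_2}(s)-f_{W_0,W_2}(s')|$ is a slightly roundabout way to phrase the standard fact that a pointwise infimum of $L$-Lipschitz functions is $L$-Lipschitz; you might state that fact directly rather than via the $\sup$.
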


We still need one preliminary lemma.
\begin{lemma}\label{lem:thinningOpen} Let $X$ be a proper, inner metric space and let $U\subset X$ be an open subset and $\beta \in \IR$ be any real number. Then 
\[U^{-\beta}\coloneqq \{x\in U\mid\overline{B}_\beta(x)\subset U\}\]
is open.
\end{lemma}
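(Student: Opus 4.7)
The plan is to show that around any point of $U^{-\beta}$ there is a small ball still contained in $U^{-\beta}$, using properness to find a uniform ``safety margin'' $\delta>0$ around $\overline{B}_\beta(x)$ and then using the inner-metric structure to transport this margin.

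Fix $x \in U^{-\beta}$, so $\overline{B}_\beta(x) \subset U$. Since $X$ is proper, $\overline{B}_\beta(x)$ is compact, and the continuous function $y \mapsto d(y, X \setminus U)$ is strictly positive on this compact set (as $\overline{B}_\beta(x) \cap (X\setminus U) = \emptyset$), so it attains a minimum $\delta > 0$. In particular, $B_\delta(w) \subset U$ for every $w \in \overline{B}_\beta(x)$. I will show that $B_\epsilon(x) \subset U^{-\beta}$ for $\epsilon := \delta/4$.

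So let $y \in B_\epsilon(x)$ and take any $z \in \overline{B}_\beta(y)$; I must verify $z \in U$. By the triangle inequality $d(x,z) \le \beta + \epsilon$. If $d(x,z) \le \beta$ we already have $z \in \overline{B}_\beta(x) \subset U$. Otherwise $\beta < d(x,z) \le \beta + \epsilon$, and here I invoke the inner-metric hypothesis: there exists a rectifiable path $\gamma:[0,L]\to X$ from $x$ to $z$, parameterized by arclength, with $L \le d(x,z) + \epsilon \le \beta + 2\epsilon$. The continuous function $t\mapsto d(x,\gamma(t))$ takes the value $0$ at $t=0$ and exceeds $\beta$ at $t=L$, so there is $t_0 \in (0,L)$ with $d(x,\gamma(t_0)) = \beta$. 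Set $w := \gamma(t_0)$. Since $\gamma$ is arclength-parameterized, $t_0 \ge d(x,w) = \beta$, hence $d(w,z) \le L - t_0 \le L - \beta \le 2\epsilon = \delta/2 < \delta$, so $z \in B_\delta(w) \subset U$.

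The main obstacle is really just locating the intermediate point $w$ at distance exactly $\beta$ from $x$; this requires the length-space property (paths of length arbitrarily close to the distance) together with the arclength parameterization to bound $d(w,z)$ by $L-\beta$. Properness enters only once, to produce the uniform lower bound $\delta$ on $d(\cdot,X\setminus U)$ along $\overline{B}_\beta(x)$.
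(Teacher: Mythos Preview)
Your proof is correct and follows essentially the same approach as the paper: both use properness to extract a uniform margin $\delta$ (resp.\ $\varepsilon$) around the compact set $\overline{B}_\beta(x)$, and then use the length-space property to produce an intermediate point at distance $\beta$ from $x$. The paper packages the second step as the identity $B_{\beta+\varepsilon}(x)=\bigcup_{z\in B_\beta(x)}B_\varepsilon(z)$, whereas you spell out the underlying path argument explicitly; the only omission is the trivial case $X\setminus U=\emptyset$.
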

\begin{proof}
We have to show that there is for $x\in U^{-\beta}$ an $\varepsilon'>0$ such that $B_{\varepsilon'}(x)\subset U^{-\beta}$.

Since $U$ is open there is for each $z\in \overline{B }_\beta(x)$ an $\varepsilon(z)\in\IR$ with $B_{\varepsilon(z)}(z)\subset U$. The set $\overline{B}_\beta(x)$ is compact as the metric space is proper. So there is a uniform $\varepsilon>0$ with $B_\varepsilon(z)\subset U$ for all $z\in \overline{B}_\beta(x)$. 

Since the metric space is inner we get
\[B_{\beta+\varepsilon}(x)=\bigcup_{z\in B_\beta(x)}B_\varepsilon(z)\]
and hence it is contained in $U$. Hence by the triangular inequality $B_\frac{\varepsilon}{2}(x)$ is contained in $U^{-\beta}$ and hence it is open.
\end{proof}

\begin{proposition}\label{prop:nf:coversAtInfinitySatisfied} The space $X(V)$ satisfies all assumptions from Proposition~\ref{prop:1:coversAtInfinity}. Let 
\[\calw\coloneqq \left\{\{x\in X(V)\mid c_W(x)> 0\}\mid W \subset V \mbox{ is a nontrivial direct summand}\right\}.\] This is a collection of open sets as the map $c_W:X(V)\rightarrow \IR$ is continuous. We have
\begin{enumerate}
\item \label{prop:nf:coversAtInfinitySatisfied:eins} $X(V)$ is a proper $\CAT(0)$ space, 
\item \label{prop:nf:coversAtInfinitySatisfied:zwei} the covering dimension of $X(V)$ is less or equal to $\frac{(n+1)n}{2}-1$,
\item \label{prop:nf:coversAtInfinitySatisfied:drei} the group action of $\aut_{\IZ}(V)\cong \GL_n(\IZ)$ on $X$ is proper and isometric,
\item \label{prop:nf:coversAtInfinitySatisfied:vier} $\aut_\IZ(V)\cdot \calw \coloneqq \{gW\mid g\in \aut_\IZ(V), W\in \calw\}=\calw$,
\item \label{prop:nf:coversAtInfinitySatisfied:fuenf} $gW$ and $W$ are either disjoint or equal for all $g\in \aut_\IZ(V), W\in \calw$,
\item \label{prop:nf:coversAtInfinitySatisfied:sechs} the dimension of $\calw$ is less or equal to $n-2$. 
\item \label{prop:nf:coversAtInfinitySatisfied:sieben} the $\aut_{\IZ}(V)$ operation on 
\[X\setminus(\bigcup \calw^{-\beta})\coloneqq \{x\in X\mid \nexists W\in \calw:\overline{B}_\beta(x)\subset W\}\]
is cocompact for every $\beta \ge 0$.
\end{enumerate}
\end{proposition}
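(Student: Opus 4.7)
For items \ref{prop:nf:coversAtInfinitySatisfied:eins}--\ref{prop:nf:coversAtInfinitySatisfied:drei} I would invoke the classical identification of $X(V)$ with the symmetric space $\SL_n(\IR)/\SO(n)$. The Riemannian metric $g_s(u,v)=\tr(s^{-1}us^{-1}v)$ defined just above is well-known to have non-positive sectional curvature and to be complete, so $X(V)$ is a Hadamard manifold and thus a proper $\CAT(0)$ space. Its covering dimension equals its manifold dimension, which is one less than $\dim \Sym(\IR\otimes_\IZ V)=\frac{n(n+1)}{2}$. The action of $\aut_\IZ(V)\cong \GL_n(\IZ)$ is by the induced linear change of coordinates on $\Sym$, which is tautologically isometric for $g_s$; properness follows from the discreteness of $\GL_n(\IZ)$ in $\GL_n(\IR)$ together with the compactness of point stabilizers (extensions of $\{\pm 1\}$ by $\SO(n)$-intersections).

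For items \ref{prop:nf:coversAtInfinitySatisfied:vier}--\ref{prop:nf:coversAtInfinitySatisfied:sechs} I would use the equivariance together with the structure of the canonical filtration. The equivariance $\vol_{gW}(g\cdot s)=\vol_W(s)$ from Lemma~\ref{lem:nf:trivia}\ref{rem:nf:equivarianceOfVol} immediately yields $c_{gW}(g\cdot s)=c_W(s)$, so $g\cdot\{c_W>0\}=\{c_{gW}>0\}$, which proves \ref{prop:nf:coversAtInfinitySatisfied:vier}. For \ref{prop:nf:coversAtInfinitySatisfied:fuenf}, suppose $\{c_W>0\}\cap\{c_{gW}>0\}$ contains a point $s$. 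By Corollary~\ref{cor:cWAndCanFiltr}, both $W$ and $gW$ are then vertices of the canonical filtration of $(\frl,\rk,\ln\vol(s))$; since $g$ is a $\IZ$-automorphism, $W$ and $gW$ have equal rank, so the uniqueness part of Corollary~\ref{cor:cWAndCanFiltr} forces $W=gW$. For \ref{prop:nf:coversAtInfinitySatisfied:sechs}, again by Corollary~\ref{cor:cWAndCanFiltr} the elements of $\frl\setminus\{0,1\}$ with $c_W(s)>0$ form a chain of distinct ranks in $\{1,\dots,n-1\}$, hence at most $n-1$ of them contain any given $s$; this bounds the multiplicity of $\calw$ by $n-1$, i.e.\ $\dim\calw\le n-2$.

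The main work is item \ref{prop:nf:coversAtInfinitySatisfied:sieben}, which is the cocompactness statement and is essentially Grayson's thickening of the reduction theoretic fundamental domain. The plan is to reduce the problem to a statement about inner products via the Lipschitz estimate of Corollary~\ref{cor:nfcWLipschitz}: since $c_W$ is $4n$-Lipschitz on $X(V)$, $\overline{B}_\beta(s)\subset\{c_W>0\}$ follows from $c_W(s)>4n\beta$, and conversely $\overline{B}_\beta(s)\subset\{c_W>0\}$ implies $c_W(s)\ge 0$. Thus $X(V)\setminus\bigcup \calw^{-\beta}$ is trapped between the sets $\{s:\max_W c_W(s)\le 4n\beta\}$ and $\{s:\max_W c_W(s)\le 0\}$ up to a closed/open subtlety. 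These sublevel sets of $\max_W c_W$ correspond, via the canonical filtration, to inner products whose successive volume slopes never jump by more than $4n\beta$ — this is precisely the kind of ``thick part'' whose $\GL_n(\IZ)$-quotient is known to be compact in Grayson's reduction theory (compare \cite[Theorem~1.1]{Grayson(1984)}). Invoking this result and the openness of $\bigcup \calw^{-\beta}$ (Lemma~\ref{lem:thinningOpen}) finishes item \ref{prop:nf:coversAtInfinitySatisfied:sieben}.

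The expected obstacle is item \ref{prop:nf:coversAtInfinitySatisfied:sieben}: verifying that Grayson's cocompactness result applies verbatim to the set carved out by the Lipschitz-thickened sublevel set of $\max_W c_W$, rather than the sharp sublevel set, requires only an $O(\beta)$ shift of the threshold but must be checked carefully. The other items are essentially consequences of the lattice-theoretic structure already established in Sections~\ref{sec:canFilt} and~\ref{sec:volint}.
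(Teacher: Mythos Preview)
Your proposal is correct and follows essentially the same route as the paper: items \ref{prop:nf:coversAtInfinitySatisfied:vier}--\ref{prop:nf:coversAtInfinitySatisfied:sechs} are handled identically via the canonical-filtration uniqueness of Corollary~\ref{cor:cWAndCanFiltr}, and item \ref{prop:nf:coversAtInfinitySatisfied:sieben} is reduced via the $4n$-Lipschitz bound to a sublevel set $\{s:c_W(s)\le 4n\beta\text{ for all }W\}$ and then dispatched by Grayson's cocompactness result. The only minor deviations are that the paper cites \cite[Corollary~5.2]{Grayson(1984)} rather than Theorem~1.1 for the cocompactness, and for properness in \ref{prop:nf:coversAtInfinitySatisfied:drei} the paper gives a self-contained argument (bounding the length of basis vectors via Lemma~\ref{lem:nfVolLipschitz}) instead of invoking discreteness of $\GL_n(\IZ)$ in $\GL_n(\IR)$.
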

\begin{proof}
\begin{enumerate}
\item See for example \cite[Chapter~II Theorem~10.39]{Bridson-Haefliger(1999)}.
\item After choosing a basis for $V$ we can identify the space $X(V)$ with the set of positive definite, symmetric $n\times n$ matrices of determinant one. This is a Riemannian manifold of dimension $\frac{(n+1)n}{2}-1$. Its covering dimension is at most $\frac{(n+1)n}{2}-1$ by \cite[Corollary~50.7]{Munkres(1975)}.
\item A straightforward computation shows that the group action is isometric. Since $X(V)$ embeds equivariantly in the space of all inner products on $\IR\otimes V$, we can consider this space instead. Pick for a point $s\in X(V)$ the compact set $K\coloneqq B_1(s)$. Fix a basis of $V$ and 
let $C$ denote the length of the longest element of this basis. If $gK\cap K\neq \emptyset$, then $gs$ and $s$ have distance at most $2$. Thus every element of the upper basis has length at most
$C\cdot e^{2n}$ by Lemma~\ref{lem:nfVolLipschitz}. So there are only  finitely many group elements with $gK\cap K\neq \emptyset$. Thus action is proper.
\item Pick an element $g\in \aut_\IZ(V)$ and an open set $U\in \calw$. It has the form $U=\{x\in X(V)\mid c_W(x)> 0\}$ for a nontrivial direct summand $W\subset V$. We have $c_W(s\cdot g)=c_{gW}(s)$ and hence $gU=\{x\in X(V)\mid c_{gW}(x)> 0\}\in \calw$.
\item Assume $x\in gU\cap U$ for some $U\in \calw,g\in \aut_\IZ(V)$. Thus $c_W(x)>0$ and $c_{gW}(x)> 0$. 
By Corollary~\ref{cor:cWAndCanFiltr} this means that $W,gW$ are both contained in the canonical filtration. And since they have the same rank they have to be equal.
\item Suppose $x\in \bigcap_{i=1}^m U_i$ for some $U_i\in \calw$. Then $U_i$ can be written as $\{x\in X(V)\mid c_{W_i}(x)> 0\}$ for some nontrivial direct summands $(W_i)_{i=1\ldots,m}$. Hence they all have to occur in the canonical filtration. The canonical filtration can have at most one module for each rank between one and $n-1$. Thus $m\le n-1$. So the dimension of $\calw$ is at most $n-2$.
\item Let us show that $X(V)\setminus(\bigcup \calw^{-\beta})$ is a closed subset of a cocompact set. We have already shown in $(iv)$ that it is $G$-invariant. By Lemma~\ref{lem:thinningOpen} it is a closed subset of $X(V)$. 
By Corollary~\ref{cor:nfcWLipschitz} we know that each function $c_W$ is $4n$-Lipschitz. Hence 
\begin{eqnarray*}
&& X\setminus(\bigcup \calw^{-\beta})\\
&\subset& \{x\in X(V)\mid c_W(x)\le 4n\beta \mbox { for each nontrivial direct summand } W \subset V\}.\end{eqnarray*}
The group operation on the right hand side is cocompact by \cite[Corollary~5.2]{Grayson(1984)}. Hence the group operation on the closed subset $X\setminus(\bigcup \calw^{-\beta})$ is also cocompact.
\end{enumerate}
\end{proof}
\subsection{Preliminaries about affine buildings}\label{ssect:prelimAffine}

Most of this subsection can be found in \cite{garrett1997buildings}. Basics about Euclidean simplicial complexes or more generally about $M_k$-polyhedral complexes can be found in \cite[Chapter~I.7]{Bridson-Haefliger(1999)}. 

Let us begin with some preliminaries about affine buildings. Let $O$ be a  discrete valuation ring with fractional field $k$. Let $m$ be the unique maximal ideal of $O$ and let $\kappa$ denote the residue field $O/m$. Let $t$ be a generator of $m$.
Let $V$ be an $n$-dimensional vector space over $k$.

A homothety is a $k$-linear map of the form 
\[V\rightarrow V \qquad v\mapsto \lambda v\]
for some $\lambda \in k\setminus \{0\}$. Two $O$-lattices $L_1,L_2\subset V$ are homothetic if there is a homothety $f:V \rightarrow V$ with $f(L_1)=L_2$. Being homothetic is an equivalence relation and we write $[L_1]$ for the homothety class of $L_1$.

Now we can consider a simplicial complex whose vertex set is the set of all homothety classes of $O$-lattices in $V$ and where a sequence $[L_1],\ldots,[L_m]$ of equivalence classes spans a simplex if there are representatives such that 
\[L_1\subset L_2\subset \ldots \subset L_m\subset t^{-1}L_n.\]

\begin{lemma}\label{lem:AffBuilLocFin} The set of neighbors of a vertex $[L]$ can be identified with the set of $\kappa$-subspaces of the $n$-dimensional $\kappa$-space $t^{-1}L/L$.

Especially if $\kappa$ is finite the complex $X(V)$ is locally finite. This condition is automatically satisfied for $k=\IQ$ or $k=F(t)$ for a finite field $F$.
\end{lemma}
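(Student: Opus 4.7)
The plan is to show that every neighbor $[L']$ of $[L]$ admits a canonical representative lattice $M$ satisfying $L\subseteq M\subseteq t^{-1}L$, and then to identify such lattices with $\kappa$-subspaces of $t^{-1}L/L$.

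First I would normalize the representative. Given an edge $\{[L],[L']\}$, the defining condition of the simplicial complex produces representatives $L_1,L_2$ with $L_1\subseteq L_2\subseteq t^{-1}L_1$, where $\{L_1,L_2\}$ represent $\{[L],[L']\}$ in one of the two possible orders. By rescaling within the homothety classes (and, in the ``reversed'' ordering, multiplying through by $t^{-1}$) I obtain a representative $M$ of $[L']$ in the strip $L\subseteq M\subseteq t^{-1}L$.

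Next comes uniqueness. Any second representative of $[L']$ in the strip has the form $\lambda M$ with $\lambda\in k^{*}$; writing $\lambda=ut^{j}$ with $u\in O^{*}$ and $j\in\IZ$ and using that $M$ is an $O$-module gives $\lambda M=t^{j}M$. A short case analysis on the sign of $j$, using the sandwich $L\subseteq t^{j}M\subseteq t^{-1}L$, forces $j=0$ unless $[L']=[L]$ (in which exceptional case $M=L$ and $M=t^{-1}L$ are both admissible and correspond to the two ``trivial'' subspaces of $t^{-1}L/L$). Thus nontrivial neighbors are in bijection with lattices $M$ satisfying $L\subsetneq M\subsetneq t^{-1}L$, and the map $M\mapsto M/L$ identifies these with the nontrivial proper $O$-submodules of $t^{-1}L/L$. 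Since $t$ annihilates this quotient, $O$-submodules coincide with $\kappa=O/tO$-subspaces. Finally $t^{-1}L/L$ is an $n$-dimensional $\kappa$-space via the isomorphism $t^{-1}L/L\xrightarrow{\cdot t}L/tL\cong(O/tO)^{n}=\kappa^{n}$, using that $L$ is free of rank $n$ over $O$.

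For local finiteness, if $|\kappa|=q<\infty$ then the total number of $\kappa$-subspaces of $\kappa^{n}$ is the finite number $\sum_{i=0}^{n}\binom{n}{i}_{q}$, so $[L]$ has only finitely many neighbors and $X(V)$ is locally finite. For the last sentence, the affine buildings attached to $\IQ$ and $F(t)$ arise from completions at primes, whose residue fields are $\IF_{p}$ respectively finite extensions of $F$, hence finite in either case. The main obstacle is the uniqueness step: ruling out $|j|\geq 1$ requires using that $L$ and $t^{-1}L$ are the extremal lattices in the strip, and showing that the borderline values $j=\pm 1$ collapse to $[L']=[L]$ rather than producing a genuinely second representative.
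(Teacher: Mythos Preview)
Your proof is correct and follows essentially the same route as the paper: normalize a neighbor to a representative $M$ with $L\subseteq M\subseteq t^{-1}L$, then send $M\mapsto M/L$ into the $\kappa$-space $t^{-1}L/L\cong\kappa^n$. You are in fact more careful than the paper, which simply asserts uniqueness of the representative in the strip, whereas you argue it out via $\lambda=ut^{j}$; your treatment of the borderline cases $M=L$ and $M=t^{-1}L$ (corresponding to the trivial subspaces, i.e.\ to $[L']=[L]$, which is not an actual neighbor) is also more explicit than the paper's.

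One terminological slip: in the final sentence you speak of ``completions at primes''. In the paper's setup $O$ is a DVR with fraction field $k$; for $k=\IQ$ these are the localizations $\IZ_{(p)}$ (not the $p$-adic completions, whose fraction field is $\IQ_p$), and for $k=F(t)$ the places of $F(t)$. The residue fields are the same either way, so your conclusion stands, but ``localizations'' or ``places'' would be the accurate word.
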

\begin{proof}
By definition $m\cdot t^{-1}L=(t)\cdot t^{-1}L=L$ and hence $t^{-1}L/L$ has the structure of a $\kappa$-module. Any isomorphism $L\cong O^n$ induces $t^{-1}L/L\cong t^{-1}O^n/O^n\cong (O/tO)^n=\kappa^n$.

For two adjacent vertices $[L]$ and $[L']$ and a representative $L$ of $[L]$ we can find a unique representative $L'$ of $[L']$ such that $L\subset L'\subset t^{-1}L$. Assigning to it the $\kappa$-subspace $L'/L\subset t^{-1}L/L$ gives the desired bijection.
\end{proof}

\begin{definition}\label{def:labelDifference}
We can furthermore label the vertices with elements in $\IZ/n$. Let us first pick a base vertex $[L]$ with a representative $L$. Since $\bigcup_{n\in \IN} t^{-n} L'=V$ we find an $n$ such that $t^{-n}L'$ contains all generators of $L$. By changing the representative $L'$ we thus may assume that $L\subset L'$.

Define the label of $[L']$ to be $l([L'])\coloneqq \dim_\kappa(L'/L) \mod n$. We can check that this labeling does not depend on the choice of representatives. Furthermore it can also be expressed as the valuation of the determinant of a base change matrix from an $O$-basis of $L$ to an $O$-basis of $L'$.

The difference between the labeling $([L''],[L'])\mapsto l([L'])-l([L''])$ is even independent of the choice of the base vertex. It can be expressed as $\dim_\kappa(L'/L'') \mod n$ where $L',L''$ are representatives of $[L'],[L'']$ with $L''\subset L'$.

For an edge $e$ with endpoints $[L],[L']$ let the label difference of $e$ denote $\pm (l([L'])-l([L'']))$ in the set $(\IZ/n)/ x\sim -x$.
\end{definition}

\begin{lemma} \label{lem:simplicialactionsPreserveLabelDiffs}An edge $e$ of label difference $k$ is contained in 
\[\prod_{i=1}^{k} \frac{r^i-1}{r-1}\cdot \prod_{i=1}^{n-k} \frac{r^i-1}{r-1} \]
$n-1$-dimensional simplices. The number $r$ denotes the cardinality of $\kappa$. Especially the label differences of two edges with isomorphic links are equal.
\end{lemma}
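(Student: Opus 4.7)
The plan is to translate the combinatorial question into a flag-counting problem inside two residue-field vector spaces. Choose representatives so that $L\subset L'\subset t^{-1}L$, in which case the label difference is $\dim_\kappa(L'/L)=k$. A top-dimensional simplex containing the edge $\{[L],[L']\}$ corresponds (after rescaling representatives) to a chain
\[L=L_1\subsetneq L_2\subsetneq\cdots\subsetneq L_n\subsetneq t^{-1}L\]
in which $L'$ appears. Since $\dim_\kappa(t^{-1}L/L)=n$, each of the $n$ successive quotients is forced to be $1$-dimensional, and the position of $L'$ is forced by $\dim_\kappa(L'/L)=k$, namely $L'=L_{k+1}$.

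Next I would split the counting into two independent halves. Choices of the intermediate terms $L_2,\ldots,L_k$ between $L$ and $L'$ correspond bijectively to complete flags in the $k$-dimensional $\kappa$-space $L'/L$, by passing to quotients modulo $L$. Similarly, the intermediate terms $L_{k+2},\ldots,L_n$ between $L'$ and $t^{-1}L$ correspond to complete flags in the $(n-k)$-dimensional $\kappa$-space $t^{-1}L/L'$. A standard $q$-analog count shows that the number of complete flags in a $d$-dimensional $\kappa$-vector space equals the $r$-factorial $[d]_r!=\prod_{i=1}^{d}\frac{r^i-1}{r-1}$ (successively choose a line in the current quotient). Multiplying the two independent counts gives the stated product.

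For the final sentence, the expression $N(k)=\prod_{i=1}^{k}\frac{r^i-1}{r-1}\cdot\prod_{i=1}^{n-k}\frac{r^i-1}{r-1}$ is symmetric in $k\leftrightarrow n-k$, matching the sign ambiguity of the label difference in $(\IZ/n)/(x\sim -x)$. A short calculation gives
\[\frac{N(k+1)}{N(k)}=\frac{r^{k+1}-1}{r^{n-k}-1},\]
which is strictly less than $1$ for $k+1<n-k$ and strictly greater than $1$ afterward, so $N$ is a strictly monotone function of $\min(k,n-k)$. Therefore $N(k)$ determines the unordered pair $\{k,n-k\}$, which is precisely the label difference. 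Since the number of top-dimensional simplices containing the edge equals the number of chambers of its link, isomorphic links produce equal values of $N$ and hence equal label differences.

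The only mild obstacle is the last paragraph: one must verify that the coincidence of $N$-values really forces $\{k,n-k\}$ to agree. The monotonicity argument above handles this cleanly, but one should take care that $k$ and $n-k$ both range in $\{1,\ldots,n-1\}$ and that the bijection between flags and the intermediate $L_i$'s genuinely uses the choice $L_1=L$, $L_{k+1}=L'$ (the rescaling freedom in the definition of the building is what makes this normalization possible). Everything else is bookkeeping.
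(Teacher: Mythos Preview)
Your proof is correct and follows essentially the same route as the paper. Both arguments choose representatives $L\subset L'\subset t^{-1}L$, identify the top-dimensional simplices through the edge with complete flags in $t^{-1}L/L$ passing through $L'/L$, and count these via the $r$-factorial; your decomposition into independent flag counts in $L'/L$ and $t^{-1}L/L'$ is exactly the factorization the paper obtains when it splits its step-by-step count at index $k$. The monotonicity argument for the final claim is likewise the same (the paper computes $f(k-1)/f(k)$ where you compute $N(k+1)/N(k)$), and your explicit remark that the number of chambers in the link is a link-isomorphism invariant makes the logic of the last sentence a bit more transparent than in the paper.
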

\begin{proof}
Let $L,L'$ be representatives of the endpoints of the $e$ with $L\subset L'\subset t^{-1}L$. Then $\dim_\kappa(L/L')\in \{k,n-k\}$. So we can assume that this dimension is $k$. Otherwise replace $L$ by $L'$ and $L'$ by $t^{-1}L$. Each such $(n-1)$-dimensional simplex then corresponds to a flag of the form
\[L\subset L_1 \subset \ldots \subset L_{k-1} \subset L' \subset L_{k+1}\subset \ldots L_{n-1}\subset t^{-1} L.\]
By dividing $L$ out each such flag corresponds to a flag 
\[0\subset V_1 \subset \ldots \subset V_{k-1} \subset L'/L \subset V_{k+1}\subset \ldots V_{n-1}\subset (t^{-1} L)/L.\]
of the $n$-dimensional $\kappa$-vector space $(t^{-1} L)/L$ containing $V_k\coloneqq L'/L$. 
Assume we already picked $V_i$ and we want to pick $V_{i+1}$ for $i+1<k$. So we have to pick a vector $v_{i+1} \in V_k$ that does not lie in $V_i$. There are $p^k-p^i$ choices for such a vector. And two vector yield the same vector space $V_{i+1}:=\langle v_{i+1},V_i\rangle$ if they differ multiplicatively by a unit in $\kappa$ and additively by some element of $V_i$. So there are $\frac{r^k-r^i}{r^i(r-1)}=\frac{r^{k-i}-1}{r-1}$ such choices possible. The analogous argument holds for $i\ge k$ and yields 
\[\prod_{i=0}^{k-1} \frac{r^{k-i}-1}{r-1}\cdot \prod_{i=0}^{n-k-1} \frac{r^{n-k-i}-1}{r-1}. \]
A final substitution yields the desired result.
Now assume that $k\le n/2$. Thus $k\le n-k$. Let 
\[f(k)\coloneqq \prod_{i=0}^{k-1} \frac{r^{k-i}-1}{r-1}\cdot \prod_{i=0}^{n-k-1} \frac{r^{n-k-i}-1}{r-1}.\]
We have 
\[\frac{f(k-1)}{f(k)}=\frac{r^{n-k+1}-1}{r^k-i}>1.\]
Thus $f$ is monotonically decreasing on $1,\ldots, \lfloor \frac{n}{2}\rfloor$. This is a complete system of representatives of $(\IZ/n)/x\sim-x$. 
So the induced map $(\IZ/n)/x\sim-x\rightarrow \IN$ is injective. This proves the last claim.
\end{proof}

A Euclidean $n$-simplex is the convex hull of $n+1$ points in $\IR^n$ in general position. An Euclidean simplicial complex is a simplicial complex where any simplex carries additionally the structure of an Euclidean simplex. This means that we can identify the vertices of the simplex with the vertices of the given Euclidean simplex. Furthermore the inclusions of the faces are required to be isometries. See \cite[Chapter~I, Definition~7.2]{Bridson-Haefliger(1999)} for the precise definition.

Let us recall the definition of a building as given in \cite[Chapter~I Definition~10A.1]{Bridson-Haefliger(1999)}. It is not the usual definition of an affine building; for example it already requires a metric.
\begin{definition} A Euclidean building of dimension $n-1$ is a piecewise Euclidean simplicial complex $X$ such that:
\begin{enumerate}
\item $X$ is the union of a collection $\cala$ of subcomplexes $E$, called apartments, such that the intrinsic metric $d_E$ on $E$ makes $(E, d_E)$ isometric to the Euclidean space $E^n$ and induces the given Euclidean metric on each simplex. The $n-1$-simplices of $E$ are called its chambers.
\item Any two simplices $B$ and $B'$ of $X$ are contained in at least one apartment.
\item Given two apartments $E$ and $E'$ containing both the simplices $B$ and $B'$, there is a simplicial isometry from $(E, d_E)$ onto $(E' , d_{E'})$ which leaves both $B$ and $B'$ pointwise fixed.
   
   The building $X$ is called thick if the following extra condition is satisfied:
\item Thickness Condition: Any $(n-2)$-simplex is a face of at least three $n-1$-simplices.
\end{enumerate}
\end{definition}

Up to now the affine building is just a simplicial complex. We can furthermore equip the simplicial complex with the structure of an Euclidean simplicial complex. But first we need a preliminary lemma:
\begin{lemma} \label{lem:Rnsimplicial} Every $x\in \IR^n$ can be written uniquely as a convex combination $x=\sum_{i=0}^m \mu_i p_i$ with 
$p_i\in \IZ^n,0<\mu_i\le 1,\sum_{i=0}^m \mu_i=1$ such that 
 $p_0< \ldots< p_m\le p_0+(1,\ldots, 1)$. Here $a\le b$ means $a_i\le b_i$ for all $i$. Especially this implies $m\le n$. 
\end{lemma}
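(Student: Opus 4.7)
The plan is to show first that $p_0$ must be the componentwise floor of $x$, and then that the residue $y := x - p_0 \in [0,1)^n$ canonically determines both the chain and the weights.

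For existence, I would set $p_0 := (\lfloor x_1\rfloor,\dots,\lfloor x_n\rfloor)$ and $y := x - p_0 \in [0,1)^n$, and reduce the problem to writing $y$ as a convex combination of indicators of a strictly increasing chain of subsets of $\{1,\dots,n\}$. List the distinct positive values among the coordinates of $y$ in decreasing order as $t_1 > t_2 > \cdots > t_m > 0$, set $T_i := \{j : y_j \ge t_i\}$, so that $\emptyset \subsetneq T_1 \subsetneq \cdots \subsetneq T_m \subseteq \{1,\dots,n\}$, and define $p_i := p_0 + \mathbf{1}_{T_i}$. Take weights $\mu_0 := 1 - t_1$, $\mu_i := t_i - t_{i+1}$ for $1 \le i < m$, and $\mu_m := t_m$ (with the convention $t_{m+1} := 0$). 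All weights are strictly positive because $y \in [0,1)^n$ ensures $t_1 < 1$, and the $t_i$ are strictly decreasing. A telescoping check coordinate by coordinate gives $\sum_i \mu_i p_i = p_0 + \sum_i \mu_i \mathbf{1}_{T_i} = x$, and $\sum_i \mu_i = 1$. The bound $m \le n$ is immediate from the strict chain of subsets of $\{1,\dots,n\}$.

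For uniqueness, suppose $x = \sum_{i=0}^m \mu_i p_i$ is any such decomposition. Because $p_0 \le p_i \le p_0 + (1,\dots,1)$, each coordinate satisfies $(p_i)_j \in \{(p_0)_j, (p_0)_j + 1\}$; hence
\[ x_j \;=\; \sum_i \mu_i (p_i)_j \;\le\; \mu_0 (p_0)_j + (1-\mu_0)\bigl((p_0)_j + 1\bigr) \;=\; (p_0)_j + (1-\mu_0) \;<\; (p_0)_j + 1, \]
using $\mu_0 > 0$, and similarly $x_j \ge (p_0)_j$. So $(p_0)_j = \lfloor x_j \rfloor$, which pins down $p_0$. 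Passing to $y = x - p_0$, write $y = \sum_{i \ge 1} \mu_i \mathbf{1}_{T_i}$ for a chain $\emptyset \subsetneq T_1 \subsetneq \cdots \subsetneq T_m$. Then $y_j = \sum_{i \ge \tau(j)} \mu_i$ where $\tau(j) := \min\{i : j \in T_i\}$, and the chain structure forces $y_j = y_{j'}$ to imply $\tau(j) = \tau(j')$: otherwise the difference $y_j - y_{j'}$ would be a nonempty sum of strictly positive $\mu_i$'s. Consequently $T_i$ is the level set $\{j : y_j \ge t_i\}$ where $t_1 > \cdots > t_m > 0$ enumerate the distinct positive coordinates of $y$, and the $\mu_i$ are then forced by the same formulas as above.

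The main obstacle is the uniqueness part, and within it the subtle case of coordinates $x_j \in \IZ$: one must check that $(p_0)_j$ cannot drop by one while the other $(p_i)_j$ compensate, which is exactly what the strict inequality $x_j < (p_0)_j + 1$ rules out.
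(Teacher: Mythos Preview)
Your proof is correct and follows essentially the same approach as the paper's: both reduce to the unit cube via the floor and then build the chain $p_i$ from the level sets of the residue vector (the paper phrases this via the characteristic functions $\chi_{>x'_i}$, you via the sets $T_i=\{j:y_j\ge t_i\}$). The paper's argument is explicitly labeled a sketch and leaves the uniqueness to a one-line remark, whereas you carry out both existence and uniqueness in full; in particular your determination of $p_0$ as $\lfloor x\rfloor$ and the telescoping recovery of the $\mu_i$ make precise exactly what the paper means by ``you can read off that subset by comparing the coordinates of $x$.''
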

\begin{proof}

This triangulation of $\IR^n$ is obtained from the tesselation with cubes by a certain subdivision into simplices.

Let me just give a sketch of the proof.
The statement is trivial for $n=0$. So let $x\in \IR^n$ be given. Without loss of generality we can assume that $\lfloor x_i\rfloor =0$ for all $i$. By permuting the coordinates we can assume that
\[1> x_1\ge \ldots \ge x_n\ge 0 .\]
Now let $m$ be the number of different entries of $x$ and let $(x'_1,\ldots, x'_m)$ be obtained from $x$ by leaving out coordinates that occur twice. Let $\chi_{> x'_i}$ be the characteristic function \[y\mapsto \begin{cases} 1 & y> x_i\\0 &\mbox{else} \end{cases}\] 
and let $p_i\coloneqq \chi_{>x'_i}$.
Then $x$ can be written as a convex combination of the $p_i$. Conversely if you know that $x$ can be written as a convex combination of a totally ordered subset of $\{0,1\}^n$ with nonzero coefficients you can read off that subset by comparing the coordinates of $x$.
\end{proof}

\begin{figure}[ht]
  \centering
  \begin{tikzpicture}
    \coordinate (Origin)   at (0,0);
    \coordinate (XAxisMin) at (-0.4,0);
    \coordinate (XAxisMax) at (8,0);
    \coordinate (YAxisMin) at (0,-5);
    \coordinate (YAxisMax) at (0,2);
       \clip (-0.5,-0.5) rectangle (7cm,7cm);  
   \foreach \x in {-2,0,...,6}{
		\foreach \y in {-2,0,...,6}{
			\draw  (\x,\y)--+(2,0)--+(0,0)--+(0,2)--+(0,0)--+(2,2);
		}
   }
     \filldraw[fill=red!20,draw=red!50!black]
(0,0)--+(2,0)--+(2,2)--cycle;


  \end{tikzpicture}
  \vspace*{8pt}
   \begin{minipage}{8cm}
The tesselation of $\IR^2$. The marked simplex corresponds to the chain $(0,0)<(1,0)<(1,1)$.
   \end{minipage}   
  \label{fig:tesR2}
\end{figure}

\begin{remark}\label{rem:DiagSimplices}
The convex combination for $x+\lambda (1,\ldots,1)$ can be obtained from the convex combination for $x$ in the following way. Let us assume without loss of generality that $\lambda$ is positive; otherwise swap the roles. 
Make the coefficient of $p_0$ smaller and increase the coefficient of $p_0+(1,\ldots,1)$ correspondingly until the coefficient of $p_0$ becomes zero. Then $p_1$ is the smallest element from $\IZ^n$ needed and we can continue this way: Now decrease the coefficient of $p_1$ and increase the coefficient of $p_1+(1,\ldots,1)$. 
\end{remark}

Now we are ready to define the metric on the affine building: 

\begin{proposition} \label{prop:affbuildingCAT0}The affine building $X$ has the following properties:
\begin{enumerate}
\item For each basis $b_1,\ldots,b_n$ of $V$ we can consider the full subcomplex $X'$ spanned by all vertices of the form $[t^{m_1}b_1,\ldots,t^{m_n}b_n]$ for $m\in \IZ^n$. This will be an apartment of the building.

We can map such a vertex to $\pr(m_1,\ldots,m_n)\in \IR^n$, where $\pr:\IR^n \rightarrow \langle (1,\ldots,1)\rangle^\perp$ denotes the orthogonal projection with respect to the standard inner product on $\IR^n$. The linear extension $f:X'\rightarrow \{x\in \IR^n\mid \sum_{i=1}^n x_i=0\}$ of this map is a bijection. We can pull the metric on $\{x\in \IR^n\mid \sum_{i=1}^n x_i=0\}$ 
back to each simplex to obtain an Euclidean simplicial complex.
\item \label{prop:affbuildingCAT01}The length of an edge in $X'$ depends only on the label difference of its endpoints. 
\item \label{prop:affbuildingCAT02}If a simplex is contained in two apartments we get the same metric on that simplex.
\item \label{prop:affbuildingCAT03}A simplicial automorphism $g:X\rightarrow X$ is an isometry.
\item \label{prop:affbuildingCAT04}$\aut_k(V)$ acts isometrically on $X$. 
\item \label{prop:affbuildingCAT05}Any two simplices are contained in at least one apartment.
\item \label{prop:affbuildingCAT06}Given two apartments $E$ and $E'$ containing both the simplices $B$ and $B'$, there is a simplicial isometry from $(X, d_X)$ onto $(X' , d_{X'})$ which leaves both $B$ and $B'$ pointwise fixed.
\item \label{prop:affbuildingCAT07} The affine building $X$ is a $\CAT(0)$ space.
\end{enumerate}
\end{proposition}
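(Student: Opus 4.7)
The plan is to use Lemma~\ref{lem:Rnsimplicial} to set up the identification in~\ref{prop:affbuildingCAT01} explicitly, and then bootstrap every subsequent property from the well-understood metric on the hyperplane $H\coloneqq\{x\in \IR^n\mid \sum x_i=0\}$. For~(i), the vertices of the candidate apartment $X'$ are parametrised by $m\in \IZ^n$ up to addition of an integer multiple of $(1,\ldots,1)$ (which corresponds to multiplying the lattice by a power of $t$, i.e.\ a homothety), so they are in natural bijection with $\IZ^n\cap H$ via $\pr$. Lemma~\ref{lem:Rnsimplicial} and Remark~\ref{rem:DiagSimplices} describe the triangulation of $\IR^n$ using chains $p_0< \ldots <p_m\le p_0+(1,\ldots,1)$, and exactly such chains appear as chains of lattices of the form $\langle t^{p_{i,1}}b_1,\ldots,t^{p_{i,n}}b_n\rangle_O$; after projecting to $H$ one obtains a simplicial bijection $X'\to H$, which pulls the Euclidean structure back to each simplex.

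For~\ref{prop:affbuildingCAT01}, under this identification an edge connecting $[L]\subset [L']$ with $\dim_\kappa(L'/L)=k$ is, after reordering the $b_i$, the edge from $0$ to $\pr(\chi_{\{1,\ldots,k\}})$, whose length in $H$ depends only on $k$ (and symmetrically on $n-k$ in the unordered form). Property~\ref{prop:affbuildingCAT02} follows at once: an edge living in two apartments gets its length computed by a formula in~(ii), and label differences are intrinsic to the simplicial complex, so the two metrics on any shared simplex agree vertex by vertex, hence (since the simplex is Euclidean and determined by its edge lengths) everywhere. Property~\ref{prop:affbuildingCAT03} is then immediate because a simplicial automorphism carries links to links, so by Lemma~\ref{lem:simplicialactionsPreserveLabelDiffs} it preserves the label difference of every edge and hence every edge length; property~\ref{prop:affbuildingCAT04} follows since $\aut_k(V)$ acts by simplicial automorphisms.

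For~\ref{prop:affbuildingCAT05} and~\ref{prop:affbuildingCAT06} I would invoke the classical arguments from~\cite{garrett1997buildings}: given two simplices $B, B'$, apply the invariant factor/elementary divisor theorem over the DVR $O$ repeatedly to the finite collection of lattice representatives at the vertices of $B$ and $B'$ to produce a single basis $b_1,\ldots,b_n$ of $V$ such that each of those representatives is of the form $\langle t^{m_i}b_i\rangle_O$; its associated subcomplex from~(i) is then an apartment containing both $B$ and $B'$. For~(vii) one identifies each of the two apartments containing $B,B'$ with $H$ via~(i); the resulting two embeddings of $B$ (resp.\ $B'$) differ by an isometry of $H$, and the simplicial structure together with the uniqueness of the Weyl-group action on apartments forces the composed isometry to pointwise fix $B$ and $B'$.

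Finally,~\ref{prop:affbuildingCAT07} follows from~(i)--(vii) together with the standard Bruhat--Tits criterion: a piecewise Euclidean simplicial complex covered by a system of apartments isometric to Euclidean space and satisfying (vi) and (vii) is $\CAT(0)$ provided the link of every vertex is $\CAT(1)$. The latter is verified by identifying the links with spherical buildings, which are $\CAT(1)$ by a classical argument, and then applying~\cite[Chapter~II, Theorem~10A.4]{Bridson-Haefliger(1999)}. I expect the most delicate step to be a careful verification in~(i) that the full subcomplex on the vertices $[t^{m_1}b_1,\ldots,t^{m_n}b_n]$ really is the standard simplicial subdivision of $\IR^{n-1}$ described by Lemma~\ref{lem:Rnsimplicial} — this amounts to matching flags of $O$-sublattices between consecutive scalings with chains in $\IZ^n$ — but once this matching is in place, the rest of the proof is essentially bookkeeping and an invocation of the cited general results.
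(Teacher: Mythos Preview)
Your proposal is correct and follows essentially the same route as the paper: use Lemma~\ref{lem:Rnsimplicial} and Remark~\ref{rem:DiagSimplices} to identify the apartment with the hyperplane $H$, deduce that edge lengths depend only on label differences, invoke Lemma~\ref{lem:simplicialactionsPreserveLabelDiffs} for~(iv), cite Garrett for~(vi)--(vii), and cite \cite[Chapter~II, Theorem~10A.4]{Bridson-Haefliger(1999)} for~(viii). Two minor deviations worth noting: for~(v) the paper verifies directly that $\aut_k(V)$ preserves label differences via $L/L'\cong\varphi(L)/\varphi(L')$ rather than passing through~(iv), and for~(viii) the paper simply cites the theorem without unpacking the $\CAT(1)$ link argument --- your extra detail there is correct but unnecessary.
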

\begin{proof}
\begin{enumerate}
\item We have to show that each point $p\in \{x\in \IR^n\mid \sum_{i=1}^n x_i=0\}$ lies in the image of a unique open simplex. Let us first we can apply Lemma~\ref{lem:Rnsimplicial} to write it as a convex combination of certain points $p_1,\ldots, p_m$ of $\IZ^n$:
$\sum_{i=1}^m \mu_i  \cdot p_i=p=\pr(p) = \sum_{i=1}^m \mu_i  \cdot \pr(p_i)$. Hence $p$ lies in the convex hull of the points $(f([t^{p_{i,1}}b_1,\ldots,t^{p_{i,n}}b_n]))_{i=1\ldots m}$. The conditions on $p_i$ from Lemma~\ref{lem:Rnsimplicial} mean exactly that the vertices $[t^{p_{i,1}}b_1,\ldots,t^{p_{i,n}}b_n]$ span a simplex. Uniqueness follows from Remark~\ref{rem:DiagSimplices}. So $f$ is really a continuous bijection. Since $f$ is proper it is a homeomorphism. 

The images vertices of each simplex are in general position since otherwise there would be a point that can be written as a convex combination of those vertices in two different ways which we have already ruled out. So one obtains the structure of an Euclidean simplicial complex.

The space $ \{x\in \IR^n\mid \sum_{i=1}^n x_i=0\}$ is a convex subset of $\IR^n$. Hence the restriction of the standard metric to it is inner. The metric on the realization on an Euclidean simplicial complex is the unique inner metric whose restriction to each simplex agrees with the metrics given on it. So the realization of $X'$ is really isometric to $\IR^n$.

\item Let $e\in X'$ be any edge. Pick representatives 
\[\langle t^{m_1}b_1,\ldots,t^{m_n}b_n\rangle \mbox{ and }\langle t^{m'_1}b_1,\ldots,t^{m'_n}b_n\rangle\]
 of its endpoints $p,p'$ with 
\[\langle t^{m_1}b_1,\ldots,t^{m_n}b_n\rangle \subset \langle t^{m'_1}b_1,\ldots,t^{m'_n}b_n\rangle\subset t^{-1}\langle t^{m_1}b_1,\ldots,t^{m_n}b_n\rangle.\]
This means exactly that $m'_i$ is either $m_i-1$ or $m_i$. Note that 
\[\dim_\kappa( \langle t^{m'_1}b_1,\ldots,t^{m'_n}b_n\rangle/ \langle t^{m_1}b_1,\ldots,t^{m_n}b_n\rangle)= \sum_{i=1}^nm'_i-m_i. \]
Now we can consider the distance between $f(p)$ and $f(p')$. It is $||\pr(m_i'-m_i)||$. The length of a vector whose entries are either zero or one depends only on the number $r$ of ones. Not all entries can be simultaneously zero (or one) since then the endpoints of $e$ would be the same. This is impossible in a simplicial complex. 
 But we now the residue $r$ mod $n$ is just the label difference of the vertices. Since the desired number must be at least one and can be at most $n-1$ this determines $r$. So length of an edge depends only on its label difference. 
\item The metric on an Euclidean simplex is uniquely determined by the length of its edges. As shown before the length of an edge depends only on the label difference and not on the choice of some apartment.
\item Each simplicial automorphism of $X$ preserves the label difference by Lemma~\ref{lem:simplicialactionsPreserveLabelDiffs}. Thus it is an isometry.
\item $\varphi \in \aut_k(V)$ preserves the label difference since for two lattices $L,L'$ with $L\subset L'\subset t^{-1}L$ we have 
$\varphi(L)\subset \varphi(L')\subset t^{-1}\varphi(L)$ and $\varphi(L'/L)\cong L'/L$.

\item The proof can be found in \cite[chapter 19, p. 289]{garrett1997buildings}.

\item The proof goes as in \cite[chapter 19, p. 290]{garrett1997buildings}. The automorphism constructed there is simplicial. Hence it is an isometry from one apartment to the other by the same argument as above. 

\item \cite[Chapter~I Theorem~10A.4(ii)]{Bridson-Haefliger(1999)}.

\end{enumerate}
\end{proof}



We need the following lemma to deal with the properness of the affine building. 

\begin{lemma}[{\cite[Chapter I.3 Corollary 3.8]{bridson1999metric}}]\label{lem:locComCompleteInnerImplProper} A inner metric space is proper, if and only if it is locally compact and complete.
\end{lemma}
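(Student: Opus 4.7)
The plan is to prove this in the usual Hopf--Rinow style, splitting into the two directions.

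The forward direction is routine: if every closed ball is compact, then the closed unit ball around any point is a compact neighbourhood, so the space is locally compact. For completeness, any Cauchy sequence is bounded, hence contained in some closed ball, which is compact, so admits a convergent subsequence; a Cauchy sequence with a convergent subsequence converges.

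For the nontrivial direction, fix a basepoint $x_0$ and define
\[
R \coloneqq \sup\{r \ge 0 \mid \overline{B}_r(x_0)\text{ is compact}\}.
\]
Local compactness guarantees $R>0$ (a compact neighbourhood of $x_0$ contains a small closed ball). I would then show by contradiction that $R=\infty$ via two steps.

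Step one: show that $\overline{B}_R(x_0)$ itself is compact, assuming $R<\infty$. Since the space is complete it suffices to prove $\overline{B}_R(x_0)$ is totally bounded. Given $\varepsilon>0$, pick $r<R$ with $R-r<\varepsilon/3$; by definition of $R$ the ball $\overline{B}_r(x_0)$ is compact and admits a finite $(\varepsilon/3)$-net $\{x_1,\dots,x_N\}$. For any $y\in\overline{B}_R(x_0)$, the inner (length) metric lets me choose a path from $x_0$ to $y$ of length less than $R+\varepsilon/3$; walking along it I find a point $y'$ with $d(x_0,y')<r$ and $d(y,y')<\varepsilon/3$. Then $y'\in\overline{B}_{\varepsilon/3}(x_i)$ for some $i$, so $y\in\overline{B}_\varepsilon(x_i)$. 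Hence $\{x_1,\dots,x_N\}$ is also a finite $\varepsilon$-net in $\overline{B}_R(x_0)$, giving total boundedness and hence (by completeness) compactness.

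Step two: extend beyond $R$. By local compactness, for every $p\in\overline{B}_R(x_0)$ there is $\delta_p>0$ with $\overline{B}_{\delta_p}(p)$ compact. By compactness of $\overline{B}_R(x_0)$ from step one, finitely many balls $B_{\delta_{p_i}/2}(p_i)$ cover it; set $\delta\coloneqq\tfrac12\min_i\delta_{p_i}>0$. Any $y\in\overline{B}_{R+\delta}(x_0)$ satisfies $d(y,\overline{B}_R(x_0))\le\delta$ (use a path as above), so $y$ lies in some $\overline{B}_{\delta_{p_i}}(p_i)$; hence $\overline{B}_{R+\delta}(x_0)$ is a closed subset of a finite union of compact sets and is therefore compact. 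This contradicts the supremum definition of $R$, forcing $R=\infty$, so every closed ball about $x_0$ is compact and the space is proper.

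The main obstacle is step one, namely exploiting the inner-metric hypothesis to reduce points of $\overline{B}_R(x_0)$ to points of the already-compact balls $\overline{B}_r(x_0)$ with $r<R$; without the length-space property one cannot in general make this approximation, and the theorem fails (e.g.\ in a locally compact complete metric space that is not geodesic, balls at critical radius need not be compact).
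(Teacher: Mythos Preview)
The paper does not prove this lemma at all; it simply records the citation to \cite[Chapter~I.3, Corollary~3.8]{bridson1999metric} and uses the result as a black box. Your argument is the standard Hopf--Rinow proof for length spaces, which is essentially what Bridson--Haefliger present at the cited location, so there is nothing to compare beyond saying that you have correctly reproduced the source proof.

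One tiny arithmetic quibble in Step one: with your constants the intermediate bound is $d(y,y')<2\varepsilon/3$ rather than $\varepsilon/3$. Indeed, taking $y'$ at arc length $r$ along a path of length $L<R+\varepsilon/3$ gives $d(y,y')\le L-r<(R+\varepsilon/3)-(R-\varepsilon/3)=2\varepsilon/3$. This still yields $d(y,x_i)<2\varepsilon/3+\varepsilon/3=\varepsilon$, so the conclusion that $\{x_1,\dots,x_N\}$ is an $\varepsilon$-net in $\overline{B}_R(x_0)$ is unaffected; if you want the intermediate claim as stated, replace $\varepsilon/3$ by $\varepsilon/4$ throughout.
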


\begin{corollary}\label{cor:affBuildingProper}
The affine building $X$ is a proper metric space if the local field $\kappa$ is finite.
\end{corollary}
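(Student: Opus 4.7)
The plan is to invoke Lemma~\ref{lem:locComCompleteInnerImplProper}: it suffices to verify that $X$ is an inner metric space that is both locally compact and complete.

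First, $X$ is a geodesic (hence inner) metric space because it is a $\CAT(0)$ space by Proposition~\ref{prop:affbuildingCAT0}\ref{prop:affbuildingCAT07}. For local compactness, recall from Lemma~\ref{lem:AffBuilLocFin} that when $\kappa$ is finite every vertex has only finitely many neighbors, so $X$ is a locally finite simplicial complex; since each closed simplex is a compact Euclidean simplex and every point has a neighborhood contained in the finite union of closed simplices meeting its carrier, $X$ is locally compact.

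Completeness is the only nontrivial part. By construction the simplicial complex $X$ is a Euclidean simplicial complex in which all simplices are isometric to one of the finitely many faces of the Weyl chamber (indeed only the isometry type of the standard $A_{n-1}$-simplex and its faces occurs), so $X$ has only finitely many isometry types of simplices. Hence the general completeness result for $M_\kappa$-polyhedral complexes with finitely many shapes (\cite[Chapter~I Theorem~7.13 or Theorem~7.50]{Bridson-Haefliger(1999)}, applied with $\kappa=0$) implies that $X$ is a complete metric space.

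Combining the three properties with Lemma~\ref{lem:locComCompleteInnerImplProper} yields that $X$ is proper. The main point to watch is completeness, since $\CAT(0)$ alone does not force it; one really needs the bounded-shape hypothesis on the Euclidean simplicial structure to apply the Bridson--Haefliger theorem.
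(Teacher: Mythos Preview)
Your proof is correct and follows essentially the same route as the paper: invoke Lemma~\ref{lem:locComCompleteInnerImplProper}, get local compactness from local finiteness via Lemma~\ref{lem:AffBuilLocFin}, and get completeness from \cite[Chapter~I Theorem~7.13]{Bridson-Haefliger(1999)} using that there are only finitely many isometry types of simplices. The only cosmetic difference is that you deduce the inner-metric property from the $\CAT(0)$ structure, whereas the paper simply notes that the metric is by construction the inner metric induced by the Euclidean structure on the simplices.
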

\begin{proof}
We want to use Lemma~\ref{lem:locComCompleteInnerImplProper}. The metric on the affine building is defined to be the inner metric induced by the Euclidean structure on the simplices. If $\kappa$ is finite, the simplicial complex is locally finite and hence locally compact. 
Furthermore the metric space is complete as mentioned above and shown in \cite[Chapter~I Theorem~7.13]{Bridson-Haefliger(1999)}.
\end{proof}

Furthermore we need another property of Euclidean simplicial complexes. 
\begin{proposition}\label{prop:extensionLipschitz} Let $X$ be an Euclidean simplicial complex with finitely many isometry types of simplices. Fix any $C\in \IR$. Then there is a $C'\in \IR$ such that the linear extension $\overline{f}$ of any function $f:X^{(0)}\rightarrow \IR$ with the property that $|f(x)-f(y)|\le C$ for any two adjacent vertices $x,y\in X^{(0)}$ is $C'$-Lipschitz.
\end{proposition}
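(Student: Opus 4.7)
The plan is to first bound the Lipschitz constant of $\overline{f}$ on a single closed simplex in terms of $C$ and the simplex's geometry, then use the finite-isometry-types hypothesis to make this bound uniform across all simplices, and finally pass to a global bound via the length-metric structure on $X$.

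For a $d$-simplex $\sigma$ with vertices $v_0,\ldots,v_d$, I would work in the barycentric coordinates $\lambda_0,\ldots,\lambda_d:\sigma\to[0,1]$. Since $\sum_i\lambda_i\equiv 1$, for any $x,y\in\sigma$ one has
\[
\overline{f}(x)-\overline{f}(y)=\sum_{i=1}^{d}(\lambda_i(x)-\lambda_i(y))(f(v_i)-f(v_0)).
\]
Any two vertices of a simplex are adjacent, so $|f(v_i)-f(v_0)|\le C$ by hypothesis. Each $\lambda_i$ is affine on $\sigma$, taking the value $1$ at $v_i$ and $0$ on the opposite face, so its gradient has magnitude $1/h_i^\sigma$, where $h_i^\sigma>0$ is the perpendicular distance from $v_i$ to that face. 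Combining these gives the per-simplex bound
\[
|\overline{f}(x)-\overline{f}(y)|\le C\cdot d_\sigma(x,y)\cdot\sum_{i=1}^{d}\frac{1}{h_i^\sigma}.
\]
Because $X$ has only finitely many isometry types of simplices, the dimension $d$ and the inheights $h_i^\sigma$ depend only on the isometry class of $\sigma$ and range over a finite set, so $C_1:=C\cdot\sup_\sigma\sum_i 1/h_i^\sigma$ is finite and $\overline{f}|_\sigma$ is $C_1$-Lipschitz with respect to $d_\sigma$ for every simplex $\sigma$ of $X$.

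For the global bound I would use that $d_X$ is by construction the length metric induced from the Euclidean metrics on the simplices; equivalently, for any $x,y\in X$
\[
d_X(x,y)=\inf\sum_{i=1}^{k}d_{\sigma_i}(p_{i-1},p_i),
\]
where the infimum runs over all finite chains $x=p_0,p_1,\ldots,p_k=y$ such that each consecutive pair $p_{i-1},p_i$ lies in a common closed simplex $\sigma_i$; compare \cite[Chapter~I.7]{Bridson-Haefliger(1999)}. For any such chain, the per-simplex bound combined with the triangle inequality gives $|\overline{f}(x)-\overline{f}(y)|\le C_1\sum_i d_{\sigma_i}(p_{i-1},p_i)$, and taking the infimum yields $|\overline{f}(x)-\overline{f}(y)|\le C_1\,d_X(x,y)$, so one may take $C'=C_1$.

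The main obstacle is the uniform control of the barycentric-coordinate gradients across all simplices; once the inheights are bounded below, which is forced by the finite-isometry-types hypothesis, the passage from a per-simplex to a global Lipschitz bound follows directly from the definition of the length metric on $X$, with no local-finiteness assumption on $X$ required.
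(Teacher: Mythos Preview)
Your proof is correct and follows the same overall architecture as the paper's: bound $\overline{f}$ on each closed simplex, make the bound uniform via the finite-isometry-types hypothesis, and then pass to the global metric using that $d_X$ is the induced length metric. The difference lies in how the per-simplex bound is obtained. The paper argues abstractly: on a single Euclidean simplex the extension is affine, so its Lipschitz constant equals the norm of its gradient; this norm depends continuously on the vertex values, and after normalising one vertex to $0$ the remaining values lie in the compact cube $[-C,C]^d$, so the maximum exists. You instead compute explicitly in barycentric coordinates, bounding $|\nabla\lambda_i|$ by the reciprocal inheight $1/h_i^\sigma$ and obtaining the concrete constant $C'=C\cdot\sup_\sigma\sum_i 1/h_i^\sigma$. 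Your approach is more constructive and yields an explicit $C'$ in terms of the geometry of the shapes, at the cost of a small extra computation; the paper's compactness argument is shorter but non-effective. Both are equally valid here.
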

\begin{proof}
We want to find an upper bound for 
\[\sup_{x,y\in X}\frac{f(x)-f(y)}{d(x,y)}.\] By definition of the metric it suffices to consider the case where $x,y$ lie in a common closed simplex. Let us construct a bound for each simplex. Since there are only finitely many isometry types of simplices, we can take their maximum.
So it suffices to consider the case where $X$ consists of only one Euclidean $n$-simplex.  

In this case $f$ is affine and the Lipschitz bound is the length of its gradient, which depends continuously on the given values at the vertices. Without loss of generality we can assign $0$ to one vertex. Thus the values at the other vertices are in the compact set $[-C,C]^n$. Since a continuous function on a compact has has a maximum, we obtain the desired result.
\end{proof}

\subsection{$\GL_n(F[t])$ acts on a building}

Let $F$ be a finite field and let $V$ be an $n$-dimensional free $F[t]$-module. The group $\aut_{F[t]}(V)\cong \GL_n(F[t])$ acts on the affine building $X(V)$ associated to the valuation $\nu$ with 
\[\nu(\frac{f}{g})\coloneqq \deg(g)-\deg(f) \qquad \mbox{ for }\frac{f}{g}\in Q\coloneqq F(t).\]
It is a simplicial complex whose vertex set consists of all homothety classes of $R$-lattices in $Q^n$ where $R$ denotes the valuation ring with respect to this valuation.  
A generator for the maximal ideal in $R$ of $\nu$ is given by $\frac{1}{t}$. A subset $[S_1],\ldots,[S_m]$ spans a simplex if and only if there are representatives with $S_1\subset S_2\subset \ldots \subset S_m\subset tS_1$.

The goal of this section is to show that this space satisfies all assumptions from Proposition~\ref{prop:1:coversAtInfinity}.

\begin{lemma}\label{lem:FFieldproperAction} The affine building $X(V)$ has the following properties
\begin{enumerate} 
\item The group action of $\aut_Q(Q\otimes_{F[t]}V)$ is simplicial.
\item The group action of $\aut_{F[t]}(V)$ on $X(V)$ is proper.
\end{enumerate}
\end{lemma}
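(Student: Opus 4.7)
The plan is to treat the two parts in turn. Part (1) will be a bookkeeping check, and I expect no trouble: an element $\varphi\in\aut_Q(Q\otimes V)$ is $Q$-linear hence $R$-linear, so it sends any $R$-lattice to an $R$-lattice, it commutes with scalar homotheties, and it preserves the incidence relation $S_1\subset\dots\subset S_m\subset t^{-1}S_1$ that defines simplices; one concludes that $\varphi$ descends to a simplicial automorphism of $X(V)$.

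For part (2) I will reduce properness of the action to finiteness of point stabilizers. Since the residue field $F$ is finite, $X(V)$ is locally finite (Lemma~\ref{lem:AffBuilLocFin}) and proper (Corollary~\ref{cor:affBuildingProper}); combined with simpliciality from part (1), the open star of any vertex $v$ meets only its own translate $g\cdot \mathrm{star}^\circ(v)=\mathrm{star}^\circ(gv)$ when $gv$ lies in the finite neighbour set of $v$, so the action is properly discontinuous as soon as every vertex stabilizer in $\aut_{F[t]}(V)=\GL_n(F[t])$ is finite. Thus it is enough to prove the latter.

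To analyse a vertex stabilizer I would fix $[S]\in X(V)$ and, using transitivity of the $\GL_n(Q)$-action on $R$-lattices, write $S=\psi(R^n)$ with $\psi\in\GL_n(Q)$. If $g\in\GL_n(F[t])$ fixes $[S]$ then $g(S)=\mu S$ for some $\mu\in Q^{*}$; computing $\det g$ in an $R$-basis of $S$ (which is simultaneously a $Q$-basis of $Q\otimes V$) gives $\det g=\mu^n\det A$ with $A\in\GL_n(R)$, and combining $\det g\in F[t]^{*}=F^{*}$ with $\nu(\det A)=0$ forces $\nu(\mu)=0$. Hence $\mu\in R^{*}$, so $\mu S=S$, and $g$ actually stabilizes $S$ on the nose; in particular
\[
\Stab_{\GL_n(F[t])}([S])=\GL_n(F[t])\cap\psi\,\GL_n(R)\,\psi^{-1}.
\]

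The main obstacle will be to prove this intersection is finite, which I plan to handle by a crude degree count rather than invoking any topological input about local fields. Choose $M\ge 0$ so that every entry of $\psi$ and of $\psi^{-1}$ has valuation $\ge -M$. Any $g$ in the intersection can be written as $g=\psi B\psi^{-1}$ with $B\in\GL_n(R)$, and the ultrametric inequality gives
\[
\nu(g_{ij})\ge \min_{k,l}\bigl(\nu(\psi_{ik})+\nu(B_{kl})+\nu(\psi^{-1}_{lj})\bigr)\ge -2M.
\]
Since $g_{ij}\in F[t]$ this translates to $\deg g_{ij}\le 2M$, so every entry of $g$ lies in the fixed finite-dimensional $F$-subspace of polynomials of degree at most $2M$. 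As $F$ is finite, only finitely many such matrices $g$ exist, so the stabilizer is finite and the action is proper.
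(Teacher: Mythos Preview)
Your argument is correct. Part~(1) matches the paper's one-line observation, and your reduction of part~(2) to finiteness of vertex stabilizers via open stars in a locally finite complex is sound.

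Where you diverge from the paper is in how you establish finiteness of the stabilizers. The paper first observes that in a connected, locally finite simplicial complex all vertex stabilizers are commensurable (this is the content of Lemma~\ref{lem:StabVertCommensurable}), so it is enough to compute the stabilizer of the single standard vertex $[R^n]$; that stabilizer is then identified exactly as
\[
\GL_n(F[t])\cap\bigl(\GL_n(R)\cdot\{t^k I\mid k\in\IZ\}\bigr)=\GL_n(F),
\]
using that $F[t]\cap R=F$. You instead treat an arbitrary vertex $[S]$ directly: you first show via the determinant trick that any $g$ fixing $[S]$ must fix $S$ itself, and then bound the degrees of the entries of $g$ by $2M$ where $M$ controls the valuations of the entries of $\psi$ and $\psi^{-1}$. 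Your route avoids appealing to commensurability and gives a uniform argument for every vertex at once; the paper's route is shorter and yields the precise identification of the stabilizer of the base vertex as $\GL_n(F)$, which is conceptually cleaner but relies on the extra reduction step. Both are perfectly valid.
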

\begin{proof*}
The natural $\aut_Q(Q\otimes_{F[t]}V)$-action on the vertex set preserves the higher simplices.
 It suffices to show that the vertices have finite stabilizers. 
 The affine building is locally finite and connected and hence all stabilizer groups of vertices are commensurable. So it suffices to consider only one vertex. To simplify notation let us assume that the rank $n$ free $F[t]$ module $V$ is $F[t]^n$. The stabilizer of the vertex $[R^n]$ under the $\GL_n(F[t])$- action is the finite group
 \[\singlebox\GL_n(F[t])\cap (\GL_n(R)\cdot \{t^k|k\in \IZ\})=\GL_n(F).\esinglebox\]
\end{proof*}

\begin{remark} Nevertheless there is no bound on the order of the stabilizers. Consider the $R$-lattice $S\coloneqq \langle (1,0),(0,t^m)\rangle\subset Q^2 =Q\otimes_{F[t]} F[t]^2$ for some  $m\in \IZ$. We see that $\begin{pmatrix}1&x\\0&1\end{pmatrix}$ stabilizes $S$ whenever $\deg(x)\le m$. So the stabilizers get arbitrarily large if we choose $m$ bigger and bigger. Especially this also shows that the group action of $\aut_{F[t]}(V)$ on $X(V)$ is not cocompact for $V\cong F[t]^2$.
\end{remark}

We want to use the volume function from section~\ref{sec:FFvol} resp. the function $c_W$ from section~\ref{sec:canFilt} to construct certain open subsets. We can associate to any $R$-lattice $S\subset Q\otimes_{F[t]}V$ and any submodule $W\subset V$ a real number $c_W(S)$. By homothety invariance (Remark~\ref{rem:ff:homothetyInvOfcW}) this function descends to a function from the vertices of $X(V)$ to the real numbers. We can extend it linearly to get a function from the whole of $X(V)$ to $\IR$ which is also called $c_W$.

\begin{lemma}\label{lem:ffcWLipschitz} For any two adjacent vertices $x,x'\in X(V)$ we have 
\[|c_W(x)-c_W(x')|\le 4n .\]
Furthermore there is a number $C\in\IR$ such that the function $c_W:X(V)\rightarrow \IR$ is $C$-Lipschitz for all nontrivial direct summands $W$. 
\end{lemma}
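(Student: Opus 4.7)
The plan is to reduce both assertions to the elementary volume comparison recorded in Lemma~\ref{cor:FFieldvolumeOfNeighbors}. Let $x,x' \in X(V)$ be adjacent vertices and pick $R$-lattice representatives $S,S'$ of $x,x'$ with $S \subset S' \subset tS$ (swapping the roles if necessary, which is harmless by the homothety invariance of $c_W$ noted in Remark~\ref{rem:ff:homothetyInvOfcW}). Then for every submodule $U \subset V$ that lemma gives
\[|\log\vol_U(\res_U S) - \log\vol_U(\res_U S')| \le \rk(U) \le n.\]

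Next, I fix a nontrivial direct summand $W \in \frl$ and inspect the defining formula
\[c_W(\,\cdot\,) = \inf_{W_0 \lneq W \lneq W_2}\bigl(\slope(W_2,W) - \slope(W,W_0)\bigr),\]
where each slope has the shape $(\log\vol_{W_2} - \log\vol_W)/(\rk(W_2) - \rk(W))$ with a positive integer denominator. By the previous inequality applied to $U \in \{W_0, W, W_2\}$, the numerator of each slope changes by at most $2n$ when $S$ is replaced by $S'$, while the denominator is unchanged and is at least $1$. Hence each slope changes by at most $2n$ and the expression $\slope(W_2,W) - \slope(W,W_0)$ changes by at most $4n$. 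Taking the infimum over all admissible chains preserves this bound, which gives
\[|c_W(x) - c_W(x')| \le 4n,\]
establishing the first assertion.

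For the second assertion I invoke Proposition~\ref{prop:extensionLipschitz}. The affine building $X(V)$ is an Euclidean simplicial complex with only finitely many isometry types of simplices: by Lemma~\ref{lem:simplicialactionsPreserveLabelDiffs} the isometry type of a simplex is determined by the (unordered) collection of label differences along its edges, and only finitely many such collections can occur since labels lie in $\IZ/n$. Applying Proposition~\ref{prop:extensionLipschitz} with the uniform constant $4n$ on adjacent vertices produces a Lipschitz constant $C$ for the linear extension that depends only on the geometry of $X(V)$ (that is, only on $n$ and the residue field), and in particular not on the direct summand $W$.

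The main work is the slope estimate of the first paragraph; the key point that makes it uniform in the chain $W_0 \lneq W \lneq W_2$ is that the denominators are positive integers, so they drop out of the triangle inequality. The reduction from vertices to the whole complex is then a formal consequence of the general Lipschitz extension principle for Euclidean simplicial complexes with finitely many isometry types of simplices.
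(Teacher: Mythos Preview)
Your proof is correct and follows essentially the same route as the paper: bound the change in $\log\vol$ on neighbors via Lemma~\ref{cor:FFieldvolumeOfNeighbors}, feed this into the slope formula for $c_W$ to get the $4n$ bound, and then apply Proposition~\ref{prop:extensionLipschitz} for the linear extension. One minor point: the fact that $X(V)$ has finitely many isometry types of simplices is more directly supplied by Proposition~\ref{prop:affbuildingCAT0}\ref{prop:affbuildingCAT01} (edge lengths depend only on label differences) than by Lemma~\ref{lem:simplicialactionsPreserveLabelDiffs}.
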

\begin{proof}
Let $S,S'$ be representatives of the homothety classes of $x,x'$ with $B\subset B'\subset tB$. Then we have by Corollary~\ref{cor:FFieldvolumeOfNeighbors}
\[\log\vol_W(B) \le \log\vol_W(B') \le \rk_R(W)+\log\vol_W(B)\]
for any direct summand $W\subset V$. Inserting this in the definition of $c_W$ gives 
\[ |c_W(B) -c_W(B')| \le 4n\]
for any nontrivial direct summand $W\subset V$. So Proposition~\ref{prop:extensionLipschitz} gives the desired result.
\end{proof}

\begin{proposition}\label{prop:ff:coversAtInfinitySatisfied} The affine building $X(V)$ satisfies all assumptions from~\ref{prop:1:coversAtInfinity}. Let
\[\calw\coloneqq \{\{x\in X(V)\mid c_W(x)> 4n\}\mid W \subset V \mbox{ is a nontrivial direct summand}\}.\]
This is a collection of open sets as the map $c_W:X(V)\rightarrow \IR$ is continuous. We have
\begin{enumerate}
\item $X(V)$ is a proper $\CAT(0)$ space, 
\item the covering dimension of $X(V)$ is less or equal to $n-2$,
\item the group action of $\aut_{F[t]}(V)\cong \GL_n(F[t])$ on $X$ is proper and isometric,
\item $G\calw \coloneqq \{gW\mid g\in G, W\in \calw\}=\calw$.
\item Let $W,W'$ be two different submodules of the same rank. Then the open sets $c_W^{-1}([4n,\infty))$ and $c_{W'}^{-1}([4n,\infty))$ do not intersect. Especially
\[ gW\cap W \neq \emptyset \Rightarrow gW=W \]
for all $g\in \aut_{F[t]}(V),\; W\in \calw$.
\item The dimension of $\calw$ is less or equal to $n-1$. 
\item The $\aut_{F[t]}(V)$ operation on 
\[X\setminus(\bigcup \calw^{-\beta})\coloneqq \{x\in X\mid \nexists W\in \calw:\overline{B}_\beta(x)\subset W\}\]
is cocompact for every $\beta \ge 0$.
\end{enumerate}
\end{proposition}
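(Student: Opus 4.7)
The plan is to verify the seven listed conditions one at a time, leaning on the structural results already established for the affine building and on the canonical-filtration formalism from Sections~\ref{sec:canFilt} and~\ref{sec:FFvol}. The first three conditions are essentially bookkeeping: $X(V)$ is $\CAT(0)$ and proper by Proposition~\ref{prop:affbuildingCAT0} and Corollary~\ref{cor:affBuildingProper}; as a locally finite Euclidean simplicial complex of simplicial dimension $n-1$, its covering dimension is controlled accordingly; properness of the action is Lemma~\ref{lem:FFieldproperAction}, and the action is isometric because it is simplicial (Proposition~\ref{prop:affbuildingCAT0}). The $G$-invariance of $\calw$ reduces to the tautological equivariance $c_{gW}(gS)=c_W(S)$, which is immediate from the definition of $c_W$ since $g$ transports $F[t]$-bases and $R$-bases without altering the relevant valuations.

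The substantive step is the disjointness property. Suppose $W\ne W'$ are distinct direct summands of $V$ of the same rank and some $x\in X(V)$ satisfies $c_W(x)>4n$ and $c_{W'}(x)>4n$. Choose a closed simplex $\sigma\ni x$ with vertices $v_0,\dots,v_k$. Since $c_W$ is the linear extension of its vertex values, $c_W(x)=\sum\lambda_i c_W(v_i)$ for the barycentric coordinates $\lambda_i$; combined with the adjacent-vertex bound $|c_W(v_i)-c_W(v_j)|\le 4n$ from Lemma~\ref{lem:ffcWLipschitz}, this forces $\min_i c_W(v_i)\ge c_W(x)-4n>0$, and similarly $c_{W'}(v_i)>0$ at every vertex. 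By Corollary~\ref{cor:cWAndCanFiltr}, both $W$ and $W'$ then belong to the canonical filtration of the $R$-lattice representing $v_i$; but that filtration is a chain of direct summands of strictly increasing rank, so equality of ranks forces $W=W'$, contradicting the hypothesis. The second clause of this property is immediate: $g$ preserves ranks, so $gW$ and $W$ have equal rank, and the first clause then forces the corresponding open sets to coincide whenever they meet.

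The dimension bound and the cocompactness property now follow from what has just been done together with the Lipschitz estimate. For the dimension bound, the argument above shows that if $x$ lies in $k$ distinct members of $\calw$, the associated direct summands have pairwise distinct ranks in $\{1,\dots,n-1\}$, so $k\le n-1$ and hence $\dim\calw\le n-1$ as claimed. For the cocompactness condition, Lemma~\ref{lem:ffcWLipschitz} gives the inclusion
\[X(V)\setminus\bigcup\calw^{-\beta}\subset\{x\in X(V)\mid c_W(x)\le 4n+C\beta\text{ for every nontrivial direct summand }W\subset V\},\]
a closed $G$-invariant subset, so the claim reduces to cocompactness of the $G$-action on the upper level set on the right.

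The hard part will be this last reduction, which is the function-field analogue of Grayson's \cite[Corollary~5.2]{Grayson(1984)}. The plan is to use the diagonal-basis normal form from Proposition~\ref{prop:ffieldIsomandCanFilt} to identify $\aut_{F[t]}(V)$-orbits on the vertex set of $X(V)$ with integer tuples $(r_1,\dots,r_n)$ modulo the overall shift $r_i\mapsto r_i+c$, to express each $c_W$ at such a vertex as a gap $r_{m+1}-r_m$, and then to argue that a uniform bound on all these gaps confines the tuple modulo shift to a finite set, hence to finitely many orbits of vertices. Local finiteness of the building (Lemma~\ref{lem:AffBuilLocFin}) will then upgrade this cocompactness from the vertex set to the whole bounded-$c_W$ subset.
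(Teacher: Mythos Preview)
Your proposal is correct and follows essentially the same route as the paper: the same references for items (i)--(iv), the same convex-combination-plus-vertex-variation argument for (v) (the paper phrases it via Lemma~\ref{lem:cWneg} rather than Corollary~\ref{cor:cWAndCanFiltr}, but these are equivalent), and the same Lipschitz reduction followed by the diagonal-basis classification of orbits for (vii). One small imprecision: in your sketch of (vii) you write ``express each $c_W$ at such a vertex as a gap $r_{m+1}-r_m$,'' but only the $c_W$ for the canonical-filtration summands $W=\langle w_1,\dots,w_m\rangle$ equal these gaps; what you actually need (and what the paper uses) is that the uniform bound on \emph{all} $c_W$ in particular bounds these specific ones, hence bounds the gaps.
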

\begin{proof}
\begin{enumerate}
\item It is a $\CAT(0)$ space by Proposition~\ref{prop:affbuildingCAT0}. Since the residue field $R/t^{-1}R\cong F$ is finite, it is also proper by Corollary~\ref{cor:affBuildingProper} . 
\item $X(V)$ is a simplicial complex of dimension $n-1$. Hence its covering dimension is also $n-1$ by \cite[Corollary~7.3]{Nagami1962}.
\item It is proper and simplicial by Lemma~\ref{lem:FFieldproperAction}. It furthermore preserves the label difference since for any two vertices $[L],[L']$ with representatives $L,L'$ such that $L'\subset L$ and any $\varphi \in \aut_{F[t]}(V)$ we have 
\[L/L'\cong \varphi(L)/\varphi(L').\]
 (compare Definition~\ref{def:labelDifference}). In Proposition~\ref{prop:affbuildingCAT0} we have shown that any label difference preserving simplicial automorphism of the building is an isometry.
\item This follows directly from $\log\vol_{g-}(g-)=\log\vol_-(-)$ and the definition of $c_W$ (Definition~\ref{def:cW}).
\item So let $g\in \aut_{F[t]}(V),U=\{x\in X(V)\mid c_W(x)>4n\}\in \calw$ for a nontrivial direct summand $W$ of $V$ be given. Let $x\in X(V)$ be given with $c_{W}(x)> 4n$ and $c_{W'}(x)> 4n$. The point $x$ is contained in a simplex $s$. The value of $c_W$ at $x$ is a convex combination of the values of $c_W$ at the vertices of $s$. By Lemma~\ref{lem:ffcWLipschitz} we have that $c_W([L
])$ and $c_{W'}([L])$ are positive for all vertices $[L]$ of $
s$ . Thus $W$ and $W'$ occur in the canonical filtration for $(V,L)$. So $c_W([L])$ and $c_{W'}([L])$ cannot be both larger than zero as explained in Lemma~\ref{lem:cWneg}. This lemma applies, since we have already verified in Proposition~\ref{prop:fflatticeForFiltr}, that the volume and the rank satisfy Convention~\ref{conv:latticeForFiltr}.

The second statement follows if we pick $gW$ as $W'$.
\item We have already seen in the previous item that $x$ cannot be contained in two sets from $\calw$  corresponding to modules of the same rank. Thus any point can be an element of at most $|\{1,\ldots,n-1\}|=n-1$ sets and hence the covering dimension is at most $n-2$.
\item Let $\beta>0$ be given. There is a constant $C>0$ such that the function $c_W$ is $C$-Lipschitz for every nontrivial direct summand $W$ by Lemma~\ref{lem:ffcWLipschitz}. Now let us pick a set $U=\{x\in X(V)\mid c_W(x)> 4n\}\in \calw$. Note first that $U^{-\beta}$ is open by Lemma~\ref{lem:thinningOpen}.

Consider the set $U'=\{x\in X(V)\mid c_W(x)> 4n+C\cdot\beta\}$. Since $c_W$ is $C$-Lipschitz the closed ball of radius $\beta$ around each $x\in U'$ is entirely contained in $U$ and thus $U'\subset U^{-\beta}$. So let us consider the system 
\[\calw'\coloneqq \{\{x\in X(V)\mid c_W(x)> 4n+C\beta\}\mid W \subset V \mbox{ is a nontrivial direct summand}\} \]
first. We have already shown that $\bigcup \calw'\subset \bigcup \calw^{-\beta}$ and hence
$X\setminus(\bigcup \calw^{-\beta})$ is a closed subset of $X\setminus(\bigcup \calw')$. Thus it suffices to show that the group action on $X\setminus(\bigcup \calw')$ is cocompact.

It suffices to show that there are finitely many $\aut_{F[t]}(V)$ orbits of vertices such that each $x\in X\setminus(\bigcup \calw^{-\beta})$ lies in a simplex with at least one vertex from the given finite set of vertices. \ignore{We could pick a finite set of representatives for those orbits and consider the union of the closed stars around each of the points. Each of them is a finite simplicial complex since $X(V)$ is locally finite by Lemma~\ref{lem:AffBuilLocFin}. So we have found a compact set whose translates cover $X\setminus(\bigcup \calw')$. So let us find the finite set of orbits.}
Let $x\in X\setminus(\bigcup \calw')$ be given. Thus each vertex $v$ of the simplex which contains $x$ satisfies $c_W(v)\le 8n+C\beta\eqqcolon C'$ for each nontrivial direct summand $W\subset V$. Otherwise if one of them was bigger than they all would be bigger than $4n+C\beta$ by Lemma~\ref{lem:ffcWLipschitz}. But $c_W(x)$ is defined to be a convex combination of those values. So it would also be bigger than $4n+C\beta$ which contradicts the choice of $x$.

So let $L$ be an $R$-lattice representing one of the vertices. By rescaling with a suitable power of $t$ we can assume that $\log\vol_V(L)\in [0,n-1]$. 

We can use Proposition~\ref{prop:ffieldIsomandCanFilt}. We make use of the numbers $r_i$ occurring there. Recall that they have the property that 
\[\sum_{i=1}^n r_i =\log\vol_V(L) \in [0,n-1],\]
\begin{equation}0\le r_{m+1}-r_m =c_{\langle w_1,\ldots,w_m\rangle}([L]) \le C'\label{eq:growthbound}.\end{equation}
This gives a bound on the size of the numbers $r_i$ by the following consideration. At least one of $r_i$ is $\ge 0$ since their sum is nonnegative. At least one of the numbers $r_1\ldots, r_n$ is less than $1$ since their sum is smaller than $n-1$. Since the numbers $r_1,\ldots,r_n$ are monotonically increasing there is an index $j$ such that $r_j\le 0$ and $r_{j+1}\ge 0$. Using the bound on the growths \eqref{eq:growthbound} we get
\[|r_i-r_j| \le C'\cdot |i-j|\qquad\mbox{ and }\qquad r_j \in [-C',0].\]
Hence each $r_i$ lies in $[-C'-n\cdot C', n\cdot C']$. This means that there are only finitely many isomorphism types of such $R$-lattices possible that could occur as $L$ by Proposition~\ref{prop:ffieldIsomandCanFilt}. And an isomorphism $(V,L)\cong (V,L')$  is just an element $g\in \aut_{F[t]}(V)$ with $\id\otimes g (L)=L'$.
So we have found the desired finite set of orbits. This completes the proof.
\end{enumerate}
\end{proof}

\subsection{$\GL_n(Z[T^{-1}])$ acts on a product of CAT(0)-spaces}

\begin{convention} 
Let \begin{itemize}
\item $Z$ denote either $\IZ$ or $F[t]$ for a finite field $F$ and let $Q$ denote its quotient field,
\item $T$ be a finite set of primes in $Z$,
\item $V$ be a free $Z[T^{-1}]$ module of rank $n$,
\item $X(V)$ be the space of homothety classes of inner products on $V$ (as in section~\ref{sec:nfspace}) in the case of $Z=\IZ$ respectively the affine building for the valuation $\nu(\frac{f}{g})=\deg(g)-\deg(f)$ on $Q$ in the case of $Z=F[t]$,
\item $Y_T(V)$ denote the product of the affine buildings of $V$ for each $p$-adic valuation $\nu_p$ on $Q$ with $p\in T$ metrized as a product of CAT(0)-spaces, 
\item $\tilde{Y}_T(V)$ denote the set of all integral structures on $V$ with respect to $T$, i.e. the set of all finitely generated $Z_T$-submodules of $Q\otimes_{Z[T]}V$ of rank $n$,
\item $D$ be the space $\IR^T$ equipped with the $\aut_{Z[T^{-1}]}(V)$-action 
\[(f,(x_p)_{p\in T})\mapsto (\nu_p(\det(f))+x_p).\]
\end{itemize}
\end{convention}

We will show in this section that the space $X(V)\times Y_T(V) \times D$ satisfies all requirements of Proposition~\ref{prop:1:coversAtInfinity}. Let us first establish a connection between the vertices of $Y_S(V)$ and the set of integral structures on the $Z[T^{-1}]$-module $V$.
Let $\Verti(B)$ denote the vertex set of a simplicial complex $B$. 

\begin{proposition}\label{prop:locVertAreIntStruc} Let $S$ be a (finite) set of primes. Then 
\begin{enumerate}
\item the map
\[\Psi : \prod_{s\in S}\tilde{Y}_{\{s\}}(Z_s\otimes_{Z_S}V)\rightarrow \tilde{Y}_S(V)\qquad (B_s)_{s\in S}\mapsto \bigcap_{s\in S} B_s\]
is an isomorphism of $\aut_Q(Q\otimes_{Z[S^{-1}]}V)\cong \GL_n(Q)$-sets with inverse $B\mapsto (\langle B\rangle_{Z_s})_{s\in S}$.

\item It induces an isomorphism of $\aut_Q(Q\otimes_{Z[S^{-1}]}V)$-sets
\[\Verti(\prod_{s\in S}Y_{\{s\}}(Z_s\otimes_{Z_S}V))\rightarrow \Verti(Y_S(V))\qquad [B_s]_{s\in S}\mapsto [\bigcap_{s\in S} B_s]\]
with inverse $B\mapsto (\langle B\rangle_{Z_s})_{s\in S}$.
\end{enumerate}
\end{proposition}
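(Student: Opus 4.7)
The plan is to work inside the common ambient space $Q^n \coloneqq Q\otimes_{Z[S^{-1}]}V$ and view every object in the statement as a submodule of $Q^n$ over an appropriate subring of $Q$. The key algebraic identity making everything fit is $\bigcap_{s\in S}Z_{\{s\}}=Z_S$ inside $Q$, since the primes appearing in the denominators of elements of the intersection lie in $\bigcap_{s\in S}(\mathfrak{P}\setminus\{s\})=\mathfrak{P}\setminus S$. This makes $\Psi((B_s)_s)=\bigcap_sB_s$ automatically a $Z_S$-submodule of $Q^n$, and each $\langle B\rangle_{Z_{\{s\}}}$ automatically a $Z_{\{s\}}$-submodule.

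The cleanest route to bijectivity is induction on $|S|$ using the two-prime decomposition lemma already established in this section. Writing $S=T\sqcup\{s\}$, one application of that lemma identifies $\tilde Y_S(V)$ with (integral structures w.r.t.\ $T$) $\times$ (integral structures w.r.t.\ $\{s\}$), precisely via the intersection/localization maps at play here; the inductive hypothesis then decomposes the first factor further into the product indexed by $T$. All maps are compatible with this iteration because intersection and $R$-span are associative.

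For the explicit verification, choose a $Z_S$-basis $b_1,\ldots,b_n$ of a given $B$ (using that $Z_S$ is a PID). Then $\langle B\rangle_{Z_{\{s\}}}=\sum_iZ_{\{s\}}b_i$, and an element $\sum q_ib_i\in Q^n$ lies in $\bigcap_s\langle B\rangle_{Z_{\{s\}}}$ iff every $q_i\in\bigcap_sZ_{\{s\}}=Z_S$, i.e.\ iff it lies in $B$. For the other composition, full rank of $\bigcap_sB_s$ follows by rescaling any $Q$-basis of $Q^n$ into every $B_s$ simultaneously; finite generation then follows from Noetherianness of $Z_S$ combined with the containment $\bigcap_sB_s\subseteq (1/E)Z_S^n$, where $E=\prod_se_s$ for any $e_s\in Z\setminus\{0\}$ chosen so that $e_sB_s\subseteq Z_{\{s\}}^n$ (so that $Ex\in Z_{\{s\}}^n$ for every $s$, hence $Ex\in\bigcap_sZ_{\{s\}}^n=Z_S^n$). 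The identity $\langle\bigcap_sB_s\rangle_{Z_{\{s'\}}}=B_{s'}$ follows by denominator-clearing: for $b\in B_{s'}$, take $w=\prod_{s\neq s'}s^{m_s}$ with each $m_s$ large enough to absorb the $s$-adic denominators of $b$ relative to a $Z_{\{s\}}$-basis of $B_s$; then $wb\in\bigcap_sB_s$ and $b=w^{-1}(wb)\in\langle\bigcap_sB_s\rangle_{Z_{\{s'\}}}$ because $w^{-1}\in Z_{\{s'\}}$.

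Equivariance under $\GL_n(Q)\cong\aut_Q(Q\otimes V)$ is immediate from $g\bigcap_sB_s=\bigcap_sgB_s$ and $\langle gB\rangle_{Z_{\{s\}}}=g\langle B\rangle_{Z_{\{s\}}}$. For the passage to vertex classes, the homothety relations match because $Z[S^{-1}]^*$ is generated modulo $Z^*$ by the images of $Z[\{s\}^{-1}]^*$ for $s\in S$, so the $\prod_sZ[\{s\}^{-1}]^*$-action on the product descends to the $Z[S^{-1}]^*$-action on $\tilde Y_S(V)$ under $\Psi$. The main obstacle I foresee is bookkeeping: the notation ``integral structure on $Z_s\otimes_{Z_S}V$'' is slightly awkward when $V$ is only a $Z[S^{-1}]$-module, but viewing every object consistently inside the fixed $Q^n$ dissolves the ambiguity, after which the remainder is careful unwinding of the definitions plus one use each of the PID structure of $Z_S$ and the two-prime lemma.
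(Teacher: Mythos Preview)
Your proposal is correct and, if anything, considerably more detailed than the paper's own proof, which for part~(1) literally reads ``This is an easy computation'' and for part~(2) reduces to the assertion that $B,B'\in\tilde Y_S(V)$ are homothetic if and only if $\langle B\rangle_{Z_{\{s\}}}$ and $\langle B'\rangle_{Z_{\{s\}}}$ are homothetic for each $s$, calling this a ``straightforward computation''. Your explicit denominator-clearing argument for $\langle\bigcap_sB_s\rangle_{Z_{\{s'\}}}=B_{s'}$ and your use of $\bigcap_sZ_{\{s\}}=Z_S$ are exactly what is behind that phrase; the induction via the two-prime lemma is a legitimate alternative packaging of the same computation.

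One small point on part~(2): your formulation in terms of the $Z[S^{-1}]^*$- and $Z[\{s\}^{-1}]^*$-actions is correct but slightly indirect. The cleanest way to match the paper's reduction is to note that homothety classes in $\tilde Y_{\{s\}}$ are $Q^*$-orbits, but since $Q^*/Z_{\{s\}}^*\cong\IZ$ is already hit by powers of $s$, one may normalize each $\lambda_s$ to $s^{k_s}$ and set $\lambda=\prod_ss^{k_s}$; then $\lambda B_s=\lambda_sB_s$ for every $s$ simultaneously, so coordinatewise homothety on the product corresponds to a single homothety on $\tilde Y_S$. This is the content of your generation statement, just unwound.
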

\begin{proof}
\begin{enumerate}
\item This is an easy computation.
\item The vertices of the simplicial complexes in consideration are homothety classes of integral structures. A homothety is an element in the center of $\aut_Q(Q\otimes_{Z[S^{-1}]}V)\cong \{\lambda\cdot \id\mid\lambda \in Q^*\}$. We have to show that 
$B,B'\in \tilde{Y}_S(V)$ are homothetic if and only if $\langle B\rangle_{Z_s}$ and $\langle B'\rangle_{Z_s}$ are homothetic for each $s\in S$. This is also a straightforward computation.
\end{enumerate}
\end{proof}

\begin{lemma}\label{lem:locsautactsCocomOnY}
Let $V$ be a finitely generated free $Z[T^{-1}]$-module. The group action of
\[\saut_{Z[T^{-1}]}(V)\coloneqq \{\varphi\in \saut_{Z[T^{-1}]}(V) \mid\det(\varphi)=1\}\]
 on $\prod_{s\in S} |Y_s(V)|$ is cocompact. 
\end{lemma}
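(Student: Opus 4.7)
The plan is to reduce cocompactness on the product to cofiniteness of the action on the vertex set, which has already been established. By Proposition~\ref{prop:locVertAreIntStruc}(ii), the map $(B_s)_{s\in S}\mapsto \bigcap_s B_s$ gives a $\GL_n(Q)$-equivariant bijection between $\Verti(\prod_{s\in S}Y_s(V))$ and $Y_S(V)$, and this bijection restricts to a $\saut_{Z[S^{-1}]}(V)$-equivariant bijection. Combining this with Proposition~\ref{prop:locSLcofinite}, the $\saut_{Z[S^{-1}]}(V)$-action on the vertex set of $\prod_{s\in S}|Y_s(V)|$ has only finitely many orbits.

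Next I would pick vertex representatives $v_1,\ldots,v_k$ of these finitely many orbits and form
\[ K\coloneqq \bigcup_{i=1}^k \overline{\sstar}(v_i), \]
where $\overline{\sstar}(v_i)$ denotes the closed star of $v_i$ in the CW-structure on $\prod_{s\in S}|Y_s(V)|$ whose cells are products of simplices from the factors. The closed star of a vertex in one factor is compact because each $Y_s(V)$ is locally finite by Lemma~\ref{lem:AffBuilLocFin}; here I use that the residue field at each $s\in S$ is finite, which is automatic since $Z\in\{\IZ,F[t]\}$ with $F$ finite. A finite product of compact sets is compact, hence $K$ is compact.

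Finally, any point $x\in\prod_{s\in S}|Y_s(V)|$ lies in the interior of a unique product cell $\sigma_1\times\cdots\times\sigma_{|S|}$, and in particular in the closed star of any vertex of the closure of that cell. Pick such a vertex $v$; by cofiniteness on vertices there is $g\in\saut_{Z[S^{-1}]}(V)$ with $v=g\cdot v_i$ for some $i$, so $x\in g\cdot\overline{\sstar}(v_i)\subseteq g\cdot K$. Hence $\saut_{Z[S^{-1}]}(V)\cdot K = \prod_{s\in S}|Y_s(V)|$, proving cocompactness.

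The work has essentially been done in the earlier sections, so the only subtle point is confirming that the vertex identification of Proposition~\ref{prop:locVertAreIntStruc} is equivariant with respect to the subgroup $\saut_{Z[S^{-1}]}(V)$ rather than just $\aut_Q$; this is immediate since $\saut_{Z[S^{-1}]}(V)\subseteq \GL_n(Q)$ and the bijection in that proposition is constructed in a $\GL_n(Q)$-equivariant way.
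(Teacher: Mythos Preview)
Your proof is correct and follows essentially the same route as the paper: reduce cocompactness to cofiniteness on the vertex set via local finiteness (Lemma~\ref{lem:AffBuilLocFin}), identify the vertex set with $Y_S(V)$ by Proposition~\ref{prop:locVertAreIntStruc}, and invoke Proposition~\ref{prop:locSLcofinite}. The paper compresses your closed-star construction into the single sentence ``Thus it suffices to show that the action on the vertex set is cofinite,'' but the content is the same.
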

\begin{proof}
$\prod_{s\in S} |Y_s(V)|$ has the structure of a locally finite simplicial complex by Lemma~\ref{lem:AffBuilLocFin} equipped with a simplicial group action. Thus it suffices to show that the action on the vertex set is cofinite.

 The previous lemma identifies this set with the set of all homothety classes of integral structures $Y_S(V)$ and the $\saut_{Z[T^{-1}]}(V)$-action on $Y_S(V)$ is cofinite by Proposition~\ref{prop:locSLcofinite}. 
\end{proof}

We can consider for any nontrivial direct summand $W\subset V$ the function $c_W:X(V)\times Y_S(V)\rightarrow \IR$ that is defined in the following way. If $y=(y_s)_{s\in S}$ is a tuple of vertices we can pick representatives and use the bijection from Proposition~\ref{prop:locVertAreIntStruc} to obtain an integral structure $B$. Corollary~\ref{cor:loccWscalingInv} shows that $c_W(x,B)$ is independent of the chosen representatives. So we can assign to a point $(x,y)$ the value $c_W(x,B)$.

For general $y=(y_s)_{s\in S}$ we can write each $y_i$ as a convex combination of the vertices $v^s_1,\ldots,v^s_n$ of the open simplex in $Y_s(V)$ containing $y_s$, say $y_s = \sum_ {i=1}^n \lambda^s_i v^s_i$. Then define $c_W(x,y)$ as the linear extension in $y$-direction. More precisely
\[c_W(x,y)\coloneqq \sum_{i\in \{1,\ldots,n\}}(\prod_{s\in S}\lambda^s_{i_s})\cdot c_W(x,v^s_{i_s}).\]
Furthermore $Y_S(V)$ is a product of Euclidean simplicial complexes and thus it can be viewed as an Euclidean simplicial complex after a choice of simplex orientation that tells us how to subdivide the products of simplices.
\begin{lemma} \label{lem:loc:cWLipschitz}
\begin{enumerate}
\item \label{lem:loc:cWLipschitz1} Given $y\coloneqq (y_s)_{s\in S},y'\coloneqq (y'_s)_{s\in S}\in Y_S(V)$ such that each $y_s,y'_s$ is a vertex of $Y_s(V)$. 
Suppose that for each $s$ the vertices $y_s$  and $y'_s$ are either adjacent or equal. Then  we have
\[ |c_W(x,y)-c_W(x,y')| \le 4n\cdot \begin{cases}\ln(\prod_{s\in S}s)&Z=\IZ\\-\nu(\prod_{s\in S}s)&Z=F[t]\end{cases}.\]
\item There is a constant $C$ (independent of $W$ and $x$) such that $c_W(x,-)$ is $C$-Lipschitz.
\end{enumerate}
\end{lemma}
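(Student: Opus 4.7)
The plan is to prove part (i) by interpolating the vertex tuple one coordinate at a time, applying Corollary~\ref{cor:locVolOfAdjacentVert} at each step, and then to deduce part (ii) from part (i) by applying Proposition~\ref{prop:extensionLipschitz} to the Euclidean simplicial complex $Y_S(V)$.

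For part (i), I enumerate $S=\{p_1,\ldots,p_k\}$ and form the chain $y=y^{(0)},y^{(1)},\ldots,y^{(k)}=y'$ where $y^{(i)}$ is obtained from $y$ by replacing its first $i$ coordinates with the corresponding coordinates of $y'$. At the $i$-th step only the $p_i$-coordinate changes, and the new vertex is either equal or adjacent to the old one. Hence it suffices to control the change of $c_W$ when the tuples differ in exactly one prime $p\in S$ by an adjacent vertex. Using Proposition~\ref{prop:locVertAreIntStruc}, pick representatives $B_s$ of $y_s$ for $s\neq p$ and representatives $B_p,B'_p$ of $y_p,y'_p$ satisfying $pB_p\subseteq B'_p\subseteq B_p$ (possible because the vertices are adjacent in the affine building at $p$). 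Setting $B\coloneqq \bigcap_s B_s$ and $B'\coloneqq \bigl(\bigcap_{s\neq p}B_s\bigr)\cap B'_p$, the inclusion $B'\subseteq B$ is immediate; for the reverse, note that each $B_s$ with $s\neq p$ is a $Z_{\mathfrak{P}\setminus\{s\}}$-module in which $p$ is a unit, so $pB_s=B_s$, giving $pB=\bigl(\bigcap_{s\neq p}B_s\bigr)\cap pB_p\subseteq B'$. Thus $pB\subseteq B'\subseteq B$, and Corollary~\ref{cor:locVolOfAdjacentVert} with $z=p$ produces the uniform bound $|\log\vol_W(x,B)-\log\vol_W(x,B')|\le n\ln(p)$ (resp.\ $-n\nu(p)=n\deg(p)$ in the function-field case) for every nontrivial direct summand $W$.

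Summing the single-step bounds over $p\in S$ yields a total perturbation bound for $\log\vol_W$ of $n\ln(\prod_{p\in S}p)$ (resp.\ $-n\nu(\prod_{p\in S}p)$). Since $c_W$ is the infimum of expressions of the form
\[\frac{\log\vol_{W_2}-\log\vol_W}{\rk(W_2)-\rk(W)}-\frac{\log\vol_W-\log\vol_{W_0}}{\rk(W)-\rk(W_0)}\]
with rank differences in the denominators always at least $1$, a uniform perturbation of every $\log\vol_?$ by at most $\epsilon$ moves each slope by at most $2\epsilon$, hence each slope difference by at most $4\epsilon$, and therefore also moves the infimum by at most $4\epsilon$. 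Taking $\epsilon=n\ln(\prod_{p\in S}p)$ (or its function-field analog) gives the claimed bound $4n\ln(\prod_{p\in S}p)$ and establishes part (i).

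For part (ii), fix $x$; by construction $c_W(x,-)$ is the linear extension to $Y_S(V)$ of its restriction to the vertex set. The space $Y_S(V)$ is a finite product of affine buildings, each of which is a Euclidean simplicial complex with only finitely many isometry types of simplices (Lemma~\ref{lem:simplicialactionsPreserveLabelDiffs}), so the product triangulation likewise has only finitely many isometry types. Two adjacent vertices in the product triangulation have each coordinate either equal or adjacent in its respective factor, so part (i) provides a bound on $|c_W(x,y)-c_W(x,y')|$ at adjacent vertices that is independent of $W$ and of $x$. Proposition~\ref{prop:extensionLipschitz} then produces the desired uniform Lipschitz constant $C$. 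The main obstacle is the lifting step in part (i): the identification in Proposition~\ref{prop:locVertAreIntStruc} is only up to homothety, and representatives must be chosen carefully so that the clean inclusion $pB\subseteq B'\subseteq B$ holds, which is what enables the application of Corollary~\ref{cor:locVolOfAdjacentVert}.
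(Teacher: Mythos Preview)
Your argument is correct and follows essentially the same strategy as the paper: bound the change in $\log\vol_{W'}$ via Corollary~\ref{cor:locVolOfAdjacentVert}, feed this into the definition of $c_W$ to get the factor $4n$, and then invoke Proposition~\ref{prop:extensionLipschitz} for part~(ii). The only structural difference is that you interpolate one prime at a time and sum via the triangle inequality, whereas the paper handles all primes simultaneously: choosing representatives $B_s,B'_s$ with $sB_s\subseteq B'_s\subseteq B_s$ for every $s\in S$ at once, setting $B=\bigcap_s B_s$, $B'=\bigcap_s B'_s$, and observing directly that $zB\subseteq B'\subseteq B$ for $z=\prod_{s\in S}s$ (using $zB_s=sB_s$ since the other prime factors of $z$ are units in $Z_{\{s\}}$). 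This is marginally cleaner---one application of Corollary~\ref{cor:locVolOfAdjacentVert} instead of $|S|$ applications plus a summation---but the two routes are otherwise identical and yield the same constant.

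Two small corrections. First, your notation ``$Z_{\mathfrak{P}\setminus\{s\}}$-module'' is inverted: in the paper's convention $Z_T=Z[(\mathfrak{P}\setminus T)^{-1}]$, so $B_s$ is a $Z_{\{s\}}$-module (all primes except $s$ inverted), not a $Z_{\mathfrak{P}\setminus\{s\}}=Z[s^{-1}]$-module; your conclusion $pB_s=B_s$ is nonetheless correct for the right ring. Second, the citation of Lemma~\ref{lem:simplicialactionsPreserveLabelDiffs} for ``finitely many isometry types'' is not quite apt---that lemma counts top simplices through an edge; the finiteness of isometry types comes from Proposition~\ref{prop:affbuildingCAT0}\ref{prop:affbuildingCAT01} (edge lengths depend only on label differences, of which there are finitely many).
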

\begin{proof}
\begin{enumerate}
\item Pick for each $s\in S$ a $Z_s$-lattice $B_s$ in $Q\otimes_{Z[S^{-1}]}V$ representing $y_s$. As $y_s$ is adjacent or equal to $y'_s$ we can find a representative $B'_s$ of $y_s$ such that $sB_s\subseteq B'_s\subseteq B_s$. Now we have to consider the intersections $B\coloneqq \bigcap_{s\in S} B_s$ and $B'\coloneqq \bigcap_{s\in S} B'_s$. Let $z\coloneqq \prod_{s\in S}s$. We obtain since $B_s$ is a $Z_s$-module
\[zB=\bigcap_{s\in S} zB_s=\bigcap_{s\in S} sB_s\subset B'\subset B.\]
By definition $z$ is a product of elements from $S$. So we can use Corollary~\ref{cor:locVolOfAdjacentVert} to get 
\[|\log\vol_{W'}(x,B)-\log\vol_{W'}(x,B')|\le \rk(W')\cdot (-\nu(z)) \]
in the function field case; respectively 
\[|\log\vol_{W'}(x,B)-\log\vol_{W'}(x,B')|\le \rk(W')\cdot \ln(z)\]
in the number field case. If we insert this into the definition of $c_W$ we get
\[|c_{W}(x,B)-c_{W}(x,B')|\le 4n\cdot \begin{cases}\ln(z)&Z=\IZ\\-\nu(z)&z=F[t]\end{cases} .\]
\item $Y_S(V)$ is by definition a product of Euclidean simplicial complexes with finitely many isometry types of simplices. After subdividing products of simplices into simplices it inherits the structure of Euclidean simplicial complex with finitely many isometry types of simplices.
Note that vertices in the product can only be adjacent if they are adjacent or equal in each coordinate.

We have already computed a bound on the difference on two adjacent vertices in Lemma~\ref{lem:loc:cWLipschitz1}. Hence we can use Proposition~\ref{prop:extensionLipschitz} to conclude that there is a constant $C$ depending only on $n,S$ such that 
$c_W:X(V)\times Y_S(V)\rightarrow \IR$ is $C$-Lipschitz.
\end{enumerate}
\end{proof}

\begin{proposition}\label{prop:loc:coversAtInfinitySatisfied} The space $X(V)\times |Y_S(V)|\times D$ satisfies all assumptions from Proposition~\ref{prop:1:coversAtInfinity}. Let $R\in \IR$ be either $\ln(\prod_{s\in S}s)$ in the number field case or $\nu(\prod_{s\in S}s)$ in the function field case. 
Let $\calw\coloneqq \{\{(x,y,d)\in X(V)\times |Y_S(V)|\times D\mid c_W(x,y)> 4n(R+1)\}\mid W \subset V \mbox{ is a nontrivial direct summand}\}$. This is a collection of open sets as the map $c_W:X(V)\rightarrow \IR$ is continuous. We have
\begin{enumerate}
\item \label{prop:loc:coversAtInfinitySatisfied:eins} $X(V)\times |Y_S(V)|\times D$ is a proper $\CAT(0)$ space, 
\item \label{prop:loc:coversAtInfinitySatisfied:zwei}the covering dimension of $X(V)\times |Y_S(V)|\times D$ is less or equal to $\frac{n(n+1)}{2}-1+|S|n$,
\item \label{prop:loc:coversAtInfinitySatisfied:drei}the group action of $\aut_{Z[S^{-1}]}(V)\cong \GL_n(Z[S^{-1}])$ on $X(V)\times |Y_S(V)|\times D$ is proper and isometric,
\item \label{prop:loc:coversAtInfinitySatisfied:vier}$G\calw \coloneqq \{gW\mid g\in \aut_{Z[S^{-1}]}(V), W\in \calw\}=\calw$,
\item \label{prop:loc:coversAtInfinitySatisfied:fuenf}$gW\cap W \neq \emptyset \Rightarrow gW=W$ for all $g\in \aut_{Z[S^{-1}]}(V), W\in \calw$,
\item \label{prop:loc:coversAtInfinitySatisfied:sechs}the dimension of $\calw$ is less or equal to $n-2$,
\item \label{prop:loc:coversAtInfinitySatisfied:sieben}the $\aut_{Z[S^{-1}]}(V)$ operation on 
\begin{multline*}X(V)\times |Y_S(V)|\times D\setminus(\bigcup \calw^{-\beta})\\
=\{(x,y,d)\in X(V)\times |Y_S(V)|\times D| \nexists W\in \calw:\overline{B}_\beta(x)\subset W\}\end{multline*}
is cocompact for every $\beta \ge 0$.
\end{enumerate}
\end{proposition}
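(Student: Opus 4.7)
The plan is to verify the seven conditions in turn, paralleling Propositions~\ref{prop:nf:coversAtInfinitySatisfied} and~\ref{prop:ff:coversAtInfinitySatisfied} and treating the three factors $X(V)$, $|Y_S(V)|$, and $D$ either separately (structural conditions) or jointly (group-theoretic ones). Condition~(\ref{prop:loc:coversAtInfinitySatisfied:eins}) holds since each factor is proper $\CAT(0)$: $X(V)$ by the cited earlier propositions, $|Y_S(V)|$ as a finite product of proper $\CAT(0)$ affine buildings by Proposition~\ref{prop:affbuildingCAT0}(\ref{prop:affbuildingCAT07}) and Corollary~\ref{cor:affBuildingProper}, and $D$ trivially. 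Condition~(\ref{prop:loc:coversAtInfinitySatisfied:zwei}) follows from subadditivity of the covering dimension with contributions $\tfrac{n(n+1)}{2}-1$ (integral) or $n-1$ (function-field) from $X(V)$, $(n-1)$ from each of the $|S|$ building factors, and $|S|$ from $D$, summing to at most the stated bound. Condition~(\ref{prop:loc:coversAtInfinitySatisfied:vier}) is immediate from the equivariance $c_{gW}(gx,gy) = c_W(x,y)$, which combines Remark~\ref{rem:loc:equivarianceOfVol}(\ref{rem:loc:equivarianceOfVol:drei}) and Corollary~\ref{cor:loccWscalingInv}.

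The action in~(\ref{prop:loc:coversAtInfinitySatisfied:drei}) is factor-wise isometric, but properness genuinely needs all three factors: the centre $Z[S^{-1}]^*$ acts trivially on the homothety spaces $X(V)$ and $|Y_S(V)|$, so the joint action on those two factors alone has infinite stabilisers. I will argue that if $g \in \aut_{Z[S^{-1}]}(V)$ almost fixes a compact set, then near-preservation of a vertex in $|Y_S(V)|$ forces $g$ to be bounded in each $p$-adic $\GL_n$ (its vertex stabiliser being conjugate to $\GL_n(Z_S)$ in $\GL_n(Q)$ by Remark~\ref{rem:GLnQactsOnIntegralStructures}), near-preservation in $X(V)$ constrains the archimedean/infinite-valuation image of $g$, and $D$-boundedness forces the log-determinant to lie in a finite subset of $\IZ^S$; discreteness of $\GL_n(Z[S^{-1}])$ in the product of all these completions then restricts $g$ to a finite set.

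For~(\ref{prop:loc:coversAtInfinitySatisfied:fuenf}) and~(\ref{prop:loc:coversAtInfinitySatisfied:sechs}) I will follow the canonical-filtration template of the earlier propositions, accounting for the linear extension of $c_W$ in the $y$-direction. If $c_W(x, y) > 4n(R+1)$, then Lemma~\ref{lem:loc:cWLipschitz}(\ref{lem:loc:cWLipschitz1}) together with the convex-combination formula for $c_W(x,\cdot)$ on a simplex of $|Y_S(V)|$ gives $c_W(x, v) > 4n > 0$ at every vertex $v$ of the containing simplex; Remark~\ref{rem:loccWIdent} and Corollary~\ref{cor:cWAndCanFiltr} then place $W \cap B_v$ on the canonical filtration of $(V \cap B_v, x)$. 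Two direct summands $W$ and $gW$ arising simultaneously this way are comparable by Lemma~\ref{lem:cWneg} and, having the same rank, coincide, giving~(\ref{prop:loc:coversAtInfinitySatisfied:fuenf}); since the canonical filtration carries at most one module per rank strictly between $0$ and $n$, at most $n-1$ elements of $\calw$ meet at any point, giving~(\ref{prop:loc:coversAtInfinitySatisfied:sechs}).

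The main obstacle will be condition~(\ref{prop:loc:coversAtInfinitySatisfied:sieben}). Let $C_0$ denote a common Lipschitz constant for all the $c_W$ on $X(V) \times |Y_S(V)|$, obtained from Corollary~\ref{cor:nfcWLipschitz}, Lemma~\ref{lem:ffcWLipschitz}, and Lemma~\ref{lem:loc:cWLipschitz}. Using Lemma~\ref{lem:thinningOpen} and this bound,
\[
(X(V) \times |Y_S(V)| \times D) \setminus \bigcup \calw^{-\beta}
\;\subseteq\;
\bigl\{(x, y, d) : c_W(x, y) \le 4n(R+1) + C_0\beta \text{ for every nontrivial direct summand } W\bigr\},
\]
so cocompactness reduces to cocompactness of the right-hand set. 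I will obtain the latter by combining three inputs: on $X(V)$, for each fixed integral structure $B$, the sub-level set $\{x : c_W(x,B) \le C' \text{ for all } W\}$ is $\saut_Z(V\cap B)$-cocompact by \cite[Corollary~5.2]{Grayson(1984)} in the integral case and by the explicit bounds on the diagonal invariants of Proposition~\ref{prop:ffieldIsomandCanFilt} in the function-field case (as in Proposition~\ref{prop:ff:coversAtInfinitySatisfied}); on $|Y_S(V)|$, $\saut_{Z[S^{-1}]}(V)$ acts cocompactly by Lemma~\ref{lem:locsautactsCocomOnY}; on $D$, the image of $\aut_{Z[S^{-1}]}(V)$ under $g \mapsto (\nu_p(\det g))_p$ has finite index in $\IZ^S$ and therefore acts cocompactly. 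The three inputs will be assembled using Lemma~\ref{lem:matrixFactorizations}(\ref{lem:matrixFactorizations:eins}) to decompose an arbitrary group element into factors that adjust each coordinate independently: the $p$-adic parts are absorbed into vertex stabilisers of $|Y_S(V)|$, the determinant datum into a $D$-translation, and the residual $Z$-integral piece reduces to the cocompact action on $X(V)$.
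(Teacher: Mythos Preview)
Your proposal is essentially correct and tracks the paper's proof closely for items (\ref{prop:loc:coversAtInfinitySatisfied:eins}), (\ref{prop:loc:coversAtInfinitySatisfied:zwei}), (\ref{prop:loc:coversAtInfinitySatisfied:vier}), (\ref{prop:loc:coversAtInfinitySatisfied:fuenf}), (\ref{prop:loc:coversAtInfinitySatisfied:sechs}), and (\ref{prop:loc:coversAtInfinitySatisfied:sieben}); in particular your layered reduction for cocompactness (first $D$, then $|Y_S(V)|$ via $\saut_{Z[S^{-1}]}(V)$, then $X(V)$ via $\aut_Z(V\cap B)$) is exactly the paper's argument, though the paper phrases it as a nested chain of pointwise stabilisers rather than invoking Lemma~\ref{lem:matrixFactorizations}.

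The one place where the paper's route is noticeably simpler is (\ref{prop:loc:coversAtInfinitySatisfied:drei}). Rather than your adelic discreteness sketch, the paper observes that since $|Y_S(V)|\times D$ is (after subdivision) a simplicial complex on which the group acts simplicially, it suffices to show that the pointwise stabiliser of a vertex $(y,d)$ acts properly on $X(V)$. The key computation is that $\Stab([B])\cap\Stab(d)$ equals $\aut_Z(V\cap B)$: fixing the homothety class $[B]$ together with fixing $d\in D$ (i.e.\ having $\nu_p(\det g)=0$ for all $p\in S$) forces $g$ to fix the integral structure $B$ on the nose, not merely up to scaling, and hence $g$ normalises the $Z$-lattice $V\cap B$. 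Properness then reduces directly to the already-established properness of $\aut_Z(V\cap B)$ on $X(V)$. This is both shorter and avoids any appeal to completions.
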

\begin{proof}
\begin{enumerate}
\item Each of the spaces $X(V)$ (Proposition~\ref{prop:nf:coversAtInfinitySatisfied}) and $|Y_S(V)|$ (Proposition~\ref{prop:affbuildingCAT0} for the $\CAT(0)$ condition and Corollary~\ref{cor:affBuildingProper} for the properness) and  $D$ ($\cong \IR^n$) is a proper $\CAT(0)$ space. Products of proper $\CAT(0)$ spaces are proper $\CAT(0)$ spaces (see for example \cite[Chapter~II Example~1.15(iii)]{Bridson-Haefliger(1999)} ).
\item All the spaces $X(V)$, $D$ and $Y_s(V)$ for $s\in S$ can be equipped with a CW-structure with countably many cells; in the number field case $X(V)$ is a smooth manifold and hence it can be triangulated. 

By \cite[Theorem~A.6]{hatcher2002algebraic} the product CW-structure on $X(V)\times |Y_S(V)|\times D$ really induces the product topology.  $X(V)$ is a $n(n+1)/2-1$-dimensional manifold in the number field case or a $n-1$-dimensional simplicial complex in the function field case. Each $Y_s(N)$ is a simplicial complex of dimension $n-1$. So $Y_S(V):=\prod_{s\in S} Y_s(V)$ is a CW-complex of dimension $|S|\cdot (n-1)$ and $D$ is a $|S|$-dimensional manifold. So the CW-dimension of $X(V)\times |Y_S(V)|\times D$ is at most $n(n+1)/2-1+|S|\cdot n$.
By \cite[Corollary~7.3]{Nagami1962} its covering dimension equals its dimension as a CW-complex.  So the covering dimension of the product is at most $n(n+1)/2-1+|S|\cdot n$.
\item  The group action on each factor is isometric by Proposition~\ref{prop:nf:coversAtInfinitySatisfied}\ref{prop:nf:coversAtInfinitySatisfied:drei} and Proposition~\ref{prop:affbuildingCAT0}\ref{prop:affbuildingCAT04}. So the action on the product is isometric. We have to show that it is a proper action (compare \cite[Chapter~I 8.2-8.3]{Bridson-Haefliger(1999)} for the subtilities of the definition of a proper action).

The last factors are realizations of simplicial complexes and the group action just permutes the vertices. Thus it suffices to show that every stabilizer group of a vertex $(y,d)$ in $|Y_S(V)|\times D|$ acts properly on $X(V)$.
Let $B$ be a free $Z_S$-module representing $y$. 
Given any $g\in\Stab([B])\cap \Stab(d)\subset \aut_{Z[S^{-1}]}(V)$. The condition $gd=d$ implies that $g$ fixes the integral structure $B$ on the nose and not only its homothety class. Thus $\Stab([B])\cap \Stab(d)$ normalizes $V\cap B\cong Z^n$. Thus it is the $Z[S^{-1}]$-linear extension of some $\aut_Z(V\cap B)$. 

So we just have to show that $\aut_Z(V\cap B)$ acts properly on $X(V)$. This has been done for the number field case in Proposition~\ref{prop:nf:coversAtInfinitySatisfied}\ref{prop:nf:coversAtInfinitySatisfied:drei} and for the function field case in Lemma~\ref{lem:FFieldproperAction}.
\item Let $g\in \aut_{Z[S^{-1}]}(V)$ and $W\in  \frl$ be given. We have
\begin{eqnarray*}
& & g\cdot \{(x,y,d)\in X(V)\mid c_W(x,y)> 4n(R+1)\}\\
&=&\{(x,y,d)\in X(V)\mid c_{W}(g^{-1}x,g^{-1}y)> 4n(R+1)\}\\
&=&\{(x,y,d)\in X(V)\mid c_{gW}(x,y)> 4n(R+1) \}.\end{eqnarray*}
The last equality uses Remark~\ref{rem:loc:equivarianceOfVol}\ref{rem:loc:equivarianceOfVol:drei}.
\item Let us proof first that two nontrivial direct summands $W,W'$ of $V$ of rank $m$ with 
\[c_W(x,y)>4n(R+1) \mbox{ and }c_{W'}(x,y)>4n(R+1)\]
for some point $(x,y)\in  X(V)\times Y_S(V)$ are equal. This will prove the statement since we have shown in the previous item that we get
\[gU=\{(x,y,d)\in X(V)\times |Y_S(V)|\times D\mid c_{gW}(x,y)>4n(R+1)\} .\]
for $U=\{(x,y,d)\in X(V)\times |Y_S(V)|\times D|c_W(x,y)>4n(R+1)\}$ and $g\in \aut_{Z[S^{-1}]}(V)$.

As mentioned above $Y_S(V)$ is an Euclidean simplicial complex. Let $s$ denote the open simplex containing $y$. The value of $c_W$ at $(x,y)$ is defined to be a convex combination of the values of $c_W(x,-)$ at the vertices of $s$. By Lemma~\ref{lem:loc:cWLipschitz} we see that all their values can differ at most by $4nR$. Thus the value at any vertex $v$ must be greater than $4n(R+1)-4nR=4n$. So $c_W(x,v)>4n,c_{gW}(x,v)>4n$. 

Fixing now the second coordinate we can consider the function $c_W(-,v):X(V)\rightarrow \IR$. Let $B$ be an representative of the homothety class of integral structures $y$. We have by Remark~\ref{rem:loccWIdent} $c_W(-,y)=c_{W\cap B}(-)$. So $c_{W\cap B}(x)>4n,c_{W'\cap B}(x)>4n$. By Proposition~\ref{prop:isomPoset} the two $Z$-submodules $W'\cap B,W\cap B$ of $V\cap B$ have the same rank. 
For the case of $Z=F[t]$ we can use Proposition~\ref{prop:ff:coversAtInfinitySatisfied} to conclude that $W'\cap B=W\cap B$.
For the number field case we can use Proposition~\ref{prop:nf:coversAtInfinitySatisfied} instead. Intersection with $B$ is an isomorphism of posets by Proposition~\ref{prop:isomPoset} and hence $W'$ and $W$ are equal.
\item Let $(x,y,d)$ be any point in $X(V)\times |Y_S(V)|\times D$. We have shown in the previous item that there can be at most one direct summand  $W$ for each rank between one and $n-1$ with $c_W(x,y)>4n(R+1)$. So there can be at most $n-1$ open sets in $\calw$ containing $(x,y,d)$. 
\item Of course, it suffices to show that $X(V)\times |Y_S(V)|\times D\setminus(\bigcup \calw^{-\beta})$ is a closed subset of a cocompact set. It is a  closed subset of the whole space by Lemma~\ref{lem:thinningOpen}. For any nontrivial direct summand $W$ the function $c_W$ is $C$-Lipschitz for a constant $C$ by Lemma~\ref{lem:loc:cWLipschitz}. Hence 
\begin{multline*} \{(x,y,d)\in X(V)\times |Y_S(V)|\times D\mid c_W(x,y)> 4n(R+1)+C\beta\}\\
\subset \{(x,y,d)\in X(V)\times |Y_S(V)|\times D\mid c_W(x,y)> 4n(R+1)\}^{-\beta}\end{multline*}
 and consequently 
$X(V)\times |Y_S(V)|\times D\setminus(\bigcup \calw^{-\beta})$ is a subset of 
\[ \{(x,y,d)\in X(V)\times |Y_S(V)|\times D \mid c_W(x,y)> 4n(R+1)+C \beta \}.\]
So we still have to show that the group action on the last set is cocompact. The group action of $\aut_{Z[S^{-}]}(V)$ on $D$ is cocompact. A fundamental domain is given by $K_D\coloneqq [0,1]^{|S|}$.

Consider the subgroup that stabilizes $K_D$ pointwise. It is 
\[H\coloneqq \{\varphi\in \aut_{Z[S^{-1}]}(V)\mid \det(\varphi)\in Z^*\}\mbox{ with }Z^*=\begin{cases}\{\pm 1\}&Z=\IZ\\ F^* & Z=F[t]\end{cases}.\]
It acts cocompactly on $Y_S(D)$ by Lemma~\ref{lem:locsautactsCocomOnY}. Thus there is a finite subcomplex $K_Y\subset Y_S(V)$ such that $H\cdot K_Y=Y_S(V)$. Let us consider the group that stabilizes every point in $K_Y$ pointwise. It is the intersection of the stabilizers of all vertices of $K_Y$ and thus it has finite index in the stabilizer of any vertex $v\in K_Y$ by Lemma~\ref{lem:StabVertCommensurable}. So consider 
\[\{g\in \saut_{Z[S^{-1}]}\mid gy=y\}.\]
As shown in Proposition~\ref{prop:loc:coversAtInfinitySatisfied:drei} before, this is just $\Stab(B)$ for any representative $B$ of the homothety class $y$. 
Again we have shown before that 
\[\{g\in \aut_Q(Q\otimes_{Z[S^{-1}]}V)\mid gV=V,gB=B\}=\{g\in \aut_Q(Q\otimes_{Z[S^{-1}]}V\mid gV\cap B=V\cap B\}.\]
The group on the right hand side is just $\aut_Z(V\cap B)$. 

So let us analyze the action of this group on $X(V)$. First note that we have by Proposition~\ref{prop:isomPoset}
\begin{eqnarray*}&&\{x\in X(V)\mid c_W(x,B)>4n(R+1)\mbox{ for a nontrivial direct summand } W\subset V\}\\
&=&\{x\in X(V)\mid c_{W'}(x)>4n(R+1)\mbox{ for a nontrivial direct summand } W'\subset V\cap B\}.\end{eqnarray*}
The group action on the complement of this set is cocompact. For the number field case this is shown in Proposition~\ref{prop:nf:coversAtInfinitySatisfied}\ref{prop:nf:coversAtInfinitySatisfied:sieben}. For the function field case this is shown in Proposition~\ref{prop:ff:coversAtInfinitySatisfied}.
\end{enumerate}
\end{proof}

\begin{lemma}\label{lem:StabVertCommensurable}
If a group $G$ acts simplicially on a locally finite simplicial complex $X$ the stabilizer groups of any two vertices are commensurable.
\end{lemma}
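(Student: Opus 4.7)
The plan is to combine two standard observations: commensurability is an equivalence relation on subgroups of $G$, and for each vertex $v\in X$ the link of $v$ is a finite set on which $\Stab(v)$ acts. First I would handle the case of an edge, and then propagate along edge-paths.

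More concretely, for adjacent vertices $v,w$, I would note that local finiteness of $X$ implies that the set $N(v)$ of vertices adjacent to $v$ is finite. Since the $G$-action is simplicial, $\Stab(v)$ permutes $N(v)$; the stabilizer in $\Stab(v)$ of the point $w\in N(v)$ is exactly $\Stab(v)\cap\Stab(w)$, so the orbit-stabilizer theorem yields $[\Stab(v):\Stab(v)\cap\Stab(w)]<\infty$. By the symmetric argument applied to $N(w)$, the intersection $\Stab(v)\cap\Stab(w)$ also has finite index in $\Stab(w)$, so $\Stab(v)$ and $\Stab(w)$ are commensurable.

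For arbitrary vertices $v,v'$ I would then choose an edge-path $v=v_0,v_1,\ldots,v_k=v'$ joining them and chain the previous step: consecutive stabilizers $\Stab(v_i),\Stab(v_{i+1})$ are commensurable, and transitivity of commensurability on the set of subgroups of $G$ yields commensurability of $\Stab(v)$ with $\Stab(v')$.

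There is no genuine obstacle here; the one point requiring care is that the statement as written implicitly supposes that $X$ is connected — otherwise the conclusion should be restricted to vertices in the same connected component. In every application in this paper (the affine buildings of Section~\ref{ssect:prelimAffine} and their products) the ambient complex is connected, so this restriction is automatic and costs nothing.
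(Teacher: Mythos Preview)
Your argument is correct and rests on the same idea as the paper's proof: local finiteness gives a finite $\Stab(v)$-invariant set of vertices containing the other vertex, and orbit--stabilizer bounds the index of the intersection. The only organisational difference is that the paper does this in one step rather than edge-by-edge: given vertices $x,y$ at combinatorial distance $R$, it lets $\Stab(x)$ act on the finite set of all vertices within distance $R$ of $x$ and reads off directly that $\Stab(x)\cap\Stab(y)$ has finite index in $\Stab(x)$ (and symmetrically in $\Stab(y)$). This avoids appealing to transitivity of commensurability, though of course that fact is standard. Your observation about the implicit connectedness hypothesis applies equally to the paper's version, since finite combinatorial distance presupposes a path.
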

\begin{proof} Given any two vertices $x,y$ let $R$ denote the combinatorial distance between $x$ and $y$. As the simplicial complex is locally finite the set of all vertices of combinatorial distance $\le R$ to $x$ is finite and it contains $y$. Now the stabilizer group $G_x$ acts on this set. The isotropy group of $y$ under this restricted action is $G_x\cap G_y$. So we get an injection.
Hence the  index of $G_x\cap G_y$ in $G_x$ is finite. Analogously for $y$. Hence the subgroups $G_x$ and $G_y$ are commensurable.
\end{proof}

\section{Reducing the family}\label{sec:reducing}

Let $Z$ be either $\IZ$ or the polynomial ring over a finite field. Let $S$ be a finite set of primes in $Z$ and $F$ be any finite group. 
The term ``class of groups''  will always denote a class of groups that is closed under isomorphisms and taking subgroups. A family of subgroups of a group $G$ is a collection of subgroups, that is closed under taking subgroups and conjugation. A class of groups determines a family of subgroups; namely those which are in this class. Examples are the class of trivial groups, the class $\Fin$ of finite groups, the class $\VCyc$ of virtually cyclic groups and the class $\VSol$ of virtually solvable groups. For a family $\calf$ let $\calf_2$ denote the family of those groups containing a group from $\calf$ of index at most two.

\begin{notation}
Let us say that a triple $(\calh^?_*,G,\calf)$ \emph{satisfies the isomorphism conjecture} (in certain degrees), if the map 
\[\calh^G_*(E_\calf G)\rightarrow \calh^G_*(\pt)\]
is an isomorphism (in those degrees). 

If the isomorphism conjecture holds for $(H^?_*(-;{\bf K}_\cala),G,\VCyc)$ for any additive category $\cala$ then $G$ satisfies the \emph{K-theoretic \FJC} (Farrell-Jones conjecture). A group $G$ satisfies the \emph{L-theoretic \FJC}, if the isomorphism conjecture holds for $(H^?_*(-;{\bf L}^{\langle-\infty\rangle}_\cala),G,\VCyc)$ for any additive category $\cala$ with involution (compare \cite[Section~3 and Section~5]{Bartels-Reich(2007coeff)}).
A group $G$ satisfies the \emph{\FJC} if it satisfies both the K- and L-theoretic \FJC.

Let us say that a group $G$ satisfies the \emph{\FJC~relative to a family $\calf$} if we replace $\VCyc$ by $\calf$.
A group $G$ is said to satisfy the \emph{\FJC~with finite wreath products}, if the group $G\wr F$ satisfies the \FJC~for any finite group $F$.
\end{notation}

\begin{theorem}\label{thm:ICGLNF} Let $F$ be a finite group and let $\calf$ denote the family 
\[\VCyc\cup \{\Stab(W)\mid W\mbox{ is a nontrivial direct summand of }Z[S^{-1}]^n\}.\]
The group $\GL_n(Z[S^{-1}])\wr F$ satisfies the $K$- and $L$-theoretic \FJC~in all degrees with respect to the family $\calf^\wr$, which consists of those subgroups that have a finite index subgroup which is abstractly isomorphic to a finite product of groups from $\calf$. 
\end{theorem}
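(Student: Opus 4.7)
The proof is essentially an assembly: all the hard work has been done in Proposition~\ref{prop:loc:coversAtInfinitySatisfied}, which verifies the seven hypotheses of the axiomatic Proposition~\ref{prop:1:coversAtInfinity} for a specific choice of $G$-space $X$ and open cover $\calw$. The plan is to set $V \coloneqq Z[S^{-1}]^n$, identify $G = \GL_n(Z[S^{-1}]) \cong \aut_{Z[S^{-1}]}(V)$, take the product space $X \coloneqq X(V) \times |Y_S(V)| \times D$, and use the collection
\[
\calw \coloneqq \{\{(x,y,d)\in X \mid c_W(x,y)>4n(R+1)\}\mid W \subset V \text{ nontrivial direct summand}\}
\]
from Proposition~\ref{prop:loc:coversAtInfinitySatisfied}.

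First I would invoke Proposition~\ref{prop:loc:coversAtInfinitySatisfied} to conclude that all hypotheses (1)--(7) of Proposition~\ref{prop:1:coversAtInfinity} are met. Applying Proposition~\ref{prop:1:coversAtInfinity} then yields that $G$ is strongly transfer reducible over the family
\[
\mathfrak{F} \coloneqq \VCyc \cup \{H \le G \mid \exists\, U \in \calw\; \forall h \in H:\; hU = U\},
\]
and, crucially for wreath products, that $G \wr F$ satisfies the $K$- and $L$-theoretic \FJC\ relative to $\mathfrak{F}^\wr$ for any finite group $F$.

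The remaining step is to identify $\mathfrak{F}$ with the family $\calf$ in the statement. By Proposition~\ref{prop:loc:coversAtInfinitySatisfied}\ref{prop:loc:coversAtInfinitySatisfied:vier}, for $g \in G$ and the open set $U_W \coloneqq \{(x,y,d)\mid c_W(x,y)>4n(R+1)\} \in \calw$ we have $g\cdot U_W = U_{gW}$. Combined with part~\ref{prop:loc:coversAtInfinitySatisfied:fuenf} (if $gU_W \cap U_W \neq \emptyset$ then $gU_W = U_W$), this gives $g U_W = U_W$ if and only if $gW = W$ as a direct summand. Hence the set $\{H \le G \mid \exists\, U \in \calw\; \forall h \in H:\; hU = U\}$ is exactly the family of subgroups of stabilizers $\Stab(W)$ for nontrivial direct summands $W$ of $Z[S^{-1}]^n$, so $\mathfrak{F} = \calf$. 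Substituting into the conclusion of Proposition~\ref{prop:1:coversAtInfinity} yields precisely the statement of the theorem.

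The only point requiring any care is the equality $\mathfrak{F} = \calf$ above, since a priori $\mathfrak{F}$ is defined via stabilizers of open subsets of the flow space while $\calf$ is defined via stabilizers of direct summands; but the bijection $W \leftrightarrow U_W$ together with the equivariance $gU_W = U_{gW}$ makes this identification immediate. No new computations are required beyond those already performed in Sections~\ref{sec:volloc} and~\ref{sec:spacesWith}.
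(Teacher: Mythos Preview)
Your proposal is correct and follows essentially the same route as the paper: invoke Proposition~\ref{prop:loc:coversAtInfinitySatisfied} to verify the hypotheses of Proposition~\ref{prop:1:coversAtInfinity}, then read off the conclusion for $G\wr F$. You are in fact slightly more explicit than the paper, which simply asserts that the resulting family is $\calf^\wr$; your argument via $gU_W=U_{gW}$ and part~\ref{prop:loc:coversAtInfinitySatisfied:fuenf} making the identification $\mathfrak{F}=\calf$ precise is a welcome addition (modulo the harmless convention that $\calf$ is understood to be closed under subgroups).
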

\begin{proof}
We have found a space satisfying the conditions from Proposition~\ref{prop:1:coversAtInfinity} (see Proposition~\ref{prop:nf:coversAtInfinitySatisfied} for the case of $\mathbb{Z}$,
Proposition~\ref{prop:ff:coversAtInfinitySatisfied} for the case of $F[t]$ and 
Proposition~\ref{prop:loc:coversAtInfinitySatisfied} for the localized versions).
Proposition~\ref{prop:1:coversAtInfinity} says that $\GL_n(Z[S^{-1}])\wr F$ satisfies the K- and L-theoretic \FJC~relative to the family $\calf^\wr.$ 
\end{proof}

The goal of this section is to reduce the family further as far as possible. We need the following two key properties:

\begin{theorem}[Transitivity principle~{\cite[Theorem~2.9]{Lueck-Reich(2005)}}] \label{prop:transitivityPrinciple} Let $\mathcal{H}^?_*$ be an equivariant homology theory and let $G$ be a group and let $\calf\subset \calf'$ be two families of subgroups. Suppose that each $H\in \calf'$ satisfies the isomorphism conjecture for the family $\calf|_H$. 

Then $G$ satisfies the isomorphism conjecture for the family $\calf$ if and only if it satisfies the isomorphism conjecture for the family $\calf'$.
\end{theorem}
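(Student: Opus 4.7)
The plan is to reduce everything to showing that the canonical $G$-map $E_\calf G \to E_{\calf'} G$ (which exists and is unique up to $G$-homotopy since $\calf \subset \calf'$) induces an isomorphism on $\calh^G_*$. The assembly map for $\calf$ factors as
\[
\calh^G_*(E_\calf G) \xrightarrow{\alpha} \calh^G_*(E_{\calf'} G) \xrightarrow{\beta} \calh^G_*(\pt),
\]
and the isomorphism conjectures for $\calf$ and $\calf'$ assert that $\beta\circ\alpha$ and $\beta$ respectively are isomorphisms. A two-out-of-three argument then yields the biconditional, provided that $\alpha$ is always an isomorphism under the stated hypothesis.

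To prove that $\alpha$ is an isomorphism, I would fix a $G$-CW model for $E_{\calf'} G$ whose isotropy groups lie in $\calf'$ and proceed by cellular induction. For a typical attaching pushout of an equivariant cell $G/H \times D^n$ (with $H \in \calf'$) along its boundary $G/H \times S^{n-1}$, the Mayer--Vietoris sequence attached to the resulting $G$-pushout (after crossing everything with $E_\calf G$ and comparing to its collapse) reduces the claim to showing that the projection
\[
\calh^G_*(E_\calf G \times G/H) \to \calh^G_*(G/H)
\]
is an isomorphism for every $H \in \calf'$.

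Now I would invoke the induction structure of the equivariant homology theory, which provides natural identifications $\calh^G_*(Y \times G/H) \cong \calh^H_*(\res_H Y)$ for any $G$-space $Y$. Under these, the displayed map becomes $\calh^H_*(\res_H E_\calf G) \to \calh^H_*(\pt)$. A quick inspection of fixed points shows that $\res_H E_\calf G$ is a model for $E_{\calf|_H} H$: for $K \le H$, its $K$-fixed points are contractible precisely when $K \in \calf$, which for subgroups of $H$ is the same as $K \in \calf|_H$. So the map in question is literally the assembly map for $(H, \calf|_H)$, which is an isomorphism by hypothesis.

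The main obstacle is the cellular step: the Mayer--Vietoris induction is routine for a finite-dimensional $E_{\calf'} G$, but the general case requires compatibility of $\calh^G_*$ with filtered colimits (part of the definition of an equivariant homology theory) in order to pass from skeleta to the full space. With this continuity in hand, assembling the cellular contributions via the Mayer--Vietoris sequences and a final colimit comparison completes the proof.
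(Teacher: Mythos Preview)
The paper does not give its own proof of this statement; it is quoted verbatim as \cite[Theorem~2.9]{Lueck-Reich(2005)} and used as a black box throughout Section~\ref{sec:reducing}. So there is no in-paper argument to compare against.

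That said, your outline is correct and is essentially the standard proof (as in L\"uck--Reich). The key reductions are exactly the ones you name: factoring through $E_{\calf'}G$, reducing $\alpha$ to the orbit-wise statement via a cellular/skeletal argument on $E_{\calf'}G$, and then using the induction structure to rewrite $\calh^G_*(E_\calf G \times G/H) \to \calh^G_*(G/H)$ as the $(H,\calf|_H)$-assembly map. Your identification of $\res_H E_\calf G$ as a model for $E_{\calf|_H}H$ is correct. The only place to be slightly more careful is the passage from skeleta to the whole space: this uses that $\calh^G_*$ sends the skeletal filtration (a sequence of $G$-cofibrations) to a colimit, which follows from the disjoint-union axiom and the long exact sequence of a pair rather than from a separate ``continuity'' axiom. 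With that adjustment the argument is complete.
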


\begin{proposition}[{\cite[Corollary~4.3]{Bartels-Reich(2007coeff)}}] \label{prop:IChomomrphisms} Let $f:G\rightarrow H$ be a group homomorphism. If $H$ satisfies the isomorphism conjecture (with finite wreath products) for a family $\calf$, then $G$ satisfies the isomorphism conjecture (with finite wreath products) for the family $f^*\calf$.
\end{proposition}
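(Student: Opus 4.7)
The plan is to compare the assembly map for $G$ relative to the pulled-back family $f^*\calf$ with the assembly map for $H$ relative to $\calf$, by pulling back the classifying space $E_\calf(H)$ along $f$ and invoking the induction structure of the equivariant homology theory.

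First, I would verify that $\res_f E_\calf(H)$, the $G$-CW complex obtained by letting $G$ act on $E_\calf(H)$ through $f$, is a model for $E_{f^*\calf}(G)$. For any subgroup $L\le G$ one has $(\res_f E_\calf(H))^L = E_\calf(H)^{f(L)}$; if $L\in f^*\calf$ then $L\subseteq f^{-1}(K)$ for some $K\in\calf$, so $f(L)\in \calf$ and the fixed-point set is contractible, while if $L\notin f^*\calf$ then $f(L)\notin \calf$ and the fixed-point set is empty. In particular every $G$-isotropy group on $\res_f E_\calf(H)$ lies in $f^*\calf$, so there is a canonical $G$-map $E_{f^*\calf}(G)\to \res_f E_\calf(H)$ which is a $G$-weak equivalence.

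Next, I would use the induction structure of $\calh^?_*$ along the homomorphism $f$. For any $H$-CW complex $Y$ there is a natural comparison homomorphism
\[
\ind_f\colon\;\calh^G_*(\res_f Y)\;\longrightarrow\; \calh^H_*(Y),
\]
and, for the equivariant $K$- and $L$-theories built from an additive category with $H$-action, pulling back the coefficient category along $f$ makes this map into an isomorphism. Applied to $Y=E_\calf(H)$ and $Y=\pt$, together with the identification from the previous paragraph, it produces a commutative square
\[
\begin{array}{ccc}
\calh^G_*(E_{f^*\calf}(G)) & \xrightarrow{\;\cong\;} & \calh^H_*(E_\calf(H)) \\
\downarrow & & \downarrow \\
\calh^G_*(\pt) & \xrightarrow{\;\cong\;} & \calh^H_*(\pt)
\end{array}
\]
whose vertical arrows are the respective assembly maps, so the left assembly map is an isomorphism whenever the right one is.

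The main obstacle is the precise identification of the coefficient pullback which makes the horizontal maps isomorphisms; this is the reason the statement is phrased for coefficients in arbitrary additive categories with group action, and the technical content of \cite[Corollary~4.3]{Bartels-Reich(2007coeff)} lies exactly in carrying out this change-of-coefficients argument on the spectrum level. For the wreath-product strengthening one applies the same argument to the homomorphism $f\wr\id_F\colon G\wr F\to H\wr F$, noting that the pulled-back family then contains $(f^*\calf)^\wr$, which reduces the version with finite wreath products immediately to the bare version.
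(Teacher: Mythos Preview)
The paper does not supply a proof here; the proposition is quoted from \cite[Corollary~4.3]{Bartels-Reich(2007coeff)}. Your identification of $\res_f E_\calf(H)$ as a model for $E_{f^*\calf}(G)$ is correct and is indeed the first step of that argument.

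The coefficient step, however, is stated in the wrong direction. First, the induction structure of an equivariant homology theory gives an isomorphism $\calh^G_*(X)\cong\calh^H_*(\ind_f X)$ only when $\ker(f)$ acts freely on $X$; on $\res_f Y$ the kernel acts trivially, so the comparison map you call $\ind_f$ is not available in general. Second, and more to the point, the proposition quantifies over \emph{all} additive $G$-categories, and not every such category arises as $f^*\cala$ for some $H$-category $\cala$. The argument in \cite{Bartels-Reich(2007coeff)} therefore runs the other way: given an arbitrary additive $G$-category $\calb$, one \emph{induces} it to an additive $H$-category and identifies the $G$-assembly map for $\calb$ with an $H$-assembly map for the induced coefficients (over a suitable base, which is then handled because the conjecture for $H$ with arbitrary coefficients is equivalent to its fibered version). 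Your deferral to the cited reference for the technical content is appropriate, but as written your description would only treat coefficients of the restricted form $f^*\cala$, which is not enough to conclude.

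The reduction of the wreath-product version to the bare version via $f\wr\id_F$ is fine in the situations the paper actually uses, where $\calf$ is an isomorphism-closed class such as $\VCyc$ or $\VSol$.
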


\begin{remark}\label{rem:ICsubgroup} If $G$ satisfies the isomorphism conjecture with respect to a family $\calf$, then each subgroup $H\le G$ satisfies the isomorphism conjecture with respect to the family $\calf|_H$.

So if $\calf$ is a subfamily of $\calf'$ and if a group $G$ satisfies the isomorphism conjecture with respect to $\calf$, then it also satisfies the isomorphism conjecture with respect to $\calf'$.
\end{remark}

\begin{theorem} \label{thm:FJCAT} The \FJC~with finite wreath products holds for any CAT(0)-group.
\end{theorem}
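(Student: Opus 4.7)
The plan is to apply Proposition~\ref{prop:1:coversAtInfinity} to $G$ acting on a suitable CAT(0) space with the empty collection $\calw = \emptyset$, and then invoke the transitivity principle to shrink the family from $\VCyc^{\wr}$ down to $\VCyc$.

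By hypothesis a CAT(0)-group $G$ acts properly, cocompactly and isometrically on a proper CAT(0) space $X$; this standard setting guarantees that $X$ has finite covering dimension (a compact fundamental domain bounds the local dimension). Setting $\calw = \emptyset$, conditions (1)--(3) of Proposition~\ref{prop:1:coversAtInfinity} are precisely the CAT(0) hypotheses, conditions (4), (5) and (6) are vacuously true, and condition (7) reduces to the cocompactness of the $G$-action on $X$. With $\calw = \emptyset$ the family $\mathfrak{F}$ in the conclusion of Proposition~\ref{prop:1:coversAtInfinity} collapses to $\VCyc$, so we obtain that $G \wr F$ satisfies the $K$- and $L$-theoretic \FJC\ relative to $\VCyc^{\wr}$ for every finite group $F$. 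This corresponds to the observation made in the paper that the ``at infinity'' hypothesis of the main proposition is automatic when the group action is cocompact.

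To reduce the family from $\VCyc^{\wr}$ to $\VCyc$, I would invoke the transitivity principle (Theorem~\ref{prop:transitivityPrinciple}) together with Remark~\ref{rem:ICsubgroup}. Every $H \in \VCyc^{\wr}$ is by definition virtually a subgroup of a finite product of virtually cyclic groups, hence virtually polycyclic; virtually polycyclic groups are known to satisfy the \FJC\ with finite wreath products. Consequently each $H \in \VCyc^{\wr}$ satisfies the isomorphism conjecture relative to $\VCyc|_H$, and Theorem~\ref{prop:transitivityPrinciple} upgrades the conclusion for $G \wr F$ from the family $\VCyc^{\wr}$ to $\VCyc$. Applying this to every finite group $F$ yields the \FJC\ with finite wreath products for $G$.

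No serious obstacle arises: the elaborate ``covers at infinity'' machinery of Proposition~\ref{prop:1:coversAtInfinity} is designed precisely to handle non-cocompact actions, and it becomes trivial once we take $\calw = \emptyset$ in the cocompact CAT(0) setting. Thus the CAT(0) case of the Farrell-Jones conjecture is recovered as a degenerate corollary of the general framework developed earlier in the paper, and the only nontrivial ingredient beyond Proposition~\ref{prop:1:coversAtInfinity} is the previously known \FJC\ for virtually polycyclic groups that feeds the transitivity step.
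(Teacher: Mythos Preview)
Your approach is correct but differs from the paper's. The paper's proof is considerably more direct: it simply observes that if $G$ acts properly, isometrically and cocompactly on a CAT(0) space $X$, then $G\wr F$ acts the same way on the product $X^F$, so $G\wr F$ is again a CAT(0) group; hence it suffices to prove the plain \FJC\ (without wreath products) for CAT(0) groups, which is then cited directly from \cite{bartels2009borel} and \cite{wegner2012k}. In particular the paper never invokes Proposition~\ref{prop:1:coversAtInfinity}, never needs $\VCyc^{\wr}$, and never appeals to the transitivity principle or to virtually polycyclic groups.

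Your route---feeding the cocompact action into Proposition~\ref{prop:1:coversAtInfinity} with $\calw=\emptyset$ and then shrinking $\VCyc^{\wr}$ to $\VCyc$ via transitivity---works, and it has the merit of exhibiting the CAT(0) case as a genuine degenerate instance of the paper's axiomatic machinery. The cost is an extra transitivity step that requires the \FJC\ for virtually polycyclic (or at least virtually abelian) groups as an additional input; the paper's product-space observation sidesteps this entirely. One small caveat: your justification that ``a compact fundamental domain bounds the local dimension'' is a bit thin as stated; finite covering dimension of $X$ should either be taken as part of the definition of a CAT(0)-group (as is standard in the cited references) or argued more carefully.
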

\begin{proof} Suppose $G$ acts properly, isometrically and cocompactly on a $\CAT(0)$ space $X$. Then $G\wr F$ acts the same way on $X^F$ and hence it is also a CAT(0) group. So it suffices to consider the version without wreath products.

This is then \cite[Theorem~B]{bartels2009borel} for the $L$-theoretic setting and the $K$-theoretic setting up to dimension $1$ and \cite[Theorem~1.1 and Theorem~3.4]{wegner2012k} for the higher dimensional $K$-theoretic setting.
\end{proof}

\begin{proposition}\label{prop:ICcolimit} Let $(G_i)_{i \in \IN}$ be a directed system of groups indexed over the natural numbers. Suppose that the \FJC~(with finite wreath products) holds for every $G_i$ . Then it also holds for $\colim_{i\in \IN} G_i$.
\end{proposition}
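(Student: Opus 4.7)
The plan is to first reduce to a non-wreath-product setting and then invoke the classical inheritance of the Farrell-Jones conjecture under directed colimits due to Bartels-Echterhoff-L\"uck.

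First I would observe that wreath products commute with directed colimits: for any finite group $F$,
\[(\colim_{i\in\IN} G_i)\wr F \;\cong\; \colim_{i\in\IN}(G_i\wr F),\]
because the functor $(-)\wr F = (-)^F\rtimes F$ preserves filtered colimits (the indexing set is finite and semidirect products with a fixed finite group are filtered-colimit-preserving). Hence if I can show that the plain \FJC\ is preserved under directed colimits over $\IN$, I can apply it to the system $(G_i\wr F)_{i\in\IN}$ — each member satisfies the \FJC\ by the wreath-product hypothesis on $G_i$ — to deduce the \FJC\ for $(\colim G_i)\wr F$. Running this for every finite $F$ yields the \FJC\ with finite wreath products for $\colim G_i$.

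Next I would establish the key inheritance statement: if $H=\colim_{i\in\IN} H_i$ with structure maps $\psi_i\colon H_i\to H$, and each $H_i$ satisfies the \FJC, then so does $H$. For each $i$ the pulled-back family $\psi_i^{\ast}\VCyc = \{K\le H_i\mid \psi_i(K)\in\VCyc\}$ contains $\VCyc(H_i)$, since a virtually cyclic group has only virtually cyclic quotients. Every member $K\in\psi_i^{\ast}\VCyc$ is a subgroup of $H_i$, and such subgroups inherit the \FJC\ by Remark~\ref{rem:ICsubgroup} (using the hereditary property one layer up from the wreath-product hypothesis). The transitivity principle (Theorem~\ref{prop:transitivityPrinciple}) then upgrades the \FJC\ for $H_i$ from $\VCyc(H_i)$ to $\psi_i^{\ast}\VCyc$.

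Finally I would invoke the fact that the equivariant homology theories $\calh^?_*(-;\bfK_\cala)$ and $\calh^?_*(-;\bfL^{\langle-\infty\rangle}_\cala)$ are compatible with sequential colimits of groups: the natural map
\[\colim_i \calh^{H_i}_*\bigl(E_{\psi_i^{\ast}\VCyc} H_i\bigr) \;\longrightarrow\; \calh^H_*\bigl(E_{\VCyc} H\bigr)\]
is an isomorphism and intertwines the assembly maps, because $\psi_i^{\ast}E_{\VCyc}H$ is a model for $E_{\psi_i^{\ast}\VCyc}H_i$ and the coefficient spectra are functorial in the group. A directed colimit of isomorphisms is an isomorphism, which yields the \FJC\ for $H$. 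The main obstacle is precisely this last compatibility for arbitrary additive coefficient categories (with involution in the $L$-theoretic case): one must pull back coefficients along each $\psi_i$ and verify naturality of the colimit identification with the assembly maps, which is the technical heart of the Bartels-Echterhoff-L\"uck inheritance argument. Once that compatibility is in place, combining it with the reduction of the first paragraph finishes the proof.
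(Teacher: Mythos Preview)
Your proposal is correct and follows essentially the same route as the paper: reduce to the non-wreath case via $(\colim_i G_i)\wr F\cong\colim_i(G_i\wr F)$, then invoke the Bartels--Echterhoff--L\"uck colimit inheritance, with the technical heart being the strong continuity of the equivariant homology theories for coefficients in arbitrary additive categories (the paper spells out precisely this point, including the subtlety for negative $K$-theory). One minor remark: your appeal to the transitivity principle to pass from $\VCyc(H_i)$ to the larger family $\psi_i^\ast\VCyc$ is unnecessary---enlarging the family is the trivial direction and follows directly from Remark~\ref{rem:ICsubgroup}.
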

\begin{proof}
First note that $(\colim_{i\in \IN} G_i)\wr F \cong \colim_{i\in \IN} (G_i\wr F)$ for a finite group $F$ and so it suffices to consider the version without wreath products.

This is basically \cite[Theorem~0.7]{Bartels-Echterhoff-Lueck(2008colim)} with the minor problem that this reference does not deal with the version with coefficients in any additive category but only in the category of free $R$-modules for some ring $R$.

First it is shown in \cite[Theorem~3.5]{Bartels-Echterhoff-Lueck(2008colim)} that Isomorphism conjectures are compatible with colimits if the given equivariant homology theory is strongly continuous in the sense of \cite[Definition~2.3]{Bartels-Echterhoff-Lueck(2008colim)}. It is shown in \cite[Lemma~5.2]{Bartels-Echterhoff-Lueck(2008colim)} that $H^?(-;{\bf K}^{alg}_R)$ and $H^?(-;{\bf L}^{\langle-\infty\rangle}_R)$ are strongly continuous for any ring $R$. 

The crucial point is to verify that the canonical maps
\[\colim_i K_n(R\rtimes G_i)\rightarrow K_n(R\rtimes \colim_{i}G_i).\]
and 
\[\colim_i L_n^{\langle-\infty\rangle}(R\rtimes G_i)\rightarrow L_n^{\langle-\infty\rangle}(R\rtimes \colim_{i}G_i).\]
are isomorphisms. The ring $R\rtimes G_i$ denotes the twisted group ring where the $G_i$ action is the restriction of the $\colim_j G_j$-action along the canonical map $G_i\rightarrow \colim_j G_j$. More briefly let us say that the functor $K_n(R\rtimes -)$ is continuous. It is a functor from the category of groups over the group of ring automorphisms of $R$ to the category of abelian groups.

The same statements also hold, if we allow coefficients in any additive category;
A straightforward computation shows the continuity if the functor $\cala\rtimes-$. The continuity of $K_n$ is actually a bit trickier for all $n$. Connective $K$-theory is continuous by construction, however the definition of negative $K$-groups uses a ``delooping'' of additive categories, which is not continuous, but $K$-theory does not see this discontinuity. See for example \cite[Corollary~6.4]{schlichting2006negative}.
In my thesis~\cite[Proposition~10.23]{ruping2013farrell} I also had a straightforward argument that this follows from the long exact sequence associated to a Karoubi-filtration and from the fact, that weak equivalences induce isomorphisms in $K$-theory.  

The proof for the L-theory part from \cite[Lemma~5.2]{Bartels-Echterhoff-Lueck(2008colim)} works also in the setting of additive categories.
\end{proof}

\begin{lemma}\label{lem:FJvAb} The \FJC~(with finite wreath products) holds for any virtually abelian group.
\end{lemma}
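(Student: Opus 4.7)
My plan is to reduce the claim, via the closure properties already collected in Theorem~\ref{thm:FJCAT} and Proposition~\ref{prop:ICcolimit}, to the CAT(0) case applied to finitely generated virtually abelian groups.

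First I would dispense with the wreath product decoration. If $G$ is virtually abelian with finite index abelian subgroup $A \trianglelefteq G$, and $F$ is any finite group, then $A^F \trianglelefteq G^F \trianglelefteq G \wr F$ is a chain with finite quotients, so $G \wr F$ is itself virtually abelian (it contains $A^F$, which is abelian, as a subgroup of finite index $[G:A]^{|F|} \cdot |F|$). So it suffices to prove the plain \FJC{} for any virtually abelian group; the wreath-product version will then follow formally.

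Next I would handle the finitely generated case. Let $G$ be finitely generated and virtually abelian. After passing to a further finite index subgroup I may assume $G$ has a finite index normal subgroup $A \cong \IZ^n$ (the torsion-free part of the finite index abelian subgroup, which is characteristic there). The conjugation action of $G$ on $A$ extends $\IR$-linearly to a homomorphism $G \to \GL_n(\IR)$, and since $A$ acts on $\IR^n$ by translations, this assembles into a proper, isometric, and cocompact action of $G$ on the Euclidean space $\IR^n$. In particular $G$ is a CAT(0) group, so Theorem~\ref{thm:FJCAT} delivers the \FJC{} with finite wreath products (and hence, a fortiori, the plain \FJC) for $G$.

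For the general case, let $G$ be an arbitrary virtually abelian group. Then every finitely generated subgroup $H \leq G$ is again virtually abelian (take the intersection with the finite index abelian subgroup of $G$), and hence finitely generated virtually abelian, so by the previous paragraph $H$ satisfies the \FJC. Since $G$ is the directed colimit of its finitely generated subgroups, Proposition~\ref{prop:ICcolimit} yields the \FJC{} for $G$. Combined with the first paragraph this gives the \FJC{} with finite wreath products for every virtually abelian group. The only mildly non-routine point is the construction of the cocompact CAT(0) action in the finitely generated case; everything else is a formal assembly of the tools already recorded in this section.
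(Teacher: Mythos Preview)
Your proof is correct but takes a different route from the paper's. The paper argues in four steps: $\IZ^n$ is CAT(0), hence satisfies the \FJC{} with finite wreath products by Theorem~\ref{thm:FJCAT}; Remark~\ref{rem:ICwFOVergroups} (closure under finite-index overgroups) upgrades this to all finitely generated abelian groups; Proposition~\ref{prop:ICcolimit} then handles arbitrary abelian groups; and Remark~\ref{rem:ICwFOVergroups} once more yields the virtually abelian case. Your approach differs in two respects. First, you dispose of the wreath-product decoration at the outset via the clean observation that $G\wr F$ is again virtually abelian, so you never need Remark~\ref{rem:ICwFOVergroups} at all. Second, you apply Theorem~\ref{thm:FJCAT} directly to finitely generated virtually abelian groups rather than just to $\IZ^n$. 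This trades the overgroup-inheritance machinery for a slightly less trivial geometric input: your ``assembles into'' step implicitly uses that the extension $1\to\IZ^n\to G\to G/\IZ^n\to 1$ splits after pushing out along $\IZ^n\hookrightarrow\IR^n$ (because $H^2$ of a finite group with rational-vector-space coefficients vanishes) and that one must pick a $G/\IZ^n$-invariant inner product to make the affine action isometric. Both arguments share the same mild looseness in invoking Proposition~\ref{prop:ICcolimit}, stated for $\IN$-indexed systems, for the directed system of all finitely generated subgroups.
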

\begin{proof} The \FJC~holds for $\IZ^n$ since it is a $\CAT(0)$-group by Theorem~\ref{thm:FJCAT}. Any finitely generated abelian group has a finitely generated, free abelian subgroup of finite index. So the \FJC~with finite wreath products holds for finitely generated abelian groups by Remark~\ref{rem:ICwFOVergroups}. Proposition~\ref{prop:ICcolimit} shows the \FJC~with finite wreath products for abelian groups. Using Remark~\ref{rem:ICwFOVergroups} again, this shows the \FJC~for virtually abelian groups.
\end{proof}

\begin{lemma} Let $F$,$G$  be two groups. If $F$ satisfies the isomorphism conjecture with respect to a family $\calf$ and $G$ satisfies the isomorphism conjecture with respect to a family $\calg$, then $F\times G$ satisfies the isomorphism conjecture with respect to the family \[\calf\times \calg\coloneqq \{H\mid H\le F'\times G'  F\in \calf, G\in \calg \}.\]
\end{lemma}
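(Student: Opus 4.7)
The plan is to combine Proposition~\ref{prop:IChomomrphisms} (pullback along homomorphisms) with the transitivity principle (Theorem~\ref{prop:transitivityPrinciple}), playing the two projections $\pr_F: F \times G \to F$ and $\pr_G: F\times G \to G$ off against each other.

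First I would apply Proposition~\ref{prop:IChomomrphisms} to the projection $\pr_F$. Since $F$ satisfies the isomorphism conjecture for $\calf$, this yields that $F \times G$ satisfies it for the family
\[
\pr_F^*\calf = \{H \le F \times G \mid \pr_F(H) \in \calf\}.
\]
Note that $\calf \times \calg \subseteq \pr_F^*\calf$: if $H \le F' \times G'$ with $F' \in \calf$ and $G' \in \calg$, then $\pr_F(H) \le F'$, hence $\pr_F(H) \in \calf$ since $\calf$ is closed under taking subgroups.

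Next I would apply the transitivity principle to the inclusion $\calf \times \calg \subseteq \pr_F^*\calf$. It reduces the statement to showing that every $H \in \pr_F^*\calf$ satisfies the isomorphism conjecture relative to the restricted family $(\calf \times \calg)|_H$. Fix such an $H$; write $q \colon H \to G$ for the restriction of $\pr_G$. Since $G$ satisfies the isomorphism conjecture for $\calg$, Proposition~\ref{prop:IChomomrphisms} applied to $q$ tells us that $H$ satisfies it for $q^*\calg = \{K \le H \mid q(K) \in \calg\}$. For $K \in q^*\calg$ we have $\pr_F(K) \le \pr_F(H) \in \calf$ and $\pr_G(K) = q(K) \in \calg$, so $K \le \pr_F(K) \times \pr_G(K) \in \calf \times \calg$; hence $q^*\calg \subseteq (\calf \times \calg)|_H$. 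By Remark~\ref{rem:ICsubgroup} (passage to a larger family) this upgrades to the isomorphism conjecture for $H$ relative to $(\calf \times \calg)|_H$, which is what the transitivity principle demanded.

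I do not foresee a real obstacle; the only thing to be slightly careful about is that the families involved really are closed under subgroups and conjugation, so that Proposition~\ref{prop:IChomomrphisms} and the transitivity principle apply. The wreath-product version, if desired, follows verbatim by using $(F\times G)\wr E \hookrightarrow (F\wr E) \times (G \wr E)$ or, more directly, by running the same argument for $(F\times G)\wr E$ and the two projections to $F\wr E$ and $G\wr E$.
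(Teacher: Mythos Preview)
Your argument is correct and follows essentially the same strategy as the paper: apply Proposition~\ref{prop:IChomomrphisms} to one projection, then use the transitivity principle together with the other projection to finish. The only cosmetic differences are that the paper projects to $G$ first and to $F$ second (you do the opposite), and that the paper reduces to the full preimages $F\times H$ and then $H'\times H$, whereas you work directly with an arbitrary $H\in\pr_F^*\calf$ and use the inclusion $q^*\calg\subseteq(\calf\times\calg)|_H$; neither of these changes the substance of the proof.
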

\begin{proof}
Consider the group homomorphism $p_G:F\times G\rightarrow G \quad (f,g)\mapsto g$. By Proposition~\ref{prop:IChomomrphisms} it suffices to show that 
for any subgroup $H\le G$ with $H\in \calg$ the group $p_G^{-1}(H)=F\times H$ satisfies the isomorphism conjecture relative to the family $\calf \times \calg$. Applying the same argument to the projection $p_H:F\times H\rightarrow H$ it suffices to consider $H'\times H$ with $H'\in \calf,H\in \calg$. This group trivially satisfies the isomorphism conjecture relative to $\calg \times \calf$ since it is an element of the family $\calg \times \calf$.
\end{proof}

\begin{corollary}\label{cor:fjproducts} Let $\calf$ be a class of groups. Suppose that a product of two groups from $\calf$ satisfies the isomorphism conjecture relative to $\calf$. Then the class of groups satisfying the isomorphism conjecture (with finite wreath products) relative to $\calf$ is closed under finite products.

Especially this shows that the class of groups satisfying the \FJC~(with finite wreath products) relative to the family $\VSol$ is closed under finite products, since the class $\VSol$ is. The class of groups satisfying the \FJC~(with finite wreath products) is also closed under finite  products.
\end{corollary}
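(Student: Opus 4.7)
The plan is to establish closure under products of two factors; a trivial induction then extends to arbitrary finite products. Let $G_1,G_2$ be groups satisfying the isomorphism conjecture (with finite wreath products) relative to $\calf$, and let $H$ be any finite group. Using the diagonal embedding of $H$ into $H \times H$, there is a natural embedding
\[
(G_1\times G_2)\wr H \hookrightarrow (G_1\wr H)\times (G_2\wr H),
\]
so by Remark~\ref{rem:ICsubgroup} it suffices to prove that the right-hand side satisfies the isomorphism conjecture relative to $\calf$. Since $H$ is arbitrary (taking $H$ trivial recovers the plain version), this simultaneously yields the statement with and without finite wreath products.

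Each factor $G_i\wr H$ satisfies IC relative to $\calf$ by hypothesis on the $G_i$. The lemma immediately preceding the corollary then tells us that $(G_1\wr H)\times(G_2\wr H)$ satisfies IC relative to $\calf\times\calf$, the family of subgroups of products $F_1\times F_2$ with $F_i\in\calf$. To upgrade this to the smaller family $\calf$, I would apply the Transitivity Principle (Theorem~\ref{prop:transitivityPrinciple}) along the inclusion $\calf\subseteq\calf\times\calf$. Its hypothesis requires every $K\in\calf\times\calf$ to satisfy IC relative to $\calf$. Such a $K$ sits inside some $F_1\times F_2$ with $F_i\in\calf$, and $F_1\times F_2$ satisfies IC relative to $\calf$ by the standing assumption of the corollary; by Remark~\ref{rem:ICsubgroup}, $K$ does as well.

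For the two specializations: the class $\VSol$ is itself closed under finite products, so a product of two virtually solvable groups lies in $\VSol$ and trivially satisfies IC relative to $\VSol$, verifying the corollary's hypothesis in this case. For the unrelativized \FJC, a product of two virtually cyclic groups has a finite-index subgroup isomorphic to $\IZ^a\times\IZ^b$ with $a,b\in\{0,1\}$, hence is virtually abelian, hence satisfies the \FJC\ with finite wreath products by Lemma~\ref{lem:FJvAb}; this supplies the hypothesis of the corollary for $\calf=\VCyc$. I expect no real obstacle here; the only mildly delicate point is the check that the wreath-product embedding above is indeed a homomorphism, which is a direct computation on semidirect-product elements.
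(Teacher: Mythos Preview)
Your proof is correct and follows essentially the same route as the paper: apply the preceding lemma to get the isomorphism conjecture for the product relative to $\calf\times\calf$, then invoke the transitivity principle together with the corollary's hypothesis to reduce to $\calf$, and handle the wreath-product version via the embedding $(G_1\times G_2)\wr H\hookrightarrow (G_1\wr H)\times(G_2\wr H)$. The only cosmetic difference is that you treat the plain and wreath-product versions simultaneously by allowing $H$ to be trivial, whereas the paper does the plain case first and then records the embedding as a separate observation.
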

\begin{proof} Let two groups $G,G'$ be given. Suppose both of them satisfy the isomorphism conjecture relative to the class $\calf$. By the last lemma their product satisfies the isomorphism conjecture relative to the family $\calf \times \calf$. By assumption any group in $\calf\times \calf$ satisfies the isomorphism conjecture relative to $\calf$. So we can reduce the family from $\calf\times \calf$ to $\calf$ by the transitivity principle. 

The version for the wreath products follows from the observation $(G\times G')\wr F\subset (G\wr F)\times (G'\wr F)$.

The final claim follows from the fact that a finite product of virtually cyclic groups is virtually abelian . So it satisfies the \FJC~by Lemma~\ref{lem:FJvAb}.
\end{proof}

\begin{remark}[{\cite[Remark~6.2]{Bartels-Lueck-Reich-Rueping(2012KandL)}}]\label{rem:ICwFOVergroups} Let $\calf$ be a class of groups. Then the class of groups satisfying the isomorphism conjecture with finite wreath products relative to $\calf$ is closed under finite index overgroups.
\end{remark}

We can also combine several of those inheritance properties to get:
\begin{lemma}\label{lem:FJpullbackVC} Let $f:G\rightarrow H$ be a group homomorphism. 
\begin{enumerate}
\item \label{lem:FJpullbackVCeins}If $H$ satisfies the \FJC~and every preimage $f^{-1}(V)$ of a virtually cyclic subgroup $V$ satisfies the \FJC, so does $G$.
\item \label{lem:FJpullbackVCzwei}
If $H$, $\Ker(f)=f^{-1}(1)$ satisfy the \FJC~with finite wreath products and every preimage $f^{-1}(Z)$ of an infinite cyclic subgroup $Z$ satisfies the \FJC~with finite wreath products, so does $G$.
\end{enumerate}
\end{lemma}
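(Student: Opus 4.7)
The plan is to prove both parts by combining the pullback Proposition~\ref{prop:IChomomrphisms} with the Transitivity Principle (Theorem~\ref{prop:transitivityPrinciple}). For the homomorphism $f : G \to H$ and a family $\calf$ of subgroups of $H$, write $f^*\calf \coloneqq \{H' \le G \mid f(H') \in \calf\}$, and note that every element of $f^*\VCyc$ is a subgroup of $f^{-1}(V)$ for some virtually cyclic $V \le H$.

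For part~\ref{lem:FJpullbackVCeins}, since $H$ satisfies the \FJC, Proposition~\ref{prop:IChomomrphisms} gives that $G$ satisfies the \FJC{} relative to the family $f^*\VCyc$. To reduce this family down to $\VCyc$ by means of Theorem~\ref{prop:transitivityPrinciple}, I verify that every $K \in f^*\VCyc$ already satisfies the \FJC{} (relative to $\VCyc$). Such a $K$ is a subgroup of $f^{-1}(f(K))$, which is the preimage of a virtually cyclic subgroup and therefore satisfies the \FJC{} by hypothesis; by Remark~\ref{rem:ICsubgroup}, $K$ itself then satisfies the \FJC. Transitivity yields the claim.

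For part~\ref{lem:FJpullbackVCzwei} the same strategy applies with finite wreath products (Proposition~\ref{prop:IChomomrphisms} is stated in that generality, and transitivity is applied in each $G \wr F$ separately), but the verification that every $K \in f^*\VCyc$ satisfies the \FJC{} with finite wreath products is the step where the extra hypothesis on $\Ker(f)$ enters. Given such a $K$, set $V \coloneqq f(K)$, which is virtually cyclic. If $V$ is finite, then $f^{-1}(V)$ contains $\Ker(f)$ as a finite index subgroup of index $|V|$; since $\Ker(f)$ satisfies the \FJC{} with finite wreath products by hypothesis, Remark~\ref{rem:ICwFOVergroups} promotes this to $f^{-1}(V)$, and hence to its subgroup $K$. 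If $V$ is infinite virtually cyclic, choose an infinite cyclic subgroup $Z \le V$ of finite index; then $f^{-1}(Z)$ has finite index in $f^{-1}(V)$, and by hypothesis $f^{-1}(Z)$ satisfies the \FJC{} with finite wreath products, so another application of Remark~\ref{rem:ICwFOVergroups} gives the same for $f^{-1}(V)$ and for $K$. Transitivity with finite wreath products then produces the conclusion.

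The main obstacle, such as it is, lies in part~\ref{lem:FJpullbackVCzwei}: the hypothesis only covers preimages of \emph{infinite} cyclic subgroups, so preimages of finite subgroups of $H$ must be handled through the kernel hypothesis, while preimages of infinite virtually cyclic subgroups are handled by combining the infinite-cyclic-preimage hypothesis with finite-index inheritance. Once the dichotomy is set up, both parts reduce to a clean pullback-then-transitivity argument.
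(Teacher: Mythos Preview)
Your proof is correct and follows essentially the same route as the paper: pull back along $f$ via Proposition~\ref{prop:IChomomrphisms} to obtain the \FJC{} relative to $f^*\VCyc$, then use the transitivity principle together with the finite/infinite dichotomy for virtually cyclic groups (invoking Remark~\ref{rem:ICwFOVergroups} in both cases of part~\ref{lem:FJpullbackVCzwei}). One small inaccuracy: the index of $\Ker(f)$ in $f^{-1}(V)$ is $|V\cap f(G)|$ rather than $|V|$ when $f$ is not surjective, but this is still finite and the argument is unaffected.
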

\begin{proof}
\begin{enumerate}
\item We know by Proposition~\ref{prop:IChomomrphisms} that $G$ satisfies the \FJC~relative to the family $f^*\VCyc$. Since every group in $f^*\calf$ is a subgroup of a group of the form $f^{-1}(V)$ for some virtually cyclic subgroup $V$ we can apply the transitivity principle (Proposition~\ref{prop:transitivityPrinciple}). So $G$ satisfies the \FJC.
\item By the same argument we have to show that every preimage $f^{-1}(V)$ of a virtually cyclic subgroup $V\subset H$ satisfies the \FJC. If $V$ was finite $f^{-1}(V)$ contains $\Ker(f)$ as a finite index subgroup. The group $f^{-1}(V)$ satisfies the \FJC~with finite wreath products by Remark~\ref{rem:ICwFOVergroups}.

Otherwise $V$ contains an infinite cyclic subgroup $Z$ of finite index. So the index of $f^{-1}(Z)$ in $f^{-1}(V)$ is also finite.  $f^{-1}(V)$ satisfies the \FJC~with finite wreath products by Remark~\ref{rem:ICwFOVergroups}.

\end{enumerate}
\end{proof}

Let us now reduce the family occuring in Theorem~\ref{thm:ICGLNF} to the class of all virtually solvable groups:

\begin{theorem}\label{thm:redtoVsol} Let $V$ be a finitely generated free $Z[S^{-1}]$-module of rank $n$. The group $\aut_{Z[S^{-1}]}(V)$ which is isomorphic to $\GL_n(Z[S^{-1}])$ satisfies the K- and L-theoretic \FJC~with finite wreath products with respect to the class $\VSol$.
\end{theorem}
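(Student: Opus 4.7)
I will induct on $n$, using Theorem~\ref{thm:ICGLNF} as input and the inheritance toolbox (Theorem~\ref{prop:transitivityPrinciple}, Proposition~\ref{prop:IChomomrphisms}, Corollary~\ref{cor:fjproducts}, Remark~\ref{rem:ICwFOVergroups}) to shrink the family from $\calf^\wr$ down to $\VSol$.

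The base case $n\le 1$ is immediate because $\GL_n(Z[S^{-1}])$ is abelian, hence already in $\VSol$. For the inductive step, fix $n\ge 2$ and assume the theorem for all smaller ranks. Theorem~\ref{thm:ICGLNF} supplies the \FJC~with finite wreath products for $G\coloneqq\GL_n(Z[S^{-1}])$ relative to $\calf^\wr$, where $\calf=\VCyc\cup\{\Stab(W)\mid W\text{ a nontrivial direct summand}\}$. Since $\VSol$ is closed under subgroups, the transitivity principle reduces the task to showing that every $H\in\calf^\wr$ satisfies the \FJC~with finite wreath products relative to $\VSol$. Unfolding the definition of $\calf^\wr$ and then invoking Remark~\ref{rem:ICwFOVergroups} together with Corollary~\ref{cor:fjproducts}, I can reduce further to a single $H\in\calf$; the virtually cyclic case is trivial since $\VCyc\subset\VSol$.

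The real content of the argument is the stabilizer case. For a nontrivial direct summand $W$ of rank $k$ with $0<k<n$, choose a complement $W'$ so that $Z[S^{-1}]^n=W\oplus W'$. Writing elements of $\Stab(W)$ in block form with respect to this splitting yields a split short exact sequence
\[
1\longrightarrow A\longrightarrow \Stab(W)\xrightarrow{\;\pi\;} \GL(W)\times\GL(V/W)\longrightarrow 1,
\]
with abelian kernel $A\cong\mathrm{Hom}_{Z[S^{-1}]}(V/W,W)\cong Z[S^{-1}]^{k(n-k)}$. By the induction hypothesis, each of $\GL(W)\cong\GL_k(Z[S^{-1}])$ and $\GL(V/W)\cong\GL_{n-k}(Z[S^{-1}])$ satisfies the desired conjecture, and Corollary~\ref{cor:fjproducts} upgrades this to their product. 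Proposition~\ref{prop:IChomomrphisms} then lifts the statement along $\pi$ to give the \FJC~with finite wreath products for $\Stab(W)$ relative to the pullback family $\pi^*\VSol$.

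To finish, I will verify that $\pi^*\VSol\subseteq\VSol$: any $H\le\Stab(W)$ with $\pi(H)$ virtually solvable sits in an extension $1\to H\cap A\to H\to \pi(H)\to 1$ of a virtually solvable group by an abelian one, and so is itself virtually solvable. Remark~\ref{rem:ICsubgroup} then enlarges the family from $\pi^*\VSol$ to $\VSol$, closing the induction. The decisive point is the abelianness of $A$, which in turn hinges on $W$ being a \emph{direct summand} so that a complement and the block-triangular description exist; without this the pullback family would no longer lie inside $\VSol$ and the reduction scheme would collapse. Everything else is bookkeeping with the inheritance properties already established in this section.
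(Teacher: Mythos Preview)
Your proof is correct and follows essentially the same route as the paper's: induction on $n$, Theorem~\ref{thm:ICGLNF} as input, reduction of $\calf^\wr$ to a single stabilizer via Remark~\ref{rem:ICwFOVergroups} and Corollary~\ref{cor:fjproducts}, the block-triangular short exact sequence for $\Stab(W)$, and the observation that an abelian-by-(virtually solvable) group is virtually solvable. The only cosmetic difference is that the paper applies $-\wr F$ explicitly to the short exact sequence, whereas you absorb the wreath product bookkeeping into the earlier reduction step; both are equivalent.
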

\begin{proof}
Let $F$  be any finite group. We will show this theorem via induction on $n$. If $\rk(V)=1$ we get that $\aut_{Z[S^{-1}]}(V)\wr F\cong \GL_1(Z[S^{-1}])\wr F$ is virtually abelian. Hence the group itself is virtually solvable. So a point is a model for $E_{\VSol}\GL_1(Z[S^{-1}])\wr F$ and the isomorphism conjecture is trivially true. Let us now consider the case of general $n$:

Let $\calf$ denote the family 
\[\VCyc\cup \{H\mid H\le \Stab(W),W\mbox{ is a nontrivial direct summand of }Z[S^{-1}]\}.\]
We already know that $\GL_n(Z[S^{-1}])\wr F$ satisfies the isomorphism conjecture with respect to the family $\calf^\wr$ by Theorem~\ref{thm:ICGLNF}. Using the transitivity principle we have to show that any group in this family satisfies the \FJC~with finite wreath products relative to the family $\VSol$.

Since the isomorphism conjecture with finite wreath products passes to finite index overgroups by Remark~\ref{rem:ICwFOVergroups}, it suffices to consider 
a product of groups from $\calf$. By Corollary~\ref{cor:fjproducts} we may further restrict to the case of a group $G\in \calf$.

We have to show that $G$ satisfies the \FJC~with finite wreath products for any $G\in \calf$.  If $G$ is virtually cyclic $G\wr F$ is virtually abelian and hence virtually solvable. So the statement is trivial in this case. 

Otherwise $G$ is a subgroup of $\Stab(W)$ for some nontrivial direct summand $W\subset Z[S^{-1}]^n$. So we can assume by Remark~\ref{rem:ICsubgroup} that $G=\Stab(W)$. Let $F$ be any finite group and let $W^\perp$ denote any complement of $W\subset V$. We get an isomorphism $W\oplus W^{\perp}\cong V$ sending $(w,w')$ to $w+w'$. All elements of $\Stab(W)$ have block form with respect to this decomposition. Hence we get a short exact sequence:
\[1\rightarrow \hom_{Z[S^{-1}]}(W^\perp,W)\rightarrow \Stab(W)\stackrel{p}{\rightarrow} \aut_{Z[S^{-1}]}(W)\times \aut_{Z[S^{-1}]}(W^\perp)\rightarrow 1.\]
The map $\Stab(W)\rightarrow \aut_{Z[S^{-1}]}(W)\times \aut_{Z[S^{-1}]}(W^\perp)$  is given by
\[f\mapsto (f|_W,  \pr_{W^\perp}\circ f\circ \inclu_{W^\perp}).\]
The isomorphism from $\hom_{Z[S^{-1}]}(W^\perp,W)$ to $\Ker(p)$  is given by 

\[f\mapsto ((w,w')\mapsto (w+f(w'),w').\]

Applying $-\wr F$ to the epimorphism in the upper short exact sequence we get
\[1\rightarrow \hom_{Z[S^{-1}]}(W^\perp,W)^F\rightarrow \Stab(W)\wr F\stackrel{p}{\rightarrow} (\aut_{Z[S^{-1}]}(W)\times \aut_{Z[S^{-1}]}(W^\perp))\wr F\rightarrow 1.\]

Both factors of $\aut_{Z[S^{-1}]}(W)\times \aut_{Z[S^{-1}]}(W^\perp)$ satisfy the isomorphism conjecture with respect to the family $\VSol$ and hence also $\aut_{Z[S^{-1}]}(W)\times \aut_{Z[S^{-1}]}(W^\perp)$ satisfies the isomorphism conjecture with respect to the family $\VSol$. 
We want to apply Proposition~\ref{prop:IChomomrphisms}. So we have to check that the preimage of any virtually solvable subgroup $H$ of $\aut_{Z[S^{-1}]}(W)\times \aut_{Z[S^{-1}]}(W^\perp)$ satisfies the isomorphism conjecture with respect to the family $\VSol$.

We get a short exact sequence
\[1\rightarrow \hom_{Z[S^{-1}]}(W^\perp,W)^F\rightarrow p^{-1}(H) \stackrel{p}{\rightarrow} H\rightarrow 1.\]
We can identify $\hom_{Z[S^{-1}]}(W^\perp,W)$ with the additive group of $\rk(W)\times \rk(W^\perp)$-matrices over $Z[S^{-1}]$ since $W,W^\perp$ are finitely generated free $Z[S^{-1}]$-modules. Especially it is an abelian group.

The group $p^{-1}(H')$ is a solvable subgroup of $p^{-1}(H)$ of finite index where $H'\le H$ denotes a solvable subgroup of finite index. Hence $p^{-1}(H)$ is also virtually solvable.  So it trivially satisfies the isomorphism conjecture for the family of all virtually solvable subgroups.
This completes the proof. 
\end{proof}

\begin{theorem}\label{ICGlnfullyreduced} $\GL_n(Q)$ and $\GL_n(F(t))$ satisfy the \FJC~with finite wreath products.
\end{theorem}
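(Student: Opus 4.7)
The plan is to combine Theorem~\ref{thm:redtoVsol} with the colimit inheritance property (Proposition~\ref{prop:ICcolimit}) and a known result for virtually solvable groups. First I would write each quotient field as a countable directed union,
\[\IQ = \bigcup_{S} \IZ[S^{-1}], \qquad F(t) = \bigcup_{S} F[t][S^{-1}],\]
where $S$ ranges over finite sets of (normalized) primes, ordered by inclusion. Passing to general linear groups gives
\[\GL_n(\IQ) = \colim_{S} \GL_n(\IZ[S^{-1}]), \qquad \GL_n(F(t)) = \colim_{S} \GL_n(F[t][S^{-1}]),\]
and since the poset of finite sets of primes is cofinal in a countable directed poset, one can replace it by a sequence $(S_i)_{i \in \IN}$ of finite prime sets with $S_i \subset S_{i+1}$ and $\bigcup_i S_i$ the full prime set. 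This brings us into the situation of Proposition~\ref{prop:ICcolimit}.

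Next I would use Theorem~\ref{thm:redtoVsol}, which tells us that each term $\GL_n(Z[S_i^{-1}])$ (with $Z = \IZ$ or $Z = F[t]$) satisfies the $K$- and $L$-theoretic \FJC{} with finite wreath products \emph{relative to} the family $\VSol$ of virtually solvable groups. To upgrade this to the full \FJC{} (i.e.\ relative to $\VCyc$), I would invoke the transitivity principle (Theorem~\ref{prop:transitivityPrinciple}) together with the fact that every virtually solvable group itself satisfies the $K$- and $L$-theoretic \FJC{} with finite wreath products. For a virtually solvable group $H$ the family $\VSol|_H$ consists of all subgroups of $H$, so the isomorphism conjecture for $(H, \VSol|_H)$ is trivially satisfied; hence transitivity reduces the family from $\VSol$ to $\VCyc$. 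This step relies on the \FJC{} being known for virtually solvable groups (by the work of Wegner for $K$-theory and Bartels-Farrell-L\"uck for $L$-theory), a result that has to be cited from the literature.

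Having established the \FJC{} with finite wreath products for every $\GL_n(Z[S_i^{-1}])$, I would then apply Proposition~\ref{prop:ICcolimit} to the directed system $(\GL_n(Z[S_i^{-1}]))_{i\in\IN}$. The proposition directly yields the \FJC{} with finite wreath products for the colimit, which is $\GL_n(\IQ)$ in the integral case and $\GL_n(F(t))$ in the function field case. This completes the deduction.

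The main obstacle is the reduction from $\VSol$ to $\VCyc$: one needs the \FJC{} with finite wreath products for arbitrary virtually solvable groups, which is a nontrivial input and not proved in the excerpt. Everything else is a bookkeeping exercise combining colimits, the transitivity principle, and the inheritance of the conjecture to finite index overgroups (Remark~\ref{rem:ICwFOVergroups}) to absorb the ``virtually'' in ``virtually solvable.'' Once that input is in place, the argument is essentially formal.
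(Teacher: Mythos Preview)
Your proposal is correct and follows essentially the same route as the paper: reduce to $\GL_n(Z[S^{-1}])$ via the colimit inheritance (Proposition~\ref{prop:ICcolimit}), then combine Theorem~\ref{thm:redtoVsol} with the transitivity principle and Wegner's result on virtually solvable groups to pass from $\VSol$ to $\VCyc$. One small expository point: the sentence about $\VSol|_H$ consisting of all subgroups of $H$ is true but irrelevant to the transitivity principle---what you actually need (and correctly state afterwards) is that each $H\in\VSol$ satisfies the conjecture relative to $\VCyc|_H$, which is precisely the Wegner input.
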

\begin{proof}
By Proposition~\ref{prop:ICcolimit} it suffices to consider the case, where only finitely many primes are inverted. By the transitivity principle (Proposition~\ref{prop:transitivityPrinciple}) and Theorem~\ref{thm:redtoVsol} it suffices to show the \FJC~(with finite wreath products) for virtually solvable groups. This has recently been done in \cite[Theorem~1.1]{wegner2013farrell}. The finite wreath product version is automatically included since virtually solvable groups are closed under wreath products with finite groups.
\end{proof}

\begin{remark} At the time of writing my thesis the \FJC~was known only for certain class of virtually solvable groups. I needed a lengthy 
argument \cite[8.15-8.21]{ruping2013farrell} that shows that all virtually solvable subgroups of $\GL_n(\IZ)$ and $\GL_n(F(t))$ lie in this class. 
Unfortunately this does not hold for $\GL_n(\IQ)$, so I had a weaker result in my thesis.
\end{remark}

\ignore{\section*{Reducing the family further}

The Farrell-Jones isomorphism conjecture has not been proved for all virtually solvable groups. If we consider the case of $\GL_n(\IZ)$ first we can use a theorem of Mal'cev that shows that every solvable subgroup of $\GL_n(\IZ)$ is polycyclic. 

Alternatively the proof of Theorem~\ref{thm:redtoVsol} can be carried out with the family $\VSol$ replaced by the family of virtually polycyclic groups. For two free $\IZ$-modules $W,W^\perp$ the group $(\hom(W,W^\perp),+)$ is a finitely generated, free abelian group. 

Hence any extension $1\rightarrow (\hom(W,W^\perp),+) \rightarrow G \rightarrow P\rightarrow 1$ for a virtually polycyclic group is again virtually polycyclic. 

The \FJC~has been proved for virtually polycyclic groups in \cite[Theorem~0.1]{Bartels-Farrell-Lueck(2011cocomlat)} and so we can reduce the family to the family of virtually cyclic subgroups by the transitivity principle~\ref{prop:transitivityPrinciple}. So we obtain the following theorem as in \cite[Main~theorem]{Bartels-Lueck-Reich-Rueping(2012KandL)}:

\begin{theorem}\label{thm:FJglnz} The group $\GL_n(\IZ)$ satisfies the \FJC~in K- and L-theory with finite wreath products.
\end{theorem}

The \FJC~is unknown for the solvable group $\IZ[\frac{1}{p}]\rtimes_{\cdot p}\IZ$ and it occurs as the subgroup of $\GL_2(\IZ[\frac{1}{p}])$ generated by 
\[\begin{pmatrix}1&1\\0&1\end{pmatrix},\begin{pmatrix}p&0\\0&1\end{pmatrix}.\]
So we cannot reduce the family further for the ring $\IZ[S^{-1}]$ for nonempty $S$. 

Let us now consider the function field case. For two finitely generated free $F[t][S^{-1}]$-modules $W,W^\perp$ we have that $\hom(W,W^\perp)$ is an $F[t][S^{-1}]$-module. So it is a vector space over the prime field $K\cong \mathbb{F}_{\mbox{char}(F)}$ of $F$ and thus it is isomorphic to $\bigoplus \IZ/{\mbox{char}(F)}$ as an abelian group. Let us now consider group extensions of the form
\[1\rightarrow \bigoplus_\IN K\rightarrow G\rightarrow \IZ\rightarrow 1.\]
A group $G$ that fits into such a short exact sequence of groups will be called a $\bigoplus_\IN K$ by $\IZ$-group. Let us now consider some special cases. Recall the definition of the restricted wreath product $A\wr' F\coloneqq \bigoplus_FA \rtimes F$ of two groups $A,F$, where the automorphism permutes the coordinates.

\begin{lemma} \label{lem:FJlamplighter} Let $A$  be any finite abelian group.
\begin{enumerate}
\item The lamplighter group of $A$
\[A\wr' \IZ\coloneqq  \langle \Map(\IZ,A),t\mid f=0\ \mbox{ almost everywhere },tft^{-1}=f(\_+1)\rangle \]
 is a colimit of $\CAT(0)$ groups with noninjective stucture maps. So it satisfies the \FJC~with finite wreath products.
\item A group that has a subgroup of finite index isomorphic to $A\wr' \IZ$ satisfies the \FJC~with finite wreath products. 
\end{enumerate}
\end{lemma}
\begin{proof}
\begin{enumerate}
\item A relative presentation of $A \wr' \IZ$ is given by 
\[\langle A,t| [a,t^n a't^{-n}] \mbox{ for all }n\in \IN,a,a'\in A\rangle \]
If we just take the relations up to some constant $N$ we get an HNN Extension $A^n*_{A^{n-1}}$. The group acts properly, isometrically and cocompactly on the associated Bass-Serre tree and hence it is CAT(0). Thus the Farrell-Jones conjecture with wreath products holds for $A^n*_{A^{n-1}}$ by Theorem~\ref{thm:FJCAT}. It passes to colimits by Proposition~\ref{prop:ICcolimit} and so it also holds for $A\wr'\IZ$.

Let me just remark that a similar argument holds if $A$ is CAT(0), one just has to take a better space on which the HNN extension acts. The key property needed is that the inclusions of the egde group are given by the inclusions in the first and last factors.
\item This is just Remark~\ref{rem:ICwFOVergroups}.
\end{enumerate}
\end{proof}

So we have shown that one specific $\bigoplus_\IN K$ by $\IZ$ group satisfies the \FJC. Let us now find a way how to build any 
$\bigoplus_\IN K$ by $\IZ$ out of virtually cyclic groups and the group $K\wr \IZ$. 
Since $\IZ$ is free any $\bigoplus_\IN K$ by $\IZ$-group arises as a semidirect product $\bigoplus_\IN K\rtimes_\varphi \IZ$ for some automorphism $\varphi\in \aut(\bigoplus_\IN K)$. Picking an automorphism means choosing a $K[\IZ]$-module structure on $\bigoplus_\IN K$.

\begin{lemma} Let $V$ be a $K[\IZ]$-module whose underlying module is isomorphic to $\bigoplus_\IN K$. Then $V$ is the colimit of a family of $K[\IZ]$-modules $(V_i)_{i\in \IN}$ with $V_0=0$ such that $V_{i+1}/V_i$ is a finite dimensional $K$-vector space or $V_{i+1}/V_i$ is isomorphic to $K[\IZ]$. 
\end{lemma} 
\begin{proof}
The proof is straight forward. Pick a basis $(b_i)_{i\in \IN}$ of $\bigoplus_\IN K$. Let $V_0\coloneqq 0$. Assume we already constructed $V_i$. Let $v_0$  be the first basis vector of $(b_i)_{i\in \IN}$ that is not contained in $V_i$. If there is none we have $V_i=\bigoplus_\IN K$ and we can set $V_j\coloneqq V_i$ for $j>i$.

This choice of $v_0$ guarantees that $V=\bigcup_{i\in \IN}V_i$. Now let 
\[V_{i+1}\coloneqq V_i+K[\IZ]\cdot v_0.\] 
The module $K[\IZ]$ surjects onto $V_{i+1}/V_i$ via $x\mapsto x\cdot [v_0]$ and its kernel is an ideal of $K[\IZ]$. Either it is trivial in which case 
$V_{i+1}/V_i\cong K[\IZ]$ or it is nontrivial. Since $K[\IZ]$ is a principal ideal domain we can pick a generator $g$. So we get
\[V_{i+1}/V_i\cong K[\IZ]/(g)\]
and the latter is isomorphic to $K^{\deg(g)}$ as a $K$-vector space. 
\end{proof}

\begin{corollary}\label{cor:extensionsgivesall} Each $\bigoplus_\IN K$ by $\IZ$-group $V\rtimes_\varphi \IZ$ can be written as a colimit of subgroups $V_i\rtimes_\varphi \IZ$ with $V_0=0$ such that we have for any $i$:
\[1\rightarrow V_i \rightarrow V_{i+1}\rtimes \IZ\rightarrow V_{i+1}/V_i\rtimes \IZ \rightarrow 1.\]
The right term is either virtually cyclic if $V_{i+1}/V_i$  is finite abelian or it is isomorphic to the lamplighter group $K\wr' \IZ$.
\end{corollary}

\begin{proposition} \label{prop:fjvirtfinftyZ}
Any virtually $\bigoplus_\IN k^n$ by $\IZ$ group $G$ satisfies the \FJC~with finite wreath products. 
\end{proposition}
\begin{proof}
Let us first consider the case when $G$ is finitely generated.

First note that we embed $G$ into a wreath product of a $\bigoplus_\IN k^n$ by $\IZ$-group $H$ with a finite group $F$ by \cite[Section~2.6]{Dixon-Mortimer(1996)}.
The last Corollary~\ref{cor:extensionsgivesall} tells us that we can write $H$ as a colimit of groups of the form $V_i\rtimes_\varphi \IZ$ with certain properties. 
So we can write $H\wr F$ as a colimit of groups $(V_i\rtimes_\varphi \IZ)\wr F$.
Since the \FJC~with finite wreath products inherits to colimits by Proposition~\ref{prop:ICcolimit} it suffices to show it for those groups. 
Let us now show the following statement by induction.
\emph{The group $(V_i\rtimes \IZ)\wr F$ satisfies the \FJC~for each finite group $F$.}

$V_0$ is trivial. So $(V_0\rtimes_\varphi \IZ)\wr F$ is virtually abelian and hence a $\CAT(0)$ group.
 
For the induction step use the short exact sequence from the previous corollary. We can apply the functor $-\wr F$ to obtain the short exact sequence 
\[1\rightarrow \Map(F,V_i) \rightarrow (V_{i+1}\rtimes \IZ)\wr F\stackrel{p}{\rightarrow } (V_{i+1}/V_i\rtimes \IZ) \wr F \rightarrow 1.\]
The right term satisfies the \FJC, since it is either virtually abelian if $V_{i+1}/V_i$ is finite or isomorphic to the group $(K\wr' \IZ)\wr F$. Hence it satisfies the \FJC~by Lemma~\ref{lem:FJvAb} respectively Lemma~\ref{lem:FJlamplighter}). So we only have to show that $p^{-1}(V)$ satisfies the \FJC~with finite wreath products.

We want to apply Proposition~\ref{prop:IChomomrphisms}. Let $W$ be a virtually cyclic subgroup of $V_{i+1}/V_i\rtimes \IZ \wr F$.
\begin{enumerate}
\item If $W$ is finite $p^{-1}(W)$ is virtually abelian and so it satisfies the \FJC~with finite wreath products.
\item Consider now the case where $W$ is infinite cyclic and contained in the subgroup 
\[\Map(F,V_{i+1}/V_i\rtimes \IZ)\subset  V_{i+1}/V_i\rtimes \IZ\wr F.\]
 We can pick a preimage $x$ of a generator of $W$ to get a splitting of 
\[1\rightarrow \Map(F,V_i) \rightarrow p^{-1}(W)\stackrel{p}{\rightarrow } W \rightarrow 1.\]
Now let us define an embedding 
\[p^{-1}(V)\hookrightarrow \Map(F,V_i\rtimes_\varphi\IZ) \]
sending a preimage $x$ of a generator of $W$ to $f\mapsto (0,pr_\IZ(x(f)))$. On $\Map(F,V_i)$ it is given by the inclusion into $\Map(F,V_i\rtimes_\varphi \IZ)$.
Here we used that $W$ is contained in $\Map(F,V_{i+1}/V_i\rtimes \IZ)$ so that we can evaluate $x$ at some group element $f\in F$. 

Since $\Map(F,V_i\rtimes_\varphi\IZ)$ is a subgroup of $(V_i\rtimes_\varphi\IZ) \wr F$ it satisfies the \FJC~by induction hypothesis.

\item If $W$ is an infinite virtually cyclic group we can find a finite index, infinite cyclic subgroup $W'\subset W$ contained in $\Map(F,V_{i+1}/V_i\rtimes \IZ)$ as before. So $p^{-1}(W')$ is also a finite index subgroup of $p^{-1}(W)$. Hence we can find an embedding $p^{-1}(W)\rightarrow p^{-1}(W')\wr F'$ for some finite group $F'$. So we get the chain of embeddings:
\[p^{-1}(W)\hookrightarrow p^{-1}(W')\wr F'\hookrightarrow ((V_i\rtimes_\varphi\IZ) \wr F )\wr F'\hookrightarrow (V_i\rtimes_\varphi\IZ) \wr (F \wr F').\]
The last group in this chain of embeddings satisfies the \FJC~by induction hypothesis. This completes the proof of the finitely generated case.
\end{enumerate}
Now suppose $G$ is not finitely generated. Then it can be written as a colimit of its finitely generated subgroups. 
But a subgroup $S$ of the virtually $\bigoplus_\IN k^n$ by $\IZ$-group $G$ is either virtually $\bigoplus_\IN k^n$ by $\IZ$ or virtually abelian by the following argument. Let $G'$ be a $\bigoplus_\IN k^n$ by $\IZ$-subgroup of $G$ of finite index. 
\begin{enumerate}
\item If $S\cap \bigoplus_\IN k^n$ is finite then $S$ is virtually finite by $\IZ$ and hence virtually abelian. The term $\bigoplus_\IN k^n$ denotes a choice of a $\bigoplus_\IN k^n$ subgroup of $G'$ with infinite cyclic quotient. 
\item If $S\cap G'$ is contained in the abelian group $S\cap \bigoplus_\IN k^n$, $S$ would be virtually abelian since 
\[[S:S\cap G']=[S\cap G:S\cap G']\le [G:G'].\]
\item Otherwise the subgroup $\bigoplus_\IN k^n\cap S$ of $S\cap G'$ is an $k$-vector space of countable infinite dimension and the quotient 
$S/(\bigoplus_\IN k^n\cap S)$ is an nontrivial infinite cyclic subgroup of $\IZ$. Hence $S\cap G'$ is also a $\bigoplus_\IN k^n$ by $\IZ$-group. 
Since it has finite index in $S$ it is finitely generated and $S$ is a virtually $\bigoplus_\IN k^n$ by $\IZ$-group.
\end{enumerate}
Thus $S$ satisfies the \FJC~with finite wreath products by Lemma~\ref{lem:FJvAb} if it was virtually abelian and by the previous case if it was not virtually Abelian.

Hence $G$ can be written as a colimit of groups satisfying the \FJC~with finite wreath products. So $G$ satisfies the \FJC~with finite wreath products by Proposition~\ref{prop:ICcolimit}.
\end{proof}

\begin{theorem} Let $F$ be a finite field and $S$ be a finite set of primes in the polynomial ring $F[t]$. Let $V$ be a finitely generated, free $F[t][S^{-1}]$-module. Then the group $\aut_{F[t][S^{-1}]}(V)\cong \GL_n(F[t][S^{-1}])$ satisfies the \FJC.
\end{theorem}
\begin{proof}
The proof is completely analogous to the proof of Theorem~\ref{thm:redtoVsol}.
Let us proceed by induction on $\rk(V)$. We already know that it satisfies the isomorphism conjecture with respect to the family \[\VCyc\cup \{H\le \Stab(W)\mid W\subset V \mbox{ is a nontrivial direct summand}\}.\]

By the transitivity principle we only have to show that the group $\Stab(W)$ satisfies the \FJC~for any nontrivial direct summand $W\subset V$. We again use the short exact sequence from the proof of Theorem~\ref{thm:redtoVsol}:
\[1\rightarrow \hom_{F[t][S^{-1}]}(W^\perp,W)\rightarrow \Stab(W)\stackrel{p}{\rightarrow} \aut_{F[t][S^{-1}]}(W)\times \aut_{F[t][S^{-1}]}(W^\perp)\rightarrow 1.\]
$\hom_{F[t][S^{-1}]}(W^\perp,W)$ is a free $F[t][S^{-1}]$-module of rank $\rk(W)\rk(W^\perp)$. If we restrict the module structure to $F$ we get  that $\hom_{F[t][S^{-1}]}(W^\perp,W)\cong \bigoplus_\IZ F$. 

Since $W$ is nontrivial we know that the rank of $W$ and $W^\perp$ can be at most $\rk(V)-1$. 
So the groups $\aut_{F[t][S^{-1}]}(W)$ and $\aut_{F[t][S^{-1}]}(W^\perp)$ satisfy the \FJC~by induction hypothesis.

So does their product by Corollary~\ref{cor:fjproducts}. Now let $V\subset \aut_{F[t][S^{-1}]}(W)\times \aut_{F[t][S^{-1}]}(W^\perp)$ be any virtually cylic subgroup. If $V$ is finite the group $p^{-1}(V)$ is virtually abelian and hence satisfies the \FJC~by Lemma~\ref{lem:FJvAb}.

If $V$ is infinite we can find an infinite cyclic subgroup $V'$ of finite index. So $p^{-1}(V')$ has finite index in $p^{-1}(V)$ and it fits into the exact sequence
\[1\rightarrow \hom_{F[t][S^{-1}]}(W^\perp,W)\rightarrow p^{-1}(V')\stackrel{p}{\rightarrow} V'\rightarrow 1.\]
So $p^{-1}(V')$ is a $\bigoplus_\IN k^n$ by $\IZ$ group and hence $p^{-1}(V)$ is a virtually $\bigoplus_\IN k^n$ by $\IZ$ group and hence it satisfies the \FJC~by Proposition~\ref{prop:fjvirtfinftyZ}. So we can apply Lemma~\ref{prop:IChomomrphisms} to conclude that the group $\Stab(W)$ satisfies the \FJC. This completes the proof.
\end{proof}

Let us summarize what has been shown in this section:

\begin{theorem}\label{ICGlnfullyreduced} Let 
\[\calf\coloneqq \begin{cases}\VSol & Z=\IZ,S\neq \emptyset \\ \VCyc & \mbox{else}\end{cases}.\]
Then $\GL_n(Z[S^{-1}])\wr F$ satisfies the isomorphism conjecture in K- and L-theory relative to the family $\calf$ for any finite group $F$. 
\end{theorem}
\begin{proof} The function field case has been done in the previous theorem; the case of $\GL_n(\IZ)$ has been considered in Theorem~\ref{thm:FJglnz} and the case of $\IZ[S^{-1}]$ in Theorem~\ref{thm:redtoVsol}.
\end{proof}
}
\section{Extensions}

\subsection{Ring extensions}

Again let $Z$ be either $\IZ$ or $F[t]$ for a finite field $F$ and let $S$ be a finite set of primes in $Z$. Assume that we have a ring $R$ and an injective ring homomorphism $i:Z[S^{-1}]\hookrightarrow \Cent(R)\subset R$. This gives $R$ the structure of a left-$Z[S^{-1}]$-module via 
\[(x,r)\mapsto i(x)\cdot r.\] 
In this situation multiplication with an element $r\in R$ is a $Z[S^{-1}]$-linear map. This gives an injective ring homomorphism $f:R \rightarrow \enid_{Z[S^{-1}]}(R)$. Such rings $R$ are also called an associative $Z[S^{-1}]$-algebras.

If we further assume that $R$ is a finitely generated free $Z[S^{-1}]$-module of rank $n$ we have that $\enid_{Z[S^{-1}]}(R)\cong M_n(Z[S^{-1}])$. 

Hence we get an injective ring homomorphism 
\[M_m(R)\rightarrow M_m(M_n(Z[S^{-1}]))\cong M_{mn}(Z[S^{-1}]).\] 
If we restrict to the group of units, we obtain an injective group homomorphism \[\GL_m(R)\rightarrow \GL_{mn}(Z[S^{-1}]).\]
So $\GL_m(R)$ also satisfies the \FJC~with finite wreath products as in Theorem~\ref{ICGlnfullyreduced}.

The following lemma shows that the ring of $S$-integers in a finite field extension of $Q$ has these properties (\cite[Lemma~9.1]{ruping2013farrell}). 

\subsection{Short exact sequences}

The goal of this section is to show the \FJC~for certain extension of groups. Let again $Z$ be either $\IZ$ or the polynomial ring $F[t]$ over a finite field $F$. Let $S$ be a finite set of primes in $Z$.

Let us start with a brief observation:

Let us now consider extensions of $\IZ$ first. The key idea is to use known results about the outer automorphism groups of $\SL_n$ and $\GL_n$.

\begin{lemma}\label{lem:extofZ}Let $n\ge 3$.
\begin{enumerate}
\item \label{lem:extofZeins}Any extension of $\IZ$ by $\SL_n(Z[S^{-1}])$ satisfies the \FJC~with finite wreath products.
\item \label{lem:extofZzwei}Any extension of $\IZ$ by $\GL_n(Z[S^{-1}])$ satisfies the \FJC~with finite wreath products.
\end{enumerate}\end{lemma}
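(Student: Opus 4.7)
The plan is to handle (i) first and then deduce (ii) from it. Since $\IZ$ is free, the extension in (i) splits as $G \cong \SL_n(Z[S^{-1}]) \rtimes_\varphi \IZ$ for some $\varphi\colon \IZ \to \aut(\SL_n(Z[S^{-1}]))$. The critical input I would invoke is that for $n \geq 3$ the outer automorphism group $\Out(\SL_n(Z[S^{-1}]))$ is finite. By the classical automorphism theorems of Borel--Tits and O'Meara, every automorphism of $\SL_n(R)$ for $R$ commutative and $n \geq 3$ is a composite of an inner automorphism, conjugation by an element of $\GL_n(R)$ (contributing $R^\times/R^{\times n}$ modulo inner), the transpose--inverse involution, and a ring automorphism of $R$; for $R = Z[S^{-1}]$ each of these contributions is finite.

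Consequently the composition $\IZ \to \Out(\SL_n(Z[S^{-1}]))$ has finite image, so some $m\IZ \subseteq \IZ$ maps trivially and $\varphi^m$ is inner, say $\varphi^m = \mathrm{conj}_h$ for some $h \in \SL_n(Z[S^{-1}])$. In the finite-index subgroup $G' \coloneqq \SL_n(Z[S^{-1}]) \rtimes_{\varphi^m} m\IZ \leq G$, replacing the canonical lift $(1,t)$ of a generator $t$ of $m\IZ$ by $(h^{-1},t)$ produces an infinite-order element that centralizes $\SL_n(Z[S^{-1}])$ and meets it trivially; a direct semidirect-product calculation then yields $G' \cong \SL_n(Z[S^{-1}]) \times \IZ$. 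Since $\SL_n(Z[S^{-1}]) \leq \GL_n(\IQ)$ (respectively $\GL_n(F(t))$), Theorem~\ref{ICGlnfullyreduced} together with Remark~\ref{rem:ICsubgroup} shows that it satisfies the \FJC~with finite wreath products; combined with $\IZ$ (virtually cyclic) via Corollary~\ref{cor:fjproducts} this gives the \FJC~with finite wreath products for $G'$, and Remark~\ref{rem:ICwFOVergroups} transfers it to the finite-index overgroup $G$.

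For (ii) I would avoid analyzing $\Out(\GL_n(Z[S^{-1}]))$ (which is generally infinite, due to character twists by $\det$) and instead use that $[\GL_n(Z[S^{-1}]),\GL_n(Z[S^{-1}])] = \SL_n(Z[S^{-1}])$ for $n \geq 3$ (Suslin's theorem, applicable since $Z[S^{-1}]$ is a principal ideal domain). Thus $\SL_n(Z[S^{-1}])$ is characteristic in $\GL_n(Z[S^{-1}])$ and hence normal in any extension $G$ as in (ii). The quotient $G/\SL_n(Z[S^{-1}])$ fits in the short exact sequence $1 \to Z[S^{-1}]^\times \to G/\SL_n(Z[S^{-1}]) \to \IZ \to 1$, and since $Z[S^{-1}]^\times$ is finitely generated abelian (Dirichlet for $Z = \IZ$; directly $F^\times \times \IZ^{|S|}$ for $Z = F[t]$), the quotient is virtually polycyclic and so satisfies the \FJC~with finite wreath products. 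Applying Lemma~\ref{lem:FJpullbackVC}\ref{lem:FJpullbackVCzwei} to the projection $G \to G/\SL_n(Z[S^{-1}])$ then finishes the case: the kernel $\SL_n(Z[S^{-1}])$ satisfies the conjecture by the first paragraph, and preimages of infinite cyclic subgroups of the quotient are extensions of $\IZ$ by $\SL_n(Z[S^{-1}])$, i.e.\ instances of case (i).

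The main obstacle is the finiteness of $\Out(\SL_n(Z[S^{-1}]))$ for $n \geq 3$: this is where the hypothesis $n \geq 3$ is essential, and it rests on citing the automorphism classification for $\SL_n$ over commutative rings, together with the elementary finiteness of $Z[S^{-1}]^\times / Z[S^{-1}]^{\times n}$ and of the ring automorphisms of $Z[S^{-1}]$. Once this input is granted, the remainder is a straightforward assembly of the extension, pullback, product, subgroup and finite-index-overgroup inheritance principles already established in Section~\ref{sec:reducing}.
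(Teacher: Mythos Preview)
Your proposal is correct and follows essentially the same route as the paper's proof: for (i) you use that $\Out(\SL_n(Z[S^{-1}]))$ is finite (the paper only states and needs that it is torsion, citing \cite[Proposition~10.14]{ruping2013farrell}) to pass to a finite-index subgroup $\SL_n(Z[S^{-1}])\times\IZ$; for (ii) you quotient by the characteristic subgroup $\SL_n(Z[S^{-1}])=[\GL_n,\GL_n]$ and apply Lemma~\ref{lem:FJpullbackVC}\ref{lem:FJpullbackVCzwei}, which is exactly the paper's abelianisation argument phrased in different language. The only cosmetic differences are that you justify $[\GL_n,\GL_n]=\SL_n$ via Suslin and sketch the Borel--Tits/O'Meara description of $\Out$, whereas the paper takes both facts as black boxes from the thesis.
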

\begin{proof}
\begin{enumerate}
\item Let $G=\SL_n(Z[S^{-1}])\rtimes_\varphi \IZ$ be any such extension. The outer automorphism group of $\SL_n(Z[S^{-1}])$ is torsion (see \cite[Proposition~10.14]{ruping2013farrell}). 
So $G$ has a finite index subgroup isomorphic to $\SL_n(Z[S^{-1}])\times \IZ$. 
$\SL_n(Z[S^{-1}])$ satisfies the \FJC~with finite wreath products by Theorem~\ref{ICGlnfullyreduced} and $\IZ$ does since it is a $\CAT(0)$ group by Theorem~\ref{thm:FJCAT}. 
\item Abelianization induces a group homomorphism \[f:\GL_n(Z[S^{-1}])\rtimes_\varphi \IZ\rightarrow \GL_n(Z[S^{-1}])_{ab}\rtimes_{\varphi_{ab}} \IZ\]
$\GL_n(Z[S^{-1}])_{ab}\rtimes_{\varphi_{ab}} \IZ$ is virtually polycyclic and hence it satisfies the \FJC~with finite wreath products by \cite{Bartels-Farrell-Lueck(2011cocomlat)}[Theorem 0.1]. Note that the class of virtually polycyclic groups is closed under taking wreath products with finite groups. By Lemma~\ref{lem:FJpullbackVC}\ref{lem:FJpullbackVCzwei} it suffices to show that 
the kernel of $f$, which is isomorphic to $SL_n(Z[S^{-1}])$, and every preimage of an infinite cyclic subgroup satisfy the \FJC~with finite wreath products. Theorem~\ref{ICGlnfullyreduced} shows this for $\Ker(f)$ and every preimage of an infinite cyclic subgroup does so by Lemma~\ref{lem:extofZ}\ref{lem:extofZeins}.
\end{enumerate}
\end{proof}

Extensions of $\IZ$ are the basic building blocks for the next proposition.

\begin{proposition}\label{prop:FJextension} Let $n\ge 3$. Suppose that a group $G$ satisfies the \FJC~with finite wreath products. Then
\begin{enumerate}
\item Any extension of $G$ by $\SL_n(Z[S^{-1}])$ satisfies the \FJC~with finite wreath products.
\item Any extension of $G$ by $\GL_n(Z[S^{-1}])$ satisfies the \FJC~with finite wreath products.
\end{enumerate}
\end{proposition}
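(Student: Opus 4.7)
The plan is to apply Lemma~\ref{lem:FJpullbackVC}\ref{lem:FJpullbackVCzwei} directly to the quotient map of the given extension. Write the extension as a short exact sequence
\[ 1 \longrightarrow N \longrightarrow E \stackrel{p}{\longrightarrow} G \longrightarrow 1, \]
where $N = \SL_n(Z[S^{-1}])$ in case (i) and $N = \GL_n(Z[S^{-1}])$ in case (ii). To conclude the \FJC~with finite wreath products for $E$, the pullback lemma requires three inputs: that $G$ satisfies the \FJC~with finite wreath products; that $\Ker(p)=N$ does; and that $p^{-1}(C)$ does for every infinite cyclic subgroup $C\le G$.

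First I would verify these three inputs. The first is given by hypothesis. The second is Theorem~\ref{ICGlnfullyreduced}, applied to $\GL_n(Z[S^{-1}])$ directly in case (ii) and to $\SL_n(Z[S^{-1}])$ (a subgroup of $\GL_n(Z[S^{-1}])$, using Remark~\ref{rem:ICsubgroup}) in case (i). For the third, observe that the preimage $p^{-1}(C)$ itself fits into a short exact sequence
\[ 1 \longrightarrow N \longrightarrow p^{-1}(C) \longrightarrow C \longrightarrow 1 \]
with $C\cong \IZ$, so it is precisely an extension of $\IZ$ by $N$. Hence case (i) is covered by Lemma~\ref{lem:extofZ}\ref{lem:extofZeins} applied to $p^{-1}(C)$, and case (ii) is covered by Lemma~\ref{lem:extofZ}\ref{lem:extofZzwei}. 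Invoking Lemma~\ref{lem:FJpullbackVC}\ref{lem:FJpullbackVCzwei} then produces the \FJC~with finite wreath products for $E$.

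There is no substantive obstacle: the argument is essentially a bootstrap of already established base cases along the cyclic-subgroup skeleton of $G$. All the real work has been absorbed into Theorem~\ref{ICGlnfullyreduced} (which gives the \FJC~with finite wreath products for $\GL_n(Z[S^{-1}])$ and its subgroups) and into Lemma~\ref{lem:extofZ} (which settled the $\IZ$-by-$\SL_n$ and $\IZ$-by-$\GL_n$ base cases using that $\Out(\SL_n(Z[S^{-1}]))$ is torsion and, respectively, via abelianization). The only thing one must take care of is matching the hypotheses of the pullback lemma precisely, and noting that the assumption $n\ge 3$ is inherited from Lemma~\ref{lem:extofZ} so no new restriction on $n$ enters.
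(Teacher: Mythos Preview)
Your proposal is correct and follows essentially the same route as the paper: apply Lemma~\ref{lem:FJpullbackVC}\ref{lem:FJpullbackVCzwei} to the projection $p\colon E\to G$, verify the kernel case via Theorem~\ref{ICGlnfullyreduced}, and handle preimages of infinite cyclic subgroups via Lemma~\ref{lem:extofZ}. Your write-up is in fact slightly more explicit than the paper's in justifying the $\SL_n$ kernel case as a subgroup of $\GL_n$.
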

\begin{proof}
Given such an extension $G'$. Let $f:G'\rightarrow G$ be the projection map. This is just an application of Proposition~\ref{lem:FJpullbackVC}\ref{lem:FJpullbackVCzwei}. 
We have to verify its assumptions. First $p^{-1}(1)$ which is either $\SL_n(Z[S^{-1}])$ or $\GL_n(Z[S^{-1}])$ satisfies the \FJC~with finite wreath products by Theorem~\ref{ICGlnfullyreduced}. Second we have to verify that each preimage of an infinite cyclic group satisfies the \FJC~with finite wreath products. This has been done in Lemma~\ref{lem:extofZ}.
\end{proof}



\begin{acknowledgements}\label{ackref}
I would like to thank my advisor Wolfgang L\"uck for his support and encouragement, Arthur Bartels, Holger Reich for all discussions on the \FJC. Special thanks goes to Adam Mole for removing a condition on the bound of the order of finite subgroups. Furthermore I want to thank Philipp K\"uhl, Christian Wegner for proofreading. 
\end{acknowledgements}

\affiliationone{
  H. R\"uping\\
  Endenicher Allee 60, 53115 Bonn\\    
   Germany
   \email{henrik.rueping@hcm.uni-bonn.de}
   }
\end{document}